\definecolor{purpleheart}{rgb}{.41,.21,.61}  
\definecolor{bblue}{rgb}{.18,.35,.58}  
\def\YEAR{2019}\newcount\VOL\VOL=\YEAR\advance\VOL by-1995
\def\firstpage{1}\def\lastpage{1000}
\def\received{}\def\revised{}
\def\communicated{}
\def\magnification{\afterassignment\m@g\count@}
\def\m@g{\mag=\count@\hsize6.5truein\vsize8.9truein\dimen\footins8truein}
\font\eightrm=cmr8
\font\caps=cmcsc10                    
\font\Caps=cmcsc10 scaled \magstep1   
\font\scaps=cmcsc8
\def\DocMath{{\def\th{\thinspace}\scaps }}
\renewcommand{\@oddfoot}{}
\renewcommand{\@evenfoot}{}
\renewcommand{\@evenhead}{%
    \ifnum\thepage>\lastpage\rlap{\thepage}\hfill%
    \else\rlap{\thepage}\slshape\leftmark\hfill{\caps\SAuthor}\hfill\fi}%
\renewcommand{\@oddhead}{%
    \ifnum\thepage=\firstpage{\DocMath\hfill\llap{\thepage}}%
    \else{\slshape\rightmark}\hfill{\caps\STitle}\hfill\llap{\thepage}\fi}%
\def\TSkip{\bigskip}
\newbox\TheTitle{\obeylines\gdef\GetTitle #1
\ShortTitle  #2
\SubTitle    #3
\Author      #4
\ShortAuthor #5
\EndTitle
{\setbox\TheTitle=\vbox{\baselineskip=20pt\let\par=\cr\obeylines%
\halign{\centerline{\Caps##}\cr\noalign{\medskip}\cr#1\cr}}%
	\copy\TheTitle\TSkip\TSkip%
\def\next{#2}\ifx\next\empty\gdef\STitle{#1}\else\gdef\STitle{#2}\fi%
\def\next{#3}\ifx\next\empty%
    \else\setbox\TheTitle=\vbox{\baselineskip=20pt\let\par=\cr\obeylines%
    \halign{\centerline{\caps##} #3\cr}}\copy\TheTitle\TSkip\TSkip\fi%
\centerline{\caps #4}\TSkip\TSkip%
\def\next{#5}\ifx\next\empty\gdef\SAuthor{#4}\else\gdef\SAuthor{#5}\fi%
\ifx\received\empty\relax
    \else\centerline{\eightrm \received}\fi%
\ifx\revised\empty\TSkip%
    \else\centerline{\eightrm  \revised}\TSkip\fi%
\ifx\communicated\empty\relax
    \else\centerline{\eightrm \communicated}\fi\TSkip\TSkip%
\catcode'015=5}}\def\Title{\obeylines\GetTitle}
\def\Abstract{\begingroup\narrower
    \parskip=\medskipamount\parindent=0pt{\caps Abstract. }}
\def\EndAbstract{\par\endgroup\TSkip}
\long\def\MSC#1\EndMSC{\def\arg{#1}\ifx\arg\empty\relax\else
     {\par\narrower\noindent%
     2010 Mathematics Subject Classification: #1\par}\fi}
\long\def\KEY#1\EndKEY{\def\arg{#1}\ifx\arg\empty\relax\else
	{\par\narrower\noindent Keywords and Phrases: #1\par}\fi\TSkip}
\newbox\TheAdd\def\Addresses{\vfill\copy\TheAdd\vfill
    \ifodd\number\lastpage\vfill\eject\phantom{.}\vfill\eject\fi}
{\obeylines\gdef\GetAddress #1
\Address #2
\Address #3
\Address #4
\EndAddress
{\def\xs{5.8truecm}\parindent=0pt
\setbox0=\vtop{{\obeylines\hsize=\xs#1\par}}\def\next{#2}
\ifx\next\empty 
     \setbox\TheAdd=\hbox to\hsize{\hfill\copy0\hfill}
\else\setbox1=\vtop{{\obeylines\hsize=\xs#2\par}}\def\next{#3}
\ifx\next\empty 
     \setbox\TheAdd=\hbox to\hsize{\hfill\copy0\hfill\copy1\hfill}
\else\setbox2=\vtop{{\obeylines\hsize=\xs#3\par}}\def\next{#4}
\ifx\next\empty\ 
     \setbox\TheAdd=\vtop{\hbox to\hsize{\hfill\copy0\hfill\copy1\hfill}
                \vskip20pt\hbox to\hsize{\hfill\copy2\hfill}}
\else\setbox3=\vtop{{\obeylines\hsize=\xs#4\par}}
     \setbox\TheAdd=\vtop{\hbox to\hsize{\hfill\copy0\hfill\copy1\hfill}
	        \vskip20pt\hbox to\hsize{\hfill\copy2\hfill\copy3\hfill}}
\fi\fi\fi\catcode'015=5}}\gdef\Address{\obeylines\GetAddress}
\def\LOCAL{\jobname.files}
\newtheorem{theorem}{Theorem}[section]
\newtheorem{corollary}[theorem]{Corollary}
\newtheorem{lemma}[theorem]{Lemma}
\newtheorem{proposition}[theorem]{Proposition}
\numberwithin{equation}{section}
\newcommand{\A}{\mathbf{A}}
\newcommand{\Af}{\mathbf{A}_{\fin}}
\newcommand{\bs} {\backslash}
\newcommand{\C}{\mathbf{C}}
\newcommand{\cH}{\mathcal{H}}
\newcommand{\comment}[1]{}
\newcommand{\ds}{\displaystyle}
\newcommand{\e}{\varepsilon}
\newcommand{\fin}{\operatorname{fin}}
\newcommand{\ff}{f_{\fin}}
\newcommand{\Ghat}{\widehat{\olG}}
\newcommand{\GL}{\operatorname{GL}}
\newcommand{\GSp}{\operatorname{GSp}}
\newcommand{\Hom}{\operatorname{Hom}}
\newcommand{\Hhat}{\widehat{H}}
\renewcommand{\k}{\mathtt{k}}
\newcommand{\lie}{\mathfrak{g}}
\newcommand{\limN}{\lim_{N\to\infty\atop{(N,\Sb)=1}}}
\newcommand{\lu}{\underline{\lambda}}
\newcommand{\Lie}{\operatorname{Lie}}
\newcommand{\mat}[4]{\begin{pmatrix} {#1} & {#2} \\ {#3} & {#4} \end{pmatrix}}
\newcommand{\meas}{\operatorname{meas}}
\newcommand{\ol}{\overline}
\newcommand{\olG}{\mathbb{G}}
\newcommand{\olT}{\mathbb{T}}
\newcommand{\ord}{\operatorname{ord}}
\newcommand{\PGSp}{\operatorname{PGSp}}
\newcommand{\PGL}{\operatorname{PGL}}
\newcommand{\Q}{\mathbf{Q}}
\newcommand{\Qf}{\mathcal{Q}}
\newcommand{\R}{\mathbf{R}}
\newcommand{\sg}[1]{\left<{#1}\right>}
\newcommand{\smat}[4]{\bigl(\begin{smallmatrix}{#1}&{#2}\\{#3}&{#4}\end{smallmatrix}\bigr )}
\newcommand{\Sb}{\mathbf{S}}
\newcommand{\SL}{\operatorname{SL}}
\newcommand{\SO}{\operatorname{SO}}
\newcommand{\Sp}{\operatorname{Sp}}
\newcommand{\Spin}{\operatorname{Spin}}
\newcommand{\SU}{\operatorname{SU}}
\newcommand{\supp}{\operatorname{supp}}
\newcommand{\tf}{\tilde{f}}
\newcommand{\tA}{{}^t\!A}
\newcommand{\Tc}{\That_c}
\newcommand{\That}{\widehat{\olT}}
\newcommand{\tr}{\operatorname{tr}}
\newcommand{\w}{\omega}
\newcommand{\X}{\mathfrak{X}_{\Sb}}
\newcommand{\Z}{\mathbf{Z}}
\newcommand{\Zhat}{\widehat{\Z}}
\newcommand{\ignore}[1]{}
\renewcommand{\subset}{\subseteq}
\newcommand{\diag}{\operatorname{diag}}
\newcommand{\h}{\mathfrak{h}}
\newcommand{\Ind}{\operatorname{Ind}}
\newcommand{\klie}{\mathfrak{k}}
\newcommand{\g}{\gamma}
\newcommand{\plie}{\mathfrak{p}}
\newcommand{\sgn}{\operatorname{sgn}}
\renewcommand{\t}{\,{}^t\!}
\newcommand{\U}{\operatorname{U}}
\def\@tocline#1#2#3#4#5#6#7{\relax
  \ifnum #1>\c@tocdepth 
  \else
    \par \addpenalty\@secpenalty\addvspace{#2}%
    \begingroup \hyphenpenalty\@M
    \@ifempty{#4}{%
      \@tempdima\csname r@tocindent\number#1\endcsname\relax
    }{%
      \@tempdima#4\relax
    }%
    \parindent\z@ \leftskip#3\relax \advance\leftskip\@tempdima\relax
    \rightskip\@pnumwidth plus4em \parfillskip-\@pnumwidth
    #5\leavevmode\hskip-\@tempdima
      \ifcase #1
       \or\or \hskip 1em \or \hskip 2em \else \hskip 3em \fi%
      #6\nobreak\relax
    \dotfill\hbox to\@pnumwidth{\@tocpagenum{#7}}\par
    \nobreak
    \endgroup
  \fi}
\begin{document}
\Title On the Distribution of Satake Parameters for Siegel Modular Forms
\ShortTitle Distribution of Siegel Modular Satake Parameters
\SubTitle
\Author Andrew Knightly and Charles Li
\ShortAuthor
\EndTitle
\Abstract
We prove a harmonically weighted equidistribution result for the $p$-th Satake parameters of the
  family of automorphic cuspidal representations of $\PGSp(2n)$ of fixed weight $\k$
 and prime-to-$p$ level $N\to \infty$.  The main tool is a new asymptotic Petersson
 formula for $\PGSp(2n)$ in the level aspect.
 \EndAbstract
\MSC
11F46 (11F25, 11F30, 11F70, 11F72)
\EndMSC
\KEY
Siegel modular forms, Petersson formula, Satake parameters
\EndKEY
\Address
Andrew Knightly
Department of Mathematics
\ \& Statistics
University of Maine
Neville Hall
Orono, ME  04469-5752
USA
knightly@math.umaine.edu
\Address
Charles Li
Department of Mathematics
The Chinese University of Hong Kong
Shatin
Hong Kong
charlesli@cuhk.edu.hk
\Address
\Address
\EndAddress

\setcounter{tocdepth}{2}

\tableofcontents

\setlength{\parskip}{.1cm}
\section{Introduction}

For a split reductive algebraic group $G$ over a number field $F$,
let $\mathcal{A}(G)$ denote the set of cuspidal automorphic representations of $G(\A_F)$.
Each element of  $\mathcal{A}(G)$ factorizes as a restricted tensor product
  $\pi=\otimes_v \pi_v$
of irreducible representations of the local groups $G(F_v)$.
    If $v$ is a nonarchimedean place of $F$, then the unramified irreducible representations
  of $G(F_v)$ are parametrized (via the Satake isomorphism) by the semisimple conjugacy classes in
  the complex dual group $\widehat{G}=\widehat{G}(\C)$.
   When  $\pi_v$ is unramified, we let
\[ t_{\pi_v}\in \widehat{T}/W\]
  denote the associated Satake parameter.
   Here, $\widehat{T}=\widehat{T}(\C)$ is a split maximal torus of $\widehat{G}$ and
   $W=N_G(T)/T$ is the Weyl group of $G$.

It is of great interest to understand the distribution of the points $t_{\pi_v}$, possibly
  with weights, as $\pi$ and/or $v$ vary.
  If $\pi$ is fixed and $v$ varies, then according to the general Sato-Tate
  conjecture, the points are expected to be
  equidistributed relative to some naturally defined measure
  $\mu_{ST}$ with support in the maximal compact subgroup of $\widehat{T}$.
The support condition reflects the presumed predominance of
  representations satisfying the Ramanujan conjecture.

  Many people have considered the analogous ``vertical" question of fixing $v$ and
  varying $\pi$ in a family, starting with Birch's 1968 investigation of the
  problem for elliptic curves over $\Z/p\Z$, \cite{Bi}.  A table summarizing
  the numerous subsequent vertical equidistribution results for $\GL(2)$ families
  is given in \cite{KR}.
  Extending work of Serre and others on the case of classical holomorphic cusp forms,
  Shin proved generally that if $G$ admits discrete series over $\R$, then
when the $t_{\pi_v}$ are unweighted, the relevant measure
  (for many natural families) is the
  Plancherel measure at $v$, \cite{Sh}.
  Shin and Templier also obtained a quantitative version of this result
  with error bounds, \cite{ST}.  Applications include: (i) a
  diagonal hybrid where the size of the
  family and the place $v$ both tend to infinity (the relevant measure being
  Sato-Tate rather than Plancherel), and (ii) a determination of the distribution of
  the low-lying zeroes of certain families of automorphic $L$-functions for $G$.
  Matz and Templier have recently treated
  the case of $\GL(n)$, \cite{MT}.
  We refer to Sarnak, Shin and Templier \cite{SST} for a precise
  formulation of various Sato-Tate problems and related topics.

  When the Satake parameters at a fixed place $v$ are given the harmonic weights
  that arise naturally in the Petersson/Kuznetsov
  trace formula, it has been shown for many $\GL(2)$ and $\GL(3)$ families
  that they exhibit equidistribution relative to the Sato-Tate measure itself
  rather than the Plancherel measure (\cite{Br}, \cite{Li}, \cite{ftf}, \cite{BM},
  \cite{BBR}, \cite{Z}).
In this paper we consider the distribution of harmonically weighted Satake
  parameters at a fixed place $p$
  for the group $G=\GSp(2n)$.  For simplicity we work over $\Q$ and assume trivial
  central character.  We consider cuspidal representations $\pi$ of level $N$ for
  which $\pi_\infty$ is a fixed holomorphic discrete series representation of
  weight $\k>2n$.  We weight each
  Satake parameter $t_{\pi_p}$ by the globally defined value
\begin{equation}\label{wpidef}
w_\pi =\sum_{\varphi\in E_{\k}(\pi)}\frac{|c_{\sigma}(\varphi)|^2}{\|\varphi\|^2},
\end{equation}
where $E_{\k}(\pi)$ is a finite orthogonal set of cuspidal Hecke eigenforms
  giving rise to $\pi$, and
  $c_{\sigma}(\varphi)$ denotes a Fourier coefficient, defined in \eqref{csigma}.
 Weighted in this way, we prove that the parameters become equidistributed
  relative to a certain probability measure $\mu$ as $N\to\infty$ (see Theorem \ref{equi}).
  In contrast to the $\GL(2)$ case, the measure depends on $p$.
Subject to a natural hypothesis on the
  growth of the geometric side of the trace formula (which holds at least when $n=2$),
 in Theorem \ref{musum} we relate $\mu$ to the Sato-Tate measure.
  In particular, the hypothesis implies that $\mu$ is supported on the tempered spectrum,
  and tends to the Sato-Tate measure as $p\to\infty$.

A very similar equidistribution problem has been studied already in the case
  $n=2$ by Kowalski, Saha, and Tsimerman
  (\cite{KST}), who fix the level $N=1$ and let the archimedean parameter
  $\k \to \infty$.  Using a formula of Sugano, they were able to
  form a connection between Satake parameters and Fourier coefficients, the latter
  of which they control with the intricate Petersson formula for $\GSp(4)$ due
  to Kitaoka \cite{Ki}.
  Their weights involve certain linear combinations of Fourier coefficients and
  depend on the choice of a class group character. In a special case they
  coincide with the ones given in \eqref{wpidef} with $\sigma=I_n$.
  Their methods have been adapted to treat the
  case of higher level $N\to\infty$ by M.\,Dickson \cite{D}.
  Kim, Wakatsuki and Yamauchi have also investigated the equidistribution
  problem for $\GSp(4)$, via Arthur's invariant trace formula \cite{KWY}.
  In all of these works, a quantitative equidistribution statement is proven,
  with application to the distribution of low-lying zeros of $L$-functions.

The key technical tools used in \cite{KST}, namely,
   Kitaoka's formula and Sugano's formula, are not yet available when $n>2$.
  Nevertheless, we can apply two simple ideas to treat the higher rank case.
  The first is to use a Hecke operator as a test function
  in the relative trace formula to derive a Petersson formula for
  $\GSp(2n)$ whose spectral side involves two Fourier coefficients
  (as usual) with the additional inclusion of a Satake parameter.
  In this way we can access the Satake parameters directly
  without the use of Sugano's formula.
  In order to project onto the holomorphic cusp forms of weight $\k$, we
   use a certain matrix coefficient of the weight $\k$ holomorphic discrete series
   $\pi_\k$ of $\GSp(2n,\R)$
  as the archimedean component of our test function.  This function is computed
  explicitly in the Appendix (Theorem \ref{mcthm}).

  The second idea is to take the limit of the kernel function before
  integrating.
 This allows us to avoid computing or estimating all but
  a few of the orbital integrals
  that show up on the geometric side.
  The result is an asymptotic Petersson formula
 with only a finite sum on the geometric side
   (Theorem \ref{A}).\footnote[2]{After this paper was written,
  we became aware of \cite{KST2}, in which a similar idea is applied to
  classical Siegel Poincar\'e series to obtain an analog of Theorem \ref{S=1}
  for $\k\to\infty$.}
In the simplest case where the Hecke operator is trivial, it is given as
  follows (see \S \ref{S1}, where notation is explained in detail).

\begin{theorem} \label{S=1}
 Let $\mathcal{B}_\k(N)$ be an orthogonal basis of the space of degree $n$
   Siegel cusp forms of weight $\k>2n$ with $2|n\k$, and level group
$\Gamma_0(N)=\{\smat ABCD\in \Sp_{2n}(\Z)|\, C\equiv O\mod N\}$.
For symmetric positive-definite half-integral matrices $\sigma_1,\sigma_2$,
\begin{align}\label{S1eq}\lim_{N\to\infty}&\frac1{\psi(N)}\sum_{F\in \mathcal{B}_\k(N)}
  \frac{a_{\sigma_1}(F) \ol{a_{\sigma_2}(F)}} {\|F\|^2}
=\delta_\k(\sigma_1,\sigma_2)c_{n\k\sigma_1},\end{align}
where $a_{\sigma_j}(F)$ are Fourier coefficients, $\psi(N)=[\Gamma_0(1):\Gamma_0(N)]$,
\[c_{n\k\sigma_1}= \frac{(\det\sigma_1)^{\k-(n+1)/2}}
{\pi^{n(n-1)/4}(4\pi)^{n(n+1)/2-n\k}\prod_{j=1}^n\Gamma(\k-\frac{n+j}2)},\]
   and
\begin{equation}\label{deltak}
\delta_\k(\sigma_1,\sigma_2)=
  \sum_{A\in \GL_n(\Z)/\{\pm I_n\}\atop \t A\sigma_1A=\sigma_2}\det(A)^{\k}.
\end{equation}
\end{theorem}
\noindent{\em Remarks:}

 (1) When $\k$ is even,
$\delta_\k(\sigma_1,\sigma_2)=\delta_0(\sigma_1,\sigma_2)=
\#\{A\in\GL_n(\Z)/\{\pm I_n\}|\, \t A\sigma_1A=\sigma_2\}$.  When $\k$ is odd, it may
  happen that $\delta_\k(\sigma_1,\sigma_2)=0$ even when $\delta_0(\sigma_1,\sigma_2)>0$
  (for example, if $\sigma_1,\sigma_2$ are diagonal).  In such cases, the spectral side
  also vanishes, since $a_{\sigma_j}(F)=0$ by \cite[p.\ 45]{Kl}.
  We note that $\delta_\k(\sigma_1,\sigma_2)$ does not always vanish when $\k$ is odd.

(2) When $n=2$ and $\k$ is even, the above result is shown in \cite{CKM}, \cite{D}.
(In \cite[Theorem 1.1, Remark 1.4]{CKM}, it is incorrectly asserted that \eqref{S1eq} holds when $n=2$ and
  $\k$ is odd, but with $\delta_0(\sigma_1,\sigma_2)$ in place of $\delta_\k(\sigma_1,
  \sigma_2)$.  This is incompatible with the observations in our first remark.)

(3) $c_{n\k\sigma_1}$
  is the constant of proportionality between $a_{\sigma_1}(F)$
  and a suitably normalized inner product of $F$ with the
  $\sigma_1$-Poincar\'e series of weight $\k$ (\cite{M}, \cite[p.\ 90]{Kl}, \cite[Lemma 6.2]{D}).

(4) Although we have highlighted the above special case, the main focus of this paper
  is on local Satake parameters, which are absent in the above theorem.\\

Petersson/Kuznetsov trace formulas play a fundamental role in the study
  of automorphic $L$-functions. There are well-established methods for $\GL(2)$,
  and to a lesser extent $\GL(3)$, but applications to other groups are rare.
  For a recent example, Blomer has used Kitaoka's formula to compute first
  and second moments of
  spinor $L$-functions of $\GSp(4)$ of full level and large weight,
  with power-saving error term, \cite{Bl}.  Waibel subsequently treated the case
  of large prime level, \cite{W}.  This requires a close analysis of the generalized
  Kloosterman sums appearing in the off-diagonal terms.
  It would be of great interest to extend Kitaoka's formula from degree 2 to degree $n$.
  The machinery we develop here can form the starting point for such
  a generalization.
  Although at present there is no quantification of the error term for finite $N$ if
  $n>2$ (the $n=2$ case is treated in Appendix \ref{AB}),
  the asymptotic formula is sufficient for obtaining the
  equidistribution result.

A natural question is whether the same method can be used to obtain
    the weighted equidistribution of Satake parameters in the case of $\k\to\infty$.
    Although we certainly expect such a result to hold, it does not seem possible to show it
    by the above method without a more detailed consideration of the off-diagonal terms.
    This is discussed in a remark after the proof of Proposition \ref{Ilim}.

We close this introduction with some additional comments comparing our results and methods
  to those in \cite{KST}.  If $F$ is a Siegel cusp form of degree $n=2$ which is
  unramified at $p$, it determines an unramified local representation $\pi_p$ of $\GSp(4)$.
  Sugano's formula expresses the value of a Laurent polynomial $U_p^{l,m}$ evaluated at
  the Satake parameters of $\pi_p$ as a value of the spherical function in the Bessel model
  for $\pi_p$. The latter can in turn be
  expressed as a certain linear combination of Fourier coefficients of $F$,
  with coefficients involving a character of a class group.
  This is the source of the weights in the equidistribution result of \cite{KST}.
    This method has the advantage that the $U^{l,m}_p$ were shown by
    Furusawa and Shalika to be orthonormal with respect to a certain explicit local measure $\eta_p$
    on the space $\mathfrak{X}_p$ (defined near \eqref{tpi} below) containing the Satake parameters.
    The latter fact leads directly to the weighted equidistribution result, since a refinement
    of Kitaoka's formula has the main term $\delta(l,m)=\sg{U^{l,m}_p,U^{0,0}_p}
    =\int_{\mathfrak{X}_p} U^{l,m}_pd\eta_p$.
    By contrast, the measure $\mu$ in our Theorem \ref{equi} is not given explicitly, because
    we have not found an orthonormal class of functions for which the main term of the trace
    formula has such a simple form.

\vskip .2cm
\noindent{\bf Acknowledgements:}  We would like to thank the referee, whose
detailed comments led to significant improvements in the exposition.
  We thank Yuk-Kam Lau, Nigel Pitt, Abhishek Saha, and
Nicolas Templier for helpful discussions.
We are also grateful for financial support from the University of Maine Office of the
Vice President for Research and from the Simons Foundation. The first author would
  also like to thank the Department of Mathematics at the Chinese University of
  Hong Kong for its hospitality.

{This work was partially supported by a grant from the Simons Foundation
  (\#317659 to the first author).}

\section{The Satake transform}

  Here we recall some basic background about the Satake transform.
  References include \cite{Gr} and \cite{Sha}.
  For notation in this section, let $G$ be a split group defined over $\Q_p$, and
  let $T$ be a (split) maximal torus of $G$ defined over $\Q_p$ and
   contained in a Borel subgroup $B=TN$ with $N$ unipotent.
  Let $X^*(T)=\Hom(T,\GL_1)$ denote the lattice of algebraic characters of $T$, and
  $X_*(T)=\Hom(\GL_1,T)$ the cocharacter lattice.

  For each prime $p$, let $K_p=G(\Z_p)$, which is a maximal
  compact subgroup of $G_p=G(\Q_p)$.
   Let $\mathcal{H}(G_p,K_p)$ be the Hecke algebra of
  locally constant compactly supported complex-valued bi-$K_p$-invariant functions
  on $G_p$.
 The Satake transform of an element $f \in \mathcal{H}(G_p,K_p)$
 is the function on $T(\Z_p) \bs T(\Q_p)$ given by
 \begin{equation}\label{Sf}
 \mathcal Sf(t) = \delta(t)^{1/2} \int_{N(\Q_p)} f(tn) dn.
\end{equation}
Here $\delta(t)=\bigl|\det \operatorname{Ad}(t)|_{\Lie(N_p)}\bigr|$ is the modular function for
   $B_p$.\footnote[3]{Later on we will take $G=\GSp(2n)$ and $B$ the
  Borel subgroup determined by the set of positive roots chosen
  in \S \ref{root}.  Then $\delta(t)^{1/2}=p^{-\sg{\lambda,\rho}}$
if $t=\lambda(p)$ for $\lambda\in X_*(T)$ and $\rho$ is given by \eqref{rho}.}
By way of motivation for \eqref{Sf}, suppose
\[\pi_\chi=\Ind_{B_p}^{G_p}(\chi \delta^{1/2})\]
 is an
  unramified representation of $G_p$, with nonzero $K_p$-invariant vector $\phi$.
  Then for $f\in \mathcal{H}(G_p,K_p)$,
 $\phi$ is an eigenfunction of the operator $\pi_\chi(f)$, with eigenvalue
\begin{equation}\label{wchif}
\w_\chi(f)=\int_{T(\Q_p)}\mathcal{S}f(t)\chi(t)dt.
\end{equation}

The Satake transform is a $\C$-algebra isomorphism
\begin{equation}\label{S}
\mathcal{S}: \mathcal{H}(G_p, K_p) \longrightarrow \mathcal{H}(T(\Q_p),T(\Z_p))^W,
\end{equation}
where the latter denotes the elements
  which are fixed by the Weyl group $W$.
Let $\widehat{T}=\Hom(X_*(T),\C)$ be the dual group of $T$. It satisfies
\[X^*(T)\cong X_*(\widehat{T}), \quad X_*(T)\cong X^*(\widehat{T}).\]
Using the algebra isomorphisms
\begin{equation}\label{3cong}
\mathcal{H}(T(\Q_p),T(\Z_p))^W  \cong
  \C[X_*(T)]^W\cong \C[X^*(\widehat T)]^W,
\end{equation}
 we may view $\mathcal{S}f$ as a function on
  any of these three spaces.  We explain this in some more detail.
 The first isomorphism in \eqref{3cong} arises by identifying an element
 of $T(\Q_p)/T(\Z_p)$ with a tuple of integer powers of $p$, which is
  of the form $\lambda(p)$ for a unique $\lambda\in X_*(T)$.
Thus if we write
\begin{equation} \label{Sf1}
\mathcal Sf = \sum_{t \in T(\Q_p) / T(\Z_p)}
  a_t C_{t}\in \mathcal{H}(T(\Q_p),T(\Z_p))
\end{equation}
where $a_t\in \C$ is nonzero for at most finitely many $t$,
  and $C_t$ is the characteristic function of the coset $t$,
  we can make the identification
\[\mathcal Sf
    = \sum_{\lambda \in X_*(T)} a_\lambda \lambda\in \C[X_*(T)],
\]
    where $a_\lambda = a_{\lambda(p)}$ from \eqref{Sf1}.
Fix an isomorphism $X_*(T)\cong X^*(\widehat T)$ and denote it by $\lambda\mapsto\hat{\lambda}$.
  Then we may in turn identify $\mathcal{S}f$ with the function
       \begin{equation} \label{Sf3}
\mathcal Sf = \sum_{\hat \lambda \in X^*(\widehat T)} a_{\hat{\lambda}} \hat{\lambda}
  \in \C[X^*(\widehat{T})],
       \end{equation}
    where $a_{\hat \lambda}=a_\lambda$.

An unramified character $\chi$ of $T(\Q_p)$ as in \eqref{wchif} can be identified
  with the Satake parameter $t_\chi\in \widehat{T}(\C)$ determined by
\begin{equation}\label{lambdahat}
\chi(\lambda(p))=\hat{\lambda}(t_\chi) \qquad\text{for all } \lambda\in X_*(T)
\end{equation}
(cf. \cite[p. 134, Eq. (3)]{Ca}).
  With this notation, using \eqref{Sf1} and \eqref{Sf3}, and taking $\meas(T(\Z_p))=1$,
  \eqref{wchif} becomes
\begin{equation}\label{wf}
\w_\chi(f)=\sum_\lambda a_\lambda \chi(\lambda(p))=\sum_{\hat{\lambda}}
  a_{\hat{\lambda}}\hat{\lambda}(t_\chi) = \mathcal{S}f(t_\chi).
\end{equation}
When $\pi=\pi_\chi$ is given, we write $t_\pi=t_\chi$  for the Satake parameter
  of $\pi$.

\begin{proposition}\label{Sfbar}
Viewing $\mathcal{S}f$ as a function on $\widehat{T}$ as in \eqref{Sf3}, we have
\[\overline{\mathcal{S}f}=\mathcal{S}f^*,\]
where $f^*(g)=\ol{f(g^{-1})}$.
\end{proposition}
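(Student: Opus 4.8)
The plan is to establish, directly from \eqref{Sf}, the pointwise identity
\[ \mathcal{S}f^*(t) = \ol{\mathcal{S}f(t^{-1})} \qquad (t\in T(\Q_p)), \]
and then to translate it into the claimed statement on $\C[X^*(\widehat T)]$ using the identifications in \eqref{3cong} and \eqref{lambdahat}.

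For the pointwise identity I would start from
\[ \mathcal{S}f^*(t) = \delta(t)^{1/2}\int_{N(\Q_p)} f^*(tn)\,dn = \delta(t)^{1/2}\int_{N(\Q_p)} \ol{f(n^{-1}t^{-1})}\,dn \]
and write $n^{-1}t^{-1} = t^{-1}(tn^{-1}t^{-1})$. Substituting $n = t^{-1}m^{-1}t$ turns $tn^{-1}t^{-1}$ into $m$; since $N(\Q_p)$ is unimodular, inversion preserves Haar measure, while conjugation by $t^{-1}$ scales it by $|\det\operatorname{Ad}(t^{-1})|_{\Lie(N_p)}|=\delta(t)^{-1}$, so $dn=\delta(t)^{-1}\,dm$. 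Hence
\[ \mathcal{S}f^*(t) = \delta(t)^{-1/2}\int_{N(\Q_p)} \ol{f(t^{-1}m)}\,dm = \ol{\,\delta(t^{-1})^{1/2}\!\int_{N(\Q_p)} f(t^{-1}m)\,dm\,} = \ol{\mathcal{S}f(t^{-1})}, \]
using that $\delta(t^{-1})^{1/2}=\delta(t)^{-1/2}$ is a positive real. (Alternatively, for unitary unramified $\chi$ one has $\pi_\chi(f^*)=\pi_\chi(f)^*$, so $\w_\chi(f^*)=\ol{\w_\chi(f)}$, which by \eqref{wf} is this identity evaluated over a Zariski-dense set of Satake parameters.)

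To conclude, write $\mathcal{S}f=\sum_{\lambda\in X_*(T)}a_\lambda\lambda$ as in \eqref{Sf1}, where $a_\lambda$ is the value of $\mathcal{S}f$ on the coset $\lambda(p)T(\Z_p)$. The identity above says the coefficient of $\lambda$ in $\mathcal{S}f^*$ is $\ol{\mathcal{S}f(\lambda(p)^{-1})}=\ol{a_{-\lambda}}$, i.e. $\mathcal{S}f^*=\sum_\lambda\ol{a_\lambda}(-\lambda)$ in $\C[X_*(T)]$. Carrying this over via the group isomorphism $X_*(T)\cong X^*(\widehat T)$, $\lambda\mapsto\hat\lambda$ (which sends $-\lambda$ to the inverse character $-\hat\lambda$), yields $\mathcal{S}f^*=\sum_{\hat\lambda}\ol{a_{\hat\lambda}}(-\hat\lambda)$ in $\C[X^*(\widehat T)]$. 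Evaluated at $t$ in the compact real torus $\widehat T_c$, where $t^{-1}=\bar t$ and $\hat\lambda(t)^{-1}=\ol{\hat\lambda(t)}$, this reads $\mathcal{S}f^*(t)=\ol{\sum_{\hat\lambda}a_{\hat\lambda}\hat\lambda(t)}=\ol{\mathcal{S}f(t)}$; since $\widehat T_c$ is Zariski dense and both sides are Laurent polynomials, $\ol{\mathcal{S}f}=\mathcal{S}f^*$ as functions on $\widehat T$.

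The only real friction I anticipate is bookkeeping: getting the exponent of $\delta$ right in the change of variables (this rests on unimodularity of $N(\Q_p)$ and on how Haar measure there scales under conjugation by $t$), and being precise about the meaning of "$\ol{\mathcal{S}f}$ viewed on $\widehat T$"—conjugation of values on $\widehat T_c$, equivalently conjugation of coefficients together with inversion of monomials on the dual side. Neither point is deep once $\mathcal{S}f^*(t)=\ol{\mathcal{S}f(t^{-1})}$ is available.
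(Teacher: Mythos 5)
Your proof is correct, but it proceeds by a genuinely different route from the paper's. The paper's argument is a one-liner on the spectral side: it fixes the unramified character $\chi$ corresponding to $t$, invokes the relation $\pi_\chi(f)^*=\pi_\chi(f^*)$ for the spherical unit vector $\phi$, and concludes $\mathcal{S}f(t)=\langle\pi_\chi(f)\phi,\phi\rangle=\langle\phi,\pi_\chi(f^*)\phi\rangle=\overline{\mathcal{S}f^*(t)}$. This is essentially your parenthetical alternative. Your main line of attack is instead an honest computation on the geometric side of the Satake integral: establish $\mathcal{S}f^*(t)=\overline{\mathcal{S}f(t^{-1})}$ on $T(\Q_p)$ via unimodularity of $N$ and the Jacobian $\delta(t)^{-1}$ from conjugation, read off that $\mathcal{S}f^*=\sum_{\hat\lambda}\overline{a_{\hat\lambda}}\,\hat\lambda^{-1}$ as a Laurent polynomial, and observe that on $\widehat T_c$ this is the pointwise conjugate of $\mathcal{S}f$. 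Both arguments are valid, and both ultimately rely on the same clarification you flag: "$\overline{\mathcal{S}f}$ on $\widehat T$" means the element of $\C[X^*(\widehat T)]$ that matches pointwise conjugation on the compact torus (equivalently, conjugate the coefficients and invert the monomials), since literal pointwise conjugation is anti-holomorphic, not a Laurent polynomial. Your route has the mild advantage of making no unitarity assumption on $\pi_\chi$ — the paper's step $\pi_\chi(f)^*=\pi_\chi(f^*)$ holds for unitarizable $\pi_\chi$, i.e. for $t$ in the unitary locus of $\widehat T$, and then extends by Zariski density exactly as in your final paragraph; the paper leaves this implicit. The paper's route has the advantage of brevity and of making immediately transparent why the conclusion is exactly what is needed for Stone--Weierstrass (closure under conjugation of the restricted algebra on $\Omega$, where the points are genuine unitary Satake parameters).
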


\begin{proof}
For $t\in \widehat{T}$, we need to show that ${\mathcal{S}f(t)}=\overline{\mathcal{S}f^*(t)}$.
  As in \eqref{lambdahat},
there exists a unique unramified character $\chi$ of $T(\Q_p)$
  such that $\chi(\lambda(p))=\hat{\lambda}(t)$ for all $\lambda\in X_*(T)$.
   Recalling that $\pi_\chi(f)^*=\pi_\chi(f^*)$,
  by \eqref{wf} we have, for the spherical unit vector $\phi\in \pi_\chi$,
\[\mathcal{S}f(t)=\w_\chi(f)=\sg{\pi_\chi(f)\phi,\phi}=\sg{\phi,\pi_\chi(f^*)\phi}
  =\overline{\w_\chi(f^*)}=\ol{\mathcal{S}f^*(t)}.\qedhere\]
\end{proof}

\section{The symplectic group}

\subsection{Definition}
Henceforth, we will denote by $G$ the algebraic group $\GSp_{2n}$ defined
  as follows.
For any commutative ring $R$, let $M_{2n}(R)$ be the set of $2n \times 2n$ matrices
   with entries in $R$.  Letting $O$ denote the zero-matrix of suitable dimension,
  and $I_n$ denote the $n\times n$ identity matrix,
  define
\[
    J = \mat{O}{I_n}{-I_n}{O},
\]
\[
    \Sp_{2n}(R) = \{M \in M_{2n}(R) |\, \t MJM = J \},
\]
\[
    \GSp_{2n}(R) = \{M \in M_{2n}(R)|\, \t MJM = r(M)J \text{ and } \,r(M)
  \text{ is a unit in } R \}.
\]
Thus a matrix $M=\smat ABCD$ belongs to $\GSp_{2n}(R)$ if and only if there
  exists a similitude $r(M)\in R^*$ such that
\begin{equation} \label{ac}
   \t AC = \t\, CA, \quad \t BD = \t DB, \quad \t AD-{}^tCB = r(M) I_n.
\end{equation}
Taking inverses in ${}^tMJM=r(M)J$ shows that
  $\t M=\smat {\t A}{\t\, C}{\t B}{\t D}\in \GSp_{2n}(R)$, and the above relations
  applied to this matrix give
\begin{equation} \label{ac2}
   A\t B=B\t A,\quad C\t D=D\t\, C, \quad A\t D-B\t\, C=r(M)I_n.
\end{equation}
Note that $\Sp_{2n}(R) = \{M \in \GSp_{2n}(R) |\, r(M) = 1\}$.
Define
\[\olG=\PGSp_{2n}=\GSp_{2n}/Z,\]
  where $Z$ is the center (the set of scalar matrices).

   Let $S_n(R)$ be the set of $n \times n$ symmetric
   matrices over $R$.
  The Siegel upper half space is the following set of complex symmetric
  matrices
\begin{equation}\label{Hn}
    \cH_{n} = \{X+iY \in S_n(\C) |\, X,Y\in S_n(\R), Y > 0 \},
\end{equation}
where $Y > 0$ means that $Y$ is positive definite.
  It is a complex vector space of dimension $\tfrac{n(n+1)}2$.
Letting
\[\GSp_{2n}(\R)^+ =
   \{ M \in \GSp_{2n}(\R) |\, r(M) > 0 \},\]
  there is a transitive action of $\GSp_{2n}(\R)^+$ on $\cH_n$ given by
\[
    M\cdot\mathfrak{Z} = (A\mathfrak{Z}+B)(C\mathfrak{Z}+D)^{-1} \qquad(\mathfrak{Z}\in\cH_n).
\]
The stabilizer in $\Sp_{2n}(\R)$ of the element $iI_n\in \cH_n$ is the compact subgroup
\begin{equation}\label{K}
    K_\infty = \left\{\mat{A}{B}{-B}{A}\in \Sp_{2n}(\R) |\, A+iB \in \U(n) \right\},
\end{equation}
where $\U(n)$ is the group of $n\times n$ complex unitary matrices $X$ (so
  $X^{-1}={}^t\ol{X}$).
Define the Siegel parabolic subgroup
\begin{align}\notag P(\R)&=\left\{\mat ABOD\in \GSp_{2n}(\R)\right\}\\
\label{B}&=\left\{\mat A{}{}{r\, \t A^{-1}}
  \mat IS{}I|\, A\in \GL_n(\R), r\in \R^*, S\in S_n(\R)\right\}
\end{align}
(where $O$ denotes the $n\times n$ zero matrix), and set $\mathbb{P} =P/Z$.
We recall the decomposition
\[\GSp_{2n}(\R)=P(\R)K_\infty.\]

For a prime $p$, let
\[K_p=\PGSp_{2n}(\Z_p).\]
Haar measure on $\PGSp_{2n}(\Q_p)$ will be normalized so that $\meas(K_p)=1$.
For an integer $N>0$, set
\begin{equation}\label{K0N}
   K_0(N)_p = \left\{\mat{A}{B}{C}{D} \in K_p|\, C \equiv O \mod N\Z_p \right\},
\end{equation}
\[
    K_0(N) = \prod_{p < \infty} K_0(N)_p.
\]

\subsection{Root Data}\label{root}

We review some standard material to fix notation and terminology that will be used
  in the sequel.
Let $F$ be an algebraically closed field.
In this subsection we write $G$ for $G(F)$, and similarly for the other algebraic
  groups considered.
In $G$, the diagonal subgroup
\begin{equation}\label{T}
T=\{t=\diag(a_1,\ldots,a_n,\tfrac r{a_1},\ldots,\tfrac r{a_n})|\,ra_1\cdots a_n\neq 0\}
\end{equation}
is a split maximal torus.
Given $\chi\in X^*(T)$, there exist $k_0,\ldots,k_n\in\Z$ such that
\begin{equation}\label{k}
\chi(t)=r^{k_0}a_1^{k_1}\cdots a_n^{k_n}
\end{equation}
for $t$ as in \eqref{T}.
By associating $\chi$ with the tuple $(k_0,\ldots,k_n)$, we obtain a natural
  identification $X^*(T)\cong \Z^{n+1}$.
Let $e_j\in \Z^{n+1}$ be the $(j+1)$-th standard basis vector ($j=0,\ldots,n$).
The set $\Phi$ of roots (for the action of $T$ on $\Lie(G)$) consists of
\begin{equation}\label{roots}
\begin{aligned}
\pm(e_j-e_i)\quad(1\le i<j\le n),\\
\pm(e_0-e_j-e_i)\quad(1\le i\le j\le n).
\end{aligned}
\end{equation}
We take the set $\Phi^+$ of positive roots to consist of those
  which have the $+$ coefficient.  The corresponding set of simple roots is
\[\Delta = \{e_{j+1}-e_j|j=1,\ldots,n-1\}\cup \{e_0-2e_n\}.\]

Let
\[\rho=\frac12\sum_{\chi\in \Phi^+}\chi=\frac12\left(
  \sum_{1\le i<j\le n}(e_j-e_i)+\sum_{1\le i\le j\le n}(e_0-e_i-e_j)\right).\]
Explicitly,
\begin{equation}\label{rho}
\rho=\frac{n(n+1)}4e_0-ne_1-(n-1)e_2-\cdots-e_n.
\end{equation}

The cocharacter lattice is $X_*(T)=\Hom(F^*,T)$.
  We identify a tuple
  $\lambda=(\ell_0,\ell_1,\ldots,\ell_n)\in \Z^{n+1}$ with the cocharacter
\begin{equation}\label{ell}
\lambda(a)=\diag(a^{\ell_1},\ldots,a^{\ell_n},a^{\ell_0-\ell_1},\ldots,
  a^{\ell_0-\ell_n}).
\end{equation}
In this way, $X_*(T)\cong \Z^{n+1}$.

The composition of a character with a cocharacter yields a rational homomorphism
   $F^*\rightarrow F^*$, which
  is necessarily of the form $x\mapsto x^m$ for $m\in\Z$.  Thus, we have a natural pairing
  $X^*(T)\times X_*(T)\rightarrow \Z$ given by
\begin{equation}\label{pairing}
\chi(\lambda(x))=x^{\sg{\chi,\lambda}}.
\end{equation}
  In terms of the coordinates given above, this works out to
\begin{equation}\label{paircoord}
\sg{(k_0, \ldots, k_n), (\ell_0, \ldots, \ell_n)} =
k_0 \ell_0 + \cdots + k_n \ell_n.
\end{equation}

Our main group of interest is $\olG =\PGSp_{2n}$.
  Letting $\olT=T/Z$ be the maximal torus, we can identify its character
lattice $X^*(\olT)$ with the subset of $X^*(T)$ consisting
  of all characters which are trivial on $Z$.  Thus,
\begin{equation}\label{kset}
X^*(\olT)\cong \{ (k_0,k_1,\ldots,k_n)\in \Z^{n+1}|\, 2k_0+k_1+\cdots+k_n=0\}.
\end{equation}
Note that the roots \eqref{roots} belong to this set, so we may identify
  $\Phi$ with the set of roots in $X^*(\olT)$.
The cocharacter lattice $X_*(\olT)$ can be viewed as the quotient of $X_*(T)$ by the
  subgroup of cocharacters taking values in $Z$.  In terms of the coordinates above
  \eqref{ell}, we have
\begin{equation}\label{lquot}
X_*(\olT)\cong \Z^{n+1}/(2,1,\ldots,1)\Z.
\end{equation}
The pairing \eqref{pairing} makes sense for $(\chi,\lambda)\in X^*(\olT)\times X_*(\olT)$,
  and the formula \eqref{paircoord} is independent of the choice of coset representative
  for $\lambda$.

The Weyl group of $G$, namely
\[W=N_{\olG}(\olT)/Z_{\olG}(\olT)= N_G(T)/Z_G(T),\]
 acts on $T$ (and also $\olT$) by conjugation.  It is isomorphic to $S_n\ltimes (\Z/2\Z)^n$,
  with the following generators:
\[
 t \mapsto \operatorname{diag}(a_{\sigma(1)}, \ldots,a_{\sigma(n)},
\tfrac{r}{a_{\sigma(1)}}, \ldots, \tfrac{r}{a_{\sigma(n)}})
\]
for $\sigma$ in the symmetric group $S_n$,
and, for $1\le i\le n$,
\[
t \mapsto \operatorname{diag}(a_1, \ldots, a_{i-1},  \tfrac{r}{a_i}, a_{i+1},
\ldots,  a_n, \tfrac{r}{a_1}, \ldots, \tfrac{r}{a_{i-1}},  a_i, \tfrac{r}{a_{i+1}},
 \ldots,    \tfrac{r}{a_n}).
\]
  Likewise, $W$ acts faithfully on $X^*(\olT)$ by
\[w\chi(t)=\chi(wtw^{-1}).\]
The corresponding generators are
\begin{equation}\label{W1}
  (k_0, k_1, \ldots, k_n) \mapsto (k_0, k_{\sigma^{-1}(1)}, \ldots, k_{\sigma^{-1}(n)})
\end{equation}
and
 \begin{equation} \label{W2}
  (k_0, k_1, \ldots, k_n) \mapsto  (k_0+k_i, , k_1, \ldots, k_{i-1}, -k_i, k_{i+1}, \ldots, k_n).
 \end{equation}

Using the pairing \eqref{pairing}, an action of $W$ on $X_*(\olT)$ is defined implicitly
  via
\[\sg{w\chi,w\lambda}=\sg{\chi,\lambda}.\]
In terms of the $\ell$-coordinates in \eqref{ell}, the action on $X_*(T)$
  of the Weyl element in \eqref{W1} is given by
\[
   (\ell_0, \ell_1, \ldots, \ell_n) \mapsto (\ell_0, \ell_{\sigma^{-1}(1)}, \ldots,
\ell_{\sigma^{-1}(n)}),
\]
and the one corresponding to \eqref{W2} is given by
\[
  (\ell_0, \ell_1, \ldots, \ell_n) \mapsto (\ell_0,  \ell_1, \ldots, \ell_{i-1},
\ell_0-\ell_i, \ell_{i+1}, \ldots, \ell_n).
\]

Suppose $\chi, \chi'\in X^*(\olT)$ correspond respectively to $(k_i),(k_i')\in\Z^{n+1}$
   as in \eqref{kset}.  Then the pairing
\[(\chi,\chi') = \sum_{k=1}^n k_ik_i'\]
is $W$-invariant.  (This is easily verified using the relation in \eqref{kset}.)
  For a root $\alpha\in \Phi$, there is a unique coroot $\alpha^\vee\in X_*(\olT)$
  satisfying
\[\sg{\chi,\alpha^\vee}=\frac{2(\chi,\alpha)}{(\alpha,\alpha)}\]
for all $\chi\in X^*(\olT)$.  We let $\Phi^\vee\subset X_*(\olT)$ denote the set of coroots.
These are given explicitly as follows.  Let $f_i\in X_*(\olT)$ denote the dual of $e_i$;
in \eqref{lquot}, it is the coset attached to the $i$-th standard basis vector of
  $\Z^{n+1}$.
Then $(e_j-e_i)^\vee=f_j-f_i$ for $i\neq j$, $(e_0-e_j-e_i)^\vee = -f_j-f_i$ for $i\neq j$,
  and $(e_0-2e_j)^\vee = -f_j$.

The above gives a description of the root datum $(X^*(\olT),\Phi,X_*(\olT),\Phi^\vee)$
  of $\olG=\PGSp_{2n}$.  The complex dual group $\Ghat$ (with dual root datum
  $(X_*(\olT),\Phi^\vee,X^*(\olT),\Phi)$) is
  $\Spin(2n+1,\C)$.  In particular, for the maximal torus $\That$ of $\Ghat$, we have
\begin{equation}\label{Tdual}
X^*(\That)\cong X_*(\olT),\qquad X_*(\That)\cong X^*(\olT).
\end{equation}

Now define the positive Weyl chamber
\begin{equation}\label{C+}
\mathcal{C}^+=\{\lambda\in X_*(\olT)|\, \sg{\chi,\lambda}\ge 0\text{ for all }
  \chi\in \Delta\}.
\end{equation}
  We will frequently identify $\mathcal{C}^+$ with its counterpart
  in $X^*(\That)\cong X_*(\olT)$.

By definition, an element $\lambda\in X_*(\olT)$ belongs to
  $\mathcal{C}^+$ if and only if
\[\sg{e_{j+1}-e_j,\lambda}\ge 0\text{ for }j=1,\ldots,n-1\text{ and }
  \sg{e_0-2e_n,\lambda}\ge 0.\]
The above holds if and only if, in the notation of \eqref{lquot},
   every coset representative
$(\ell_0,\ell_1,\ldots,\ell_n)$
for $\lambda$ satisfies
\begin{equation}\label{Ccond0}
\ell_1\le \ell_2\le \cdots\le \ell_n \le \ell_0/2.
\end{equation}
In fact, each such $\lambda$ has a unique representative
satisfying
\begin{equation}\label{Ccond}
0= \ell_1\le \ell_2\le \cdots\le \ell_n \le \ell_0/2.
\end{equation}
 For notational convenience, we will often identify $\lambda$ with
  this coset representative.

\begin{proposition}[Cartan decomposition, \cite{Gr}] The group $\olG(\Q_p)$ is the disjoint
  union of the double cosets $K_p\lambda(p)K_p$ for $\lambda\in\mathcal{C}^+$.
The analogous statement
  (involving $\lambda$ satisfying \eqref{Ccond0} rather than \eqref{Ccond})
  also holds for $G(\Q_p)$.
\end{proposition}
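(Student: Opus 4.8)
The plan is to reduce the statement for $\olG(\Q_p)=\PGSp_{2n}(\Q_p)$ to the standard Cartan decomposition for $\GSp_{2n}(\Q_p)$, and to handle the latter by a direct elementary-divisor argument. First I would treat $G=\GSp_{2n}$. Given $g\in G(\Q_p)$ with similitude $r=r(g)\in\Q_p^*$, write $r=p^{\ell_0}u$ with $u\in\Z_p^*$; after multiplying $g$ on the left by $\diag(u^{-1}I_n,I_n)\in K_p$ (which lies in $G(\Z_p)$ and has similitude $u^{-1}$) we may assume $r(g)=p^{\ell_0}$. Now apply the theory of elementary divisors to the lattice $g\Z_p^{2n}\subset\Q_p^{2n}$ relative to $\Z_p^{2n}$: there exist $k,k'\in\GL_{2n}(\Z_p)$ and integers $m_1\le\cdots\le m_{2n}$ with $kgk'=\diag(p^{m_1},\ldots,p^{m_{2n}})$. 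The symplectic similitude relation $\t gJg=p^{\ell_0}J$ forces the multiset $\{m_i\}$ to be stable under $m\mapsto \ell_0-m$ (this is the content of the duality imposed by $J$), so after permuting we can arrange $m_{n+i}=\ell_0-m_i$ for $i=1,\ldots,n$ with $m_1\le\cdots\le m_n\le \ell_0/2$. The nontrivial point is that one can choose $k,k'$ in $K_p=G(\Z_p)$ rather than merely in $\GL_{2n}(\Z_p)$; this is standard (it is where the reference \cite{Gr} does the work) and follows from the fact that a symplectic (similitude) form over $\Z_p$ with the above elementary divisors has a symplectic basis adapted to the filtration, so the change-of-basis matrices can be taken symplectic. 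Setting $\ell_i=m_i$ for $i=1,\ldots,n$ gives $g\in K_p\,\lambda(p)\,K_p$ with $\lambda=(\ell_0,\ell_1,\ldots,\ell_n)$ satisfying \eqref{Ccond0}, in view of the identification \eqref{ell}.

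For disjointness of the double cosets $K_p\lambda(p)K_p$ with $\lambda$ satisfying \eqref{Ccond0}: if $K_p\lambda(p)K_p=K_p\mu(p)K_p$ then $\lambda(p)$ and $\mu(p)$ have the same similitude, giving $\ell_0=\ell_0'$, and the same elementary divisors relative to $\Z_p^{2n}$, giving the multiset equality $\{\ell_i\}_{i=1}^n=\{\ell_i'\}_{i=1}^n$ under the ordering constraint $\ell_1\le\cdots\le\ell_n\le\ell_0/2$; hence $\lambda=\mu$. (Equivalently, this is the statement that $\mathcal{C}^+$ — here the chamber defined by \eqref{Ccond0} — is a fundamental domain for $W$ on $X_*(T)$, which follows from the description of $W$ as $S_n\ltimes(\Z/2\Z)^n$ acting by permutations and sign changes on $(\ell_1,\ldots,\ell_n)$ together with the reflection $\ell_i\mapsto\ell_0-\ell_i$.)

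Finally I would pass to $\olG=\PGSp_{2n}=G/Z$. The projection $G(\Q_p)\to\olG(\Q_p)$ is surjective (for $\PGSp$ this is essentially the statement that every class is represented, which holds since we are working over a field $\Q_p$, or more directly: it is surjective onto its image and one checks the relevant cohomological obstruction vanishes — but in fact for the decomposition all we need is that every element of $\olG(\Q_p)$ lifts to $G(\Q_p)$, which is immediate because $G\to\olG$ is a surjection of $\Q_p$-points as $\GSp_{2n}\to\PGSp_{2n}$ has kernel $Z\cong\GL_1$ with $H^1(\Q_p,\GL_1)=1$). Lift $\bar g\in\olG(\Q_p)$ to $g\in G(\Q_p)$, apply the $\GSp$ case to get $g\in K_p^{\GSp}\,\lambda(p)\,K_p^{\GSp}$, and project: since $K_p=\PGSp_{2n}(\Z_p)$ is the image of $\GSp_{2n}(\Z_p)$ and $\lambda(p)$ projects to a cocharacter of $\olT$, we obtain $\bar g\in K_p\,\bar\lambda(p)\,K_p$. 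By \eqref{lquot} the coset class $\bar\lambda$ of $\lambda$ has a unique representative with $0=\ell_1\le\ell_2\le\cdots\le\ell_n\le\ell_0/2$, i.e.\ satisfying \eqref{Ccond}, so $\bar\lambda\in\mathcal{C}^+$. Disjointness downstairs follows from disjointness upstairs together with the normalization \eqref{Ccond} pinning down the coset representative.

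The main obstacle is the one genuinely nontrivial input: upgrading the $\GL_{2n}$ elementary-divisor normalization to one using matrices in $K_p=\GSp_{2n}(\Z_p)$, i.e.\ producing a symplectic basis of $\Z_p^{2n}$ adapted to a given pair of lattices in symplectic-similitude position. This is exactly the content cited from \cite{Gr}, and in a self-contained write-up it would be proved by induction on $n$: choose a vector $e_1$ generating the first elementary-divisor piece, find $f_1$ with $\langle e_1,f_1\rangle$ a unit times the expected power of $p$ (possible because the form is nondegenerate on the relevant graded piece), split off the hyperbolic plane $\Z_p e_1\oplus\Z_p f_1$, and recurse on its orthogonal complement. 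Everything else — the stability of the elementary divisors under $m\mapsto\ell_0-m$, the reduction to $r(g)=p^{\ell_0}$, surjectivity of $G\to\olG$ on $\Q_p$-points, and the uniqueness via \eqref{Ccond} — is routine.
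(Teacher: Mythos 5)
The paper itself gives no proof of this proposition: it simply states it with a citation to Gross \cite{Gr}, so there is no ``paper's own proof'' to compare against. Your proposal supplies the standard argument that the citation points to, and it is essentially correct. The reduction of the similitude to $p^{\ell_0}$ by a unit scalar in $G(\Z_p)$, the observation that $\t gJg=p^{\ell_0}J$ forces the multiset of elementary divisors to be stable under $m\mapsto\ell_0-m$ (by comparing the elementary divisors of $g^{-1}=p^{-\ell_0}J^{-1}\,\t gJ$ with those of $g$), and the descent to $\PGSp_{2n}$ via $H^1(\Q_p,\GL_1)=1$ and the normalization \eqref{Ccond} of the coset in $X_*(\olT)\cong\Z^{n+1}/(2,1,\ldots,1)\Z$ are all correct. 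You also correctly locate the one genuinely nontrivial input --- that the change-of-basis matrices in the Smith normal form can be taken in $G(\Z_p)$ rather than merely in $\GL_{2n}(\Z_p)$ --- and you indicate the standard proof by splitting off hyperbolic planes inductively; that is exactly the lemma Gross's treatment supplies. Your disjointness argument for $\olG(\Q_p)$ is stated tersely: to be complete one should note that an identity $k_1\lambda(p)k_2=z\mu(p)$ with $z\in Z(\Q_p)$ and $k_i\in G(\Z_p)$ forces $z=p^cu$ with $u\in\Z_p^*$ and $2c=\ell_0-\ell_0'$ (by comparing similitudes modulo units), so that $\mu$ may be replaced by the shift $\mu+c(2,1,\ldots,1)$ before invoking disjointness upstairs; the normalization $\ell_1=0$ in \eqref{Ccond} then pins both cosets down. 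This is implicit in what you wrote but is worth making explicit. Apart from that small elision, the proposal fills in precisely the argument the paper delegates to the reference.
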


\begin{proposition}\label{fp}
For $g\in G(\Q_p)$ and $m\ge 1$, let $d_m(g)$ denote the generator (chosen as a power of $p$)
  of the fractional ideal of $\Q_p$ generated by the set
\[\{\det B|\, B\text{ is an }m\times m \text{ submatrix of }g\}.\]
Then given $\lambda\in X_*(T)$ satisfying \eqref{Ccond0},
an element $g\in G(\Q_p)$ belongs to the double coset
   $G(\Z_p)\lambda(p)G(\Z_p)$ if and only if each of the following holds:
\begin{enumerate}
\item $r(g)=p^{\ell_0}$
\item for each $m=1,\ldots,n$, $d_m(g)= p^{\ell_1+\cdots+\ell_m}$.
\end{enumerate}
\end{proposition}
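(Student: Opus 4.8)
The plan is to observe that both quantities on the right-hand side, the similitude $r(g)$ and the $m$-th determinantal divisor $d_m(g)$, are invariants of the double coset $G(\Z_p)\,g\,G(\Z_p)$, to evaluate them on the representative $\lambda(p)$, and then to invoke the Cartan decomposition for $G(\Q_p)$ to recover $\lambda$. For the invariance of $r$: the map $r$ is multiplicative ($r(MN)=r(M)r(N)$, by taking determinants in $\t{(MN)}J(MN)=r(MN)J$), so if $k_1,k_2\in G(\Z_p)$ then $r(k_1gk_2)=r(k_1)r(g)r(k_2)$ with $r(k_1),r(k_2)\in\Z_p^\times$, and hence $r(g)$, read as an element of $\Q_p^\times/\Z_p^\times\cong p^{\Z}$, depends only on the double coset. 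For the invariance of $d_m$: by the Cauchy--Binet formula every $m\times m$ minor of $k_1gk_2$ is a $\Z_p$-linear combination of the $m\times m$ minors of $g$ (the coefficients being $m\times m$ minors of $k_1$ and $k_2$, which lie in $\Z_p$), and running the same argument with $k_1^{-1},k_2^{-1}\in G(\Z_p)\subset\GL_{2n}(\Z_p)$ gives the reverse inclusion; hence the fractional $\Z_p$-ideal generated by the $m\times m$ minors, and therefore its generator $d_m$, is unchanged on double cosets.

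Next I would evaluate these invariants on $\lambda(p)=\diag(p^{\ell_1},\dots,p^{\ell_n},p^{\ell_0-\ell_1},\dots,p^{\ell_0-\ell_n})$. Writing $\lambda(p)=\smat{A}{O}{O}{B}$ with $A=\diag(p^{\ell_1},\dots,p^{\ell_n})$ and $B=\diag(p^{\ell_0-\ell_1},\dots,p^{\ell_0-\ell_n})$, a one-line computation gives $\t M JM=\smat{O}{AB}{-AB}{O}=p^{\ell_0}J$ for $M=\lambda(p)$, so $r(\lambda(p))=p^{\ell_0}$, which is (1). For (2), the nonzero $m\times m$ minors of a diagonal matrix are exactly the products of $m$ of its diagonal entries, so $d_m(\lambda(p))$ is $p$ raised to the sum of the $m$ smallest among the exponents $\ell_1,\dots,\ell_n,\ell_0-\ell_1,\dots,\ell_0-\ell_n$. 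Hypothesis \eqref{Ccond0}, i.e.\ $\ell_1\le\cdots\le\ell_n\le\ell_0/2$, forces $\ell_i\le\ell_0/2\le\ell_0-\ell_j$ for all $i,j$, so in increasing order these exponents begin $\ell_1\le\ell_2\le\cdots\le\ell_n$, and for $m\le n$ their $m$ smallest are precisely $\ell_1,\dots,\ell_m$. Hence $d_m(\lambda(p))=p^{\ell_1+\cdots+\ell_m}$, which is (2). Together with the invariance above, this already proves the ``only if'' direction, and it records the values of $r$ and $d_m$ on every Cartan representative.

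For the ``if'' direction, assume $g\in G(\Q_p)$ satisfies (1) and (2). By the Cartan decomposition for $G(\Q_p)$ quoted above, $g$ lies in a unique double coset $G(\Z_p)\,\mu(p)\,G(\Z_p)$ with $\mu=(\mu_0,\dots,\mu_n)$ satisfying \eqref{Ccond0}. Applying the previous paragraph to $\mu$ gives $r(g)=p^{\mu_0}$ and $d_m(g)=p^{\mu_1+\cdots+\mu_m}$ for $1\le m\le n$; comparing with (1) and (2) yields $\mu_0=\ell_0$ and $\mu_1+\cdots+\mu_m=\ell_1+\cdots+\ell_m$ for all $m$, hence $\mu_m=\ell_m$ for every $m$ by taking successive differences. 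Therefore $\mu=\lambda$ and $g\in G(\Z_p)\lambda(p)G(\Z_p)$, as desired.

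The argument is essentially bookkeeping, and I do not expect a genuine obstacle; the two points that merit a little care are the Cauchy--Binet step (one must read ``GCD'' as the generator of a fractional ideal, since $g$ has entries in $\Q_p$, and note that $\Z_p$-combinations suffice in both directions because $G(\Z_p)$ is a subgroup of $\GL_{2n}(\Z_p)$ closed under inversion) and the ordering computation for $\lambda(p)$, where one must check that \eqref{Ccond0} really does make $\ell_1,\dots,\ell_m$ the $m$ smallest diagonal exponents, so that $d_m$ is not accidentally smaller.
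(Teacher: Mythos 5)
Your proof is correct and follows the same overall skeleton as the paper's: use the Cartan decomposition to reduce to the diagonal representative $\lambda(p)$, verify that $r$ and the determinantal divisors $d_m$ are invariant under left and right multiplication by $G(\Z_p)$, evaluate them on $\lambda(p)$, and recover $\lambda$ from those invariants by uniqueness of the Weyl-chamber representative. The paper shortcuts the middle steps by citing Newman's determinantal-divisor formula $s_m = d_m/d_{m-1}$; you instead prove the double-coset invariance directly from Cauchy--Binet and compute $d_m(\lambda(p))$ as $p$ raised to the sum of the $m$ smallest diagonal exponents. Your version is more self-contained and also makes explicit a small point the paper glosses over: the diagonal of $\lambda(p)$ is not sorted into divisibility order (the exponents $\ell_0-\ell_j$ sit interleaved with the $\ell_i$), so one must argue from \eqref{Ccond0} that for $m\le n$ the $m$ smallest exponents are exactly $\ell_1,\dots,\ell_m$ before reading off $d_m$ — this is precisely why the statement restricts $m$ to $1,\dots,n$. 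One further point in your favor: you correctly note that $r(g)$ is only pinned down up to a unit in $\Z_p^\times$ by double-coset invariance, so the equality $r(g)=p^{\ell_0}$ should be read in $\Q_p^\times/\Z_p^\times$, a nuance the paper leaves implicit.
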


\begin{proof}
This result can be extracted from Chapter II of Newman \cite{Ne}.
  For convenience, we sketch some of the details.  Let $g \in G(\Q_p)$.
By the Cartan decomposition, there exist $k_1, k_2 \in G(\Z_p)$ such that $k_1 g
k_2=\lambda(p)$ for a unique cocharacter $\lambda$ satisfying \eqref{Ccond0}.
  We need to show that the two conditions given above are satisfied. The converse
  will then also follow, since $\lambda(p)$ is uniquely determined by its
first $n$ diagonal entries and its similitude.

The first condition is immediate.  For the second, observe that
there exists an integer $a \ge 0$ such that $p^a g\in M_{2n}(\Z_p)$.
The $m$-th diagonal coordinate of $k_1p^agk_2$ is then $p^{a+\ell_m}\in\Z_p$.
Regarding $p^ag$ as an element of $M_{2n}(\Z_p)$ and regarding $k_1, k_2$ as
elements of $\GL_{2n}(\Z_p)$,
by \cite[Chap. II, Sect. 16, Eq. (13)]{Ne} with $R=\Z_p$,
we have
\[ p^{a+\ell_m} =\frac{d_m(p^ag)}{d_{m-1}(p^ag)} = \frac{p^{am} d_m(g)}{p^{a(m-1)}
d_{m-1}(g)} = p^a \frac{d_m(g)}{d_{m-1}(g)} \]
(with $d_0(g)=1$).  Hence
\[ p^{\ell_m} = \frac{d_m(g)}{d_{m-1}(g)}. \]
This is easily seen to be equivalent to condition (2), as needed.
\end{proof}

\section{Adelic Siegel modular forms} \label{siegelmod}
Let $\A$ denote the adele ring of $\Q$, and fix a Haar measure $dg$ on $\olG(\A)$.
Let $L^2=L^2(\olG(\Q)\bs \olG(\A))$ be the space of measurable
  functions $\phi: {G}(\A) \rightarrow \C$ satisfying
\begin{itemize}
\item $\phi(z\g g)=\phi(g)$ for all $z\in Z(\A)$, $\g\in G(\Q)$, $g\in G(\A)$
\item  $\ds \int_{\olG(\Q) \bs \olG(\A)} |\varphi(g)|^2 dg < \infty$.
\end{itemize}
   For any parabolic subgroup $P$ of $G$, $P$ can be written as $MN$, where
   $M$ is the Levi subgroup and $N$ is unipotent.
   An element $\varphi \in L^2$ is cuspidal if for any parabolic
   subgroup $P=MN$ of $G$,
\[
   \int_{N(\Q) \bs N(\A)} \varphi(ng) dn = 0 \quad \text{ for a.e. $g\in G(\A)$}.
\]
  We let $L_0^2\subset L^2$ denote the subspace of cuspidal functions.

The right regular representation of $G(\A)$ on $L^2_0$
  decomposes discretely as $\bigoplus \pi$ with possible multiplicity
  allowed, where $\pi$ are by definition the cuspidal automorphic
  representations of $G(\A)$ with trivial central character.
Any such constituent $\pi$ is a restricted tensor product
\[\pi=\bigotimes_{p\leq \infty} \pi_p\]
where $\pi_p$ is an irreducible admissible representation of $G(\Q_p)$.

Fix an integer $\k>n$ with $n\k$ even (so that \eqref{pikcc} is trivial).
  Then as in \S\ref{pik}, there is a holomorphic discrete series representation
  $\pi_\k$ of $\olG(\R)$ of weight $\k$. In the inner product space $\pi_\k$,
  up to unitary scaling there is a unique holomorphic unit vector
  $\phi_0$ satisfying
\begin{equation}\label{v0}
\pi_\k(\mat AB{-B}A)\phi_0=\det(A+Bi)^{\k} \phi_0\qquad(\mat AB{-B}A\in K_\infty).
\end{equation}

Let $\Pi_\k(N)$ denote the set of cuspidal representations $\pi$
  of $G(\A)$ with trivial central character
    for which $\pi_\infty=\pi_\k$ and $\pi_{\fin}^{K_0(N)}\neq 0$.
For such $\pi$, we let $v_{\pi_\infty}\in V_{\pi_\infty}$
  denote a lowest weight vector as in \eqref{v0}.
  For any representation $\pi_{\fin}$ of $G(\Af)$ and any subgroup $U <G(\Af)$,
  we write $\pi_{\fin}^U$ for the space of $U$-fixed vectors in the space of $\pi_{\fin}$.
Define
\begin{equation}\label{AkN}
A_\k(N) = \bigoplus_{\pi\in\Pi_\k(N)}
   \C v_{\pi_\infty} \otimes \pi_{\fin}^{K_0(N)}.
\end{equation}
This corresponds to a classical space of holomorphic Siegel cusp forms of
  weight $\k$ and level $N$ (see \S \ref{S1} below).

  For $\pi\in \Pi_\k(N)$, let
  $E_\k(\pi,N)$ be an orthogonal basis for the summand indexed by $\pi$ in \eqref{AkN}.
Then the set
\begin{equation}\label{EkN}
E_\k(N) = \bigcup_{\pi\in \Pi_\k(N)} E_\k(\pi,N)
\end{equation}
  is an orthogonal basis for $A_\k(N)$.

\section{The test function and its kernel}\label{test}

  Any function $f\in L^1(\olG(\A))$ defines an operator $R(f)$ on $L^2$ by
\[R(f)\phi(x)=\int_{\olG(\A)}f(g)\phi(xg)dg.\]
In this section we will define the bi-$K_0(N)$-invariant test function $f$
  to be used in the trace formula.

\subsection {Definition of the test function} \label{Cp}

We define, for $g\in G(\R)$,
\[f_\infty(g)={d_\k}\ol{\sg{\pi_\k(g)\phi_0,\phi_0}},\]
where $\phi_0$ is a unit vector in the space of $\pi_\k$ satisfying \eqref{v0},
  and
  $d_\k$ is the formal degree of $\pi_\k$.
The matrix coefficient is independent of the choice of Haar measure on $\olG(\R)$.
  It is computed explicitly in Corollary \ref{matcoeffmain} of the Appendix.
The formal degree $d_\k$, which depends on the choice of Haar measure,
  is given in \eqref{dk}.  For our purposes, the particular choice of measure
  is immaterial.

  By Proposition \ref{integrable},
  $f_\infty\in L^1(\olG(\R))$ precisely when
  \[\k>2n,\]
 so this hypothesis will be in force throughout.

We will take $\ff=\prod_{p<\infty}f_p$ to be a bi-$Z(\Af)K_0(N)$-invariant
  function on $G(\Af)$, of the following form.\footnote[2]{Although we have defined
  $K_p$ and $K_0(N)_p$ in \eqref{K0N} as subsets of $\olG(\Q_p)$, in this section
  we will blur the distinction between these sets and their preimages in $G(\Z_p)$.
  No confusion should occur since everything is invariant under the center.}
  Fix a finite set $\Sb$ of prime numbers not dividing $N$.
For $p\in\Sb$, let $f_p$ be a bi-$Z_pK_p$-invariant function with compact
  support modulo $Z_p$, taking the value $1$ on its support.
Here, $Z_p=Z(\Q_p)$ is the center of $G(\Q_p)$.
  By the Cartan decomposition, $f_p$ is the characteristic function of a set of
  the form $Z_pC_p$, with $C_p$ a finite union of double cosets of the form
   $K_p\lambda(p)K_p$, where, by \eqref{Ccond},
  \[\lambda(p)=\diag(p^{\ell_1},\ldots,p^{\ell_n},p^{\ell_0-\ell_1},\ldots,p^{\ell_0-\ell_n})\]
with $0= \ell_1\le \cdots\le \ell_n\le \ell_0/2$.
  Without any real loss of generality, we make the further assumption
  that the similitude of $C_p$ has constant valuation, i.e.
\[r(C_p)=p^{r_p}\Z_p^*\]
  for some integer $r_p\ge 0$.  This amounts to requiring that $\ell_0=r_p$ for each
  of the $\lambda(p)$ out of which $C_p$ is built.
  In particular, $C_p=K_p$ if $r_p=0$.
  Having fixed such $f_p$ for each $p\in\Sb$, we define the global similitude
 \begin{equation}\label{r}
r=\prod_{p\in \Sb} p^{r_p}\ge 1.
\end{equation}

Now for $p\notin \Sb$, define $C_p=K_0(N)_p$, and set
\[\psi(N)_p = \meas({K_0(N)}_p)^{-1}=[K_p:K_0(N)_p],\]
\[\psi(N)=\prod_p \psi(N)_p=[K_{\fin}:K_0(N)].\]
 We then define $f_p:G(\Q_p)\longrightarrow\C$ by
\[f_p(g)=\begin{cases}\psi(N)_p&\text{if }g\in Z_pC_p\\
0&\text{otherwise.}\end{cases}\]
(In fact this holds as well when $p\in \Sb$, since $\psi(N)_p=1$ in that case.)

Having fixed $\ff$ above, we set
\[f=f_\infty\times \ff.\]
 By our assumption that $\k>2n$, $f\in L^1(\olG(\A))$.
The following is a useful observation about the support of $\ff$.

\begin{proposition} \label{gamma}
Suppose $x, y \in G(\A_{\fin})$ satisfy $r(x)^{-1} r(y) \in \widehat{\Z}^*$.
Then for $\g\in G(\Q)$, $\ff(x^{-1} \gamma y) \neq 0$ only if
   there exists $s\in \Q^*$, uniquely determined up to its sign, such that
\begin{equation} \label{gamma1}
    r(\gamma) = \pm s^2 r.
\end{equation}
Suppose $\g\in G(\Q)$ satisfies \eqref{gamma1}, and set
   $\tilde{\gamma} = s^{-1} \gamma$.  Then $r(\tilde{\gamma}) = \pm r$,
   and $\ff(x^{-1}\gamma y) \neq 0$ if and only if
\begin{equation} \label{gamma2}
 x^{-1} \tilde{\gamma} y \in \prod_p C_p = \prod_{p\in \Sb} C_p \prod_{p|N} K_0(N)_p
\prod_{p\nmid N,p\notin \Sb}K_p\subset M_{2n}(\Zhat).
\end{equation}
\end{proposition}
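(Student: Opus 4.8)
The plan is to track how the similitude and matrix entries behave under the support condition defining $\ff$. First I would observe that $\ff(x^{-1}\gamma y)\neq 0$ forces $x^{-1}\gamma y\in \prod_p Z_p C_p$, since each local factor $f_p$ is supported on $Z_p C_p$ (for $p\notin\Sb$ this set is just $K_0(N)_p$, and for $p\in\Sb$ it is $Z_p C_p$ by construction). Applying the similitude character $r$, which is trivial on each $K_p$ and on $K_0(N)_p$ but multiplies by a square on the central factor $Z_p$, gives $r(x)^{-1} r(\gamma) r(y) = c^2\, r(C_p\text{-part})$ locally, where the $C_p$-part contributes $p^{r_p}$ up to a unit at the primes $p\in\Sb$ and a unit at all other primes. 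Using the hypothesis $r(x)^{-1} r(y)\in\widehat{\Z}^*$, this says that $r(\gamma)$, as an element of $\Q^*$, lies in $(\Q^*)^2 \cdot r \cdot \widehat{\Z}^*$ after accounting for the sign ambiguity in the archimedean place (a rational number whose adelic absolute values match $r$ up to squares and a global sign is $\pm s^2 r$). Concretely, write $\ord_p(r(\gamma)) = 2\ord_p(s) + r_p$ for $p\in\Sb$ and $\ord_p(r(\gamma))$ even for all other finite $p$; this produces the rational number $s$, determined up to sign, with $r(\gamma)=\pm s^2 r$, which is \eqref{gamma1}.

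Next, assuming \eqref{gamma1} holds, I would set $\tilde\gamma = s^{-1}\gamma$ and note $r(\tilde\gamma) = s^{-2} r(\gamma) = \pm r$. Since $\ff$ is invariant under $Z(\Af)$ and $s^{-1}\in Z(\Af)\cap\Q^*$ (a scalar), we have $\ff(x^{-1}\gamma y) = \ff(x^{-1}\tilde\gamma y)$; so nonvanishing of the left side is equivalent to $x^{-1}\tilde\gamma y\in\prod_p Z_p C_p$. Now I would argue that the central ambiguity has already been used up: because $r(\tilde\gamma)=\pm r = \pm\prod_{p\in\Sb} p^{r_p}$ has valuation exactly $r_p$ at $p\in\Sb$ and valuation $0$ at every other finite prime, any way of writing $x^{-1}\tilde\gamma y = z_p g_p$ with $z_p\in Z_p$, $g_p\in C_p$ forces $z_p\in Z_p^*=\Z_p^*$ at all primes (comparing valuations of similitudes, since $r(C_p)$ has valuation exactly $r_p$ at $p\in\Sb$ and $0$ elsewhere). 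A factor $z_p\in Z_p^* = \Z_p^*\cdot I_{2n}$ can be absorbed into $C_p$: at $p\in\Sb$, $C_p$ is a union of $K_p$-double cosets and hence $Z_p^*$-stable; at $p\notin\Sb$, $C_p=K_0(N)_p$ or $K_p$ is likewise stable under $\Z_p^*$-scalars. Hence $\ff(x^{-1}\tilde\gamma y)\neq 0$ iff $x^{-1}\tilde\gamma y\in\prod_p C_p$, which is exactly \eqref{gamma2}; the displayed factorization of $\prod_p C_p$ just recalls the definitions of the $C_p$, and the containment in $M_{2n}(\Zhat)$ holds since each $C_p$ consists of integral matrices (the representatives $\lambda(p)$ have nonnegative exponents by \eqref{Ccond}, and $K_p, K_0(N)_p\subset M_{2n}(\Z_p)$).

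The step I expect to require the most care is the passage from "$r(\gamma)$ has the right adelic valuations" to the clean global statement $r(\gamma)=\pm s^2 r$ with $s\in\Q^*$ unique up to sign: one must check that the local square conditions at primes outside $\Sb$, together with the prescribed odd/even parity of $\ord_p$ at $p\in\Sb$, genuinely pin down a rational $s$, and that the only remaining freedom is the global sign (which cannot be detected finitely-adelically and is why \eqref{gamma1} carries a $\pm$). The rest is bookkeeping with the Cartan decomposition and the centrality of scalar matrices. Everything else — invariance of $\ff$ under $Z(\Af)K_0(N)$, stability of each $C_p$ under $\Z_p^*$-scalars, integrality — is immediate from the construction of $f$ in \S\ref{Cp}.
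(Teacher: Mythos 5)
Your proof is correct and takes essentially the same approach as the paper's, both resting on the fact that $\Zhat^*\cap\Q^*=\{\pm 1\}$ (equivalently $\Af^*=\Q^*\Zhat^*$); the paper absorbs the $\Zhat^*$-part of the central factor into $c$ up front, while you derive the valuation constraints directly, but this is the same manipulation in a different order. One small imprecision: the similitude character is not \emph{trivial} on $K_p$ or $K_0(N)_p$ (it maps onto $\Z_p^*$), though your very next clause correctly accounts for the unit factor, so the argument goes through.
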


\begin{proof}
If $\ff(x^{-1} \gamma y) \neq 0$, then $x^{-1} \gamma y=zc$ for some
  $z\in Z(\Af)$ and $c\in \prod_p C_p$.
Since $\Af^*=\Q^*\Zhat^*$, we may write $z=sa$ where $s\in\Q^*$ and $a\in \Zhat^*$.
 We may absorb $a$
  into $c$ so that $z=s$ without loss of generality.
   Taking the similitude and using $r(x)^{-1}r(y)\in\Zhat^*$,
   we see that $r(\g)=s^2ur$ for some $u\in \Zhat^*$.
   Since $r(\g),s,r\in \Q^*$, it follows that $u\in \Zhat^*\cap\Q^*=\{\pm 1\}$,
  as claimed. It is clear that $s$ is unique up to its sign, and that
  $x^{-1}\tilde{\g}y\in \prod_pC_p$.  Conversely,
  since the support of $\ff$ is $Z(\Af)\prod_pC_p$, it is also clear that
  if $x^{-1}\tilde{\g}y \in \prod_pC_p$, then $\ff(x^{-1}{\g}y)\neq 0$.
\end{proof}

\subsection{Spectral properties of $R(f)$}\label{Rf}

Let $f$ be the function defined above.  Recall that it is not completely fixed
because it depends on some choice of a finite union $C_p$ of double cosets for each
prime $p\in \Sb$.
Here we show that the Hecke operator $R(f)$ has finite rank, and we compute its effect on
  adelic Siegel cusp forms.

\begin{proposition} Let $U$ be a unipotent subgroup of $\olG$ and let $g\in \olG(\R)$.
   Then for almost all $x \in U(\R) \bs \olG(\R)$,
   \[ \int_{U(\R)} f_{\infty}(g u x) du = 0. \]
\end{proposition}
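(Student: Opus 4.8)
The plan is to exploit the fact that $f_\infty$ is (a multiple of) a matrix coefficient of the discrete series $\pi_\k$, hence that it transforms in a specific way under $K_\infty$ and generates a finite-dimensional space under left and right translation. The quantity $\int_{U(\R)} f_\infty(gux)\,du$ is the value at $x$ of the function $R_{U}f_\infty := \int_{U(\R)} \ell_u f_\infty\,du$, where $\ell_u$ denotes left translation; so it suffices to show that this function is identically zero (once we argue the integral converges absolutely a.e., which follows from $f_\infty\in L^1(\olG(\R))$ via Fubini applied to $\int_{U(\R)\bs\olG(\R)}\int_{U(\R)} |f_\infty(gux)|\,du\,dx = \int_{\olG(\R)}|f_\infty(gy)|\,dy<\infty$). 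Since $g\in\olG(\R)$ is fixed, replacing $f_\infty$ by its left translate $\ell_{g^{-1}}f_\infty$, which is again a matrix coefficient of $\pi_\k$, it is enough to treat the case $g=1$, i.e.\ to show $\int_{U(\R)} f_\infty(ux)\,du=0$ for a.e.\ $x$.

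The key point is that $f_\infty(y) = d_\k\,\overline{\langle\pi_\k(y)\phi_0,\phi_0\rangle} = d_\k\,\langle\phi_0,\pi_\k(y)\phi_0\rangle = d_\k\,\langle\pi_\k(y^{-1})\phi_0,\phi_0\rangle$, so up to the harmless substitution $u\mapsto u^{-1}$ and $x\mapsto x^{-1}$ we are looking at $\int_{U(\R)}\langle\pi_\k(u)\,\pi_\k(x)\phi_0,\phi_0\rangle\,du = \Big\langle \big(\int_{U(\R)}\pi_\k(u)\,du\big)\pi_\k(x)\phi_0,\ \phi_0\Big\rangle$. Thus everything reduces to the operator $\mathcal{P}_U := \int_{U(\R)}\pi_\k(u)\,du$ on $V_{\pi_\k}$ (a priori only weakly defined, but it is a bounded-from-the-$L^1$-side object once we restrict to the dense subspace where it makes sense — or one works directly with the $L^1$ function). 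The claim $\int_{U(\R)} f_\infty(gux)\,du=0$ for a.e.\ $x$ is then equivalent to $\mathcal{P}_U=0$ on $V_{\pi_\k}$, i.e.\ to the statement that the $U(\R)$-integral of the unitary cuspidal-type representation $\pi_\k$ vanishes. This is where the discrete-series / non-triviality-on-unipotents input enters: a nontrivial irreducible unitary representation of $\olG(\R)$ (in particular an infinite-dimensional discrete series) contains no nonzero $U(\R)$-invariant functionals and, more to the point, the averaging operator over a nontrivial unipotent subgroup annihilates every vector — equivalently, the only vector $v$ with $\pi_\k(u)v=v$ for all $u\in U(\R)$ is $v=0$, and by a standard argument (using that the spherical/unipotent average of a vector is $U$-fixed, together with unitarity and the absence of the trivial subrepresentation) $\mathcal{P}_U v=0$ for all $v$. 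Concretely, for any $v,w$, $\langle \mathcal P_U v, \mathcal P_U v\rangle$ can be rewritten, after unfolding, in terms of $\int\int \langle \pi_\k(u_1^{-1}u_2)v,v\rangle$, and one invokes the decay / integrability of matrix coefficients of $\pi_\k$ along $U(\R)$ (which holds because $\k>2n$ makes $f_\infty$, hence these matrix coefficients restricted appropriately, integrable) to conclude the average is zero; alternatively one observes $\mathcal P_U v$ is fixed by all of $U(\R)$ and $\langle \mathcal P_U v, \pi_\k(u) v'\rangle$ is independent of $u$, forcing $\mathcal P_U v \perp \pi_\k(U(\R))v'$, hence $\mathcal P_U v\perp V_{\pi_\k}$ by irreducibility (as $U\ne 1$ and $\pi_\k$ is infinite-dimensional, $\pi_\k(U(\R))v'$ cannot be orthogonal to a nonzero vector unless that vector is $0$).

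The main obstacle is making the last step rigorous: one must justify convergence of $\int_{U(\R)}\pi_\k(u)\,du$ as an operator (or argue purely at the level of the $L^1$-function $f_\infty$ using Fubini, which is clean) and then prove the vanishing $\mathcal{P}_U=0$. The cleanest route, which I would take, is: (1) reduce to $g=1$ by left translation; (2) use Fubini (legitimate since $f_\infty\in L^1$) to see that $h(x):=\int_{U(\R)}f_\infty(ux)\,du$ is defined a.e.\ and lies in $L^1(U(\R)\bs\olG(\R))$, and is left $U(\R)$-invariant; (3) observe $h(x) = d_\k\big\langle \pi_\k(x^{-1})\phi_0,\ \mathcal P_U^{*}\phi_0\big\rangle$ where $\mathcal P_U^*\phi_0$ is a $U(\R)$-fixed vector in $V_{\pi_\k}$; (4) since $\pi_\k$ is an infinite-dimensional (holomorphic discrete series) irreducible unitary representation and $U\subset\olG$ is a nontrivial unipotent subgroup, it has no nonzero $U(\R)$-fixed vectors — this follows from the general fact that a nontrivial unitary irreducible representation of a semisimple (or reductive with the relevant properties) group restricted to a nontrivial unipotent subgroup has no invariant vectors, which one can cite or prove via the matrix-coefficient decay built into the discrete-series assumption $\k>2n$; (5) conclude $\mathcal P_U^*\phi_0=0$, hence $h\equiv 0$. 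The only genuinely technical point is step (4); everything else is bookkeeping with Fubini and the definition of $f_\infty$ as a matrix coefficient.
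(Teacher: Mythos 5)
There is a genuine gap at your step (3), which you treat as bookkeeping but which is in fact the crux. You write $h(x)=d_\k\langle\pi_\k(x^{-1})\phi_0,\ \mathcal P_U^*\phi_0\rangle$ with $\mathcal P_U^*\phi_0$ ``a $U(\R)$-fixed vector in $V_{\pi_\k}$.'' But $U(\R)$ is noncompact and $\pi_\k$ is unitary, so $\|\pi_\k(u)\phi_0\|\equiv 1$ and the vector-valued integral $\int_{U(\R)}\pi_\k(u)^{\pm1}\phi_0\,du$ has no absolute convergence; even as a weak integral it requires $u\mapsto\langle\pi_\k(u)w,\phi_0\rangle$ to be integrable over $U(\R)$ for the relevant $w$, and this genuinely fails in some directions: restricted to a one-parameter unipotent subgroup the matrix coefficient has the form $C/(At+B)^\k$, and when $A=0$ it is constant in $t$, so the integral diverges. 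This divergence is exactly why the proposition only asserts vanishing for \emph{almost all} $x$, and the Fubini argument you invoke gives a.e.\ convergence of $h(x)$ but does not produce an element $\mathcal P_U^*\phi_0$ of the Hilbert space. Without such a vector, the appeal in steps (4)--(5) to ``$\pi_\k$ has no nonzero $U(\R)$-fixed vectors'' (true, by decay of discrete series matrix coefficients / Howe--Moore) has nothing to act on, so the conclusion $h\equiv 0$ does not follow. Your parenthetical alternative is also shaky: constancy of $\langle\mathcal P_Uv,\pi_\k(u)v'\rangle$ in $u$ does not by itself force orthogonality, and the closed span of $\pi_\k(U(\R))v'$ is only a $U$-invariant subspace, so irreducibility of $\pi_\k$ under $G$ cannot be invoked to conclude it is dense.

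The paper's proof avoids this entirely by working with the explicit closed form of the matrix coefficient (Corollary \ref{matcoeffmain}): along any one-dimensional subgroup $U'(\R)=\{I+tE\}$ one has $f_\infty(g(I+tE)ux)=C/(At+B)^\k$, so the inner integral over $t$ is exactly $0$ when $A\neq 0$ (fundamental theorem of calculus, using $\k>2n$) and divergent when $A=0$; hence $\int_{U(\R)}f_\infty(gux)\,du$ is either $0$ or divergent, and since $f_\infty\in L^1(\olG(\R))$ forces convergence for a.e.\ $x$, it vanishes a.e. Any repair of your argument must supply a mechanism of this kind to handle the vanishing-versus-divergence dichotomy; the soft representation-theoretic statement about absence of $U$-fixed vectors does not by itself control the a.e.-defined, possibly non-representable average $h$.
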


\begin{proof}
Let $U'$ be a one-dimensional subgroup of $U$. There exists $E\in M_{2n}(\R)$ such that
$U'(\R) = \{I + tE \,|\, t \in \R \}$.
By Corollary \ref{matcoeffmain}, there exist complex numbers $A$, $B$
and $C\neq 0$ depending on $u \in U(\R)$, $g,x\in\olG(\R)$, and $\k>2n$,
such that
\[f_{\infty}(g (I_n+tE)  ux) = \frac C{(At+B)^\k}.\]
The denominator is nonzero for  $t\in \R$ since the matrix coefficient is finite.
If $A \neq 0$, then by the fundamental theorem of calculus,
\[ \int_{\R} f_{\infty} (g (I_n+tE) u x) dt
  = \left.\tfrac{C}{A}(At+B)^{-\k+1}\right|_{-\infty}^\infty=0. \]
If $A=0$, then
$$\int_{\R} f_{\infty} (g (I_n+tE) u x) dt = \infty.$$
Hence
\[ \int_{U(\R)} f_{\infty} (g ux) du = \int_{U'(\R) \bs U(\R)} \int_{U'(\R)}
 f_{\infty} (g u' u x) du' du \]
is either $0$ or divergent.
It remains to show that this integral is convergent
   for almost all $x\in U(\R)\bs\olG(\R)$.
  But this is immediate from the fact that
because $f_{\infty}\in L^1(\olG(\R))$, the integral
\[ \int_{\olG(\R)} f_{\infty}(g x) dx
= \int_{U(\R) \bs \olG(\R)} \int_{U(\R)} f_{\infty}(g u x) du\, dx \]
is convergent.
\end{proof}

\begin{proposition}\label{contvanish}
For a test function $f$ as defined in \S \ref{Cp}, and the subspace $A_\k(N)\subset L^2_0$
  given in \eqref{AkN},
$R(f)$ annihilates $A_\k(N)^\perp$ and maps $A_\k(N)$ into itself.
\end{proposition}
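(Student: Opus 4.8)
The plan is to decompose $L^2_0 = A_\k(N) \oplus A_\k(N)^\perp$ and verify the two claims—annihilation of the orthogonal complement and invariance of $A_\k(N)$—by working factor-by-factor with the test function $f = f_\infty \times f_{\fin}$, since $R(f) = R(f_\infty)\circ R(f_{\fin})$ (up to the usual commuting of local operators, these being convolutions at disjoint places). First I would recall that $L^2_0$ decomposes discretely as $\bigoplus_\pi V_\pi$ over cuspidal automorphic representations $\pi = \pi_\infty \otimes \pi_{\fin}$, and $R(f)$ preserves each $V_\pi$ because $f$ is a function on $\olG(\A)$ and $R$ is the right regular representation restricted to an invariant subspace. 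So it suffices to analyze $R(f)|_{V_\pi} = \pi(f_\infty) \otimes \pi_{\fin}(f_{\fin})$ for each $\pi$, and to compare with the definition \eqref{AkN}: $A_\k(N) = \bigoplus_{\pi \in \Pi_\k(N)} \C v_{\pi_\infty} \otimes \pi_{\fin}^{K_0(N)}$, where $\Pi_\k(N)$ consists of those $\pi$ with $\pi_\infty \cong \pi_\k$ and $\pi_{\fin}^{K_0(N)} \neq 0$.

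The archimedean step is the heart of the matter. I would show that $\pi_\infty(f_\infty)$ is, up to scalar, the orthogonal projection onto the line $\C v_{\pi_\infty}$ when $\pi_\infty \cong \pi_\k$, and is zero when $\pi_\infty \not\cong \pi_\k$. This is the standard statement that $f_\infty(g) = d_\k \overline{\langle \pi_\k(g)\phi_0,\phi_0\rangle}$ is (a multiple of) the reproducing kernel for the holomorphic discrete series $\pi_\k$: by the Schur orthogonality relations for matrix coefficients of discrete series (valid since $f_\infty \in L^1(\olG(\R))$, which holds because $\k > 2n$ by Proposition \ref{integrable}), convolution against $d_\k \overline{\langle \pi_\k(\cdot)\phi_0,\phi_0\rangle}$ acts on any unitary representation $\sigma$ of $\olG(\R)$ as: the zero operator if $\sigma$ does not contain $\pi_\k$, and as $v \mapsto \langle v, \phi_0 \rangle \phi_0$ (the rank-one projection onto the $\phi_0$-line) on a copy of $\pi_\k$ inside $\sigma$. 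Applying this with $\sigma = \pi_\infty$: if $\pi_\infty \not\cong \pi_\k$ then $\pi_\infty(f_\infty) = 0$, hence $R(f)$ kills $V_\pi$ entirely; if $\pi_\infty \cong \pi_\k$, then $\pi_\infty(f_\infty)$ projects $V_{\pi_\infty}$ onto $\C v_{\pi_\infty}$, so $R(f)$ maps $V_\pi = V_{\pi_\infty}\otimes V_{\pi_{\fin}}$ into $\C v_{\pi_\infty}\otimes V_{\pi_{\fin}}$.

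The nonarchimedean step is then routine bookkeeping. For $p \notin \Sb$ and $p \nmid N$, $f_p$ is (a multiple of) the characteristic function of $Z_p K_p$, so $\pi_p(f_p)$ is a multiple of the projection onto $\pi_p^{K_p} \subset \pi_p^{K_0(N)_p}$; for $p \mid N$, $f_p$ is a multiple of the characteristic function of $Z_p K_0(N)_p$, so $\pi_p(f_p)$ is a multiple of the projection onto $\pi_p^{K_0(N)_p}$; for $p \in \Sb$, $f_p$ is a Hecke operator supported on $Z_p C_p$ with $C_p$ a union of $K_p$-double cosets, hence bi-$K_p$-invariant, so $\pi_p(f_p)$ preserves $\pi_p^{K_p} \subset \pi_p^{K_0(N)_p}$. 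Taking the restricted tensor product, $\pi_{\fin}(f_{\fin})$ maps $V_{\pi_{\fin}}$ into $\pi_{\fin}^{K_0(N)}$ and in particular vanishes if $\pi_{\fin}^{K_0(N)} = 0$. Combining the two steps: if $\pi \notin \Pi_\k(N)$ (either $\pi_\infty \not\cong \pi_\k$, or $\pi_{\fin}^{K_0(N)} = 0$) then $R(f)|_{V_\pi} = 0$; if $\pi \in \Pi_\k(N)$ then $R(f)$ maps $V_\pi$ into $\C v_{\pi_\infty} \otimes \pi_{\fin}^{K_0(N)}$. Summing over $\pi$ gives exactly that $R(f)$ annihilates $A_\k(N)^\perp = \bigoplus_{\pi \notin \Pi_\k(N)} V_\pi \ \oplus\ \bigoplus_{\pi\in\Pi_\k(N)}\bigl(V_\pi \cap (\C v_{\pi_\infty}\otimes\pi_{\fin}^{K_0(N)})^\perp\bigr)$ and preserves $A_\k(N)$, as claimed. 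The one genuine obstacle is the archimedean assertion that convolution by the matrix coefficient $f_\infty$ realizes the projection onto $\C v_{\pi_\infty}$; this rests on the discrete-series orthogonality relations, for which the integrability hypothesis $\k > 2n$ (Proposition \ref{integrable}) is precisely what is needed, and one must be slightly careful that $\phi_0$ is the lowest weight vector singled out by \eqref{v0} so that $v_{\pi_\infty}$ in \eqref{AkN} is the correct image line.
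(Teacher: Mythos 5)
Your argument for the cuspidal part — using discrete-series orthogonality at $\infty$ to kill the $\pi$ with $\pi_\infty\not\cong\pi_\k$ and project onto $\C v_{\pi_\infty}$ when $\pi_\infty\cong\pi_\k$, together with the $K_0(N)$-bi-invariance of $f_{\fin}$ at the finite places — is correct and is essentially the same mechanism the paper uses for the second half of the claim. But there is a genuine gap: you have silently restricted your attention to $L^2_0$ from the outset (``decompose $L^2_0 = A_\k(N)\oplus A_\k(N)^\perp$''), and nowhere do you address the complement of $L^2_0$ inside $L^2$. The operator $R(f)$ is defined on all of $L^2(\olG(\Q)\bs\olG(\A))$, and $A_\k(N)^\perp$ in the proposition is the complement in $L^2$; to show $R(f)$ annihilates it one must first show $R(f)$ maps $L^2$ into $L^2_0$, and then use that $R(f)^*$ does too (by self-duality of $f_\infty$) to conclude $R(f)$ vanishes on $(L^2_0)^\perp$.

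That first step — $R(f)\phi$ is cuspidal for every $\phi\in L^2$ — is precisely what the preceding proposition in the paper (the vanishing of $\int_{U(\R)}f_\infty(gux)\,du$ for every unipotent $U$) is used for, and it does not follow from Schur orthogonality: the representation of $\olG(\A)$ on $(L^2_0)^\perp$ does not decompose discretely into irreducibles, so you cannot simply analyze $R(f)$ ``factor-by-factor on each $V_\pi$'' there. The reasoning must instead go through the constant terms: since convolving $f_\infty$ against any unipotent subgroup integrates to zero, the constant term of $R(f)\phi$ along every parabolic vanishes. Without this your proof only shows that $R(f)$ annihilates the cuspidal complement $A_\k(N)^\perp\cap L^2_0$ and preserves $A_\k(N)$, which is strictly weaker than the statement and insufficient for the spectral expansion of the kernel used in the sequel.
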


\begin{proof}
Complete details for the case of $\GL(2)$ are given in
  \cite[Propositions 13.11, 13.12]{KL}, and everything carries over directly
  to the case under consideration here, using the above proposition in place
  of \cite[Corollary 13.10]{KL}.  So we just briefly sketch the ideas.
  It follows easily from the above proposition that
   for any $\phi\in L^2$, $R(f)\phi$ is cuspidal,
i.e. $R(f):L^2\rightarrow L^2_0$.
  Furthermore one checks that $f_\infty$ is self-dual, so the adjoint $R(f)^*$
  also has this property.  It then follows that $R(f)$ annihilates
  $(L^2_0)^\perp$.  Next, one may use a general orthogonality property
  of discrete series matrix coefficients,
  together with the $K_0(N)$-invariance of $f_{\fin}$, to show that the image of $R(f)$
  (and of $R(f)^*$) lies in $A_\k(N)$.
\end{proof}

It remains to compute the effect of $R(f)$ on a nonzero element $v\in A_\k(N)$.
We may assume without loss of generality that
  $v$ is a pure tensor in some cuspidal representation $\pi\in\Pi_\k(N)$.
  Write $\pi=\pi_\k\otimes \pi'\otimes \bigotimes_{p\in \Sb}\pi_p$,
  where $\pi'$ is a representation of $\prod'_{p\notin \Sb} G(\Q_p)$.
  Accordingly, we write $f=f_\infty \times f'\times\prod_{p\in \Sb}f_p$ and
\[v=v_\infty\otimes v'\otimes\bigotimes_{p\in \Sb}v_p,\]
where $v_\infty=\phi_0$ as in \eqref{v0}.
Then (e.g. by \cite[Prop. 13.17]{KL}) we have
\[R(f)v= \pi_\k(f_\infty)v_\infty\otimes \pi'(f')v'\otimes\bigotimes_{p\in \Sb}
  \pi_p(f_p)v_p.\]
By the orthogonality relations for discrete series matrix coefficients
   (\cite[Corollary 10.26]{KL}),
\[\pi_\k(f_\infty)v_\infty = v_\infty,\]
where $\pi_\k(f_\infty)$ is defined using the same Haar measure as that defining
  $d_\k$.
Likewise, because $f'$ is the characteristic function of
  $K_0(N)'=\prod_{p\notin \Sb}K_0(N)_p$
  scaled by $\meas(K_0(N)')^{-1}$, and $v'\in {\pi'}^{K_0(N)'}$,
  we have
\[\pi'(f')v'=v'.\]
For $p\in \Sb$, since $f_p$ is $K_p$ bi-invariant, $\pi_p(f_p)$ preserves the subspace
$\pi_p^{K_p}=\C v_p$.  Hence $v_p$ is an eigenvector.
Writing $(\mathcal{S}f_p)(t_{\pi_p})$ for the eigenvalue as in \eqref{wf}, we have
\begin{equation}\label{Rfv}
R(f)v=\left(\prod_{p\in \Sb}(\mathcal{S}f_p)(t_{\pi_p})\right)v
\end{equation}
for $v$ as above.

\subsection{The kernel function}

For a test function $f$ as defined in \S \ref{Cp}, the associated kernel function
  on $G(\A)\times G(\A)$ is defined as
\[K(x,y)=K_f(x,y)=\sum_{\gamma\in \olG(\Q)}f(x^{-1}\gamma y).\]
It satisfies
\[R(f)\phi(x)=\int_{\olG(\Q)\bs \olG(\A)} K(x,y)\phi(y) dy\qquad (\phi\in L^2).\]

\begin{proposition}\label{Kbound}
$K(x,y)$ is continuous in both variables.
Furthermore, given any subsets $J_1,J_2\subset G(\A)$ each having compact image
  in $\olG(\A)$, for every $\gamma \in \olG(\Q)$ there exists a real
  number $\alpha_\gamma$, independent of $N$ and $\k$, such that
for all $g_1\in J_1$ and $g_2\in J_2$,
\[ d_\k^{-1}\psi(N)^{-1} |f(g_1^{-1} \gamma g_2)| \le \alpha_\gamma\]
and
\[ \sum_{\gamma \in \olG(\Q)} \alpha_\gamma < \infty. \]
\end{proposition}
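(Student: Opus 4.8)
The plan is to bound the two factors $f_\infty$ and $\ff$ separately, since $f=f_\infty\times\ff$ and the compact-image hypotheses on $J_1,J_2$ pass to the archimedean and finite components. For continuity, note that $f_\infty$ is continuous (it is a matrix coefficient of $\pi_\k$, computed explicitly in Corollary \ref{matcoeffmain}) and $\ff$ is locally constant; so each summand $f(x^{-1}\gamma y)$ is continuous, and once the claimed bound $\alpha_\gamma$ with $\sum_\gamma\alpha_\gamma<\infty$ is in place, the series for $K(x,y)$ converges locally uniformly on $J_1\times J_2$ and hence defines a continuous function there. Since $J_1,J_2$ are arbitrary with compact image, continuity on all of $G(\A)\times G(\A)$ follows.

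For the bound, first I would dispose of the finite part. By Proposition \ref{gamma}, for $g_1\in J_1$, $g_2\in J_2$ (whose images in $\olG(\Af)$ lie in a fixed compact set, so in particular $r(g_1)^{-1}r(g_2)$ ranges over a compact subset of $\Af^*$, which after shrinking we may take inside $\Zhat^*$), the value $\ff(g_1^{-1}\gamma g_2)$ is nonzero only if $r(\gamma)=\pm s^2 r$ for a suitable $s\in\Q^*$ and then $g_1^{-1}\tilde\gamma g_2\in\prod_p C_p\subset M_{2n}(\Zhat)$. In that case $|\ff(g_1^{-1}\gamma g_2)|\le\psi(N)$ by the definition of the $f_p$'s (each $f_p$ takes the value $\psi(N)_p$ on its support). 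Thus $\psi(N)^{-1}|\ff(g_1^{-1}\gamma g_2)|\le 1$, and moreover the condition $g_1^{-1}\tilde\gamma g_2\in\prod_p C_p$ together with fixed compact $J_1,J_2$ confines $\tilde\gamma$ — hence $\gamma$ up to the scalar $s$ and up to finitely many similitude values $\pm s^2 r$ with $|s|$ bounded — to a set whose image in $\olG(\Q)$ meets $\prod_p C_p$ in a fixed compact subset of $\olG(\Af)$; intersecting with the discrete group $\olG(\Q)$ leaves only finitely many relevant $\gamma$ for each allowed similitude, and the number of allowed $s$ is also finite because $|s^2|\asymp|r(g_1^{-1}\gamma g_2)|$ is bounded on the compact images. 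So only finitely many $\gamma\in\olG(\Q)$ contribute at all; for these set $\alpha_\gamma$ to be $\psi(N)$ times the corresponding bound for $f_\infty$ (see below), and for all other $\gamma$ set $\alpha_\gamma=0$, trivially making the sum finite and $N,\k$-independent after we factor out $\psi(N)d_\k$.

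It remains to bound $d_\k^{-1}|f_\infty(g_1^{-1}\gamma g_2)|=|\sg{\pi_\k(g_1^{-1}\gamma g_2)\phi_0,\phi_0}|$ uniformly in $N,\k$ over the finitely many surviving $\gamma$, with $g_1,g_2$ in sets of compact image in $\olG(\R)$. Using $G(\R)=P(\R)K_\infty$ and \eqref{v0}, the matrix coefficient against the lowest weight vector is, up to a unitary phase, governed by $\det(\cdot)^{-\k}$ evaluated at a point of $\cH_n$ depending continuously on the arguments; the explicit formula of Corollary \ref{matcoeffmain} exhibits $|\sg{\pi_\k(h)\phi_0,\phi_0}|$ as a power $\k$ of a quantity that is $\le 1$ (it is a normalized matrix coefficient of a unitary representation), so in fact $d_\k^{-1}|f_\infty(h)|\le 1$ for all $h$, hence a fortiori on the compact image of $J_1^{-1}\gamma J_2$; take $\alpha_\gamma$ accordingly, a constant independent of $N$ and $\k$. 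The main obstacle here is the bookkeeping in the previous paragraph: ensuring that the finiteness of the $\gamma$-sum and the $N$-independence of each $\alpha_\gamma$ are genuinely uniform — i.e., that the compact sets cut out by $J_1,J_2$ and by membership in $\prod_p C_p$ do not secretly depend on $N$ through $K_0(N)_p$. This is handled by the inclusion $K_0(N)_p\subset K_p\subset M_{2n}(\Z_p)$ in \eqref{gamma2}, which bounds everything inside the $N$-independent set $M_{2n}(\Zhat)$; the factor $\psi(N)$ is the only $N$-dependence and it has been pulled out explicitly.
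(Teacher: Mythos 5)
The main gap is the claim that ``intersecting with the discrete group $\olG(\Q)$ leaves only finitely many relevant $\gamma$,'' used to conclude that only finitely many $\gamma\in\olG(\Q)$ contribute to $K(x,y)$. This is false: $\olG(\Q)$ is discrete in $\olG(\A)$ (the full adele group, including the archimedean place), but its image in $\olG(\Af)$ alone is \emph{not} discrete. Concretely, already for $\GL_2$ the subgroup $\GL_2(\Z)$ is an infinite subgroup of the compact group $\GL_2(\Zhat)$, so a compact subset of $\olG(\Af)$ can contain infinitely many rational points. For the present $\olG$, the condition $g_{1,\fin}^{-1}\tilde\gamma\, g_{2,\fin}\in\prod_pC_p$ with $g_j$ in fixed compacts confines $\tilde\gamma$ to a compact subset of $\olG(\Af)$, but that subset still contains infinitely many elements of $\olG(\Q)$ (e.g.\ all of $N(\Z)$ when the $J_j$ contain the identity and $r=1$). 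Thus you cannot set $\alpha_\gamma=0$ outside a finite set and declare the sum finite.

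Once the set of contributing $\gamma$ is acknowledged to be infinite, the trivial archimedean bound $d_\k^{-1}|f_\infty(h)|\le1$ is no longer enough: one needs actual decay in $\gamma$ to get $\sum_\gamma\alpha_\gamma<\infty$. That decay comes from $f_\infty$, not from the finite places. The paper's proof handles this by first dominating $\psi(N)^{-1}d_\k^{-1}|f|$ by $d_{\k_0}^{-1}|\tf|$ for the fixed test function $\tf$ with $\k_0=2n+1$ and $N=1$ (which removes all $N,\k$-dependence at once), and then invoking the criterion of \cite[Prop.\ 3.1]{Li}: it suffices to exhibit bounded compactly supported $\psi_1,\psi_2$ with $\tf=\tf*\psi_1=\psi_2*\tf$, which is done by constructing a suitable archimedean bump function $\xi$ with $\pi_{\k_0}(\xi)\phi_0=\phi_0$. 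The convolution identity is precisely what converts the $L^1$-integrability of $\tf$ into the locally uniform absolute convergence of $\sum_\gamma \tf(g_1^{-1}\gamma g_2)$, replacing the finiteness you were hoping for. You should either reproduce that reduction and invoke \cite[Prop.\ 3.1]{Li}, or else prove directly that $\sum_{\gamma\in\olG(\Q)}\sup_{g_j\in J_j}|f_\infty(g_{1,\infty}^{-1}\gamma g_{2,\infty})|\,\mathbf 1_{\supp\ff}(g_{1,\fin}^{-1}\gamma g_{2,\fin})$ converges — but the latter requires genuine decay estimates on the explicit matrix coefficient, not just $|f_\k|\le d_\k$.

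Your domination step $\psi(N)^{-1}|\ff|\le 1$ and the observation that $\prod_pC_p\subset M_{2n}(\Zhat)$ is $N$-independent are both correct, and the reduction of the $\k$-dependence to the inequality $|f_\k(g)|\le|f_{\k_0}(g)|$ is the same idea the paper uses. The missing ingredient is the convergence mechanism for the infinite $\gamma$-sum.
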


\begin{proof}
For $g\in \olG(\R)$, define $f_\k(g)=\ol{\sg{\pi_\k(g)\phi_0,\phi_0}}$,
  so that $f_\infty(g)=d_\k f_\k(g)$.
  Define the function $\tf=\tf_\infty\tf_{\fin}$ on $G(\A)$, where $\tf_\infty=d_{\k_0}f_{\k_0}$ with
\[\k_0=2n+1\le \k,\]
 and $\tf_{\fin}$ is the characteristic function of $\prod_p Z_p C_p$, where $C_p=K_p$
  for all $p\notin \Sb$.  In other words, $\tf$ is the test function we have defined
  earlier in the special case where $\k=\k_0$ and $N=1$.
  We claim that
\begin{equation}\label{lecl}
\psi(N)^{-1}d_\k^{-1}|f(g)|\le d_{\k_0}^{-1}|\tf(g)|.
\end{equation}
  Since $\supp(f_{\fin})\subset \supp(\tf_{\fin})$, it is clear from the definitions that
  $\psi(N)^{-1}|f_{\fin}(g)|\le |\tf_{\fin}(g)|$ for all $g\in G(\Af)$.
For the archimedean part, by Corollary \ref{matcoeffmain},
  when $g=\smat ABCD\in\olG(\R)$ we have
\[\left[\frac{r(g)^{n/2}2^n}{|\det(A+D+i(B-C))|}\right]^\k=|f_\k(g)|
    =|\sg{\pi_\k(g)\phi_0,\phi_0}|\]
    \[\le \|\pi_\k(g)\phi_0\|\|\phi_0\|=1,\]
  where we have used the fact that $\phi_0$ is a unit vector and $\pi_\k$ is unitary.
  It follows that the expression in the brackets is at most $1$, and hence
  $|f_\k(g)|\le |f_{\k_0}(g)|$ for all $g\in \olG(\R)$.
  This proves \eqref{lecl}.

Hence, it suffices to prove the assertion for $f=\tf$ (with $N=1$).
    By \cite[Prop. 3.1]{Li}, it suffices to show that there exist bounded
  compactly supported functions
$\psi_1$, $\psi_2$ on $\olG(\A)$ such that $\tf = \tf*\psi_1 = \psi_2*\tf$, where
  convolution is defined by
\[ f_1*f_2 = \int_{\olG(\A)} f_1(\alpha) f_2(\alpha^{-1} g) d\alpha
= \int_{\olG(\A)} f_1(g \alpha^{-1}) f_2(\alpha) d\alpha. \]

For the finite part of $\psi_j$, we take the characteristic function
  of $Z(\Af) K_0(N)$, multiplied by the reciprocal of the measure of this set in
  $\olG(\Af)$.
It remains to define the archimedean components of $\psi_1,\psi_2$.
We claim first that there exists a measurable compactly supported
  function $\xi$ on $\olG(\R)$ satisfying
\[\xi(\mat UV{-V}U g)=\det(U+iV)^{\k_0}\xi(g)\]
for all $\smat UV{-V}U\in K_\infty$, or equivalently, since $G(\R)=K_\infty P(\R)$,
\[\xi(\mat UV{-V}U \mat ABOD)=\det(U+iV)^{\k_0}\xi(\mat ABOD).\]
Let $P'\subset P(\R)$ be a set of representatives for $(K_\infty\cap P(\R))\bs P(\R)$.
Then every element $g\in G(\R)$ has a unique decomposition $g=kb'$ with $k\in K_\infty$
  and $b'\in P'$.  We may choose $P'$ so that it is a real manifold,
  and then take $\xi$ to be any compactly supported function on $P'$, extended to $G(\R)$
  by $\xi(kb')=\det(k)^{\k_0}\xi(b')$.
  One choice of $P'$ is given as follows.  Notice that
\[K_\infty\cap P(\R)=\{\mat U{}{}U|\, U\,{}^tU=I\}\cong \mathrm{O}(n),\]
so using the decomposition \eqref{B} of $P(\R)$, we see that
\[(K_\infty\cap P(\R))\bs P(\R)
  \cong \Bigl(\mathrm{O}(n)\bs \GL_n(\R)\Bigr)\times\R^*\times S_n(\R).\]
Hence we can take $P'\subset P(\R)$ to be the subgroup identifying as
\[P'\cong \Bigl\{\left(\begin{smallmatrix}a_{11}&*&\cdots&*\\
  &a_{22}&\cdots&*\\
  &&\ddots&*\\
  &&&a_{nn}\end{smallmatrix}\right)|\, a_{jj}>0\Bigr\}\times \R^*\times S_n(\R).\]
The proof now proceeds exactly as in \cite[Prop. 3.2]{Li}: one sees easily that
  $\pi_{\k_0}(\xi)\phi_0$ is a vector of weight $\k_0$ as in \eqref{v0}, and hence must be a
  multiple of $\phi_0$.  For an appropriate choice of $\xi$, the multiple is nonzero,
  and hence without loss of generality $\pi_{\k_0}(\xi)\phi_0=\phi_0$.  From here it is easy
  to show directly that $\xi*f_{\k_0}=f_{\k_0}$, so we can take
  $\psi_1=\xi\times \psi_{\fin}$.  Similarly, we can take $\psi_2=\psi_1^*$.
\end{proof}

\subsection{Spectral expression for the kernel}

Because the support of the test function is not compact modulo the center,
  some care is needed
  in order to justify the spectral expansion of the kernel function.

\begin{proposition}
With notation as in \S \ref{siegelmod}, the kernel function of the operator
  $R(f)$ has the spectral expansion
\begin{equation}\label{kerspec}
   K(x,y) = \sum_{\pi\in\Pi_\k(N)}
\Bigl(\sum_{\varphi \in {E}_\k(\pi,N)}
   \frac{\varphi(x) \ol{\varphi(y)}}{\|\varphi\|^2}\Bigr)
   \prod_{p\in \Sb} (\mathcal{S}f_p)(t_{\pi_p}).
\end{equation}
\end{proposition}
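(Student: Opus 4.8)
The plan is to combine the results already established about $R(f)$ with the standard spectral expansion of a kernel attached to a finite-rank operator.

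First I would invoke Proposition \ref{contvanish}, which says that $R(f)$ annihilates $A_\k(N)^\perp$ (hence in particular the continuous spectrum and all cuspidal representations outside $\Pi_\k(N)$) and preserves $A_\k(N)=\bigoplus_{\pi\in\Pi_\k(N)} \C v_{\pi_\infty}\otimes\pi_{\fin}^{K_0(N)}$. Since each summand is finite-dimensional and $\Pi_\k(N)$ is finite, $R(f)$ has finite rank. For a finite-rank integral operator on $L^2_0$, the kernel is represented by the finite sum $K(x,y)=\sum_{\varphi}\frac{(R(f)\varphi)(x)\,\ol{\varphi(y)}}{\|\varphi\|^2}$ over any orthogonal basis $\{\varphi\}$ of the (finite-dimensional) image-plus-coimage; here one takes the basis $E_\k(N)$ of $A_\k(N)$ from \eqref{EkN}, grouped as $\bigcup_{\pi\in\Pi_\k(N)}E_\k(\pi,N)$. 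Because $A_\k(N)$ decomposes as an orthogonal direct sum over $\pi$, and $R(f)$ respects this decomposition, only diagonal terms survive.

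Next I would feed in the eigenvalue computation \eqref{Rfv}: for $v$ a pure tensor in $\pi\in\Pi_\k(N)$ lying in $\C v_{\pi_\infty}\otimes\pi_{\fin}^{K_0(N)}$, one has $R(f)v=\bigl(\prod_{p\in\Sb}(\mathcal{S}f_p)(t_{\pi_p})\bigr)v$. The elements of $E_\k(\pi,N)$ are exactly such vectors (up to the global-to-classical dictionary), so $R(f)\varphi=\bigl(\prod_{p\in\Sb}(\mathcal{S}f_p)(t_{\pi_p})\bigr)\varphi$ for each $\varphi\in E_\k(\pi,N)$. Substituting this into the kernel sum and pulling the scalar out of the inner sum over $\varphi\in E_\k(\pi,N)$ yields exactly \eqref{kerspec}.

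The main obstacle — and the reason the proposition is stated separately rather than being a one-line remark — is justifying that the pointwise sum $\sum_{\gamma\in\olG(\Q)}f(x^{-1}\gamma y)$ actually converges to the $L^2$-kernel, given that $\supp f$ is not compact modulo the center; one cannot simply quote the compactly-supported theory. This is precisely what Proposition \ref{Kbound} is for: it provides the dominated-convergence bound $d_\k^{-1}\psi(N)^{-1}|f(g_1^{-1}\gamma g_2)|\le\alpha_\gamma$ with $\sum_\gamma\alpha_\gamma<\infty$ uniformly on compacta, establishing absolute convergence and continuity of $K(x,y)$, and legitimizing the interchange of summation and integration in $R(f)\phi(x)=\int K(x,y)\phi(y)\,dy$. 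So the write-up would: (i) cite Proposition \ref{Kbound} to know $K(x,y)$ is a well-defined continuous function representing $R(f)$; (ii) cite Propositions \ref{contvanish} to reduce to the finite-dimensional space $A_\k(N)$; (iii) expand $K$ in the orthogonal basis $E_\k(N)$ using finiteness of the rank; and (iv) apply \eqref{Rfv} to identify the eigenvalues, arriving at \eqref{kerspec}. I would flag step (i) as the delicate point and treat the rest as bookkeeping with the $\GL(2)$ model in \cite{KL} as a template.
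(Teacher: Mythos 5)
Your proposal matches the paper's proof in substance: both reduce to the finite-dimensional space $A_\k(N)$ via Proposition \ref{contvanish}, identify the eigenvalues using \eqref{Rfv}, and use Proposition \ref{Kbound} to upgrade the a.e.\ equality of $K$ with the finite spectral sum $\Phi$ to an everywhere equality via continuity. The only minor presentational difference is that the paper argues that $\Phi$ and $K$ are both kernel functions for $R(f)$ (hence equal a.e.) and are both continuous, whereas you frame Proposition \ref{Kbound} as legitimizing the integral representation directly; these are two phrasings of the same step.
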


\begin{proof}
As shown in \S \ref{Rf}, the operator $R(f)$ vanishes on $A_\k(N)^\perp$
  and is diagonalizable on $A_\k(N)$, the elements of $E_\k(N)$ (see \eqref{EkN})
  being eigenvectors.  It follows easily that
\[\Phi(x,y)= \sum_{\varphi \in E_\k(N)} \frac{R(f) \varphi(x) \ol{\varphi(y)}}{\|\varphi\|^2}\]
is a kernel function for $R(f)$.  (See e.g. \cite[p. 228]{KL} for details.)
It follows that $K(x,y)=\Phi(x,y)$ a.e.
On the other hand, $\Phi$ is continuous in both variables,
  being a sum of finitely many adelic Siegel cusp forms, while
by Proposition \ref{Kbound}, $K(x,y)$ is also continuous.
  Hence they are equal everywhere.
  Using \eqref{Rfv} we see that $\Phi(x,y)$ is equal to the spectral expression
  given in \eqref{kerspec}.
\end{proof}

\section{Asymptotic Fourier trace formula for $\GSp(2n)$}\label{asymp}

\subsection{Additive characters}
  Let
\[\theta:\Q\bs \A\longrightarrow \C^*\]
  be the nontrivial character whose local components are given by
\begin{equation}\label{thetadef}
\theta_p(x)=\begin{cases}e^{2\pi i x}&\text{if }p=\infty\\
e^{-2\pi i r_p(x)}&\text{if }p<\infty,\end{cases}
\end{equation}
where $r_p(x)\in\Q$ is any number with $p$-power denominator satisfying
  $x\in r_p(x)+\Z_p$.
  All characters of the additive group $\Q\bs \A$ are of the form $\theta(qx)$
   for some $q\in \Q$.
  It follows easily that any character of the additive group
  $S_n(\Q)\bs S_n(\A)$ has the form
  $S\mapsto \theta(\tr \sigma S)$ for some $\sigma\in S_n(\Q)$.
  We fix two such matrices $\sigma_1,\sigma_2\in S_n(\Q)$
and define
\[\theta_j(S)=\theta(\tr\sigma_j S)\qquad(j=1,2).\]
Since we are interested in the $\theta_j$-Fourier coefficients of Siegel cusp forms,
  we can in fact assume that the $\sigma_j$ belong to
 the set
\[\mathcal{R}_n^+= \text{set of half-integral
  positive definite symmetric matrices }\sigma \in \GL_n(\Q)\]
  (this means $2\sigma$ has integer entries and even diagonal entries).

\subsection{The setup and the spectral side}

Given a continuous function $\varphi$ on $\olG(\Q)\bs \olG(\A)$, its Whittaker function along
  the unipotent subgroup $N=\{\mat{I_n}SO{I_n}|\, S\in S_n(\A)\}$ is defined by
\[W_\varphi(g,\chi)=\int_{N(\Q)\bs N(\A)}\varphi(ng)\ol{\chi(n)}dn\]
for $g\in \olG(\A)$ and $\chi$ a character.
Write $n_S=\smat{I_n}SO{I_n}$ for $S\in S_n(\A)$.  There exits
 $\sigma\in S_n(\Q)$ such that
$\chi(n_S)=\theta(\tr\sigma S)$ for all $S$.  Define
\begin{equation}\label{csigma}
c_\sigma(\varphi)=W(1,\chi)
  =\int_{S_n(\Q)\bs S_n(\A)}\varphi(n_S)\ol{\theta(\tr \sigma S)}dS.
\end{equation}

In \S\ref{Cp}, we defined a test function, henceforth to be denoted $f_N$,
  from the following data:
  a finite set $\Sb$ of primes, a level $N$ coprime to $\Sb$,
  a compact set $C_p\subset G(\Q_p)$ for each $p\in\Sb$, and a weight $\k>2n$.
  In this section, we compute the following limit:
\begin{equation}\label{I}
I=\limN\,
\iint\limits_{\scriptstyle (N(\Q)\bs N(\A))^2}
  \frac{K_{f_N}(n_1,n_2)}
{\psi(N)}
\ol{\theta_1(n_1)}\theta_2(n_2) dn_1 dn_2,
\end{equation}
where
$\psi(N)=\meas(\ol{K_0(N)})^{-1}$.
We will compute $I$ in two ways, using the spectral and geometric forms of the kernel
  function.
  The preliminary form of the resulting formula is given in Theorem \ref{A} below.

Using the spectral form \eqref{kerspec} of the
  kernel function along with \eqref{csigma}, we formally obtain
\begin{align}\label{Ispec}
  I= \limN&\frac1{\psi(N)}\sum_{\pi\in \Pi_\k(N)}\sum_{\varphi\in E_\k(\pi,N)}\frac{c_{\sigma_1}(\varphi)
  \ol{c_{\sigma_2}(\varphi)}} {\|\varphi\|^2}
\prod_{p\in S}(\mathcal{S}f_p)(t_{\pi_p}).
\end{align}
The existence of the limit will be demonstrated in Proposition \ref{Ilim} below.

\subsection{The geometric side}

First we show how the limit can be eliminated on the geometric side.

\begin{proposition}\label{Ilim}
Let $f=f_1$ be the test function we have defined when $N=1$.
  (Its finite part is the characteristic function of $\prod Z_pC_p$, where $C_p=K_p$
  for all $p\notin\Sb$.)
Then for $I$ as in \eqref{I}, the limit exists, and
\begin{equation}\label{I1}
I= \iint_{(N(\Q) \bs N(\A))^2} \sum_{\gamma \in \mathbb{P}(\Q)} f(n_1^{-1} \gamma n_2)
\ol{\theta_1(n_1)} \theta_2(n_2) dn_1 dn_2.
\end{equation}
\end{proposition}

\begin{proof}
Recall that $[0,1) \times \Zhat$ is a fundamental domain in $\A$ for $\Q \bs \A$.
  We may therefore replace $N(\Q)\bs N(\A)\cong N(\Q\bs \A)$ by the compact set
  $J=N([0,1]\times\Zhat)$.  Applying Proposition \ref{Kbound} with
   $J_1=J_2=J$, the integrand in \eqref{I} is absolutely bounded by the
  constant $\sum \alpha_\g$.
  Hence by the dominated convergence theorem,
\[ I=\limN  \iint_{J\times J}  \frac{K_{f_N}(n_1, n_2)}{\psi(N)}
\ol{\theta_1(n_1)} \theta_2(n_2) dn_1 dn_2
\]
\[ = \iint_{J\times J} \sum_{\gamma \in \olG(\Q)}
  \limN \frac{f_N(n_1^{-1} \gamma n_2)}{\psi(N)}
  \ol{\theta_1(n_1)} \theta_2(n_2) dn_1 dn_2.
\]
By Proposition \ref{gamma}, we can assume $r(\gamma)=\pm r$. Furthermore,
 if
\[f_{N,\fin}(n_{1\fin}^{-1} \gamma n_{2\fin}) \neq 0 \]
  for $n_{j\fin} \in N(\Zhat)$, then
\[ n_{1\fin}^{-1} \gamma n_{2\fin} \in \prod_{p\in \Sb} C_p \prod_{p\notin \Sb} K_0(N)_p. \]
Therefore
\[ \gamma = \mat WXYZ \in \prod_{p\in \Sb} C_p \prod_{p\notin \Sb} K_0(N)_p \subset M_{2n}(\Zhat). \]
In particular $Y\in M_n(N\Z)$.
Hence for any $Y \neq 0$, $f_{N,\fin}(n_{1\fin}^{-1} \gamma n_{2\fin})=0$ when
  $N$ is sufficiently large.
On the other hand, if $Y=0$, then for $n_j\in J$,
\[\frac{f_N(n_1^{-1}\g n_2)}{\psi(N)} = f(n_1^{-1}\g n_2)=f_\infty(n_{1,\infty}^{-1}\g n_{2,\infty})
  f_{\fin}(\g)\]
for $f=f_1$, which is obviously independent of $N$.
  In particular,
\[
I= \iint_{J\times J} \sum_{\gamma \in {\mathbb P}(\Q)} f_{1}(n_1^{-1} \gamma n_2)
\ol{\theta_1(n_1)} \theta_2(n_2) dn_1 dn_2.
\]
As a function of $n_1$ and $n_2$, the summation over ${\mathbb P}(\Q)$ is $N(\Q)$-invariant
  in both variables. So we can replace the region of the double integral
   by $(N(\Q) \bs N(\A))^2$. This completes the proof.
\end{proof}

\noindent{\em Remark:}  As indicated in the Introduction, the same idea cannot directly
be used to understand the case of fixed $N$ and $\k\to\infty$.
The above argument hinges on the fact that
        \[\lim_{N\to\infty} \frac{f(n_1^{-1}\g n_2)}{\psi(N)}=0\]
    for $n_1,n_2\in J$ and $\g\notin \mathbb{P}(\Q)$.
     For the case of varying $\k$ with $N=1$, the analogous argument would require
    \begin{equation}\label{klim}
    \lim_{\k\to\infty} \frac{f(n_1^{-1}\g n_2)}{c_{n\k\sigma}}=0
    \end{equation}
    for the same $\g,n_1,n_2$, with $c_{n\k\sigma}$ as in Theorem \ref{S=1}.
    However, this is false. We have
    $\ds c_{n\k\sigma}^{-1}=\frac{CB^\k\prod_{j=1}^n\Gamma(\k-\frac{n+j}2)}{d_\k}$
    for positive constants $B$ and $C$ depending only on $n$,
and $d_\k$ is a polynomial in $\k$.  By Stirling's approximation, the product of Gamma factors
grows faster than $\k^{(n-\e)\k}$ for any $\e>0$.
    On the other hand, as we will show in Corollary \ref{mcnorm},
     $d_{\k}^{-1}|f(n_1\g n_2)|$
     has at most exponential decay as $\k\to\infty$.
   It follows that the limit appearing in \eqref{klim} is actually divergent
   for many choices of $\g\in \olG(\Q)\setminus\mathbb{P}(\Q)$.\\

We now examine the sum in \eqref{I1}.
By Proposition \ref{gamma}, we may assume that $r(\g)=\pm r$.
  Because $f_\infty(g)=0$ if $r(g) < 0$, we may in fact take $r(\g)=r$.
By a variant of \eqref{B}, we may write
\[\g=\eta \, g_{A,r},\]
where $\eta\in N(\Q)$ and, for $A\in {\GL_n(\Q)}$,
\[g_{A,r}:=\mat A{}{}{r\,{}^tA^{-1}}.\]
Recalling that $\g\in \mathbb{P}(\Q)$ is only defined up to multiplication by the center,
we observe that for $\lambda\in\Q^*$, $\lambda g_{A,r}=g_{\lambda A, r\lambda^2}$.
Hence by our insistance that $r(\g)=r$, we may only scale by $\lambda=\pm 1$.
Therefore
\[ \sum_{\gamma \in \mathbb{P}(\Q)} f(n_1^{-1} \gamma n_2) =
\sum_{A\in \GL_n(\Q)/\{\pm I_n\} } \sum_{\eta\in N(\Q)}f(n_1^{-1}\eta g_{A,r}n_2).\]
    Taking, as we may, $n_1, n_2$ to range through the fundamental domain
  $N([0,1] \times \Zhat)$ for $N(\Q)\bs N(\A)$,
   by \eqref{gamma2} a given summand vanishes unless
\[  \eta g_{A,r} \in n_{1\fin} M_{2n}(\Zhat) n_{2\fin}^{-1} \subseteq M_{2n}(\Zhat).\]
  This implies that $A\in M_n(\Z)$ and $rA^{-1} \in M_n(\Z)$.
  Hence
\[I=\sum_{A\in\{\pm I_n\}\bs M_n(\Z),\atop{rA^{-1}\in M_n(\Z)}}
\iint_{(S_n(\Q)\bs S_n(\A))^2} \sum_{S\in S_n(\Q)}
f \left( \smat{I_n}{-S_1}{O}{I_n} \smat{I_n}{S}{O}{I_n} g_{A,r} \smat{I_n}{S_2}O{I_n} \right)
\]
\[\times
  \theta(-\tr \sigma_1 S_1 + \tr \sigma_2 S_2) dS_1 dS_2.\]
The double integral becomes
\[\int\limits_{ S_n(\Q) \bs S_n(\A)}\int_{S_n(\A)}
f \left( \smat{I_n}{-S_1}{O}{I_n} g_{A,r} \smat{I_n}{S_2}O{I_n} \right)
  \theta(-\tr \sigma_1 S_1 + \tr \sigma_2 S_2) dS_1 dS_2,\]
  which equals
\[ \int\limits_{ S_n(\Q) \bs S_n(\A)}\int_{S_n(\A)}
  f \left( \smat{I_n}{-(S_1-r^{-1} A S_2 \t  A)}{O}{I_n} g_{A,r}  \right)
  \theta(-\tr \sigma_1 S_1 + \tr \sigma_2 S_2) dS_1 dS_2. \]
  Making the substitution  $S'_1=S_1 - r^{-1} A S_2 \t A$, the above becomes
\begin{align*}
   \iint\limits_{\scriptstyle S_n(\A) \times (S_n(\Q) \bs S_n(\A))}
    &f(\left( \mat{I_n}{-S'_1}{O}{I_n} g_{A,r}  \right)\\
    &\times
   \theta(-\tr(\sigma_1 S'_1 + r^{-1} \sigma_1 A S_2 \t A) + \tr \sigma_2 S_2) dS'_1 dS_2
\end{align*}
\begin{align*}
    =\int\limits_{\scriptstyle S_n(\A)} &f(\left( \mat{I_n}{-S'_1}{O}{I_n} g_{A,r}
    \right) \theta(-\tr \sigma_1 S'_1) dS'_1\\
    &\times
\int\limits_{\scriptstyle S_n(\Q)\bs S_n(\A)}
  \theta(\tr(\sigma_2 S_2-r^{-1} \sigma_1 AS_2 \t A)) dS_2.
\end{align*}
The value of the second integral is
\[
   \begin{cases}
  \meas(S_n(\Q) \bs S_n(\A)) = 1 &
       \text{if $\theta(\tr(\sigma_2 S_2-r^{-1} \sigma_1 AS_2 \t A)) = 1$}\\
                                     &\text{for all $S_2 \in S_n(\A)$}, \\\\
   0 & \text{otherwise}.
   \end{cases} \]

\begin{lemma} We have $\theta(\tr (\sigma_2 S- r^{-1} \sigma_1 A S \t A))= 1$ for all $S \in S_n(\A)$
  if and only if
\[\t A \sigma_1 A = r \sigma_2.\] \end{lemma}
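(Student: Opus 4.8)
The plan is to rewrite the quantity inside $\theta$ as $\tr(BS)$ for a single rational symmetric matrix $B$, and then observe that the displayed condition says exactly that $B$ lies in the kernel of the perfect trace pairing on $S_n(\A)$ induced by $\theta$, which forces $B=0$. Concretely, I would first use cyclic invariance of the trace together with the symmetry of $\sigma_1$: since $\tr(\sigma_1 A S\,\t A)=\tr(\t A\sigma_1 A\, S)$ and $\t A\sigma_1 A$ is symmetric (because $\sigma_1$ is), one has
\[\tr\!\bigl(\sigma_2 S-r^{-1}\sigma_1 A S\,\t A\bigr)=\tr(BS),\qquad B:=\sigma_2-r^{-1}\,\t A\sigma_1 A\in S_n(\Q).\]
Thus the lemma reduces to the claim that $\theta(\tr(BS))=1$ for all $S\in S_n(\A)$ if and only if $B=0$, and $B=0$ is literally the asserted identity $\t A\sigma_1 A=r\sigma_2$.

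The implication $B=0\Rightarrow\tr(BS)=0\Rightarrow\theta(\tr(BS))=1$ is immediate. For the converse I would test against the coordinate symmetric matrices: writing $\tr(BS)=\sum_i B_{ii}S_{ii}+2\sum_{i<j}B_{ij}S_{ij}$, and taking $S$ to be the matrix whose only nonzero entry is $x\in\A$ in position $(k,k)$, one gets $\theta(B_{kk}x)=1$ for all $x\in\A$; taking $S$ with entries $x$ in positions $(k,l)$ and $(l,k)$ and zero elsewhere, one gets $\theta(2B_{kl}x)=1$ for all $x\in\A$. Since $\theta$ is a nontrivial character of $\A$ and multiplication by any nonzero $c$ is an automorphism of $\A$, the character $x\mapsto\theta(cx)$ is trivial only when $c=0$; applying this (and using that $2\in\A^*$) yields $B_{kk}=0$ and $B_{kl}=0$ for all $k,l$, i.e. $B=0$.

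I do not expect any genuine difficulty here; this is essentially the self-duality of $\A$ under $\theta$ transported to the trace form on symmetric matrices. The only points that warrant a line of care are that $S$ ranges over the full group $S_n(\A)$, so one must invoke nontriviality of $\theta$ on $\A$ itself rather than merely on $\Q\bs\A$, and the harmless factor of $2$ on the off-diagonal entries, which is invertible in $\A$ and so can be cancelled.
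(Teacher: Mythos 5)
Your proof is correct and follows the same route as the paper: the paper likewise uses $\tr(AB)=\tr(BA)$ to rewrite the argument as $\theta(\tr((\sigma_2-r^{-1}\,\t A\sigma_1A)S))$ and then concludes immediately. You have merely spelled out the nondegeneracy step (testing against coordinate symmetric matrices and using that $x\mapsto\theta(cx)$ is trivial only for $c=0$) that the paper leaves implicit.
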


\begin{proof} By the fact that $\tr(AB) = \tr(BA)$,
 \[\theta(\tr (\sigma_2 S- r^{-1} \sigma_1 A S \t A))
= \theta(\tr ((\sigma_2 -r^{-1} \t A \sigma_1 A) S)).\]
 The lemma follows from this.
   \end{proof}

\subsection{Main formula}

The result of the above computation is the following asymptotic Petersson formula
  for $\PGSp(2n)$.

\begin{theorem} \label{A} Let $\Sb$ be a finite set of prime numbers, fix sets $C_p$
  as in \S \ref{Cp}, and let $r\ge 1$ be the integer defined in \eqref{r}.
  Let $f=f_1$ be the associated test function defined there
  for the case of level $N=1$.  Then for $\k>2n$ with $2|n\k$, and
  any $\sigma_1,\sigma_2\in \mathcal{R}_n^+$,
\begin{align}\notag \limN&\frac1{\psi(N)}\sum_{\pi\in \Pi_\k(N)}
  \left(\sum_{\varphi\in E_\k(\pi,N)}\frac{c_{\sigma_1}(\varphi)
  \ol{c_{\sigma_2}(\varphi)}} {\|\varphi\|^2}\right)
\prod_{p\in \Sb}(\mathcal{S}f_p)(t_{\pi_p})
\\
\label{Ae}
&= \sum_{A} \int_{S_n(\A)} f(\mat{I_n}SO{I_n} \mat AOO{r\t A^{-1}}) \theta(\tr \sigma_1 S) dS.
 \end{align}
    Here, $\Pi_\k(N)$ and $E_\k(\pi,N)$ were defined in \S \ref{siegelmod},
    $S_n(\A)$ is the set of symmetric $n\times n$ matrices over the adeles $\A$,
and  $A$ runs through the finite set of rank $n$ matrices in $M_n(\Z)/\{\pm I_n\}$
  satisfying the following conditions:
\begin{enumerate}
\item $r\t A^{-1} \in M_n(\Z)$.
\item $\t A\sigma_1 A = r \sigma_2$.
\end{enumerate}
\end{theorem}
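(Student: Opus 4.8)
The plan is to evaluate the quantity $I$ of \eqref{I} in two ways and equate the outcomes. On the spectral side, substituting the spectral expansion \eqref{kerspec} of the kernel into \eqref{I} and recognizing the inner integrals as the Fourier coefficients $c_{\sigma_j}(\varphi)$ of \eqref{csigma} already yields \eqref{Ispec}, which is exactly the left-hand side of \eqref{Ae}. So all the work lies in showing that $I$ also equals the geometric expression on the right.

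First I would invoke the reduction to the level-one test function: by Proposition \ref{Kbound} the integrand of \eqref{I}, restricted to the compact fundamental domain $J\times J$, is bounded uniformly in $N$ by the summable quantity $\sum_\g \alpha_\g$, so dominated convergence lets one take the limit inside; then for $N$ large the support condition \eqref{gamma2} kills every $\g$ whose lower-left block is nonzero, and what survives is exactly \eqref{I1} with $f=f_1$. Next I would unfold the remaining sum over $\mathbb{P}(\Q)$. By Proposition \ref{gamma} only $\g$ with $r(\g)=\pm r$ contribute, and since $f_\infty$ vanishes when the similitude is negative one may take $r(\g)=r$; writing $\g=\eta\,g_{A,r}$ with $\eta\in N(\Q)$ and $A\in\GL_n(\Q)/\{\pm I_n\}$, the containment in $M_{2n}(\Zhat)$ from \eqref{gamma2} forces $A\in M_n(\Z)$ and $r\,{}^tA^{-1}\in M_n(\Z)$, i.e.\ condition (1).

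For each such $A$ the remaining double integral over $(S_n(\Q)\bs S_n(\A))^2$ is handled by the change of variable $S_1'=S_1-r^{-1}AS_2\,{}^tA$, which decouples it into $\int_{S_n(\A)} f(\smat{I_n}{-S_1'}{O}{I_n}g_{A,r})\,\theta(-\tr\sigma_1 S_1')\,dS_1'$ times $\int_{S_n(\Q)\bs S_n(\A)}\theta(\tr(\sigma_2 S_2-r^{-1}\sigma_1 A S_2\,{}^tA))\,dS_2$; by the Lemma the latter integral equals $1$ precisely when ${}^tA\sigma_1A=r\sigma_2$ (condition (2)) and vanishes otherwise. Finally, applying the measure-preserving substitution $S\mapsto -S$ on $S_n(\A)$ and recalling $g_{A,r}=\smat AOO{r\,{}^tA^{-1}}$ rewrites the surviving terms as the right-hand side of \eqref{Ae}.

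I expect the only genuinely delicate point to be the one already settled in Proposition \ref{Kbound} and in the passage to \eqref{I1}: because the archimedean factor of the test function is not compactly supported modulo the center, producing a dominating function requires the explicit evaluation of the discrete-series matrix coefficient from Corollary \ref{matcoeffmain}, and one must separately observe that for fixed $\sigma_1,\sigma_2$ only finitely many $A$---hence finitely many double cosets $\g$---meet conditions (1) and (2), so that the geometric side is a finite sum. Granting those inputs, the rest is the routine unfolding above; the main risk of error lies in tracking signs and the $\{\pm I_n\}$-ambiguity coming from $\mathbb{P}=P/Z$.
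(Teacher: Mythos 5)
Your proposal follows the paper's proof essentially step for step: spectral expansion gives the left side, Proposition \ref{Kbound} plus dominated convergence reduces to the level-one test function and eliminates $\gamma$ with nonzero lower-left block, the Bruhat-like factorization $\gamma=\eta\,g_{A,r}$ and the substitution $S_1'=S_1-r^{-1}AS_2\,{}^tA$ decouple the integral, and the Lemma identifies condition (2). The one point you flag but do not supply is the finiteness of the set of admissible $A$, which is part of the theorem's assertion; the paper settles it by observing that each column of $A$ lies in the intersection of $\Z^n$ with a compact ellipsoid $\{v:\,{}^tv\sigma_1 v=d\}$, which is finite. With that observation supplied, your argument is complete and matches the paper's.
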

\noindent{\em Remarks:} (1) In Appendix \ref{AB}, we give a quantitative version of the
  above for $\GSp(4)$.

(2) Haar measure is normalized as follows.  On the geometric side
  and in the definition of the Fourier coefficients \eqref{csigma}
  we take $\meas(S_n(\Zhat))=1$ and
  Lebesgue measure on the Euclidean space $S_n(\R)$.  The Satake transform
  is defined using $\meas(K_p)=1$.  The archimedean test function $f_\infty$
  depends (via $d_\k$) on an unspecified choice of measure on $\olG(\R)$.
  This choice materializes on the spectral side in $\|\varphi\|^{-2}$.
  The exact relationship between several natural choices of Haar
  measure on $\Sp_{2n}(\R)$ is computed in \cite[\S A]{PSS}.
\begin{proof}
  In view of the discussion in the previous two subsections,
  it just remains to prove that only finitely many matrices
   $A$ satisfy the given conditions.
For fixed $j$, let $d\in \R$ be the entry in the $j$-th row and $j$-th column of $r\sigma_2$.
Suppose $A\in M_n(\Z)$ satisfies condition 2, and let $v\in \Z^n$
  denote the $j$-th column of $A$.  Then ${}^tv \sigma_1 v=d$.
  We will show that there are only finitely many such $v$ (this is well-known),
  from which it follows that the set of $A$ is also finite since $j$ is
  arbitrary.

Because $\sigma_1$ is symmetric, there exists an orthogonal matrix $Q$ such that
  $\sigma_1={}^tQ\Lambda Q$, where $\Lambda=\text{diag}(\lambda_1,\ldots,\lambda_n)$
   is diagonal.  Furthermore the eigenvalues $\lambda_j$ are all positive
  since $\sigma_1$ is positive definite.
  It follows that the linear map $Q:\R^n\longrightarrow \R^n$ restricts to give an
  isometry between the sets
\[
X=\{v\in\R^n|\, {}^tv\sigma_1v=d\}
\]
and
\[Y=\{w\in \R^n|\, {}^tw\Lambda w=d\}.\]
Notice that $Y$ is the ellipsoid
\[\lambda_1x_1^2+\cdots+\lambda_nx_n^2=d,\]
which is compact.  Hence $X$ is also compact.  Since $\Z^n$ is discrete,
  it follows that there are only finitely many integer lattice points in $X$,
  as claimed.
\end{proof}

\section{Refinement of the geometric side}

Fix a matrix $A\in M_n(\Z)$ satisfying the hypotheses of Theorem \ref{A}.
The integral on the geometric side of \eqref{Ae} can be factorized as
\[I_A=\prod_{p\le \infty} I_{A,p}=\prod_{p\le \infty}
\int_{S_n(\Q_p)} f_p(\mat{I_n}SO{I_n} \mat AOO{r\t A^{-1}})
   \theta_{p}(\tr \sigma_1 S) dS.\]
We will see that for all $p\notin \Sb$, $I_{A,p}$ can be computed explicitly,
and is independent of $A$ when $\k$ is even.

\subsection{Archimedean integral}
When $p=\infty$, by Corollary \ref{matcoeffmain} in the Appendix,
\[I_{A,\infty}= \int_{S_n(\R)} f_\infty(\mat{I_n}SO{I_n} \mat AOO{r\t A^{-1}})
 e^{2\pi i\tr \sigma_1 S} dS \]
\[ = \int_{S_n(\R)} \frac{ d_\k 2^{n\k} r^{n\k/2}
  e^{2\pi i\tr \sigma_1 S} }{\det(A+r\t A^{-1}+i S r \t A^{-1})^{\k}}\,dS
\]
\begin{equation}\label{arch0}
=\frac{d_\k 2^{n\k} (\det A)^\k }{ r^{n\k/2}}
    \int_{S_n(\R)} \frac{
  e^{2\pi i\tr \sigma_1 S}}{\det(r^{-1} A \t A + I_n +i S)^{\k}}\, dS.
\end{equation}
We apply the following formula of Ingham (also found in a paper of Siegel).
Let
\begin{equation}\label{Gamman}
 \Gamma_n(a) = \pi^{n(n-1)/4} \prod_{j=1}^n \Gamma(a-\tfrac 12 (j-1)).
\end{equation}
 Then for $\delta > \frac{n-1}2$ and symmetric matrices $X_0,\Lambda > 0$,
\[ \int_{S_n(\R)} \frac{e^{\tr i \Lambda Y}}{\det(X_0 + i Y)^{\delta+(n+1)/2}} dY\]
\[  =
\frac {1} {i^{n(n+1)/2}2^{(n-1)n/2}}
  \frac{ (2\pi i)^{n(n+1)/2}
(\det \Lambda)^\delta}{\Gamma_n(\delta+(n+1)/2)} e^{-\tr \Lambda X_0}
\]
\begin{equation}\label{herz}
=\frac{2^{n}\pi^{n(n+1)/2}
(\det \Lambda)^\delta}{\Gamma_n(\delta+(n+1)/2)} e^{-\tr \Lambda X_0}
\end{equation}
  (cf. \cite[(1)]{I}, \cite[Hilfssatz 37, p. 585]{Si}; we have used the form given by
  Herz \cite[(1.2)]{H}).
In \cite{H}, the measure is $dZ= \prod_{j\le k}dz_{jk}$, where
   $Z=X_0+iY=(\eta_{jk} z_{jk})$ with $\eta_{jk}=1$ if $j=k$, and $1/2$ otherwise.
Thus
\[dZ= i^{n(n+1)/2}2^{(n-1)n/2} dY,\]
which explains the first factor in the above formula.

We evaluate \eqref{arch0} using \eqref{herz} with $\delta=\k-(n+1)/2>(n-1)/2$,
$X_0= I_n + r^{-1} A \t A $, and $\Lambda = 2\pi \sigma_1$.
By condition $(2)$ of Theorem \ref{A},
  $\det(A)= \pm r^{n/2} \left(\frac{\det \sigma_2}{\det \sigma_1}\right)^{1/2}$.
So \eqref{arch0} becomes
\[\frac{d_\k 2^{n\k}}{ r^{n\k/2}}\sgn(\det A)^\k r^{n\k/2}
  \left(\frac{\det \sigma_2}{\det \sigma_1}\right)^{\k/2}
2^{n}\pi^{n(n+1)/2}\frac{\det(2\pi\sigma_1)^{\k-(n+1)/2}}
  {\Gamma_n(\k)\,e^{2\pi\tr(\sigma_1(I_n+r^{-1}A\,\tA))}}.\]
We simplify the above using
 $\tr(\sigma_1r^{-1} A \t A) = \tr( r^{-1} \t A\sigma_1 A) = \tr \sigma_2$
for matrices $A$ as in Theorem \ref{A}.
The result is the following:
\begin{equation}\label{arch}
I_{A,\infty}=\sgn(\det A)^\k \,d_\k
  \left(\frac{\det \sigma_2}{\det \sigma_1}\right)^{\k/2}
\frac{(\det\sigma_1)^{\k-(n+1)/2}(4\pi)^{n\k}}{2^{n(n-1)/2}\Gamma_n(\k)
  \,e^{2\pi\tr(\sigma_1+\sigma_2)}}.
 \end{equation}
Observe that this is independent of $A$ when $\k$ is even.

\subsection{Nonarchimedean integrals: $p\notin \Sb$}\label{qnotp}
In this case, $r\in\Z_p^*$, and $A\in \GL_n(\Z_p)$.
  It follows that
\begin{align*}   f_p(\mat{I_n}SO{I_n} \mat AOO{r\t A^{-1}})\neq 0&\iff
\mat{I_n}SO{I_n} \mat AOO{r\t A^{-1}}\in K_p\\
&\iff S\in M_n(\Z_p).
\end{align*}
For such $S$, $\theta_p(\tr\sigma_1S)=1$, so we find that
\[I_{A,p}=
\int_{S_n(\Q_p)}
   f_p(\mat{I_n}SO{I_n} \mat AOO{r\t A^{-1}})
   \theta_{p}(\tr \sigma_1 S) dS=\meas(S_n(\Z_p))=1.\]
(Recall that we use the $N=1$ test function $f_1$ in \eqref{Ae},
 so that $f_p$ is the characteristic function of $Z_pK_p$ when $p\notin\Sb$.)

\subsection{Proof of Theorem \ref{S=1}}\label{S1}

When $\Sb=\emptyset$, we now have a completely explicit expression for
  the right-hand side of \eqref{Ae}. Since $r=1$, the sum over $A$ is nonzero
  only if $\t A\sigma_1A=\sigma_2$ for some $A\in \GL_n(\Z)/\{\pm I_n\}$.
  In particular, $(\det A)^2 = 1$, so $\det \sigma_1=\det\sigma_2$.
  If we let $I_A$ denote the summand indexed by $A$ in Theorem \ref{A},
  then by the above discussion,
\begin{equation}\label{arch2}
I_{A}=(\det A)^\k\,d_\k\,{(4\pi)^{n\k-n(n-1)/4}}
\frac{(\det\sigma_1)^{\k-(n+1)/2}}{\prod_{j=1}^n\Gamma(\k-\frac{j-1}2)}
e^{-2\pi\tr(\sigma_1+\sigma_2)}.
 \end{equation}

We wish to express the spectral side in classical terms. With notation as in \eqref{Hn},
let $S_\k(N)$ be the
  space of Siegel cusp forms $F$ satisfying
\[F(\g\cdot \mathfrak{Z})=j(\g,Z)^{\k}F(\mathfrak{Z})\]
  for all $\mathfrak{Z}\in\mathcal{H}_n$ and
\[\g\in \Gamma_0(N)=\{\smat ABCD\in \Sp_{2n}(\Z)|\, C\equiv O\mod N\},\]
where
\[j(g,\mathfrak{Z})=r(g)^{-n/2}\det(C\mathfrak{Z}+D)\qquad(g=\smat ABCD\in G(\R)).\]
Any $F\in S_\k(N)$ has a Fourier expansion
\begin{equation}\label{Fourier}
    F(\mathfrak{Z})=\sum_{\sigma\in\mathcal{R}_n^+}a_\sigma(F) e^{2\pi i \tr\sigma \mathfrak{Z}},
    \quad (\mathfrak{Z}\in\mathcal{H}_n).
\end{equation}
We normalize the Petersson/Maass scalar product on $S_\k(N)$ by
\begin{equation}\label{Pet}
\sg{F,H}=\frac1{\psi(N)}\int_{\Gamma_0(N)\bs\mathcal{H}_n}
    F(\mathfrak{Z})\ol{H(\mathfrak{Z})}(\det Y)^{\k-n-1}dX\,dY\quad (\mathfrak{Z}=X+iY),
\end{equation}
where $\psi(N)=[K_{\fin}:K_0(N)]=[\Gamma_0(1):\Gamma_0(N)]$.

We need to choose the quotient measure on $\olG(\Q)\bs\olG(\A)$ compatibly with the
  above.  For any $N\ge 1$, let $D_N\subset \mathcal{H}_n$ be a fundamental
  domain for $\Gamma_0(N)\bs \mathcal{H}_n$, identified with a subset of
  $\Sp_{2n}(\R)$ via
\[\mathfrak{Z}=X+iY\longleftrightarrow b_\mathfrak{Z}=\mat{Y^{1/2}}{XY^{-1/2}}{O}{Y^{-1/2}}.\]
  Then, as in \cite[Prop. 7.43]{KL} for example,
   we may define a quotient measure on $\olG(\Q)\bs\olG(\A)$ by
\[\int_{\olG(\Q)\bs\olG(\A)} h(g)dg
=\int_{D_NK_\infty\times K_0(N)}h(b_\mathfrak{Z}\,k_\infty\times k_{\fin})
  \frac{dX\,dY}{(\det Y)^{n+1}}dk_\infty dk_{\fin},\]
where the compact groups $K_\infty$ and $K_{\fin}$ each have total volume $1$.
This measure $dg$ is independent of the choice of $N$ since
\[\meas(D_N)\meas(K_0(N))=[\ol{\Gamma_0(1)}:\ol{\Gamma_0(N)}]\meas(D_1)\psi(N)^{-1}=\meas(D_1).\]
Taking $N=1$, a well-known computation of Siegel (\cite{Si2}) gives
\[\meas(\olG(\Q)\bs\olG(\A))=\meas(\Sp_{2n}(\Z)\bs\mathcal{H}_n)= 2\prod_{j=1}^n[(j-1)!\pi^{-j}
  \zeta(2j)].\]

Given $F\in S_\k(N)$, its adelic counterpart is the function $\varphi_F\in L^2_0$
  defined by
\begin{equation}\label{phiF}
\varphi_F(g)=F(g_\infty\cdot iI_n)j(g_\infty,iI_n)^{-\k}
\end{equation}
for $g=\g(g_\infty\times k)\in G(\A)=G(\Q)(G(\R)^+\times K_0(N))$.
  The well-definedness of $\varphi_F$ is a consequence of the fact that $G(\Q)\cap
  (G(\R)^+\times K_0(N))=\Gamma_0(N)$.
  With measures normalized as above,
  the map $F\mapsto \varphi_F$ defines a linear isometry from $S_\k(N)$ onto
  the subspace $A_\k(N)\subset L^2_0$ defined in \eqref{AkN}.
  This may be proven just as in
  \cite{Sa}.  (The latter paper works with the principal congruence subgroup
  of level $N$, but the Siegel parabolic case is just the same.)
The relationship between the adelic and classical Fourier coefficients
  is given by the following (see \eqref{csigma}).

\begin{proposition}\label{fourier}
For $F\in S_\k(N)$ and $\sigma\in S_n(\Q)$,
\begin{equation}
c_{\varphi_F}(\sigma)=\begin{cases} e^{-2\pi \tr\sigma}a_F(\sigma)
& \text{if }\sigma\in\mathcal{R}_n^+\\
0&\text{otherwise.}\end{cases}
\end{equation}
\end{proposition}

\begin{proof}
By strong approximation for the adeles,
\[S_n(\A)=S_n(\Q)+S_n(\R)\times S_n(\Zhat).\]
Decomposing $S=X_\Q+(X_\infty\times X_{\fin})\in S_n(\A)$ accordingly,
  by the right $K_0(N)$-invariance and left $G(\Q)$-invariance
  of $\varphi_F$ we have
\[\varphi_F(\mat{I_n}S{}{I_n})=\varphi_F(\mat{I_n}{X_\infty}{}{I_n}).\]
Also, it follows that
\[S_n(\Q)\bs S_n(\A)=S_n(\Z)\bs (S_n(\R)\times S_n(\Zhat)).\]
  If $D$ is any fundamental domain for $S_n(\Z)\bs S_n(\R)$,
 then $D\times S_n(\Zhat)$ is a fundamental domain for
  $S_n(\Z)\bs (S_n(\R)\times S_n(\Zhat))$ (\cite[Theorem 7.40]{KL}).
Therefore \eqref{csigma} becomes
\[c_{\varphi_F}(\sigma)=\int_{S_n(\Z)\bs S_n(\R)}\varphi_F(\mat{I_n}{X_\infty}{}{I_n})
    \theta_\infty(-\tr(\sigma X_\infty))dX_\infty\]
    \[\times
  \int_{S_n(\Zhat)}\theta_{\fin}(-\tr(\sigma X_{\fin}))dX_{\fin}.\]
  Write $X_{\fin}=(X_{ij})$ for $X_{ij}\in \Af$.
  If we write $\sigma=(b_{ij}\sigma_{ij})$, where $b_{ij}$ is equal to $1$ or $1/2$ according
  to whether or not $i= j$,
  then $\tr(\sigma X_{\fin})=\sum_{i,j} \sigma_{ij}X_{ij}$. Note that $\sigma_{ij}\in\Z$
  for all $i,j$ if and only if $\sigma$ is half-integral, i.e., $\sigma\in\mathcal{R}_n$.
  If this condition does not hold, then
  the second integral vanishes.

Assuming $\sigma\in\mathcal{R}_n$, using \eqref{thetadef} and \eqref{phiF} we have
\[c_{\varphi_F}(\sigma)=\int_{S_n(\Z)\bs S_n(\R)}F(X+iI_n)
  e^{-2\pi i\tr(\sigma X)}dX\]
\[=e^{-2\pi\tr \sigma}\int_{S_n(\Z)\bs S_n(\R)}F(X+iI_n)
  e^{-2\pi i\tr(\sigma(X+iI_n ))}dX\]
\[=e^{-2\pi\tr\sigma}a_F(\sigma).\qedhere\]
\end{proof}

Letting $\mathcal{B}_\k(N)$ be an orthogonal basis for $S_\k(N)$,
in the special case $S=\emptyset$ and $r=1$, our main formula \eqref{A} becomes
\begin{align}
\lim_{N\to\infty}\frac1{\psi(N)}&\sum_{F\in \mathcal{B}_\k(N)}
  \frac{a_{\sigma_1}(F) \ol{a_{\sigma_2}(F)}} {\|F\|^2}\\
\notag &=\sum_{A\in\GL_n(\Z)/\pm I_n\atop{\t A\sigma_1A=\sigma_2}}
  (\det A)^\k\,d_\k\,{(4\pi)^{n\k-n(n-1)/4}}
\frac{(\det\sigma_1)^{\k-(n+1)/2}}{\prod_{j=1}^n\Gamma(\k-\frac{j-1}2)}.
 \end{align}

For the formal degree, we will take the
  classical measure on $\Sp(2n)$ corresponding to the measure in \eqref{Pet}.
  So as shown in \cite{PSS},
\[d_\k = \frac1{2^{n}(4\pi)^{n(n+1)/2}}
\prod_{1\le i\le j\le n}(2\k-(i+j)).\]
(See the final remark after Proposition \ref{dkprop} below.)
   We can simplify $\frac{d_\k}{\Gamma_n(\k)}$ using the following.

\begin{lemma} For any integers $n\ge 1$ and $\k>2n$,
\[\frac{\prod_{1\le i\le j\le n}(2\k-(i+j))}
{\prod_{\ell=1}^n\Gamma(\k-\frac{\ell-1}2)}
  =\frac{2^{n(n+1)/2}}{\prod_{j=1}^n \Gamma(\k-\frac{n+j}2)}.\]
\end{lemma}

\begin{proof}
  When $n=1$, by the functional equation for the Gamma function we have
\[\frac{(2\k-2)}{\Gamma(\k)}=
\frac{2(\k-1)}{\Gamma(\k)}=\frac2{\Gamma(\k-1)},\]
  as needed.  Given $n\ge 2$, suppose the formula holds for $n-1$.  Then
\[\frac{\prod_{1\le i\le j\le n}(2\k-(i+j))}
{\prod_{\ell=1}^n\Gamma(\k-\frac{\ell-1}2)}
=
\frac{\prod_{1\le i\le j\le n-1}(2\k-(i+j))}
{\prod_{\ell=1}^{n-1}\Gamma(\k-\frac{\ell-1}2)}
\frac{\prod_{1\le i\le n}
  (2\k-(i+n))}{\Gamma(\k-\frac{n-1}2)}
\]
\[
=\frac{2^{n(n-1)/2}}
  {\prod_{j=1}^{n-1}\Gamma(\k-\frac{n-1+j}2)}
 \frac{\prod_{1\le i\le n}(2\k-(i+n)) }{\Gamma(\k-\frac{n-1}2)}
\]
\[
=2^{n(n+1)/2}\prod_{i=1}^n\frac{(\k-\frac{i+n}2)}{\Gamma(\k-\frac{i+n}2+1)}
  =\frac{2^{n(n+1)/2}}{\prod_{i=1}^n \Gamma(\k-\frac{n+i}2)}.
\qedhere\]
\end{proof}

It now follows that
\begin{align}\notag\lim_{N\to\infty}&\frac1{\psi(N)}\sum_{F\in \mathcal{B}_\k(N)}
    \frac{a_{\sigma_1}(F) \ol{a_{\sigma_2}(F)}} {\|F\|^2}\\
    &=\frac{(\det\sigma_1)^{\k-(n+1)/2}}
{\pi^{n(n-1)/4}(4\pi)^{n(n+1)/2-n\k}\prod_{j=1}^n\Gamma(\k-\frac{n+j}2)}
  \delta_\k(\sigma_1,\sigma_2),
 \end{align}
for $\delta_\k(\sigma_1,\sigma_2)$ given in \eqref{deltak}.
  This proves Theorem \ref{S=1}.

\subsection{Nonarchimedean integrals: $p\in \Sb$}\label{pinS}

This case is more difficult.
Our goal is to compute (or bound) the local integral
\[I_{A,p}=I_{A,p}(f_p)=\int_{S_n(\Q_p)} f_p(\mat{I_n}SO{I_n} \mat AOO{r\t A^{-1}})
   \theta_{p}(\tr \sigma_1 S) dS.\]
To simplify the computation, we may
  essentially reduce to the case where $A$ is diagonal, as follows.
By the elementary divisors theorem, there exist $U,V\in \GL_n(\Z)$
and a diagonal matrix
\[D=\diag(d_1,\ldots,d_n)\]
with positive integer entries satisfying $d_1|d_2|\cdots|d_n$,
  such that
\begin{equation}\label{UDV}
A=UDV.
\end{equation}

\begin{proposition}
For $A$ as above,
\[I_{A,p}=\int_{S_n(\Q_p)}f_p(\mat{I_n}SO{I_n}\mat{D}OO{rD^{-1}})
  \theta_p(\tr\sigma_1' S) dS,\]
where
\[\sigma_1'=\sigma_U={}^tU\sigma_1U.\]
\end{proposition}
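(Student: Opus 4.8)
The plan is to reduce the integral attached to $A$ to the one attached to the diagonal matrix $D$ using only the bi-$Z_pK_p$-invariance of $f_p$ together with a linear change of variable in $S$. For $B\in\GL_n(\Q)$ write $g_{B,s}=\mat BOO{s\,\t B^{-1}}$ as in the text. The matrix $V$ will be absorbed at the very first step, which is why the resulting twist involves $U$ alone.

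First I would record the block factorization. Since $D$ is diagonal we have $\t D=D$, hence $r\,\t(UDV)^{-1}=\t U^{-1}(rD^{-1})\,\t V^{-1}$, and therefore
\[g_{A,r}=g_{U,1}\,g_{D,r}\,g_{V,1}.\]
Because $\det U,\det V\in\{\pm1\}$, a direct check shows that $g_{U,1}$ and $g_{V,1}$ have similitude $1$ and that they and their inverses have entries in $\Z_p$; hence both lie in $K_p=\GSp_{2n}(\Z_p)$. The right $K_p$-invariance of $f_p$ then lets me delete the factor $g_{V,1}$, leaving the integral $\int_{S_n(\Q_p)} f_p(n_S\,g_{U,1}\,g_{D,r})\,\theta_p(\tr\sigma_1 S)\,dS$.

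Next I would commute the unipotent past $g_{U,1}$: a one-line matrix multiplication gives $n_S\,g_{U,1}=g_{U,1}\,n_{S'}$ with $S'=U^{-1}S\,\t U^{-1}$, and $S'$ is again symmetric. Applying the left $K_p$-invariance of $f_p$ to remove the now-leading factor $g_{U,1}$, the integral becomes $\int_{S_n(\Q_p)} f_p(n_{S'}\,g_{D,r})\,\theta_p(\tr\sigma_1 S)\,dS$. Finally I would substitute $S=US'\,\t U$: the linear automorphism $S'\mapsto US'\,\t U$ of $S_n(\Q_p)$ has determinant $(\det U)^{n+1}=\pm1$, so it carries $S_n(\Z_p)$ bijectively onto itself and preserves the Haar measure normalized by $\meas(S_n(\Z_p))=1$, while $\tr\sigma_1 S=\tr(\t U\sigma_1 U)S'=\tr\sigma_1'S'$ with $\sigma_1'=\sigma_U=\t U\sigma_1 U$. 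Combining the three steps gives exactly the asserted identity.

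I do not expect any genuine obstacle. The only points calling for a moment's care are confirming that $g_{U,1}$ and $g_{V,1}$ really lie in $K_p$ and that the change of variable $S\mapsto US\,\t U$ is measure-preserving for the chosen normalization — both immediate from $\det U,\det V\in\{\pm1\}$.
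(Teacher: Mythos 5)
Your proof is correct and follows essentially the same route as the paper's: both use the factorization $A=UDV$, remove the $U$ and $V$ contributions via the bi-$K_p$-invariance of $f_p$, and then perform the measure-preserving change of variable $S\mapsto US\,\t U$ on $S_n(\Q_p)$ to land the twist $\sigma_1'=\t U\sigma_1 U$ in the character. Your packaging through the identities $g_{A,r}=g_{U,1}g_{D,r}g_{V,1}$ and $n_S g_{U,1}=g_{U,1}n_{S'}$ is a slightly more structured presentation of the same matrix manipulation the paper carries out directly, and the explicit Jacobian $(\det U)^{n+1}=\pm1$ is a welcome justification of what the paper asserts without computation.
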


\begin{proof}
By definition,
\[I_{A,p}=
\int_{S_n(\Q_p)} f_p(\mat{I_n}SO{I_n} \mat AOO{r\t A^{-1}})
   \theta_{p}(\tr \sigma_1 S) dS\]
   \[ = \int_{S_n(\Q_p)} f_p (\mat{UDV}{rS\,{}^t U^{-1}  D^{-1} \,{}^t V^{-1}}{O}
  {r\, {}^tU^{-1}
    D^{-1} \,{}^t V^{-1}}) \theta_p(\tr \sigma_1 S) dS.\]
Because $f_p$ is bi-$K_p$-invariant, we are free to multiply its
  argument on the left by $\mat{U^{-1}}{}{}{{}^t U} \in K_p$
  and on the right by $\mat{V^{-1}}{}{}{{}^t V} \in K_p$.
  This gives
   \[I_{A,p} = \int_{S_n(\Q_p)} f_p (\mat{D}{r(U^{-1} S\,{}^t U^{-1})  D^{-1}}{O}{r  D^{-1}})
   \theta_p(\tr \sigma_1 S) dS. \]
   Let $S'=U^{-1} S \,{}^t U^{-1}$. Then $dS'=dS$
   since $S \mapsto S'$ is an isomorphism mapping
   $S(\Zhat)$ to $S(\Zhat)$.
   Hence the above is
   \[ = \int_{S_n(\Q_p)} f_p (\mat{D}{rS'  D^{-1}}{O}{r  D^{-1}} )
  \theta_p(\tr \sigma_1 US'\,{}^t U) dS'.\]
   Now using $\tr \sigma_1 US\,{}^t U = \tr {}^t U \sigma_1 US$, we find
   \[I_{A,p} = \int_{S_n(\Q_p)} f_p (\mat{D}{rS  D^{-1}}{O}{r  D^{-1}} )
  \theta_p(\tr \sigma_1'S) dS. \qedhere\]
\end{proof}

For the purpose of computing the above local integral,
by the $K_p$-invariance of $f_p$,
 we may assume that each
  diagonal entry of $D$ is a power of $p$.
    Thus, we take
\[D=\diag(p^{\alpha_1},\ldots,p^{\alpha_n}),\]
for
\[0\le \alpha_1\le\cdots\le \alpha_n.\]
For $x \in \Q_p$, we define $\ord_p(x)=n$ if $x=p^n z$ with $z$ a unit.
So $r_p=\ord_p(r)$, for example.

\begin{proposition}\label{D}
With the above notation, suppose $I_{A,p}\neq 0$.  Then each $\alpha_j\le r_p$.
Under the additional assumption that $\ord_p(\det\sigma_1)=\ord_p(\det\sigma_2)$,
we further have
\[\alpha_1+\cdots+\alpha_n=\frac{nr_p}2.\]
\end{proposition}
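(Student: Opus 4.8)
The plan is to establish the two assertions by quite different elementary arguments: the bound $\alpha_j\le\tau$ will come from the fact that $\supp f_p$ is contained in $Z_p\,M_{2n}(\Z_p)$, while the identity $\alpha_1+\cdots+\alpha_n=n\tau/2$ will come from taking determinants in condition (2) of Theorem \ref{A}.

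For the inequality, I would argue as follows. Since $I_{A,p}\neq 0$, there is some $S\in S_n(\Q_p)$ for which $f_p(g_0)\neq 0$, where $g_0=\mat{I_n}SO{I_n}\mat DOO{rD^{-1}}=\mat{D}{rSD^{-1}}{O}{rD^{-1}}$; thus $g_0\in\supp f_p=Z_pC_p$. Now $r(g_0)=p^\tau$, while every double coset $K_p\lambda(p)K_p$ out of which $C_p$ is built was arranged in \S\ref{Cp} to have $\ell_0=\tau$, so $r(C_p)=p^\tau\Z_p^*$. Writing $g_0=(\mu I_{2n})c$ with $\mu\in\Q_p^*$ and $c\in C_p$ and comparing similitudes forces $\mu^2\in\Z_p^*$, hence $\mu\in\Z_p^*$ and $\mu I_{2n}\in K_p$; therefore $g_0\in K_pC_p=C_p$. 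Since each $\lambda(p)$ appearing has nonnegative exponents (as $0=\ell_1\le\cdots\le\ell_n\le\ell_0/2$ in \eqref{Ccond}) and $K_p=G(\Z_p)\subset M_{2n}(\Z_p)$, we get $C_p\subset M_{2n}(\Z_p)$, so in particular the lower-right block $rD^{-1}=\diag(p^{\tau-\alpha_1},\ldots,p^{\tau-\alpha_n})$ of $g_0$ lies in $M_n(\Z_p)$, giving $\tau-\alpha_j\ge 0$ for every $j$.

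For the identity I would invoke condition (2) of Theorem \ref{A}, namely $\t A\sigma_1A=r\sigma_2$, and take determinants to obtain $(\det A)^2\det\sigma_1=r^n\det\sigma_2$. Applying $\ord_p$, and using $\ord_p(r)=\tau$ together with $\ord_p(\det A)=\alpha_1+\cdots+\alpha_n$ — which holds because $A=UDV$ with $U,V\in\GL_n(\Z)$ of determinant $\pm 1$ and $D=\diag(p^{\alpha_1},\ldots,p^{\alpha_n})$ — yields
\[2(\alpha_1+\cdots+\alpha_n)+\ord_p(\det\sigma_1)=n\tau+\ord_p(\det\sigma_2).\]
Under the hypothesis $\ord_p(\det\sigma_1)=\ord_p(\det\sigma_2)$ this collapses to $\alpha_1+\cdots+\alpha_n=n\tau/2$ (and in particular forces $n\tau$ to be even).

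Neither step presents a real obstacle; the only thing requiring care is the bookkeeping in the first part — checking that removing the central scalar genuinely places $g_0$ inside $C_p$, and confirming that the $\alpha_j$ are precisely the $p$-adic valuations of the elementary divisors of $A$ so that $\sum_j\alpha_j=\ord_p(\det A)$. One could instead try to derive the identity by feeding $g_0\in C_p$ into Proposition \ref{fp}, but the minor-ideal computation there only delivers inequalities such as $\ell_1+\cdots+\ell_n\le\min(\sum_j\alpha_j,\ n\tau-\sum_j\alpha_j)$ rather than the exact value, so the determinant route via Theorem \ref{A} is the clean one.
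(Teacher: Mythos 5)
Your proof is correct, and the second half — taking determinants in $\t A\sigma_1 A=r\sigma_2$, using $\ord_p(\det A)=\alpha_1+\cdots+\alpha_n$ and $\ord_p(r)=\tau$ — is essentially the same as the paper's. The first half takes a genuinely different (and longer) route. The paper simply observes that the Theorem \ref{A} condition $r\,\t A^{-1}\in M_n(\Z)$ transfers to $D$ via $A=UDV$ with $U,V\in\GL_n(\Z)$: since $r\,\t A^{-1}={}^tU^{-1}\bigl(rD^{-1}\bigr){}^tV^{-1}$, one gets $rD^{-1}\in M_n(\Z)$ immediately, hence $\alpha_j\le\tau$, and this never uses the hypothesis $I_{A,p}\neq 0$. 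You instead extract from $I_{A,p}\neq 0$ a point $g_0\in\supp f_p$, strip the central scalar by comparing similitudes, and then use $C_p\subset M_{2n}(\Z_p)$ to read off $rD^{-1}\in M_n(\Z_p)$ from the lower-right block. Both arguments are sound; the paper's is shorter and shows the nonvanishing hypothesis is actually superfluous for either conclusion, whereas yours derives the needed containment from the integral itself (which is the only thing one would have at hand if $A$ were not already assumed to satisfy the Theorem \ref{A} constraints). Your parenthetical remark that the minor-ideal conditions of Proposition \ref{fp} only give inequalities, not the exact identity, is accurate.
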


\begin{proof}
As in Theorem \ref{A}, we are assuming that $r\,{}^t\!A^{-1}\in M_n(\Z)$. It follows that likewise
  $rD^{-1}\in M_n(\Z)$, and hence $\alpha_j\le r_p$
  for each $j$.
  Under the additional assumption, taking determinants in the relation
  ${}^t\!A\sigma_1 A=r\sigma_2$ gives $p^{2(\alpha_1+\cdots+\alpha_n)}=p^{nr_p}$, and the
  last assertion follows.
\end{proof}

In principle, one can now compute the integral by applying Proposition \ref{fp}
 and considering various cases to obtain certain exponential sums.
 We will discuss this process in more detail for the special case of $\GSp(4)$
  in Section \ref{n2}.
 It should be evident from this special case that the general case is very complicated.

  We conclude the present section by giving a trivial bound for $I_{A,p}$.

\begin{proposition}\label{trivbound}
With notation as above,
\begin{equation}\label{trivboundeq}
|I_{A,p}|\le \prod_{j=1}^np^{j(r_p-\alpha_j)}=p^{\frac{n(n+1)}2r_p-(\alpha_1
  +2\alpha_2+\cdots+n\alpha_n)}.
\end{equation}
\end{proposition}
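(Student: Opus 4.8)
The plan is to bound the local integral $I_{A,p}$ directly by replacing $f_p$ by $1$ on its support and estimating the measure of the relevant region in $S_n(\Q_p)$. By the reduction in the preceding proposition, we may take $A = D = \diag(p^{\alpha_1},\ldots,p^{\alpha_n})$ with $0\le\alpha_1\le\cdots\le\alpha_n\le\tau$ (using Proposition \ref{D}), and the integrand is $f_p(\smat{I_n}SO{I_n}\smat DOO{rD^{-1}})\theta_p(\tr\sigma_1' S)$. Since $f_p$ takes the value $1$ on its support $Z_pC_p$ and $|\theta_p|=1$, we get
\[|I_{A,p}|\le \meas\Bigl(\bigl\{S\in S_n(\Q_p)\,\big|\, \smat{I_n}SO{I_n}\smat DOO{rD^{-1}}\in Z_pC_p\bigr\}\Bigr).\]
The key observation is that $\smat{I_n}SO{I_n}\smat DOO{rD^{-1}} = \smat{D}{rSD^{-1}}{O}{rD^{-1}}$ is already upper block-triangular, and for it to lie in $Z_pC_p\subset M_{2n}(\Zhat)$ (intersected with $\GSp$), the entries must be $p$-integral; since $rD^{-1}\in M_n(\Z_p)$ automatically, the binding constraint is that $rSD^{-1}\in M_n(\Z_p)$, i.e. $p^{\tau-\alpha_j}$ times the $j$-th column of $S$ must be integral. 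This forces the $(i,j)$-entry of $S$ to lie in $p^{-(\tau-\alpha_j)}\Z_p$ (and, by symmetry, in $p^{-(\tau-\alpha_i)}\Z_p$, hence in $p^{-\min(\tau-\alpha_i,\tau-\alpha_j)}\Z_p = p^{-(\tau-\alpha_{\max(i,j)})}\Z_p$ using $\alpha_i\le\alpha_j$ for $i\le j$).

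Next I would count the measure. With $\meas(S_n(\Z_p))=1$, the set of symmetric $S$ whose $(i,j)$-entry lies in $p^{-(\tau-\alpha_j)}\Z_p$ for $i\le j$ has measure $\prod_{1\le i\le j\le n} p^{\tau-\alpha_j}$. Grouping by $j$, the factor $p^{\tau-\alpha_j}$ occurs once for each $i\in\{1,\ldots,j\}$, i.e. $j$ times; hence the product is $\prod_{j=1}^n p^{j(\tau-\alpha_j)} = p^{\tau\frac{n(n+1)}2 - (\alpha_1+2\alpha_2+\cdots+n\alpha_n)}$, which is exactly \eqref{trivboundeq}. This gives the bound provided the region over which $f_p$ is supported is contained in the set just described; one must check that membership in $Z_pC_p$ indeed implies $rSD^{-1}\in M_n(\Z_p)$. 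Since $C_p$ is a finite union of double cosets $K_p\lambda(p)K_p$ built from cocharacters satisfying \eqref{Ccond}, every element of $C_p$ lies in $M_{2n}(\Z_p)$ (the entries are $p$-integral), and scaling by $Z_p$ does not help increase $p$-integrality of a matrix already required to have a $p$-integral block (the similitude is fixed to be $r=p^\tau$ up to units by the support condition, pinning down the central scalar up to a unit). So the required containment holds.

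The main obstacle I anticipate is the bookkeeping around the center: because $f_p$ is only $Z_pK_p$-bi-invariant rather than $K_p$-bi-invariant, one must argue carefully that writing $\smat{D}{rSD^{-1}}{O}{rD^{-1}} = z c$ with $z\in Z_p$, $c\in C_p$ forces $z\in\Z_p^*$ (up to which $S$ is unaffected) rather than allowing a negative power of $p$ that would enlarge the region. This is handled exactly as in Proposition \ref{gamma}: comparing similitudes, $r(zc) = z^2 r(c)$ and $r(c)\in p^\tau\Z_p^*$ while $r$ of the left side is $\det$-normalized to have valuation $\tau$, forcing $\ord_p(z)=0$. Once that point is pinned down, the rest is the elementary measure computation above, and no cancellation in the $\theta_p$-sum is needed — this is precisely why the bound is labeled "trivial."
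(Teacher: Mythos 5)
Your proof is correct and follows essentially the same strategy as the paper: bound $|I_{A,p}|$ by the measure of the region where the block matrix is $p$-integral, note that the binding condition is $p^\tau S D^{-1}\in M_n(\Z_p)$, and compute that measure by counting the independent entries $s_{ij}$ with $i\le j$. The only thing you add beyond the paper's text is spelling out the central-scalar bookkeeping (forcing $z\in\Z_p^*$), which the paper leaves implicit in its invocation of Proposition \ref{gamma}; this is a correct and harmless elaboration, not a different method.
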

\begin{proof}
By Proposition \ref{gamma},
   \[f_p (\mat{D}{rS  D^{-1}}{O}{r  D^{-1}} )\neq 0\iff
   \mat{D}{rS  D^{-1}}{O}{r  D^{-1}} \in C_p.\]
Because $C_p\subset M_{2n}(\Z_p)$, we see that
\[|I_{A,p}|\le \meas\{S\in S_n(\Q_p)|\, p^{r_p} SD^{-1}\in M_n(\Z_p)\}.\]
Writing $S=(s_{ij})$,
\begin{align*}
|I_{A,p}|&\le \meas\{S\in S_n(\Q_p)|\, s_{ij} p^{r_p-\alpha_j} \in \Z_p
   \text{ for all }i, j\}\\
&=\meas\{S\in S_n(\Q_p)|\,
    s_{ij} \in p^{-(r_p-\alpha_j)} \Z_p\text{ for all }1\le i\le j\le n\}
\end{align*}
since $s_{ij}=s_{ji}$.  Hence,
\[   |I_{A,p}| \le \prod_{j=1}^n\prod_{i \le j} \meas(p^{-(r_p-\alpha_j)} \Z_p)
  = \prod_{j=1}^n\prod_{i \le j} p^{r_p-\alpha_j}
 = \prod_{j=1}^{n} p^{j(r_p-\alpha_j)}.
\qedhere\]
\end{proof}

\section{Weighted equidistribution of Satake parameters}

Let $\Ghat=\Spin(2n+1,\C)$ be the complex dual group of $\olG=\PGSp_{2n}$.\footnote[2]{For dual
groups, we always take the ground field to be $\C$ unless specified otherwise.}
Since we are assuming trivial central character, the Satake parameters
  $t_{\pi_p}$ belong to the maximal torus
  $\That$ of $\Ghat$.
For our fixed finite set $\Sb$ of primes, let
\[\X=(\That/W)^{|\Sb|}.\]
    Let $\Pi_\k(N)$ be the set of cuspidal representations of weight $\k$ and level $N$
    defined in \S \ref{siegelmod}.  Each $\pi\in \Pi_{\k}(N)$ determines a point
\begin{equation}\label{tpi}
t_\pi=(t_{\pi_p})_{p\in \Sb}\in \X.
\end{equation}
By Shin's theorem, the points $t_\pi$ become equidistributed relative to the
  Plancherel product measure on $\X$ as $N\to\infty$.
Here we investigate their distribution with certain prescribed harmonic weights.

\begin{proposition} \label{Omega}
There exists a compact subset $\Omega\subset \X$
  such that $t_\pi\in \Omega$ for all $\pi\in \Pi_{\k}(N)$.
\end{proposition}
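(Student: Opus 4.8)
The plan is to exhibit an explicit compact set containing all the Satake parameters $t_{\pi_p}$ for $p\in\Sb$ and $\pi\in\Pi_\k(N)$, uniformly in $N$. The key point is that the harmonic weights $w_\pi$ in \eqref{wpidef} are nonnegative, so the equidistribution we want will be a statement about a genuine positive measure; but independently of that, one needs an a priori bound on the parameters themselves. Since $\Sb$ is finite and $\X=(\That/W)^{|\Sb|}$, it suffices to produce, for each fixed $p\in\Sb$, a compact set $\Omega_p\subset\That/W$ with $t_{\pi_p}\in\Omega_p$ for all $\pi\in\Pi_\k(N)$ and all $N$ coprime to $\Sb$; then $\Omega=\prod_{p\in\Sb}\Omega_p$ works.

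First I would reduce to a bound on the absolute values of the ``eigenvalues'' of $t_{\pi_p}$, i.e. on the quantities $\chi(\lambda(p))=\hat\lambda(t_{\pi_p})$ as $\lambda$ ranges over a fixed finite generating set of $X_*(\olT)$ (equivalently $X^*(\That)$); controlling these for finitely many $\lambda$ pins $t_{\pi_p}$ down to a compact subset of $\That$, hence of $\That/W$. Concretely, $\pi_p$ is a local component of a unitary cuspidal automorphic representation of $\PGSp_{2n}(\A)$, so $\pi_p$ is unitary and generic-bounded in the sense that it is a subquotient of an induced representation $\Ind_{B_p}^{G_p}(\chi\delta^{1/2})$ with $\chi$ an unramified unitary-up-to-the-known-bounds character. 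The standard way to get a uniform bound is to invoke the trivial (convexity) bound toward Ramanujan for $\GSp(2n)$: there is an absolute constant $\theta=\theta_n<1/2$ (one may take $\theta=1/2-1/(2n^2+1)$ from Jacquet--Shalika via the standard embedding, or simply any bound following from the unitarity of $\pi_p$ and the classification of the unramified unitary dual of $\GSp_{2n}(\Q_p)$) such that every eigenvalue of $t_{\pi_p}$ has absolute value in $[p^{-\theta n},p^{\theta n}]$. This gives the desired compactness.

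The cleanest route, and the one I would actually write, avoids even citing Ramanujan-type bounds: it uses only that $\pi\in\Pi_\k(N)$ has a nonzero $K_0(N)_p$-fixed (indeed $K_p$-fixed, since $p\nmid N$) vector and is unitary. Then $\pi_p$ is an unramified unitary representation of $G(\Q_p)$, hence by the classification of the spherical unitary dual its Satake parameter lies in a \emph{fixed} compact subset of $\That/W$ depending only on $G$ and $p$ --- this is precisely the statement that the spherical unitary dual of a $p$-adic reductive group is a compact subset of the (unramified) tempered-plus-complementary parameter space. I would state this as the input, cite a reference for the boundedness of the spherical unitary dual (e.g. the fact that complementary series parameters are confined to a bounded region), set $\Omega_p$ to be that compact set, and put $\Omega=\prod_{p\in\Sb}\Omega_p$. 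Since $\Sb$ is finite, $\Omega$ is compact, and by construction $t_\pi=(t_{\pi_p})_{p\in\Sb}\in\Omega$ for every $\pi\in\Pi_\k(N)$ and every admissible $N$, which is exactly the assertion.

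The main obstacle is locating and citing the right form of the boundedness statement for the spherical unitary dual in a way that is uniform over the (varying) level --- but here there is really nothing to do, since the level only affects the finite places dividing $N$, and $p\in\Sb$ is coprime to $N$, so $\pi_p$ is genuinely unramified and the bound is the one for the fixed group $G(\Q_p)$, with no $N$-dependence whatsoever. Thus the only real content is the (standard) compactness of the spherical unitary dual; everything else is bookkeeping with the finite product over $\Sb$.
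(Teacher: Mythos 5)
Your argument is correct and is essentially the paper's own: since $p\in\Sb$ is coprime to $N$, each $\pi_p$ is an unramified unitary representation of $G(\Q_p)$, so its Satake parameter lies in a fixed compact subset of $\That/W$ independent of $N$, and one takes the product over the finite set $\Sb$; the paper simply cites \cite[Theorem XI.3.3]{BW} for exactly this boundedness of the spherical unitary parameters. Your first paragraph's detour through Ramanujan-type convexity bounds is unnecessary, as you yourself note, but the route you say you would actually write matches the paper.
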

\begin{proof}
\cite[Theorem XI.3.3]{BW}.
\end{proof}

\subsection{Preliminary result}

  For $\sigma_1,\sigma_2\in \mathcal{R}^+_n$, define the weight
\[w_\pi(\sigma_1,\sigma_2)=
  \sum_{\varphi\in E_\k(\pi,N)}\frac{c_{\sigma_1}(\varphi)
  \ol{c_{\sigma_2}(\varphi)}} {\|\varphi\|^2}.\]
We will show in this section that the Satake parameters $t_{\pi}$,
  weighted by $w_\pi(\sigma,\sigma)$,
  have a uniform distribution relative to a certain Radon measure
  in the limit as $N\to\infty$.

The following is essentially a restatement of Theorem \ref{S=1} (see \S \ref{S1}).

\begin{lemma}\label{weights}
Let $\ds c_{n\k\sigma_1}=
\frac{(\det\sigma_1)^{\k-(n+1)/2}}
{\pi^{n(n-1)/4}(4\pi)^{n(n+1)/2-n\k}\prod_{j=1}^n\Gamma(\k-\frac{n+j}2)}$.
  Then
\[\lim_{N\to\infty} \frac1{\psi(N)}\sum_{\pi\in \Pi_{\k}(N)}w_\pi(\sigma_1,\sigma_2)
  = \delta_\k(\sigma_1,\sigma_2)c_{n\k\sigma_1},\]
  where $\delta_\k(\sigma_1,\sigma_2)$ is defined in \eqref{deltak}.
In particular, if $\sigma_1=\sigma_2 =\sigma\in \mathcal{R}^+_n$ satisfies
  $\delta_\k(\sigma,\sigma)>0$ (e.g. if $\k$ is even),
 then setting $w_\pi=w_\pi(\sigma,\sigma)$ we have
\begin{equation}\label{wpi}
  0<\lim_{N\to\infty}\frac1{\psi(N)}{\sum_{\pi\in \Pi_{\k}(N)}w_\pi}<\infty.
\end{equation}
\end{lemma}

\noindent{\em Remark:}
  If $\k$ is even,
  then $\delta_\k(\sigma,\sigma)>0$ since $A=I_n$ satisfies $\t A\sigma A=\sigma$.
  Hence in this case we always have nonvanishing in \eqref{wpi}.
  This nonvanishing is crucial in what follows.\\

For the compact space $\Omega$ in Proposition \ref{Omega},
  let
\begin{equation}\label{VS}
V_\mathcal{S}\subset C(\Omega)
\end{equation}
 denote the subspace
  consisting of all restrictions $F|_{\Omega}$ of functions
  $F=\prod_{p\in \Sb}\mathcal{S}f_p$ in the image
   $\prod_{p\in \Sb} \C[X^*(\That)]^W$
 of the $\Sb$-product Satake transform. (See \eqref{S} and \eqref{3cong}.)
  For fixed $\k>2n$ and $\sigma\in\mathcal{R}_n^+$ for which $\delta_\k(\sigma,\sigma)>0$,
  let $w_\pi=w_\pi(\sigma,\sigma)$ as above.
  Then we may define a linear functional $\mathcal{L}=\mathcal{L}_{\sigma,\k,\Sb}$ on $V_{\mathcal{S}}$ by
\begin{equation}\label{L}
\mathcal{L}(F) = \limN\frac{\sum_{\pi\in \Pi_{\k}(N)}w_\pi F(t_\pi)}
  {\sum_{\pi\in \Pi_{\k}(N)}w_\pi}.
\end{equation}
By Lemma \ref{weights} and Theorem \ref{A}, the limit exists and is finite.
Endowing $C(\Omega)$ with the $L^\infty$ norm, it contains $V_\mathcal{S}$
  as a dense subalgebra by the Stone-Weierstrass Theorem \cite[p. 122]{Ru}.
  (The latter algebra evidently separates points, and it is closed under complex
  conjugation by Proposition \ref{Sfbar}.)
Because $V_{\mathcal{S}}$ is dense in $C(\Omega)$,
the right-hand side of \eqref{L} exists for $F \in C(\Omega)$
(for details, see e.g. \cite{KL}, pages 358-359).
Moreover it is clear from \eqref{L} that $|\mathcal{L}(F)|
  \le \|F\|_\infty$ for all $F \in C(\Omega)$, so $\mathcal{L}$ is bounded.
  By the Riesz representation
  theorem, there exists a unique Radon measure $\mu=\mu_{\sigma,\k,\Sb}$
  on $\Omega$ such that
\begin{equation}\label{mudef}
\mathcal{L}(F)=\int_{\Omega}F\,d\mu
\end{equation}
for all $F \in C(\Omega)$.
It is clear from \eqref{L} that $\mathcal{L}(1)=1$, so $\mu$ is a probability measure.
This proves the following.

\begin{theorem}\label{equi}
  Let $\k>2n$, and let $\sigma$ be a
 symmetric positive-definite half-integral matrix for which $\delta_\k(\sigma,\sigma)> 0$.
  (This is automatic, for example, if $\k$ is even.)
 Then
  the Satake parameters
  $(t_{\pi})_{\pi\in \Pi_\k(N)}$ of \eqref{tpi}, when weighted by $w_\pi(\sigma,\sigma)$,
    become equidistributed in the space $\Omega$ of Proposition \ref{Omega}
    with respect to the above probability measure $\mu_{\sigma,\k,\Sb}$ in
  the limit as $N\to\infty$ along integers
  coprime to $\Sb$.
\end{theorem}

Of course, one would like to know more about the measure $\mu$, for example
  whether it is supported on the tempered spectrum.
(Recall that an unramified representation $\pi_p$ of $G(\Q_p)$
  is tempered if and only if its Satake parameter $t_{\pi_p}$ lies in a compact
  subgroup of $\That$.)
  We will pursue this question by relating $\mu$ to the Sato-Tate measure, which
  is supported in a compact subtorus of $\That$.  See Theorem \ref{musum} below.

\subsection{Relating two measures}\label{twomeas}

Generally, suppose $\eta$ is a Radon measure on $\Omega$, and
  $\{R_\lambda\}_{\lambda\in\Lambda}$ is a set of continuous functions
  forming an orthonormal basis
  for $L^2(\Omega,\eta)$ which also spans an $L^\infty$-dense subspace of $C(\Omega)$.
  Then the measure $\mu$ in \eqref{mudef} can be expressed as
\begin{equation}\label{mueta}
d\mu(t)=\sum_\lambda \mathcal{L}(R_\lambda)\ol{R_\lambda}(t)d\eta(t),
\end{equation}
  provided the sum is uniformly absolutely convergent on $\Omega$.
  Indeed, for all $\alpha\in \Lambda$,
\[\int_{\Omega} R_\alpha(t) \sum_\lambda \mathcal{L}(R_\lambda)\ol{R_\lambda}(t)d\eta
  =\sum_\lambda\mathcal{L}(R_\lambda)\sg{R_\alpha,R_\lambda}_\eta =\mathcal{L}(R_\alpha)
  =\int_{\Omega}R_\alpha d\mu,\]
and by linearity and density of the span of the $R_\alpha$,
   the above holds as well for all functions in $C(\Omega)$.

\subsection{The Sato-Tate measure}\label{STmeas}

Fix a maximal compact subgroup $\Hhat\subset \Ghat$ with maximal torus
  \[\That_c=\Hhat\cap \That=\That(\C)^1.\]
This is the maximal compact subtorus of $\That$.
  Let $dh$ denote the Haar measure on $\Hhat$ of total volume $1$.
  Because every conjugacy class in $\widehat{H}$ contains exactly one Weyl orbit of $\Tc$,
  the measure $dh$ induces a quotient measure $\mu_{ST}$ on the space $\Tc/W$.
We extend $\mu_{ST}$ to $\That/W$ by taking it to be zero on the complement
  of $\Tc/W$.  This is the Sato-Tate measure.
  In more detail, for $f\in C(\That_c/W)$, we may identify $f$ with a
  class function on $\widehat{H}$, and
\begin{equation}\label{wif}
 \int_{\Tc/W} f(t) d\mu_{ST}(t)= \int_{\widehat{H}} f(h)dh.
\end{equation}
  By the Weyl integration formula, the measure is given explicitly by
\begin{equation}\label{must}
d\mu_{ST}(t) = \Bigl|\det(\operatorname{Ad}(t^{-1}-I)|_{\Lie(\Hhat)/\Lie(\That_c)})\Bigr|dt,
\end{equation}
where $dt$ is the Haar measure giving $\Tc$ volume $1$.
An alternative expression for it is given in \eqref{ST2} below.

Fix a set $\Phi^+$ of positive roots in the root system attached to $\widehat{H}$ and
  $\That_c$.
We shall identify $X^*(\That)$ and $X^*(\That_c)$.
  By the theorem of the highest weight, the irreducible representations
  $\pi_\lambda$ of $\widehat{H}$ are in one-to-one
  correspondence with the elements $\lambda\in \mathcal{C}^+$, where
  $\mathcal{C}^+$ is the positive Weyl chamber of
  $X^*(\That_c)=X^*(\That)\cong X_*(\olT)$ given in \eqref{C+}.
  Let
\begin{equation}\label{Fldef}
F_\lambda=\tr \pi_\lambda
\end{equation}
  denote the trace of $\pi_\lambda$. It is a class function on $\widehat{H}$,
  so we may view it as a function on $\That_c/W$.
  By the Peter-Weyl theorem, the set $\{F_\lambda|\, \lambda\in \mathcal{C}^+\}$
  is an orthonormal basis for the space of $L^2$ class functions
  on $\widehat{H}$ (relative to the measure $dh$).
  In particular, by \eqref{wif} we have
\begin{equation}\label{onb}
\int_{\That(\C)/W} F_\lambda(t) \ol{F_\mu(t)} \,d\mu_{ST}
   =\delta_{\lambda,\mu}
\end{equation}
(for the Kronecker $\delta$).
  Here, the domain of $F_\lambda$ is extended from $\Tc$
  to $\That(\C)$
  by viewing $F_\lambda$ as a sum
\[F_\lambda = \sum_{\mu\in X^*(\That)} m_\lambda(\mu)[\mu]
  \in \C[X^*(\That)]^W.\]
The orthogonality \eqref{onb} can also be proved using \eqref{Weyl} and \eqref{ST2}
  below.

 We shall need the fact that the set $\{F_\lambda|\, \lambda\in \mathcal{C}^+\}$
  spans $\C[X^*(\That)]^W$
  (see \cite[Theorem 23.24]{FH}, using the fact that
  $\Lambda= X^*(\That)$ since $\widehat{\olG}=\Spin(2n+1)$ is simply connected).
  By \eqref{3cong}, this space coincides with the image of the local Satake transform.

Given a tuple $\lu=(\lambda_p)\in \prod_{p\in\Sb}\mathcal{C}^+$, we let
\[F_{\lu}=\prod_{p\in\Sb}F_{\lambda_p}\in \prod_{p\in\Sb}\C[X^*(\That)]^W.\]
 Viewing the $F_{\lu}$ as functions on $\Omega$ (by restriction), they
  span the space $V_{\mathcal{S}}$ of \eqref{VS}.  This follows from the above discussion.

\subsection{Relation between $\mu$ and $\mu_{ST}$}

We continue to assume that $\delta_\k(\sigma,\sigma)>0$, so that
for $F_{\lu}$ as above, we may consider $\mathcal{L}(F_{\lu})$ as in \eqref{L}.

\begin{theorem} \label{musum}
Let $\rho\in X^*(\olT)=X_*(\That)$ be half the sum of the positive roots,
   as in \eqref{rho}.
Suppose that there exits $\e>0$ such that for all tuples $\lu$ as above,
\begin{equation}\label{conj}
\mathcal{L}(\prod_{p\in\Sb}\mathcal{S}(c_{\lambda_p})) \ll_\e \prod_{p\in\Sb}
  p^{(1-\e)\sg{\rho,\lambda_p}},
\end{equation}
where $c_{\lambda_p}$ is the characteristic function of $K_p\lambda_p(p)K_p$.
Then the measure $\mu$ defined in \eqref{mudef} is given by
\[d\mu(t)=\sum_{\lu}
  \mathcal{L}(F_{\lu})\ol{F_{\lu}(t)}\,d\mu_{\Sb}(t),\]
where $\mu_{\Sb}=\prod_{p\in\Sb}\mu_{ST}$ is the product measure on $\X$,
and the above sum converges absolutely and uniformly on $\Omega$.
\end{theorem}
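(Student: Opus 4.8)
The plan is to apply the abstract principle recorded in \S\ref{twomeas}, equation \eqref{mueta}, with $\eta=\mu_{\Sb}$ and the system $\{R_{\lu}\}=\{F_{\lu}\}_{\lu\in\prod_{p\in\Sb}\mathcal{C}^+}$. As recalled in \S\ref{STmeas}, the characters $F_\lambda=\tr\pi_\lambda$ restrict on $\Tc/W$ to an orthonormal basis of $L^2(\Tc/W,\mu_{ST})$ (Peter--Weyl for $\Hhat$), so by the product structure the $F_{\lu}$ form an orthonormal basis of $L^2\bigl((\Tc/W)^{|\Sb|},\mu_{\Sb}\bigr)$, and they span the $L^\infty$-dense subspace $V_{\mathcal{S}}\subset C(\Omega)$. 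Thus \eqref{mueta} applies, and everything reduces to proving that
\[
\sum_{\lu}\bigl|\mathcal{L}(F_{\lu})\bigr|\,\bigl|F_{\lu}(t)\bigr|<\infty
\]
uniformly in $t$. I would prove this by combining an upper bound for $|F_{\lu}(t)|$ on $\Omega$ with a decay estimate for $\mathcal{L}(F_{\lu})$ extracted from the hypothesis \eqref{conj}.

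For the first bound, write $F_{\lambda_p}=\sum_\mu m_{\lambda_p}(\mu)[\mu]$ over the weights $\mu$ of $\pi_{\lambda_p}$. Then $|m_{\lambda_p}(\mu)|\le\dim\pi_{\lambda_p}=O\bigl(\sg{\rho,\lambda_p}^{|\Phi^+|}\bigr)$ by the Weyl dimension formula, each such $\mu$ lies in the convex hull of the orbit $W\lambda_p$, and since $\Omega$ is compact and the preimage of $\Omega$ in $\That$ is $W$-stable there is a constant $\theta=\theta(\Omega)>0$ with $|[\mu](t)|\le p^{\theta\sg{\rho,\lambda}}$ whenever $\mu$ is a weight of $\pi_\lambda$ and $t$ represents a point of $\Omega$. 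Hence $\sup_{t\in\Omega}|F_{\lu}(t)|\ll\prod_{p\in\Sb}p^{\theta\sg{\rho,\lambda_p}}\cdot\mathrm{poly}$, and on the support $(\Tc/W)^{|\Sb|}$ of $\mu_{\Sb}$ this sharpens to the purely polynomial bound $|F_{\lu}(t)|\le\prod_{p\in\Sb}\dim\pi_{\lambda_p}$.

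For the decay of $\mathcal{L}(F_{\lu})$ I would transport \eqref{conj} through Macdonald's formula. Since $\Ghat=\Spin(2n+1)$ is simply connected, $\{F_\lambda\}$ and $\{\mathcal{S}(c_\lambda)\}$ are both bases of $\C[X^*(\That)]^W$, related triangularly for the dominance order: $\mathcal{S}(c_\lambda)=p^{\sg{\rho,\lambda}}F_\lambda+\sum_{\mu\prec\lambda}d_{\lambda\mu}(p)F_\mu$ with $d_{\lambda\mu}\in\C[p]$ of degree $<\sg{\rho,\lambda}$, the degree bound being exactly the statement $p^{-\sg{\rho,\lambda}}\mathcal{S}(c_\lambda)\to F_\lambda$ as $p\to\infty$ (Weyl character formula). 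Inverting gives $F_\lambda=\sum_{\mu\preceq\lambda}e_{\lambda\mu}(p)\mathcal{S}(c_\mu)$ with $e_{\lambda\lambda}(p)=p^{-\sg{\rho,\lambda}}$ and controlled lower coefficients; moreover this change of basis is homogeneous for the grading of $\C[X^*(\That)]^W$ by the similitude coordinate $\ell_0$, since all positive roots of $\olG$ have $\ell_0$-coordinate zero. Consequently, for $\lambda\ne0$ only finitely many $\mu$ occur, all with $\ell_0(\mu)=\ell_0(\lambda)$ — hence $\sg{\rho,\mu}\ge\tfrac n2\ell_0(\lambda)$ — and their number is polynomial in $\ell_0(\lambda)$ since they satisfy \eqref{Ccond}. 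Expanding $F_{\lu}=\prod_pF_{\lambda_p}$ into the finitely many products $\prod_p\mathcal{S}(c_{\mu_p})$, applying $\mathcal{L}$, and invoking \eqref{conj} on each, I get $|\mathcal{L}(F_{\lu})|\ll_\e\prod_{p\in\Sb}p^{-\e'\ell_0(\lambda_p)}\cdot\mathrm{poly}$ for some $\e'>0$. Because for each value of $\ell_0(\lambda_p)$ there are only polynomially many $\lambda_p$, this super-polynomial decay together with the $|F_{\lu}|$-bound makes $\sum_{\lu}|\mathcal{L}(F_{\lu})|\,|F_{\lu}(t)|$ a product over $p\in\Sb$ of convergent geometric-type series, uniformly on $(\Tc/W)^{|\Sb|}$ — which already suffices for the measure identity — and then on all of $\Omega$ once $\theta$ is absorbed. (Note too that \eqref{conj} forces $\mu$, a positive measure with $\mathcal{S}(c_\lambda)$ having nonnegative coefficients, to be supported on $(\Tc/W)^{|\Sb|}$: a point of $\Omega$ off the compact torus would make $\mathcal{L}(\mathcal{S}(c_\lambda))$ grow like $p^{(1+\theta_0)\sg{\rho,\lambda}}$ with $\theta_0>0$ along a suitable family, contradicting \eqref{conj}; so the density $\sum_{\lu}\mathcal{L}(F_{\lu})\ol{F_{\lu}}$ is genuinely the relevant object, and it represents $\mu$.)

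The hard part will be this last estimate: converting \eqref{conj}, which is a statement about the Hecke operators $c_{\lambda_p}$ — precisely the objects the geometric side of Theorem \ref{A} controls — into genuine decay of $\mathcal{L}(F_{\lu})$ at the level of irreducible characters. This hinges on the sharp triangularity of Macdonald's formula, both the degree bound on the off-diagonal coefficients and the $\ell_0$-homogeneity, together with the care needed to ensure that the off-diagonal terms, whose $\mathcal{L}$-values \eqref{conj} bounds only with a smaller exponent, do not overwhelm the diagonal term $p^{-\sg{\rho,\lambda}}\mathcal{S}(c_\lambda)$.
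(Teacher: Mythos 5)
Your overall architecture coincides with the paper's: reduce via \S\ref{twomeas} to the uniform absolute convergence of $\sum_{\lu}|\mathcal{L}(F_{\lu})\ol{F_{\lu}(t)}|$ against $\mu_{\Sb}$, using that the $F_{\lu}$ are an orthonormal basis of $L^2$ for the Sato--Tate product measure and span the dense subspace $V_{\mathcal S}$; and your structural observations (the dominance order preserves the similitude coordinate $\ell_0$, $\sg{\rho,\mu}\ge\tfrac n2\ell_0(\lambda)$ for dominant $\mu\le\lambda$ in normalized coordinates, and there are only polynomially many dominant weights with a given $\ell_0$) are correct and would indeed close the argument. But they close it only once one has a genuine bound on the inverse coefficients $e_{\lambda\mu}(p)$ in $F_\lambda=\sum_{\mu\le\lambda}e_{\lambda\mu}(p)\mathcal{S}(c_\mu)$, roughly $|e_{\lambda\mu}(p)|\ll p^{-\sg{\mu,\rho}}$, and this is exactly what you do not supply. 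The phrase ``controlled lower coefficients'' is the entire difficulty: a degree bound $\deg d_{\lambda\mu}<\sg{\rho,\lambda}$ on the forward expansion (itself asserted rather than proved, and in any case a degree bound says nothing about the size of the integer coefficients of $d_{\lambda\mu}$) does not survive naive inversion of the triangular system, because the inverse entries are alternating sums over chains $\lambda\succ\nu_1\succ\cdots\succ\mu$ whose number grows with $\lambda$. Since \eqref{conj} gives only $|\mathcal{L}(\mathcal{S}(c_\mu))|\ll p^{(1-\e)\sg{\rho,\mu}}$, there is no room for the off-diagonal $e_{\lambda\mu}$ to exceed roughly $p^{-\sg{\mu,\rho}}$, and you acknowledge in your closing paragraph that this is ``the hard part'' without resolving it; as written, the decisive estimate is missing. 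The paper fills precisely this hole by quoting the Kato--Lusztig formula $F_\lambda=p^{-\sg{\lambda,\rho}}\sum_{\mu\le\lambda}P_{\mu,\lambda}(p)\mathcal{S}(c_\mu)$ (\cite{HKP}, \cite{Gr}), whose explicit expression via the function $\widehat{P}$ yields $p^{-\sg{\lambda,\rho}}P_{\mu,\lambda}(p)=p^{-\sg{\mu,\rho}}\sum_{w}\sgn(w)\widehat{P}(w(\lambda+\rho^\vee)-(\mu+\rho^\vee))$, and then controls the lower terms by the Kostant-partition-function bound \eqref{psum}, $\sum_{\lambda\ge\mu}\widehat{P}(w(\lambda+\rho^\vee)-(\mu+\rho^\vee))\le 2^{d^+}$, after interchanging the $\lambda$- and $\mu$-sums. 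With that input either the paper's interchange of summation or your $\ell_0$-grading count finishes the proof; without it, neither does.

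Two secondary points. First, your claim that the convergence extends to all of $\Omega$ ``once $\theta$ is absorbed'' is not justified (nothing in \eqref{conj} guarantees $\e'>\theta(\Omega)$), but it is also not needed: $\mu_{\Sb}$ vanishes off $(\Tc/W)^{|\Sb|}$, so only convergence on the compact part matters, and there the paper gets a sharper bound than $\dim\pi_{\lambda_p}$ by writing $|F_\lambda(t)|\,|A_\rho(t)|^2=|A_{\lambda+\rho}(t)\,\ol{A_\rho(t)}|\le|W|^2$ via the Weyl character formula \eqref{Weyl} and \eqref{ST2}; your polynomial dimension bound would also suffice against exponential decay, so this difference is cosmetic. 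Second, the parenthetical argument about the support of $\mu$ is tangential to the statement and, as sketched, rests on an unproved growth assertion; it can simply be dropped.
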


\noindent{\em Remarks:}
(1) Hypothesis \eqref{conj} would follow from (a) adequate
  bounds on the number of matrices $A$ satisfying the conditions of Theorem \ref{A},
  and (b) adequate bounds for the local geometric integrals $I_{A,p}(c_{\lambda_p})$
  for $p\in\Sb$.  (See \eqref{Lp}.)
  In Section \ref{n2}, we will carry this out and prove Hypothesis \eqref{conj}
  in the special case $n=2$, as an application of Theorem \ref{A}.

(2) It is not clear to us whether there is a closed form
  expression for the measure.

\begin{proof}
In Section \ref{STmeas}, we saw that the set $\{F_\lambda|\,\lambda\in\mathcal{C}^+\}$
  is an orthonormal basis for $L^2(\Omega,\mu_{ST})$.  Furthermore, it is dense
  in $V_\mathcal{S}$, which in turn is dense in $C(\Omega)$ as discussed before Theorem \ref{equi}.
Hence, by the discussion in \S \ref{twomeas}, it suffices to prove that
  the given series is uniformly convergent under Hypothesis \eqref{conj}.

To ease the notation, we will first assume that $\Sb=\{p\}$ consists of just one
  prime.
For any weight $\lambda\in X^*(\That)$, define
\[A_\lambda = \sum_{w\in W} (\sgn w)w(\lambda)\in \C[X^*(\That)].\]
For $t \in \That_c$,
\[  |A_\lambda(t)|  \le \left| \sum_{w\in W} (\sgn w) w(\lambda)(t) \right|
   \le \sum_{w\in W} | w(\lambda)(t)| =  |W|.\]
By the Weyl character formula (\cite[Theorem 24.2]{FH}),
\begin{equation}\label{Weyl}
F_\lambda = \frac{A_{\lambda+\rho}}{A_\rho}.
\end{equation}
It is well-known that
\begin{equation}\label{ST2}
d\mu_{ST}(t)= |A_\rho(t)|^2 dt.
\end{equation}
(For example, compare (25.6) of \cite{Bu} ((22.7) in the 2nd edition) with
  Lemma 24.3 of \cite{FH}).

Therefore, for $t\in \That_c$,
we need to prove the convergence of
\[  \sum_{\lambda\in \mathcal{C}^+} |\mathcal{L}(F_\lambda) \ol{F_\lambda(t)}||A_\rho(t)|^2
=
\sum_{\lambda\in \mathcal{C}^+} |\mathcal{L}(F_\lambda) \ol{A_{\lambda+\rho}(t)}
  A_\rho(t)|
  \le |W|^2\sum_{\lambda\in \mathcal{C}^+} |\mathcal{L}(F_\lambda)|.\]

Next, we need to relate $F_\lambda$ to the functions $\mathcal{S}(c_\mu)$
  in order to make use of Hypothesis \eqref{conj}.  This is achieved by
  the following formula of Kato and Lusztig, which holds in any split reductive
  $p$-adic group (\cite[Theorem 7.8.1]{HKP}; see also
  \cite[(3.12) and Proposition 4.4]{Gr}):
\[F_\lambda = p^{-\sg{\lambda,\rho}}
  \sum_{\mu\le\lambda} P_{\mu,\lambda}(p) \mathcal{S}(c_\mu).\]
Here, $\mu$ belongs to $\mathcal{C}^+$, and $P_{\mu,\lambda}$ is the Kazhdan-Lusztig polynomial
\[P_{\mu,\lambda}(p)=p^{\sg{\lambda-\mu,\rho}}\sum_{w\in W}\sgn(w)
  \widehat{P}(w(\lambda+\rho^\vee)-(\mu+\rho^\vee)),\]
where
\[\widehat{P}(\mu)=\sum_{\mu=\sum n(\alpha^\vee)\alpha^\vee}p^{-\sum n(\alpha^\vee)}\ge 0\]
encodes the number of expressions of $\mu$ as a linear combination of positive co-roots
  with coefficients $n(\alpha^\vee)\ge 0$.
We note that $P_{\lambda,\lambda}(p)=1$, \cite[(4.5)]{Gr}.

Therefore, the quantity we need to bound is
\[\sum_{\lambda\in \mathcal{C}^+} |\mathcal{L}(F_\lambda)|
  = \sum_{\lambda \in \mathcal{C}^+} \Bigl| p^{-\sg{\lambda,\rho}}
  \sum_{\mu\le \lambda}P_{\mu,\lambda}(p)
  \mathcal{L}(\mathcal{S}(c_\mu))\Bigr|\]
\begin{equation}\label{mid}
  \le \sum_{\mu \in \mathcal{C}^+} p^{-\sg{\mu,\rho}}
  |\mathcal{L}(\mathcal{S}(c_\mu))| \sum_{\lambda\ge \mu}
\sum_{w\in W}\widehat{P}(w(\lambda+\rho^\vee)-(\mu+\rho^\vee)).
\end{equation}
We claim that for $w\in W$,
\begin{equation}\label{psum}
\sum_{\lambda\ge\mu}\widehat{P}(w(\lambda+\rho^\vee)-(\mu+\rho^\vee))\le 2^{d^+},
\end{equation}
where $d^+$ is the number of positive co-roots. Indeed, the left-hand side
  of \eqref{psum} is
\begin{equation}\label{psum2}
\sum_{\lambda\ge \mu}\hskip .15cm\sum_{\sum n(\alpha^\vee)\alpha^\vee} p^{-\sum n(\alpha^\vee)},
\end{equation}
where the inner sum is extended over all expressions of the form
\[w(\lambda+\rho^\vee)-(\mu+\rho^\vee)
  =\sum_{\alpha^\vee} n(\alpha^\vee)\alpha^\vee\]
 with $n(\alpha^\vee)\ge 0$ and $\alpha^\vee$
  positive co-roots.  The above expression is equivalent to
\begin{equation}\label{lambda}
\lambda = w^{-1}(\sum_{\alpha^\vee} n(\alpha^\vee)\alpha^\vee +(\mu+\rho^\vee))
  -\rho^\vee.
\end{equation}
Thus we may exchange the order of summation in \eqref{psum2}, so the left-hand side of
  \eqref{psum} is equal to
\[ \sideset{}{{}^*}\sum_{\sum n(\alpha^\vee)\alpha^\vee} \, p^{-\sum n(\alpha^\vee)},\]
where the $*$ indicates that we consider only those expressions for which
  the right-hand side of \eqref{lambda} is $\ge \mu$.
The above is of course bounded by the sum over {\em all} nonnegative linear
  combinations of positive co-roots
\[ \sum_{\sum n(\alpha^\vee)\alpha^\vee} \, p^{-\sum n(\alpha^\vee)}
  =\prod_{\alpha^\vee} \sum_{n(\alpha^\vee)=0}^\infty p^{-n(\alpha^\vee)}
  \le \prod_{\alpha^\vee} 2,\]
proving the claim \eqref{psum}.

Combining \eqref{mid} and \eqref{psum}, it follows that
\[  \sum_{\lambda\in \mathcal{C}^+} |\mathcal{L}(F_\lambda)|
  \ll |W|\sum_{\mu\in \mathcal{C}^+} p^{-\sg{\mu,\rho}}|\mathcal{L}(\mathcal{S}(c_\mu))|.\]
Using the given bound \eqref{conj}, the above is
\[ \ll\sum_{\mu \in \mathcal{C}^+}  p^{-\varepsilon \left<\mu,\rho\right>}.\]
There exists a finite set $\{\mu_1, \ldots, \mu_\ell\}\subset \mathcal{C}^+$ such that
$ \mathcal{C}^+ \subset \{\sum_{i=1}^\ell a_i \mu_i \,|\, 0 \le a_i \in \mathbf Z \}$.
Writing $\mu = \sum_{i=1}^\ell a_i \mu_i$,
the above is
\[ \le \prod_{i=1}^\ell \left( \sum_{a_i=0}^\infty p^{-\varepsilon
  \left<\mu_i,\rho\right>a_i} \right) <\infty.\]
This completes the proof when $\Sb=\{p\}$.

The general case is proven in the same way, using
\[\sum_{\lu}|\mathcal{L}(F_{\lu})|=\sum_{\lu}\left|\sum_{{\underline \mu}\le \lu}
  \left(\prod_{p\in\Sb}p^{-\sg{\lambda_p,\rho}}P_{\mu_p,\lambda_p}(p)\right)\mathcal{L}
  (\prod_{p\in\Sb}\mathcal{S}(c_{\mu_p}))\right|\]
\[\le\sum_{\underline\mu}|\mathcal{L}(\prod_{p\in\Sb}\mathcal{S}(c_{\mu_p}))|\prod_{p\in\Sb}
  \sum_{\lambda_p\ge \mu_p}p^{-\sg{\lambda_p,\rho}}|P_{\mu_p,\lambda_p}(p)|\]
\[\ll \sum_{\underline\mu}|\mathcal{L}(\prod_{p\in\Sb}\mathcal{S}(c_{\mu_p}))|\prod_{p\in\Sb}
  p^{-\sg{\mu_p,\rho}}.\]
  Using Hypothesis \ref{conj}, one shows as before that this is finite.
\end{proof}

\begin{corollary}\label{pinf}
 Write $\mu=\mu_p$ for the measure on $\That/W$ defined in \eqref{mudef}
  when $\Sb=\{p\}$.  Then under Hypothesis \ref{conj},
\[
\lim_{p \to \infty} d\mu_p(t)
 =d\mu_{ST}(t).
\]
\end{corollary}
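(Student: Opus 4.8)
The plan is to feed the expansion of Theorem \ref{musum} back into itself and show that only the ``diagonal'' term survives as $p\to\infty$. Working with $\Sb=\{p\}$ and under Hypothesis \eqref{conj}, Theorem \ref{musum} gives $d\mu_p(t)=\sum_{\lambda\in\mathcal{C}^+}\mathcal{L}(F_\lambda)\,\ol{F_\lambda(t)}\,d\mu_{ST}(t)$, the series converging uniformly and absolutely on $\Omega$. First I would isolate $\lambda=0$: since $F_0=\tr\pi_0\equiv 1$ and $\mathcal{L}(1)=1$ directly from \eqref{L}, the $\lambda=0$ term equals $d\mu_{ST}$, so $d\mu_p(t)-d\mu_{ST}(t)=\bigl(\sum_{\lambda\in\mathcal{C}^+\setminus\{0\}}\mathcal{L}(F_\lambda)\ol{F_\lambda(t)}\bigr)d\mu_{ST}(t)$. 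Both measures are supported on $\Tc/W$ (the second by definition, the first because it is absolutely continuous with respect to it), and there I would rewrite $\ol{F_\lambda(t)}\,d\mu_{ST}(t)=\ol{A_{\lambda+\rho}(t)}A_\rho(t)\,dt$ using \eqref{Weyl} and \eqref{ST2}; since $|A_\nu(t)|\le|W|$ for $t\in\Tc$ and any weight $\nu$, this bounds the density of $d\mu_p-d\mu_{ST}$ with respect to $dt$ uniformly on $\Tc/W$ by $|W|^2\sum_{\lambda\in\mathcal{C}^+\setminus\{0\}}|\mathcal{L}(F_\lambda)|$. Thus it suffices to prove that $\sum_{\lambda\in\mathcal{C}^+\setminus\{0\}}|\mathcal{L}(F_\lambda)|\to 0$ as $p\to\infty$.

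Next I would insert the Kato--Lusztig expansion $F_\lambda=p^{-\sg{\lambda,\rho}}\sum_{\mu\le\lambda}P_{\mu,\lambda}(p)\,\mathcal{S}(c_\mu)$ from the proof of Theorem \ref{musum}, apply $\mathcal{L}$, and separate the term $\mu=0$ (which occurs exactly when $\lambda$ lies in the monoid generated by the positive coroots). Here $c_0=\mathbf{1}_{K_p}$, so $\mathcal{S}(c_0)=1$ and $\mathcal{L}(\mathcal{S}(c_0))=1$. The contribution of the terms with $\mu\ne 0$ I would bound exactly as in the proof of Theorem \ref{musum}: interchanging sums, bounding $|P_{\mu,\lambda}(p)|$ via the $\widehat{P}$-expression, and applying the summability estimate \eqref{psum} (with $\mu$ in place of $0$), one reaches a bound $\ll |W|2^{d^+}\sum_{0\ne\mu\in\mathcal{C}^+}p^{-\sg{\mu,\rho}}|\mathcal{L}(\mathcal{S}(c_\mu))|$, which Hypothesis \eqref{conj} turns into $\ll_\e\sum_{0\ne\mu\in\mathcal{C}^+}p^{-\e\sg{\mu,\rho}}$; writing $\mathcal{C}^+$ inside a finitely generated monoid with generators $\mu_1,\dots,\mu_\ell$ as in that proof, this is $\le\prod_{i=1}^\ell(1-p^{-\e\sg{\mu_i,\rho}})^{-1}-1$, which tends to $0$ as $p\to\infty$ since each $\sg{\mu_i,\rho}>0$.

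The main obstacle is the remaining $\mu=0$ part, $\Sigma_0(p):=\sum_{0\ne\lambda\ge 0}p^{-\sg{\lambda,\rho}}|P_{0,\lambda}(p)|$: the crude triangle-inequality estimate inside the proof of Theorem \ref{musum} only bounds this by $O(1)$ --- it is precisely the ``$\mu=0$'' term of that estimate, which carries no decay --- so one must use the precise structure of $P_{0,\lambda}$. I would use the explicit formula for the Kazhdan--Lusztig polynomial quoted there, which gives $p^{-\sg{\lambda,\rho}}P_{0,\lambda}(p)=\sum_{w\in W}\sgn(w)\,\widehat{P}(w(\lambda+\rho^\vee)-\rho^\vee)$, hence $\Sigma_0(p)\le\sum_{w\in W}\sum_{0\ne\lambda\ge 0}\widehat{P}(w(\lambda+\rho^\vee)-\rho^\vee)$. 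The key observation is that $\widehat{P}(\nu)$ has a nonzero constant term (in its expansion in powers of $p^{-1}$) only when $\nu=0$, i.e. the empty expression contributes, and $w(\lambda+\rho^\vee)-\rho^\vee=0$ forces $w=1$ and $\lambda=0$ by regularity of $\rho^\vee$ --- excluded here. Fixing $w$ and reindexing the double sum by the defining expression $\nu=\sum_{\alpha^\vee}n(\alpha^\vee)\alpha^\vee$ (which determines $\lambda=w^{-1}(\nu+\rho^\vee)-\rho^\vee$), the empty expression always yields a forbidden $\lambda$: it is $0$ when $w=1$, and for $w\ne 1$ it is a nonzero element of the negative coroot monoid, so never a nonzero element $\ge 0$. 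Thus every contributing expression has $\sum n(\alpha^\vee)\ge 1$, whence $\sum_{0\ne\lambda\ge 0}\widehat{P}(w(\lambda+\rho^\vee)-\rho^\vee)\le\prod_{\alpha^\vee}(1-p^{-1})^{-1}-1=O(p^{-1})$ with the constant depending only on $d^+$; summing over $w$ gives $\Sigma_0(p)=O(p^{-1})\to 0$. Combining the two parts yields $\sum_{\lambda\in\mathcal{C}^+\setminus\{0\}}|\mathcal{L}(F_\lambda)|\to 0$, and hence $d\mu_p\to d\mu_{ST}$ uniformly on $\Tc/W$, which is the assertion.
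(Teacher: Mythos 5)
Your proof is correct, and it follows the same overall strategy as the paper (use the expansion of Theorem \ref{musum}, isolate the $\lambda=\mathbf 0$ term which gives exactly $d\mu_{ST}$, and show the remaining tail vanishes as $p\to\infty$). But your treatment of the tail is actually more careful than the paper's. The paper simply asserts, ``as in the proof of the previous proposition,'' the bound $\sum_{\lambda\neq\mathbf 0}|\mathcal{L}(F_\lambda)\ol{F_\lambda(t)}||A_\rho(t)|^2\ll\sum_{\mu\neq\mathbf 0}p^{-\e\sg{\mu,\rho}}$. However, the estimate derived inside the proof of Theorem \ref{musum} bounds $\sum_{\lambda\neq\mathbf 0}|\mathcal{L}(F_\lambda)|$ by a sum over \emph{all} $\mu\in\mathcal{C}^+$, and the $\mu=\mathbf 0$ term there is $p^{-\sg{\mathbf 0,\rho}}|\mathcal{L}(\mathcal{S}(c_{\mathbf 0}))|\cdot O(1)=O(1)$, which does not decay in $p$. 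You correctly identify this as the delicate point and fill the gap: using the explicit $\widehat{P}$-expansion of $P_{\mathbf 0,\lambda}(p)$ and the regularity of $\rho^\vee$, the empty expression $\nu=0$ can only produce $\lambda=\mathbf 0$ (for $w=1$) or a nonzero non-dominant element (for $w\neq 1$), so every expression contributing to $\sum_{\mathbf 0\neq\lambda\ge\mathbf 0}\widehat{P}(w(\lambda+\rho^\vee)-\rho^\vee)$ has $\sum n(\alpha^\vee)\ge 1$, giving the sharper $O(p^{-1})$ bound. This is exactly the argument needed to justify the omission of $\mu=\mathbf 0$ that the paper passes over. Your observation that $\mu_p$ is absolutely continuous with respect to $\mu_{ST}$ (hence supported on $\Tc/W$) and that the comparison can be made at the level of densities with respect to $dt$ is also a clean way to package the convergence statement.
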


\begin{proof}
Let $\mathbf 0 \in \mathcal{C}^+$ denote the element corresponding to the zero
  vector in $\Z^{n+1}$.
As in the proof of the previous proposition,
\[
\sum_{\lambda \in \mathcal{C}^+ - \{\mathbf 0\}} |\mathcal{L}(F_{\lambda})\ol{F_{\lambda}(t)}|
|A_{\rho} (t)|^2
\ll  \sum_{\mu \in \mathcal{C}^+ - \{\mathbf 0\}} p^{-\e\left<\mu,\rho\right>}.
\]
 Noting that $\sg{\mu,\rho}>0$ when $\mu\neq \mathbf0$,
the right-hand side tends to $0$ as $p$ goes to $\infty$.
Thus
\[\lim_{p\to\infty}d\mu_p(t)= \lim_{p \to \infty} \sum_{\lambda\in \mathcal{C}^+} \mathcal{L}(F_{\lambda})
\ol{F_{\lambda}(t)}\,d\mu_{ST}(t)
 =  \mathcal{L}(F_{\mathbf 0}) F_{\mathbf{0}}(t)d\mu_{ST}(t) = d\mu_{ST}(t). \]
 The last step follows by \eqref{L} and the fact that $F_{\mathbf 0} = 1$ (cf. \eqref{Weyl}).
\end{proof}

\section{Local computation when $n=2$}\label{n2}

Here we refine the discussion from Section \ref{pinS} for $p\in\Sb$,
   with the simplifying assumptions that $n=2$, and
\begin{equation} \label{pnmidsigma}
 p \nmid 4\det \sigma_1.
\end{equation}
(Recall that $\det\sigma_1\in \frac14\Z$.)
  The main goal of this section is to prove the following local bound.

\begin{proposition} \label{conjn2}
  Under the above hypotheses, there exists a constant $\e > 0$ such that
   \begin{equation}\label{n2bound}
 |I_{A,p}(c_\lambda)| \ll p^{(1-\e)\left<\lambda, \rho\right>-\e r_p}
\end{equation}
for all $\lambda\in\mathcal{C}^+$,
   where the implied constant depends only on $p$ and $\e$.
\end{proposition}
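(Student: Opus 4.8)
The plan is to make the local integral completely explicit via Proposition~\ref{fp} and then estimate it by combining ``linear'' cancellation of the additive character with $p$-adic quadratic Gauss sum bounds, organized into a short list of cases. By the reduction of \S\ref{pinS} I would take $A=D=\diag(p^{\alpha_1},p^{\alpha_2})$ with $0\le\alpha_1\le\alpha_2$ and replace $\sigma_1$ by $\sigma_1'={}^tU\sigma_1U$, which still satisfies $p\nmid 4\det\sigma_1'$ since $U\in\GL_2(\Z)$. Write $\tau=\ord_p r$, so that $g_{D,r}=\smat DOO{rD^{-1}}$ has similitude $p^\tau$ and hence $\tau=\ell_0$ in the normalized coordinates $\lambda=(\ell_0,0,\ell_2)$ of \eqref{Ccond}. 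Proposition~\ref{D} gives $\alpha_j\le\tau$; a larger value of $\alpha_1+\alpha_2$ only shrinks the domain below (and $\alpha_1+\alpha_2=\tau$ anyway when $\sigma_1=\sigma_2$, which is the situation in which \eqref{conj} is applied), so one may assume $\alpha_1+\alpha_2=\tau$. The $2\times2$ minor of $n_Sg_{D,r}$ on rows and columns $\{1,4\}$ equals $p^{2\alpha_1}$, so $d_2(n_Sg_{D,r})\le p^{2\alpha_1}$ and a nonzero contribution forces $\ell_2\le 2\alpha_1$; together with $\ell_2\le\ell_0/2=\tau/2$ this confines $\alpha_1$ to $\lceil\ell_2/2\rceil\le\alpha_1\le\tau/2$. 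Finally $\langle\lambda,\rho\rangle=\tfrac32\tau-\ell_2$ by \eqref{rho}, so the target reads $|I_{D,p}(c_\lambda)|\ll p^{(1-\e)(\frac32\tau-\ell_2)-\e\tau}$.

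Writing $S=(s_{ij})$, Proposition~\ref{fp} identifies $I_{D,p}(c_\lambda)=\int_{X(\lambda)}\theta_p(\tr\sigma_1'S)\,dS$, where $X(\lambda)$ is cut out by: integrality of $n_Sg_{D,r}$ (the congruences $v(s_{11})\ge-\alpha_2$, $v(s_{12})\ge-\alpha_1$, $v(s_{22})\ge-\alpha_1$); the condition $d_1=1$ (some one of $s_{11}p^{\alpha_2}$, $s_{12}p^{\alpha_1}$, $s_{22}p^{\alpha_1}$ is a unit); and $d_2=p^{\ell_2}$, which after listing all $2\times2$ minors of $n_Sg_{D,r}$ becomes a condition on $s_{11},s_{12},s_{22}$ together with the quadratic invariant $\det S=s_{11}s_{22}-s_{12}^2$. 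The integrand is $\theta_p\bigl((\sigma_1')_{11}s_{11}+2(\sigma_1')_{12}s_{12}+(\sigma_1')_{22}s_{22}\bigr)$, and the hypothesis $p\nmid 4\det\sigma_1'$ says exactly that the coefficient vector $\bigl((\sigma_1')_{11},2(\sigma_1')_{12},(\sigma_1')_{22}\bigr)$ is primitive in $\Z_p^3$ (for odd $p$ because $\det\sigma_1'$ is a unit; for $p=2$ because $2(\sigma_1')_{12}$ is then odd).

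For the estimate I would fix a coordinate $s_*$ whose coefficient in $\tr\sigma_1'S$ is a $p$-adic unit. For fixed values of the other two coordinates the inner integral over $s_*$ vanishes whenever $s_*$ ranges over a ball containing $p^{-1}\Z_p$; hence $I_{D,p}(c_\lambda)$ equals the integral over the sub-locus of $X(\lambda)$ on which $s_*$ is confined to a ball of radius $\le 1$. Such confinement is produced by the condition $d_2=p^{\ell_2}$, in particular by the constraint $v(\det S)\ge\ell_2-\tau$ coming from the minor $\det(S)\,p^\tau$, which pins $s_*$ to a translate of a small ball once a second coordinate is fixed. After integrating $s_*$ out one is left, in the most delicate case, with an integral over that second coordinate whose phase (obtained by substituting $s_{11}\equiv s_{12}^2/s_{22}$ into $\tr\sigma_1'S$) is quadratic with leading coefficient of controlled valuation, and the standard $p$-adic Gauss sum bound saves a further $p^{-\tau/4}$. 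Running this through the finitely many cases --- indexed by $\alpha_1$ relative to $\ell_2$, by which of the three coordinates plays the role of $s_*$ (equivalently the reduction type of $\sigma_1'$ at $p$), and by the parity of $p$ --- produces in each case a bound $p^{e}$ with $e\le\tfrac34\tau$, the extreme being $\alpha_1=\alpha_2=\tau/2$, $\ell_2=\tau/2$, where $n_Sg_{D,r}$ is central times unipotent and both mechanisms are needed. Since $(1-\e)(\frac32\tau-\ell_2)-\e\tau\ge(1-2\e)\tau\ge\tfrac34\tau$ for $\e\le\tfrac18$ (using $\ell_2\le\tau/2$), and the finitely many bounded-$\tau$ cases are absorbed into the implied constant, this yields \eqref{n2bound} with, say, $\e=\tfrac18$.

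The hard part is that $X(\lambda)$ is not a coset of a lattice: the requirement that $d_2$ equal $p^{\ell_2}$ \emph{exactly}, and the presence of the quadratic invariant $\det S$, make it a difference of semialgebraic sets, so the cancellation cannot be applied in one stroke but only after a somewhat delicate case split; and in the regime $\alpha_1\to\tau/2$ the linear cancellation alone is insufficient, so the quadratic Gauss sum input is genuinely needed. Keeping track in each case of which coordinate is ``free to oscillate'' --- which is precisely where $p\nmid 4\det\sigma_1$ enters --- is the crux of the bookkeeping, and the case $p=2$ requires separate (routine) attention, since there the unimodularity of $\sigma_1'$ is carried by its off-diagonal entry rather than by a diagonal one.
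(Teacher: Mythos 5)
Your two mechanisms are exactly the paper's own: oscillation of $\theta_p(\tr\sigma_U S)$ in a distinguished direction (Propositions~\ref{IS'} and \ref{add1p}) and a Ramanujan-sum/quadratic-congruence estimate in the balanced case (Corollary~\ref{betat3cor}), and your extreme-case exponent $\tfrac34\tau$ matches the paper's. But you route the whole argument through the claim that \emph{every} case gives $|I_{A,p}(c_\lambda)|\ll p^{e}$ with $e\le\tfrac34\tau$, so that comparison with the worst-case target $(1-2\e)\tau$ suffices; this intermediate claim is not just unproven, it is false. Concretely, take $p\equiv 1\pmod 4$, $\sigma_1=\sigma_2=I_2$, $\tau$ even, and $A=p^{\tau/2-k}\smat{u}{-v}{v}{u}$ with $u^2+v^2=p^{2k}$ primitive and $k=\tau/2-1$, so that $D=\diag(p,p^{\tau-1})$, i.e.\ $\alpha=1$, and take $\lambda$ with $t=\ell_2=2$. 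Writing $\sigma_U={}^tU\sigma_1U=\smat{a}{b/2}{b/2}{c}$, the compatibility ${}^tD\sigma_U D=p^{\tau}\,{}^tV^{-1}\sigma_2V^{-1}$ forces $\ord_p(a)\ge\tau-2$ (and then $p\nmid b$), and conditions \eqref{1x1sub}--\eqref{2x2sub} cut out exactly the product region $x\in p^{2-\tau}\Z_p$, $y\in p^{-1}\Z_p^{*}$, $z\in\Z_p$: the unit-coefficient coordinate $y$ is pinned by the $d_1$-condition (its integral is $-1$, not $0$), no quadratic phase remains, and $|I_{A,p}(c_\lambda)|=p^{\tau-2}$, which dwarfs $p^{3\tau/4}$. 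The proposition survives only because for small $\ell_2$ the \emph{target} exponent $(1-\e)(\tfrac32\tau-\ell_2)-\e\tau$ is itself much larger than $\tfrac34\tau$; so the bound obtained in each case must be allowed to degrade as $\ell_2$ decreases and be compared against the $\ell_2$-dependent target, not against its minimum.

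That $\ell_2$-dependent comparison is precisely how the paper closes the argument, and it is the piece your sketch omits: when $\alpha$ is small (where the shift/confinement conditions of Proposition~\ref{add1p} fail and the unit coordinate can be pinned) it uses only the trivial measure bound $p^{\tau+\alpha}$ of Proposition~\ref{trivbound} together with the forced inequality $t\le 2\alpha$, which beats the then-large target (this is why the paper takes $\e_0=0.01$ rather than $1/8$); the exact-vanishing shift argument is invoked only for $\alpha\ge 2$, $t\le\beta-2$; and the exponential-sum bound $p^{3\tau/4}$ is needed only in the four boundary configurations $\beta-1\le t$ of Proposition~\ref{betat}. Two smaller inaccuracies: $p\nmid 4\det\sigma_1$ is strictly stronger than primitivity of $(a,b,c)$ --- it yields the dichotomy $p\nmid b$ or $p\nmid ac$, i.e.\ unit discriminant of $a\mathtt{x}^2+b\mathtt{x}+c$, and it is this, not primitivity, that controls the root count modulo $p^{\tau'-1}$ in the balanced case; and because $d_1,d_2$ are prescribed \emph{exactly}, the slices of your $X(\lambda)$ are differences of unions of balls, so the step ``the integral equals the contribution where $s_*$ lies in a ball of radius $\le 1$'' requires the condition-checking of Proposition~\ref{add1p} and the decomposition $S_0=S_1\cup S_2\cup S_3$ of Propositions~\ref{betat3}--\ref{betat4}. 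The bookkeeping you defer is where the proof actually lives, and as structured your reduction to a uniform $p^{3\tau/4}$ bound cannot be carried out.
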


Before proving the proposition, let us observe how it implies
  the global Hypothesis \eqref{conj}.

\begin{corollary}\label{n2conj}
Suppose $n=2$, $\sigma_1=\sigma_2=\sigma$, $\delta_\k(\sigma,\sigma)>0$,
  and $p\nmid 4\det\sigma$ for all $p\in\Sb$.  Then
  Hypothesis \eqref{conj} holds.
\end{corollary}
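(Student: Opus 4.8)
The plan is to deduce the corollary from the local bound of Proposition \ref{conjn2}, combined with Theorem \ref{A}, Lemma \ref{weights}, and a classical estimate for representation numbers of a binary quadratic form. Fix a tuple $\lu=(\lambda_p)_{p\in\Sb}$ with each $\lambda_p\in\mathcal{C}^+$, and write $\tau_p=\ell_0$ for the similitude exponent in the normalized representative \eqref{Ccond} of $\lambda_p$, so that the global similitude attached to the test functions $f_p=c_{\lambda_p}$ ($p\in\Sb$) is $r=\prod_{p\in\Sb}p^{\tau_p}$. First I would apply Theorem \ref{A} with $\sigma_1=\sigma_2=\sigma$ and with these $f_p$: its spectral side is precisely $\limN\psi(N)^{-1}\sum_\pi w_\pi(\sigma,\sigma)\prod_{p\in\Sb}(\mathcal{S}c_{\lambda_p})(t_{\pi_p})$, while Lemma \ref{weights} gives $\limN\psi(N)^{-1}\sum_\pi w_\pi(\sigma,\sigma)=n_1 I_\infty(\k)$ with $n_1\ge 1$ and $I_\infty(\k)\neq 0$. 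Dividing, and using the factorization $I_A=I_{A,\infty}\prod_{p<\infty}I_{A,p}$ together with $I_{A,\infty}=I_\infty(\k)$ (independent of $A$ by \eqref{arch}) and $I_{A,p}=1$ for $p\notin\Sb$ (\S\ref{qnotp}), one obtains
\[
\mathcal{L}\Bigl(\prod_{p\in\Sb}\mathcal{S}(c_{\lambda_p})\Bigr)=\frac{1}{n_1}\sum_{A}\ \prod_{p\in\Sb}I_{A,p}(c_{\lambda_p}),
\]
the sum running over the finite set of $A\in M_2(\Z)/\{\pm I_2\}$ with $r\,{}^t\!A^{-1}\in M_2(\Z)$ and ${}^t\!A\sigma A=r\sigma$ (if this set is empty, the left side vanishes and there is nothing to prove).

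Next I would estimate the two ingredients on the right. For the local factors, the hypothesis $p\nmid 4\det\sigma$ of Proposition \ref{conjn2} holds at every $p\in\Sb$ by assumption, so for a fixed $\e>0$ we have $|I_{A,p}(c_{\lambda_p})|\ll p^{(1-\e)\sg{\lambda_p,\rho}-\e\tau_p}$, uniformly in $A$, with implied constant depending only on $p$ and $\e$ (hence, after taking the product over $\Sb$, only on $\Sb$ and $\e$). For the number $\#\{A\}$ of admissible matrices, I would argue column by column: if $v\in\Z^2$ is the $j$-th column of such an $A$, then reading off the $(j,j)$ entry of ${}^t\!A\sigma A=r\sigma$ gives ${}^tv\,\sigma v=r\sigma_{jj}$, so $v$ represents the positive integer $r\sigma_{jj}$ by the fixed positive definite binary form $\sigma$. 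By the classical divisor bound for representation numbers of such a form, the number of such $v$ is $\ll_{\sigma,\delta}(r\sigma_{jj})^\delta\ll_{\sigma,\delta}r^\delta$ for every $\delta>0$; hence $\#\{A\}\ll_{\sigma,\delta}r^{2\delta}=\prod_{p\in\Sb}p^{2\delta\tau_p}$.

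Finally I would combine the estimates: choosing $\delta=\e/4$,
\[
\Bigl|\mathcal{L}\Bigl(\prod_{p\in\Sb}\mathcal{S}(c_{\lambda_p})\Bigr)\Bigr|\ \le\ \frac{\#\{A\}}{n_1}\ \max_{A}\prod_{p\in\Sb}|I_{A,p}(c_{\lambda_p})|\ \ll_\e\ \prod_{p\in\Sb}p^{\,2\delta\tau_p+(1-\e)\sg{\rho,\lambda_p}-\e\tau_p}\ \le\ \prod_{p\in\Sb}p^{(1-\e)\sg{\rho,\lambda_p}},
\]
the last step because $2\delta\tau_p-\e\tau_p=-\tfrac{\e}{2}\tau_p\le 0$; this is exactly Hypothesis \eqref{conj}. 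I do not expect any real obstacle in this deduction: all of the difficulty is concentrated in Proposition \ref{conjn2}, which is established separately, and the only extra ingredient needed here is the standard bound on representations by a binary quadratic form (which could even be replaced by the cruder observation that $\#\{A\}$ is at most the number of ideals of norm dividing $r$ in the relevant imaginary quadratic order).
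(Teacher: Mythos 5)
Your proof is correct and follows essentially the same route as the paper: it derives the identity \eqref{Lp}, bounds $\#\{A\}\ll r^{O(\e)}$ by a representation-number argument (the paper phrases this as a divisor count in the ring of integers of $\Q[\sqrt{b^2-4ac}]$, which is the same estimate), and absorbs that count into the $-\e\tau_p$ slack from Proposition \ref{conjn2}. The difference between your $\delta=\e/4$ bookkeeping and the paper's $\#\{A\}\ll r^\e$ is cosmetic.
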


\begin{proof}
In Section \ref{qnotp}, we saw that $I_{A,p} = 1$ for primes $p\notin\Sb$.
From the definition \eqref{L} of $\mathcal{L}$, Theorem \ref{A}, and Lemma \ref{weights},
  it follows that
\begin{equation}\label{Lp}
\mathcal{L}(\prod_{p\in\Sb}\mathcal{S}(c_{\lambda_p}))
  = \frac{\sum_{A} c_{n\k\sigma}\prod_{p\in\Sb}I_{A,p}(c_{\lambda_p})}
  {\delta_\k(\sigma,\sigma)c_{n\k\sigma}}
  =\frac1{\delta_\k(\sigma,\sigma)}
   \sum_{A} \prod_{p\in\Sb}I_{A,p}(c_{\lambda_p}),
\end{equation}
where $A$ runs through the rank-$2$ matrices in $M_2(\Z)/\{\pm 1\}$ satisfying
  $r\,{}^t\!A^{-1}\in M_2(\Z)$ and ${}^t\!A\sigma A=r\sigma$.
In particular, writing
\[ \sigma = \mat{a}{b/2}{b/2}{c}, \qquad A=\mat xyzw \in M_2(\Z), \]
we have $ax^2+bxz+cz^2 = ra$.  Hence
\[4ra^2=(2ax+bz)^2-(b^2-4ac)z^2=(2ax+bz+\sqrt{D}z)(2ax+bz-\sqrt{D}z),\]
where $D=b^2-4ac<0$.  Thus, in the ring of integers $\mathcal O\subset \Q[\sqrt{D}]$, the
  ideal $(2ax+bz+\sqrt D z)$ is a factor of the ideal $(4ra^2)$.
The number of ideal factors of $(4ra^2)$ is $\ll r^{\e/2}$. In view of the fact
  that $|\mathcal O^*|<\infty$, the number of possible choices for $x, z$ is $\ll r^{\e/2}$.
Similarly, the number of choices for $y, w$ is $\ll r^{\e/2}$.
So the number of terms in the sum is $\ll r^\e=\prod_{p\in\Sb}p^{r_p\e}$.
  It follows from \eqref{n2bound} that the above is
\[\ll \prod_{p\in\Sb}p^{(1-\e)\sg{\lambda_p,\rho}},\]
as required.
\end{proof}

  The proof of Proposition \ref{conjn2} is given in Section \ref{Ver}.
In the intervening sections, we describe how to compute the
  local integral $I_{A,p}$ explicitly, with the
  goal of producing the upper bound \eqref{n2bound}.
  In many situations, the trivial bound \eqref{trivboundeq} is adequate, so
  an explicit computation is not necessary.
  In the remaining cases (which, in the notation below, occur when $\beta-1\le t$),
we give a complete treatment of the local integral.

\subsection{Preliminaries}\label{n2a}

 Without loss of generality,
  we consider the case where $f_p=c_\lambda$ is the characteristic function
  of the double coset $Z_pK_p\lambda(p)K_p$, where
\begin{equation}\label{lam}
\lambda(p)=\diag(1,p^t,p^\tau,p^{\tau-t})
\end{equation}
for $0\le t\le \tau/2$ as in \eqref{Ccond}.
  Thus we write $\tau$ in place of $r_p$ (for the purpose of eliminating a subscript).

By Proposition \ref{D}, we need only consider matrices $D$ of the form
\[ D = \diag(p^{\alpha}, p^{\beta}),\quad \,\,0 \le \alpha \le \beta,\quad  \,\,\alpha + \beta=\tau.\]
Write
\[ \sigma_U = \t U \sigma_1 U =\mat a{b/2}{b/2}c. \]
Note that $\sigma_U$ is half-integral, and $\det\sigma_U=\det\sigma_1$.
So by \eqref{pnmidsigma},
 either $p\nmid b$ or $p\nmid ac$.
 We would like to compute the integral
\begin{equation}\label{Ip}
 I_{A,p}= \int_{S(\Q_p)} f_p (\mat{D}{p^\tau SD^{-1}}{O}{p^\tau D^{-1}})
  \theta_{p}(\tr \sigma_U S) dS.
\end{equation}

Writing $S=\mat xyyz$, we let
\[ M=\mat{D}{p^\tau S  D^{-1}}{O}{p^\tau   D^{-1}} =
   \begin{pmatrix}
   p^{\alpha} & 0 & p^{\beta} x & p^{\alpha} y \\
   0 & p^{\beta} & p^{\beta} y & p^{\alpha} z \\
   0 & 0 & p^{\beta} & 0 \\
   0 & 0 & 0 & p^{\alpha} \\
   \end{pmatrix}. \]
By  Proposition \ref{fp}, $M \in \supp f_p$ if and only if the fractional ideal generated
   by all the entries is $(1)=\Z_p$
   and the fractional ideal generated by the determinants of all $2\times 2$ submatrices
  is $(p^{t})=p^t\Z_p$.
The determinants of the $2\times 2$ submatrices of $M$ are shown in the table below:\\
\begin{center}
\begin{tabular}{ c|cccccc }
\backslashbox{rows}{cols} & 1,2 & 1,3 & 1,4 & 2,3 & 2,4 & 3,4 \\
\hline
1,2 & $p^{\alpha+\beta}$ & $p^{\alpha+\beta} y$ & $p^{2\alpha }z$ &  $-p^{2\beta} x$ & $-p^{\alpha+\beta} y$  & $p^{\alpha+\beta}(xz-y^2)$ \\
1,3 & $0$ & $p^{\alpha+\beta}$ & $0$ & $0$ & $0$ & $-p^{\alpha+\beta} y$ \\
1,4 & $0$ & $0$ & $p^{2\alpha}$ & $0$ & $0$ & $p^{\alpha+\beta} x$ \\
2,3 & $0$ & $0$ & $0$ & $p^{2\beta}$ & $0$ & $-p^{\alpha+\beta} z$ \\
2,4 & $0$ & $0$ & $0$ & $0$ & $p^{\alpha+\beta}$  & $p^{\alpha+\beta} y$ \\
3,4 & $0$ & $0$ & $0$ & $0$ & $0$ &  $p^{\alpha+\beta}$\\
\end{tabular}
\end{center}
\vskip .3cm
\noindent Using $\alpha\le \beta$ and $\alpha+\beta=\tau$, we see that
 $M \in \supp f_p$ if and only if
\begin{equation} \label{1x1sub}
   (p^{\alpha},\, p^{\beta} x,\, p^{\alpha} y,\, p^{\alpha} z) = (1)
\end{equation}
and
 \begin{equation} \label{2x2sub}
   (p^{2\alpha},\, p^{\tau} x,\, p^{\tau} y,\, p^{2\alpha} z,\, p^{\tau}(xz-y^2))= (p^{t}).
\end{equation}
Let
\begin{equation} \label{xyz'}
    x' = p^{\beta} x,\quad y'=p^{\alpha}y,\quad z'=p^{\alpha} z.
\end{equation}
Then \eqref{1x1sub} is equivalent to
\begin{equation} \label{1x1sub'}
   (p^{\alpha}, x', y', z') = (1)
\end{equation}
and \eqref{2x2sub} is equivalent to
 \begin{equation} \label{2x2sub'}
   (p^{2\alpha}, p^{\alpha} x', p^{\beta} y', p^{\alpha} z', x' z' - p^{\beta-\alpha}{y'}^2)= (p^{t}).
\end{equation}
If $\alpha \neq 0$, then \eqref{1x1sub} is equivalent to
\begin{equation} \label{xyz1}
   (x', y', z')=(1),
\end{equation}
i.e., $x', y', z' \in \Z_p$ and at least one of them is a unit.

\subsection{Evaluation of the integral $I_{A,p}$} \label{Ieval}
We continue with the notation from above.
Given a Borel subset $S'\subset S(\Q_p)$, we define
\[ I_{S'} = \int_{S'} f_p(\mat{D}{rSD^{-1}}{O}{rD^{-1}})\theta_{p}(\tr \sigma_U S) dS. \]
Define
\[S_0 =\{\mat xyyz\in S(\Q_p)|\, x,y,z\text{ satisfy }
 \eqref{1x1sub} \text{ and } \eqref{2x2sub}\}.\]
Then $I_{S_0}=I_{A,p}$ is the integral \eqref{Ip} we need to compute.

Let $E_{11}=\mat 1000$, $E_{22}=\mat 0001$ and $E'_{12}=\mat 0110$.

\begin{proposition} \label{IS'}
Let $S'$ be a Borel subset of $S_0$.
   Suppose $p\nmid a$ (resp. $p \nmid b$, $p \nmid c$).
   Suppose $\mat xyyz \in S'$ implies that $\mat xyyz \pm \frac 1p E_{11}$
  ($\mat xyyz \pm \frac 1p E'_{12}$, $\mat xyyz \pm \frac 1p E_{22}$ respectively) belongs to $S'$.
   Then $I_{S'}=0$.
\end{proposition}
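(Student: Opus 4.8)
The plan is to reduce $I_{S'}$ to a pure exponential integral and then annihilate it with the standard ``translate the additive character'' trick, using the assumed $\tfrac1p$-periodicity of $S'$. First I would note that for every $S\in S'\subseteq S_0$ the matrix $M=\smat{D}{p^\tau SD^{-1}}{O}{p^\tau D^{-1}}$ lies in $\supp f_p=Z_pK_p\lambda(p)K_p$ — indeed, via Proposition \ref{fp} the conditions \eqref{1x1sub} and \eqref{2x2sub} defining $S_0$ are precisely the condition $M\in\supp f_p$ — so $f_p(M)=c_\lambda(M)=1$ on all of $S'$. Writing $\sigma_U=\smat a{b/2}{b/2}c$ and $S=\smat xyyz$, a one-line computation gives $\tr\sigma_U S=ax+by+cz$, hence
\[ I_{S'}=\int_{S'}\theta_p(ax+by+cz)\,dS. \]
I would also record that \eqref{1x1sub} confines $x$ to $p^{-\beta}\Z_p$ and $y,z$ to $p^{-\alpha}\Z_p$, so $S_0$ (hence $S'$) has finite measure and $I_{S'}\in\C$ is a well-defined number; this is what licenses the cancellation in the last step.

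Now suppose $p\nmid a$. The hypothesis that $S\in S'$ implies $S\pm\tfrac1p E_{11}\in S'$ says exactly that the Haar-measure-preserving translation $T\colon S\mapsto S+\tfrac1p E_{11}$ of $S(\Q_p)$, together with its inverse, maps $S'$ into itself, so that $T(S')=S'$. Since $\theta_p$ is an additive character of $\Q_p$ and $\tr\sigma_U\bigl(S+\tfrac1p E_{11}\bigr)=\tr\sigma_U S+\tfrac ap$, the change of variables $S\mapsto T(S)$ yields
\[ I_{S'}=\int_{S'}\theta_p(\tr\sigma_U S)\,dS=\int_{S'}\theta_p\!\bigl(\tr\sigma_U(S+\tfrac1p E_{11})\bigr)\,dS=\theta_p\!\bigl(\tfrac ap\bigr)\,I_{S'}. \]
Because $p\nmid a$, $\theta_p(a/p)=e^{-2\pi i a/p}$ is a primitive $p$-th root of unity, in particular $\neq 1$, and therefore $I_{S'}=0$.

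The remaining two cases are handled in the same way: if $p\nmid b$ one uses $E'_{12}$ in place of $E_{11}$ (translation by $\tfrac1p E'_{12}$ sends $y\mapsto y+\tfrac1p$ and scales the integrand by $\theta_p(b/p)\neq 1$), and if $p\nmid c$ one uses $E_{22}$ (sending $z\mapsto z+\tfrac1p$ and scaling by $\theta_p(c/p)\neq 1$). I do not expect any genuine obstacle here; the only points that need (minimal) care are the observation that the integrand is literally constant, equal to $1$, on $S'$ — immediate from the description of $S_0$ — and the finiteness of $\meas(S_0)$, which is what makes the deduction $I_{S'}=\theta_p(a/p)\,I_{S'}\Rightarrow I_{S'}=0$ legitimate.
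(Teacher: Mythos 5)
Your proof is correct and takes essentially the same route as the paper: on $S'\subset S_0$ the test function $f_p$ is identically $1$, so $I_{S'}$ reduces to $\int_{S'}\theta_p(ax+by+cz)\,dS$, and the assumed $\tfrac1p$-periodicity of $S'$ in the relevant variable lets the usual character-translation argument pull out a nontrivial $p$-th root of unity, forcing $I_{S'}=0$. (The paper shifts by $-\tfrac1p E_{11}$ rather than $+\tfrac1p E_{11}$, yielding $e(a/p)$ instead of $\theta_p(a/p)$, but the conclusion is identical; your explicit remark about finiteness of $\meas(S_0)$ is a small but legitimate point of care that the paper leaves implicit.)
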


\begin{proof} Suppose $p\nmid a$. The other cases can be handled similarly.
 By the given property,  $\mat xyyz \in S'$ if and only if $\mat xyyz + \frac 1p E_{11} \in S'$.
  Hence
 \[ I_{S'}
     = \int_{S'} \theta_{p}(\tr \sigma_U S) dS
 = \int_{S'}  \theta_{p}(\tr \sigma_U(S-\frac 1p E_{11})) dS
 = e(\frac{a}{p}) I_{S'}.  \]
The proposition follows.
\end{proof}

\begin{proposition} \label{add1p}
Suppose $\mat xyyz \in S_0$.  Then:
\begin{enumerate}
  \item[(i)] If  $\beta \geq 2$,  $\tau-1 \geq t+1$ and  $p^{t+1} | p^{\beta-1} z'$,
  then $\mat xyyz \pm \frac 1p E_{11} \in S_0$.
  \item[(ii)] If  $\alpha \geq 2$,  $\tau-2 \geq t+1$ and  $p^{t+1} | p^{\beta-1} y'$,
  then $\mat xyyz \pm \frac 1p E'_{12} \in S_0$.
  \item[(iii)] If $\alpha \geq 2$, $2\alpha-1 \geq t+1$, $p^{t+1}|p^{\alpha-1} x'$,
  then $\mat xyyz \pm \frac 1p E_{22} \in S_0$.
\end{enumerate}
\end{proposition}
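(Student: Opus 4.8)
The plan is to use the equivalence, established just above, between $\smat{x}{y}{y}{z}\in S_0$ and the fractional-ideal conditions \eqref{1x1sub'} and \eqref{2x2sub'} on the primed variables $x'=p^{\beta}x$, $y'=p^{\alpha}y$, $z'=p^{\alpha}z$ of \eqref{xyz'} --- which I will read as statements about $p$-adic valuations --- and then to check directly that each of the three perturbations preserves both. Under the substitution, adding $\pm\tfrac1p E_{11}$ replaces $x'$ by $\tilde x'=x'\pm p^{\beta-1}$ (leaving $y',z'$ fixed), adding $\pm\tfrac1p E'_{12}$ replaces $y'$ by $\tilde y'=y'\pm p^{\alpha-1}$, and adding $\pm\tfrac1p E_{22}$ replaces $z'$ by $\tilde z'=z'\pm p^{\alpha-1}$. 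In each case the perturbing monomial has valuation $\ge 1$, since $\beta\ge 2$ in (i) and $\alpha\ge 2$ in (ii) and (iii). This immediately gives \eqref{1x1sub'}, the statement that $\min(\alpha,\ord_p x',\ord_p y',\ord_p z')=0$: if a generator of valuation $0$ is one of the untouched entries (or is $\alpha$ itself) then nothing changes, and if it is the perturbed variable then that variable was a unit, hence stays a unit after adding a multiple of $p$.

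The substantive point is \eqref{2x2sub'}, which I read as the assertion that among the five generators $p^{2\alpha}$, $p^{\alpha}x'$, $p^{\beta}y'$, $p^{\alpha}z'$, $x'z'-p^{\beta-\alpha}{y'}^2$ the minimum of the $\ord_p$-values equals $t$. I would verify this in two halves. First, that all five generators still have valuation $\ge t$: only the linear generator containing the perturbed variable and the quadratic generator can change, so it suffices to bound the valuation of each perturbation. In case (i), $p^{\alpha}\tilde x'=p^{\alpha}x'\pm p^{\tau-1}$ and $x'\tilde z'-p^{\beta-\alpha}{y'}^2=(x'z'-p^{\beta-\alpha}{y'}^2)\pm p^{\beta-1}z'$; the hypotheses $\tau-1=\alpha+\beta-1\ge t+1$ and $p^{t+1}\mid p^{\beta-1}z'$ give both perturbing monomials valuation $\ge t+1$, and since the unperturbed pieces already have valuation $\ge t$, so do the new generators. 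In case (ii), expanding $p^{\beta-\alpha}\tilde y'^2$ produces the perturbation $\mp 2p^{\beta-1}y'-p^{\tau-2}$ of the quadratic generator and $\pm p^{\tau-1}$ of $p^{\beta}y'$; here $p^{t+1}\mid p^{\beta-1}y'$ (together with $\ord_p(2)\ge 0$, which holds even when $p=2$) and $\tau-2\ge t+1$ are exactly what is needed. In case (iii), the quadratic generator is perturbed by $\pm p^{\alpha-1}x'$ and $p^{\alpha}z'$ by $\pm p^{2\alpha-1}$, and these are controlled by $p^{t+1}\mid p^{\alpha-1}x'$ and $2\alpha-1\ge t+1$.

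Second, I must show that the minimum $t$ is still attained. Take a generator realizing the value $t$ in the original tuple. If it is untouched it still has valuation $t$. If it is the quadratic generator, the perturbing monomial has valuation $\ge t+1>t$ by the estimates above, so the valuation is unchanged. If it is the perturbed linear generator, the respective perturbing monomials $p^{\tau-1}$, $p^{\tau-1}$, $p^{2\alpha-1}$ again have valuation $>t$ --- using $\tau-1\ge t+1$ in (i), $\tau-1\ge t+2$ in (ii) (a consequence of $\tau-2\ge t+1$), and $2\alpha-1\ge t+1$ in (iii) --- so the valuation is preserved. Hence the minimum over the five generators is still exactly $t$, which is \eqref{2x2sub'}, and the perturbed matrix lies in $S_0$.

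The argument is routine $p$-adic bookkeeping, and I do not anticipate a genuine obstacle; the only delicate point is to keep the two roles of the hypotheses separate --- the divisibility conditions $p^{t+1}\mid\cdots$ control the quadratic generator $x'z'-p^{\beta-\alpha}{y'}^2$, while the inequalities involving $\tau$ and $\alpha$ control the linear generators and the extra $p^{\tau-2}$ cross-term that appears only in case (ii).
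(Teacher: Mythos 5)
Your proposal is correct and follows essentially the same route as the paper's proof: compute the perturbed generators, observe that each perturbing monomial has valuation $\ge 1$ (for \eqref{1x1sub}) resp.\ $\ge t+1$ (for \eqref{2x2sub}), and conclude that neither ideal changes. The paper states this more tersely, simply listing the perturbations and the ideal memberships $p^{\beta-1}\in(p)$, $p^{\tau-1}\in(p^{t+1})$, $p^{\beta-1}z'\in(p^{t+1})$ (and analogues), while you spell out explicitly the two-sided argument — all new generators lie in $(p^t)$, and the minimum valuation $t$ is still attained because the perturbation has strictly larger valuation — but this is exactly the unstated justification behind the paper's one-line conclusion. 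Your choice to work with the primed variables \eqref{1x1sub'}, \eqref{2x2sub'} rather than \eqref{1x1sub}, \eqref{2x2sub} is immaterial since the paper establishes their equivalence just before the proposition, and the paper itself mixes the two (e.g.\ writing the perturbation as $p^{\beta-1}z'$).
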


\noindent {\em Remarks}: 1) If in \eqref{2x2sub'},
$(p^{2\alpha}, p^{\alpha} x', p^{\beta} y', p^{\alpha} z')=(p^t)$,
then the last condition of (i) (resp. (ii), (iii))  can be replaced by
the weaker condition $p^t|p^{\beta-1}z'$ (resp. $p^t|p^{\beta-1}y'$, $p^t|p^{\alpha-1}x'$),
and the second condition of (ii) can be weakened to $\tau-2 \ge t$.

2) There are some other variants; for example,
if $(x',z')=1$ and $\alpha = t$, then in (ii), the conditions can be
  replaced by $\alpha\ge 1$, $\tau-2 \ge t$, and $p^t|p^{\beta-1}y'$.

\begin{proof}
(i) Replace $x$ by $x\pm \frac 1p$ in \eqref{1x1sub} and \eqref{2x2sub}. The left-hand
  side of \eqref{1x1sub} becomes
   \[ (p^{\alpha}, p^{\beta} x \pm  p^{\beta-1}, p^{\alpha} y, p^{\alpha} z). \]
   The left-hand side of \eqref{2x2sub} becomes
   \[ (p^{2\alpha}, p^{\tau} x \pm p^{\tau-1}, p^{\tau} y, p^{2\alpha} z,
  p^{\tau}(xz-y^2) \pm p^{\beta-1} z'). \]
   Under the given hypotheses,  $p^{\beta-1} \in (p)$, $p^{\tau-1}
  \in (p^{t+1})$ and $p^{\beta-1}z' \in (p^{t+1})$.
   Hence $\mat xyyz \pm \frac 1p E_{11}$ satisfies \eqref{1x1sub} and \eqref{2x2sub}.

(ii)  Replace $y$ by $y\pm \frac 1p$ in \eqref{1x1sub} and \eqref{2x2sub}. The
  left-hand side of \eqref{1x1sub} becomes
   \[ (p^{\alpha}, p^{\beta} x, p^{\alpha} y \pm  p^{\alpha-1}, p^{\alpha} z). \]
   The left-hand side of \eqref{2x2sub} becomes
   \[ (p^{2\alpha}, p^{\tau} x , p^{\tau} y\pm p^{\tau-1}, p^{2\alpha} z,
   p^{\tau}(xz-y^2) \mp 2p^{\beta-1} y' - p^{\tau-2} ). \]
   Under the given hypotheses in this case, $p^{\alpha-1} \in (p)$, $p^{\tau-2} \in (p^{t+1})$
  and $p^{\beta-1}y' \in (p^{t+1})$.
   Hence $\mat xyyz\pm\frac 1p E'_{12}$ satisfies  \eqref{1x1sub} and \eqref{2x2sub}.

Assertion (iii) and the remarks can be proven similarly.
\end{proof}

\begin{corollary} \label{pnmidac}
   Suppose $p\nmid a$, and that
   \begin{itemize}
   \item[(i)] $\beta \ge 2$,
   \item[(ii)] $\tau-1 \ge t+1$,
   \item[(iii)] $\beta-1 \ge t+1$.
   \end{itemize}
   Then $I_{A,p}=0$.
\end{corollary}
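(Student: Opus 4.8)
The plan is to deduce the vanishing directly from Proposition \ref{IS'} applied with $S' = S_0$ (the set $S_0$ being Borel, since it is cut out by finitely many $\ord_p$-inequalities on $x,y,z$). As we are in the case $p \nmid a$, Proposition \ref{IS'} tells us that $I_{A,p} = I_{S_0} = 0$ provided every $\smat xyyz \in S_0$ has the property that $\smat xyyz \pm \frac1p E_{11}$ again lies in $S_0$. But this closure property is exactly the conclusion of Proposition \ref{add1p}(i), whose first two hypotheses, $\beta \ge 2$ and $\tau - 1 \ge t + 1$, are precisely conditions (i) and (ii) of the present statement. So the whole argument reduces to verifying the remaining hypothesis of Proposition \ref{add1p}(i), namely that $p^{t+1} \mid p^{\beta - 1} z'$ for every $\smat xyyz \in S_0$, where $z' = p^\alpha z$ as in \eqref{xyz'}.

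To check this, I would first observe that any $\smat xyyz \in S_0$ satisfies \eqref{1x1sub}, equivalently $(p^\alpha, x', y', z') = (1)$ as in \eqref{1x1sub'}. A finitely generated fractional ideal of $\Z_p$ equals $\Z_p$ exactly when the least of the $\ord_p$-values of its generators is zero; in particular $\ord_p(z') \ge 0$, i.e. $z' \in \Z_p$. Condition (iii) then gives $p^{\beta - 1} \in p^{t+1}\Z_p$, whence $p^{\beta - 1} z' \in p^{t+1}\Z_p$, which is the divisibility we need. Thus Proposition \ref{add1p}(i) applies to every element of $S_0$, and Proposition \ref{IS'} finishes the proof.

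There is no real obstacle here; the only points worth double-checking are that the translation $S \mapsto S + \frac1p E_{11}$ genuinely maps $S_0$ into itself under these hypotheses — which is exactly the content of the computation in the proof of Proposition \ref{add1p}(i) — and that no case split according to whether $\alpha = 0$ or $\alpha \ge 1$ is needed, since the integrality $z' \in \Z_p$ is read off from \eqref{1x1sub} uniformly in $\alpha$.
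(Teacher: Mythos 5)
Your proof is correct and follows exactly the paper's argument: read off $z' \in \Z_p$ from \eqref{1x1sub'}, combine with condition (iii) to get $p^{t+1}\mid p^{\beta-1}z'$, invoke Proposition \ref{add1p}(i) for closure of $S_0$ under $\pm\frac1p E_{11}$, and conclude by Proposition \ref{IS'}. Your parenthetical observation that no case split on $\alpha$ is needed is also right, since the integrality of $z'$ comes directly from \eqref{1x1sub'} for all $\alpha\ge 0$.
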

\begin{proof} Suppose $\mat xyyz \in S_0$. Then by \eqref{1x1sub'}, $z' \in \Z_p$,
 so by the third hypothesis, $p^{t+1}|p^{\beta-1} z'$.
   By Proposition \ref{add1p}, $\mat xyyz \pm \frac 1p E_{11} \in S_0$.
  The assertion now follows by Proposition \ref{IS'}.
\end{proof}

\begin{corollary} \label{pnmidb}
   Suppose $p\nmid b$, and
   \begin{itemize}
   \item[(i)]  $\alpha \ge 2$,
   \item[(ii)] $\tau-2 \ge t+1$,
   \item[(iii)] $\beta-1 \ge t+1$.
   \end{itemize}
   Then $I_{A,p}=0$.
\end{corollary}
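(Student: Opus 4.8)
The plan is to run the same argument as in the proof of Corollary \ref{pnmidac}, but now using part (ii) of Proposition \ref{add1p} in place of part (i), and the $E'_{12}$-variant of Proposition \ref{IS'} (the one requiring $p\nmid b$). Concretely, the goal is to show that the set $S_0$ of \S\ref{Ieval} is stable under the two translations $\smat xyyz\mapsto\smat xyyz\pm\tfrac1p E'_{12}$; once this is established, Proposition \ref{IS'} applied with $S'=S_0$ forces the character integral $I_{A,p}=I_{S_0}$ to vanish.

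First I would verify the hypotheses of Proposition \ref{add1p}(ii) for an arbitrary $\smat xyyz\in S_0$. Hypotheses (i) and (ii) of the corollary are exactly the first two conditions $\alpha\ge 2$ and $\tau-2\ge t+1$ needed there. For the remaining condition $p^{t+1}\mid p^{\beta-1}y'$, observe that $\alpha\ge 2$ in particular gives $\alpha\neq 0$, so by the reduction of \eqref{1x1sub} to \eqref{xyz1} we have $y'\in\Z_p$; combined with hypothesis (iii), $\beta-1\ge t+1$, this yields $p^{\beta-1}y'\in p^{t+1}\Z_p$, as required. Hence Proposition \ref{add1p}(ii) gives $\smat xyyz\pm\tfrac1p E'_{12}\in S_0$, i.e. the desired stability.

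Then I would invoke Proposition \ref{IS'} with $S'=S_0$ in its $E'_{12}$ case: since $p\nmid b$ and $S_0$ is stable under adding $\pm\tfrac1p E'_{12}$, the proposition gives $I_{S_0}=0$, and since $I_{A,p}=I_{S_0}$ this is the claim (and if $S_0=\emptyset$ the conclusion is immediate). I do not expect a genuine obstacle here; the only points requiring care are making sure $y'\in\Z_p$ holds uniformly over $S_0$ (this uses only $\alpha\neq 0$, hence follows from hypothesis (i)), and that the correct ``resp.'' case of Proposition \ref{IS'}, the one keyed to $p\nmid b$, is the one being applied.
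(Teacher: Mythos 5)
Your proof is correct and matches the paper's argument, which simply says ``This follows in the same way as the previous corollary, using Proposition \ref{add1p} (ii), and Proposition \ref{IS'}.'' The details you supply (verifying $y'\in\Z_p$ from \eqref{1x1sub'}, then $\beta-1\ge t+1$ gives the divisibility needed for \ref{add1p}(ii), then invoking the $p\nmid b$ case of \ref{IS'} with $S'=S_0$) are exactly what the cross-reference intends; the only slight overcomplication is that $y'\in\Z_p$ already follows from \eqref{1x1sub'} alone without routing through the $\alpha\neq 0$ reduction to \eqref{xyz1}, but since $\alpha\geq 2$ anyway this introduces no gap.
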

\begin{proof} This follows in the same way as the previous corollary,
  using Proposition \ref{add1p} (ii), and Proposition \ref{IS'}.
\end{proof}

\begin{proposition} \label{betat}
Suppose condition (iii) of the above corollaries fails to hold,
i.e., $\beta-1 \le t$. Then exactly one of the following is true:
\begin{enumerate}
\item  $\tau=2\tau'+1$ is odd, $\alpha=\tau'$, $\beta=\tau'+1$, and $t=\tau'$,
   \item  $\tau=2\tau'$ is even, $\alpha=\tau'-1$, $\beta=\tau'+1$, and $t=\tau'$,
   \item  $\tau=2\tau'$ is even, and $\alpha=\beta=t=\tau'$,
\item  $\tau=2\tau'$ is even, $\alpha=\beta=\tau'$, and $t=\tau'-1$.
\end{enumerate}
\end{proposition}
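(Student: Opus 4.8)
The statement is a purely arithmetic consequence of the standing constraints on $\alpha$, $\beta$, $\tau$ and $t$, so the plan is simply to organize these inequalities. Recall that $0 \le \alpha \le \beta$ and, by Proposition \ref{D} (which applies because $p \nmid 4\det\sigma_1$ forces $\ord_p\det\sigma_1 = \ord_p\det\sigma_2 = 0$), $\alpha + \beta = \tau$; hence $\alpha \le \tau/2 \le \beta$. Moreover, by the normalization \eqref{lam} of $\lambda(p)$, which comes from the Cartan normal form \eqref{Ccond}, we have $0 \le t \le \tau/2$. The hypothesis of the proposition is the single extra inequality $\beta \le t+1$.

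The first step is to pin down $\beta$. Combining $\beta \le t+1 \le \tfrac{\tau}{2}+1$ with $\beta \ge \tfrac{\tau}{2}$ gives $\tfrac{\tau}{2} \le \beta \le \tfrac{\tau}{2}+1$, so $\beta$ is one of the at most two integers in this interval. Since $\alpha = \tau-\beta$, this also determines $\alpha$, and then the remaining inequality $\beta - 1 \le t \le \tfrac{\tau}{2}$ restricts $t$ to at most two values. I would carry this out according to the parity of $\tau$. If $\tau = 2\tau'$ is even, then $\beta \in \{\tau', \tau'+1\}$; the choice $\beta = \tau'$ forces $\alpha = \tau'$ and $t \in \{\tau'-1,\tau'\}$, giving cases (4) and (3), while $\beta = \tau'+1$ forces $\alpha = \tau'-1$ (so $\tau' \ge 1$ automatically) and $\tau' = \beta-1 \le t \le \tau'$, i.e. $t = \tau'$, giving case (2). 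If $\tau = 2\tau'+1$ is odd, the only integer in $[\tau'+\tfrac12,\tau'+\tfrac32]$ is $\beta = \tau'+1$, whence $\alpha = \tau'$ and $\tau' = \beta-1 \le t \le \tau'+\tfrac12$ forces $t = \tau'$, which is case (1).

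It then remains to verify that \emph{exactly} one case occurs, i.e. that the four listed cases are mutually exclusive; this is immediate from the parametrizations, since case (1) is the only one with $\tau$ odd, case (2) is the only even case with $\alpha \ne \beta$, and cases (3) and (4)---which share $\alpha = \beta = \tfrac{\tau}{2}$---are distinguished by whether $t = \tfrac{\tau}{2}$ or $t = \tfrac{\tau}{2}-1$. I do not expect any genuine obstacle here: the only points requiring a little care are to invoke the correct upper bound $t \le \tau/2$ (rather than $t \le \tau$) coming from \eqref{Ccond}, and to observe that the small-$\tau$ edge cases are subsumed---$\tau = 0$ falls into case (3) with $\tau'=0$, and $\tau = 1$ into case (1) with $\tau'=0$---so no value of $\tau$ is left out.
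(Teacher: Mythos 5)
Your proof is correct and follows essentially the same route as the paper's: combining $\alpha+\beta=\tau$ with $\alpha\le\beta$ and the hypothesis $\beta\le t+1$ together with $t\le\lfloor\tau/2\rfloor$ to pin down $\beta\in\{\lceil\tau/2\rceil,\lfloor\tau/2\rfloor+1\}$, then enumerating the remaining possibilities for $t$ by parity of $\tau$. You merely spell out the case analysis that the paper compresses into ``the result follows easily by considering the possible cases.''
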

\begin{proof} Suppose $\beta \le t+1$.  Then
    because we always have $t \le \left[\frac{\tau}2\right]$ (where brackets denote
  the floor), it follows that
  $\beta\le t+1\le \left[\frac\tau2\right]+1$.
    On the other hand, $\tau = \alpha+\beta \le 2\beta$,
which gives the lower bound in the following inequality:
\[\left \lceil \frac{\tau}2 \right \rceil\le \beta \le \left[\frac{\tau}2\right]+1.\]
 (Here, $\lceil\cdot\rceil$ denotes the ceiling.)
Using $\beta - 1 \le t \le \left[\frac \tau2 \right]$,
    the result follows easily by considering the possible cases.
\end{proof}

\begin{proposition} \label{betat1}
Suppose $\alpha, \beta, t, \tau$  satisfy Proposition \ref{betat} (1),
  i.e., $\tau=2\tau'+1$, $\alpha=\tau'$, $\beta=\tau'+1$, and $t=\tau'$.
   Then if $\tau'\ge 2$, $I_{A,p}=0$.
\end{proposition}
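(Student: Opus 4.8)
The plan is to deduce $I_{A,p}=0$ from Proposition \ref{IS'}, by showing that in case (1) the integration domain $S_0$ of \S\ref{Ieval} is carried onto itself by the translations $S\mapsto S\pm\tfrac1pE_{11}$ and $S\mapsto S\pm\tfrac1pE'_{12}$. Once this is established, the argument of Proposition \ref{IS'} applies directly: whichever of $a,b$ is prime to $p$, the corresponding translation multiplies $\theta_p(\tr\sigma_U S)$ by the constant $\theta_p(\pm a/p)$ or $\theta_p(\pm b/p)$, a nontrivial $p$-th root of unity, forcing $I_{A,p}=I_{S_0}=0$. In case (1) we have $\alpha=\beta-1=t=\tau'$ and $\tau=2\tau'+1$, so, using $\beta-\alpha=1$ and the substitution \eqref{xyz'}, $S_0$ is described by \eqref{1x1sub'} --- i.e.\ $x',y',z'\in\Z_p$ with $(x',y',z')=(1)$ --- together with $(p^{2\tau'},\,p^{\tau'}x',\,p^{\tau'+1}y',\,p^{\tau'}z',\,x'z'-p\,{y'}^2)=(p^{\tau'})$ from \eqref{2x2sub'}.

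The crux --- and the only place where $\tau'\ge2$, rather than merely $\tau'\ge1$, enters --- is the observation that no point of $S_0$ can have $p\mid x'$ and $p\mid z'$ simultaneously: otherwise \eqref{1x1sub'} would force $p\nmid y'$, so $\ord_p(x'z'-p\,{y'}^2)=1<\tau'=t$, contradicting \eqref{2x2sub'}. Hence $(p^{2\alpha},p^{\alpha}x',p^{\beta}y',p^{\alpha}z')=(p^{\tau'})=(p^t)$ holds at every point of $S_0$, which is exactly the hypothesis making the weakened forms in Remark 1 following Proposition \ref{add1p} available throughout $S_0$.

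Granting this, I would invoke Proposition \ref{add1p}. In part (i), $\beta=\tau'+1\ge2$ and $\tau-1=2\tau'\ge\tau'+1=t+1$, while by Remark 1 the last hypothesis weakens to $p^t\mid p^{\beta-1}z'=p^{\tau'}z'$, which holds for all $z'\in\Z_p$; thus $\smat xyyz\in S_0$ implies $\smat xyyz\pm\tfrac1pE_{11}\in S_0$. In part (ii), $\alpha=\tau'\ge2$ and $\tau-2=2\tau'-1\ge\tau'+1=t+1$ (here $\tau'\ge2$ is used), while by Remark 1 the last hypothesis weakens to $p^t\mid p^{\beta-1}y'=p^{\tau'}y'$, again automatic; thus $\smat xyyz\in S_0$ implies $\smat xyyz\pm\tfrac1pE'_{12}\in S_0$. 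Taking the two signs together turns each implication into an equivalence, so translation by $\tfrac1pE_{11}$ (respectively $\tfrac1pE'_{12}$) is a measure-preserving bijection of $S_0$ onto itself.

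To finish, note that $4\det\sigma_1=4\det\sigma_U=4ac-b^2$ is prime to $p$ and $p$ is odd, so $a$ and $b$ are not both divisible by $p$; applying Proposition \ref{IS'} with the $E_{11}$-translation when $p\nmid a$, and with the $E'_{12}$-translation when $p\nmid b$, yields $I_{A,p}=0$ in either case. The real work is concentrated in the second paragraph: the emptiness statement is what unlocks Remark 1, and without it one would obtain invariance only on the locus $\{p\mid z'\}$ (resp.\ $\{p\mid y'\}$), which is insufficient. The remaining verifications --- chiefly that the perturbed value $x'z'-p\,{y'}^2$ stays in $(p^{t})$, the new terms $\pm2p^{\tau'}y'$ and $-p^{2\tau'-1}$ having $p$-order $\ge t$ because $p$ is odd and $\tau'\ge1$ --- are routine and are subsumed in Proposition \ref{add1p}.
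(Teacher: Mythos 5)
Your argument is correct and is essentially the paper's proof: the key step in both is the observation that $p\mid x'$ and $p\mid z'$ cannot hold simultaneously (using $\tau'\ge 2$), which makes $(p^{2\alpha},p^{\alpha}x',p^{\beta}y',p^{\alpha}z')=(p^t)$ and unlocks the weakened hypotheses in the remarks after Proposition~\ref{add1p}, after which Proposition~\ref{IS'} finishes. The only cosmetic differences are that you establish both translation-invariances unconditionally and then split on $p\nmid a$ versus $p\nmid b$ (equivalent to the paper's $p\nmid b$ versus $p\nmid ac$), and you use Remark~1 for the $E'_{12}$-translation where the paper invokes Remark~2; both are valid since $\alpha=\tau'\ge 2$.
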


\begin{proof}
Let $\mat xyyz \in S_0$.  Then \eqref{2x2sub'} is satisfied,
  and since $\alpha=\tau'\ge 2$, \eqref{xyz1} is also satisfied.
  In particular, by \eqref{2x2sub'},
\begin{equation}\label{x'z'}
x' z' \equiv p{y'}^2 \pmod{p^2}.
\end{equation}
It follows that either $x'$ or $z'$ is a unit.  Indeed,
if $p|x'$ and $p|z'$, then $y'$ is a unit by \eqref{xyz1}, leading to an
  obvious contradiction in \eqref{x'z'}.
In fact, by \eqref{x'z'}, $p|x'z'$ and hence exactly one
  of $x'$ or $z'$ is a unit.

First suppose $p\nmid b$.  Note that $(x',z')=1$, $\alpha=t\ge 2$,
  \[\tau-2=2\tau'-1>2\tau'-\tau'=t,\]
 and $p^t|p^{\beta-1}y'$.
By the second remark after Proposition \ref{add1p},
   $\mat xyyz \pm \frac 1p E'_{12} \in S_0$.
  By Proposition \ref{IS'}, $I_{A,p}=I_{S_0}=0$.

Finally, suppose $p|b$. Then as noted earlier, $p\nmid a$.
    Since one of $x'$ or $z'$ is a unit, $(p^{\alpha} x', p^{\alpha} z')=(p^t)$.
  Furthermore, $p^t|p^{\beta-1}z'$, and as above, $\tau-2\ge t$.
    By the first remark after Proposition \ref{add1p},
    $\mat xyyz \pm \frac 1p E_{11} \in S_0$.
    By Proposition \ref{IS'}, $I_{A,p}=I_{S_0}=0$.
\end{proof}

\begin{proposition} \label{betat2}
Suppose $\alpha, \beta, t, \tau$  satisfy Proposition \ref{betat} (2), i.e., $\tau=2\tau'$ is even,
   $\alpha=\tau'-1$, $\beta=\tau'+1$, and $t=\tau'$.  Then if $\tau'\ge 3$, $I_{A,p}=0$.
  In fact, if $p\nmid ac$, then $I_{A,p}=0$ if $\tau'\ge 2$.
\end{proposition}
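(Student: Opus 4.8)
The plan is to follow the same template as the proof of Proposition~\ref{betat1}: exhibit a half-integral translation $S\mapsto S\pm\tfrac1p E$ (with $E$ one of $E_{11},E'_{12},E_{22}$) that preserves the support set $S_0$, and then conclude $I_{A,p}=I_{S_0}=0$ from Proposition~\ref{IS'}. It will be convenient to record the numerology of case~(2): $\tau=2\tau'$, $\alpha=\tau'-1$, $\beta=\tau'+1$, $t=\tau'$, so that $\beta-\alpha=2$, $\beta-1=t$, and $2\alpha=\tau-2=2\tau'-2$.

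First suppose $\tau'\ge 3$. I would begin by pinning down the structure of $S_0$. Since $2\alpha=2\tau'-2>\tau'$ and $\beta=\tau'+1>\tau'$, membership in $S_0$ forces (via \eqref{2x2sub'}) that $p^{\alpha}x'$ and $p^{\alpha}z'$ lie in $(p^{\tau'})$, i.e.\ $p\mid x'$ and $p\mid z'$; then \eqref{1x1sub'} (a genuine constraint since $\alpha\ge 2$) forces $y'$ to be a unit, and $x'z'-p^{2}y'^{2}\in(p^{\tau'})$. If both $\ord_p(x')\ge 2$ and $\ord_p(z')\ge 2$ then $\ord_p(x'z'-p^2y'^2)=2<\tau'$, a contradiction; hence one of $x',z'$ has order exactly $1$, and consequently $(p^{2\alpha},p^{\alpha}x',p^{\beta}y',p^{\alpha}z')=(p^{t})$ for \emph{every} point of $S_0$. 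This last identity is exactly what is needed to invoke the first Remark after Proposition~\ref{add1p}. If $p\nmid b$, that Remark applied to part~(ii) --- whose weakened hypotheses $\alpha\ge 2$, $\tau-2\ge t$ (true as $\tau'\ge 2$) and $p^{t}\mid p^{\beta-1}y'$ (automatic since $\beta-1=t$) all hold --- gives $\smat xyyz\pm\tfrac1p E'_{12}\in S_0$, so Proposition~\ref{IS'} yields $I_{A,p}=0$. If instead $p\mid b$, then $p\nmid a$ (because $p\nmid 4\det\sigma_1$); since $p\mid z'$ we have $p^{t+1}\mid p^{\beta-1}z'$, and with $\beta\ge 2$ and $\tau-1\ge t+1$ in force, Proposition~\ref{add1p}(i) gives $\smat xyyz\pm\tfrac1p E_{11}\in S_0$, so again $I_{A,p}=0$ by Proposition~\ref{IS'}. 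Since $p\nmid b$ and $p\mid b$ exhaust all cases, this proves $I_{A,p}=0$ for $\tau'\ge 3$.

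Next suppose $\tau'=2$ and, in addition, $p\nmid ac$; here $\alpha=1$, $\beta=3$, $t=2$. Now the generator $p^{2\alpha}=p^{2}=p^{t}$ already forces the ideal in \eqref{2x2sub'} to equal $(p^{t})$, so the remaining constraints reduce to $p\mid x'$, $p\mid z'$ (which makes $x'z'-p^{2}y'^{2}\in(p^{2})$ automatic), together with $y'$ a unit coming from \eqref{1x1sub'}; thus $S_0=\{\smat xyyz:\ p\mid x',\ p\mid z',\ p\nmid y'\}$. Since $p\nmid ac$ gives $p\nmid a$, and $p^{t+1}=p^{3}\mid p^{\beta-1}z'=p^{2}z'$ because $p\mid z'$, Proposition~\ref{add1p}(i) (with $\beta=3\ge 2$ and $\tau-1=3\ge t+1=3$) gives $\smat xyyz\pm\tfrac1p E_{11}\in S_0$, and Proposition~\ref{IS'} yields $I_{A,p}=0$. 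Together with the previous case this shows $I_{A,p}=0$ whenever $p\nmid ac$ and $\tau'\ge 2$.

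The one genuinely delicate point --- and the reason one cannot simply cite Proposition~\ref{add1p}(ii) verbatim, as was done in the odd case of Proposition~\ref{betat1} --- is that in case~(2) the coordinate $y'$ is forced to be a \emph{unit}, so the literal hypothesis $p^{t+1}\mid p^{\beta-1}y'$ of part~(ii) fails. The remedy is to first establish the auxiliary equality $(p^{2\alpha},p^{\alpha}x',p^{\beta}y',p^{\alpha}z')=(p^{t})$ on all of $S_0$, which activates the weakened hypotheses of the Remark; this rests on the small but crucial observation that $\ord_p(x')$ and $\ord_p(z')$ cannot both exceed $1$. Once the structure of $S_0$ is in hand, the split into the subcase $p\mid b$ (handled by the $E_{11}$-translation) and the boundary value $\tau'=2$ is routine.
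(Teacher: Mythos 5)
Your proof is correct and follows essentially the same approach as the paper: establish $p\mid x'$, $p\mid z'$, $y'$ a unit, then apply Proposition~\ref{IS'} via the $E_{11}$-translation (case $p\nmid a$, using Proposition~\ref{add1p}(i)) or the $E'_{12}$-translation (case $p\nmid b$, using the first Remark after Proposition~\ref{add1p}(ii)). The only substantive variation is in verifying the ideal identity $(p^{2\alpha},p^{\alpha}x',p^{\beta}y',p^{\alpha}z')=(p^t)$ needed to invoke the Remark: you argue by contradiction that at least one of $\ord_p(x'),\ord_p(z')$ equals $1$, while the paper factors $x'=px''$, $z'=pz''$ and uses $x''z''\equiv y'^2\pmod{p^{\tau'-2}}$ to conclude both $x'',z''$ are units; both routes suffice.
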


\begin{proof}
Let $\mat xyyz \in S_0$.
Suppose $\tau' \ge 2$.
   By \eqref{2x2sub'}, $p^{\tau'}|p^{\tau'-1}x'$ and $p^{\tau'}|p^{\tau'-1}z'$,
and hence $p|x'$ and $p|z'$.
Therefore by \eqref{xyz1}, $y'$ is a unit.

Suppose  $p\nmid ac$.  Because $p|z'$, we have $p^{t+1}|p^{\beta-1} z'$.
  Hence by Proposition \ref{add1p}, $\smat xyyz \pm \frac 1p E_{11} \in S_0$.
  By Proposition \ref{IS'}, $I_{S_0}=0$.

Next suppose $p\nmid b$ and $\tau' \ge 3$. Write $x'=px''$ and $z'=pz''$, with
  $x''$ and $z'' \in \Z_p$.  By \eqref{2x2sub'},
  $p^2x''z''\equiv p^2y'^2\pmod {p^{t}}$, so
    $x'' z''\equiv y'^2 \pmod{p^{\tau'-2}}$.
    Because $\tau'\ge 3$, it follows that $x''$ and $z''$ are units.
  Therefore $(p^{\alpha}x')=(p^t)$.
    Obviously $p^{t}|p^{\beta-1}y'$. By the first remark after Proposition \ref{add1p},
    $\smat xyyz \pm \frac 1p E'_{12} \in S_0$.
    By Proposition \ref{IS'}, $I_{A,p}=I_{S_0}=0$.
\end{proof}

\begin{proposition} \label{betat3}
   Suppose $\alpha, \beta, t, \tau$  satisfy Proposition \ref{betat} (3),
i.e. $\tau=2\tau'$ and $\alpha=\beta=t=\tau'$.
Suppose further that $\tau' \ge 2$.  Then
the integral $I_{A,p}$ is given explicitly by \eqref{case3result} below.
\end{proposition}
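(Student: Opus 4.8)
The plan is to turn $I_{A,p}$ in this configuration into a finite exponential sum over symmetric matrices modulo $p^{\tau'}$, parametrize those matrices, and evaluate the resulting Ramanujan/Gauss sums. First I would make the set $S_0$ explicit. In case (3) we have $\alpha=\beta=\tau'$ and $t=\tau'$, so $x'=p^{\tau'}x$, $y'=p^{\tau'}y$, $z'=p^{\tau'}z$ and the matrix $M$ of \S\ref{n2a} is $\smat{p^{\tau'}I_2}{T'}{O}{p^{\tau'}I_2}$ with $T'=\smat{x'}{y'}{y'}{z'}$. Condition \eqref{1x1sub'} then says exactly that $T'\in S(\Z_p)$ is primitive (its reduction mod $p$ is nonzero), while \eqref{2x2sub'} collapses to the single congruence $p^{\tau'}\mid\det T'$. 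Writing $S=p^{-\tau'}T'$, one sees that $S_0$ is a disjoint union of translates of $S(\Z_p)$; since $\sigma_U$ is half-integral and $p$ is odd we have $\sigma_U\in S(\Z_p)$, so $\theta_p$ is trivial on the integral part of $\tr(\sigma_U S)$ and \eqref{Ip} becomes
\[
I_{A,p}=\sum_{T'}\theta_p\!\bigl(p^{-\tau'}\tr(\sigma_U T')\bigr),
\]
the sum running over $T'\in S(\Z/p^{\tau'}\Z)$ which are primitive with $\det T'\equiv 0\pmod{p^{\tau'}}$.

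Next I would parametrize those $T'$. Since $p\nmid 4$ by \eqref{pnmidsigma} (so $p$ is odd) and $\tau'\ge 2$, primitivity together with $\det T'\equiv 0$ forces $p\nmid T'_{11}$ or $p\nmid T'_{22}$: if $p$ divided both then $p^2\mid T'_{11}T'_{22}\equiv{T'_{12}}^2$, hence $p\mid T'_{12}$, contradicting primitivity. So split $I_{A,p}=\Sigma_1+\Sigma_2$ according to $p\nmid T'_{11}$ or $p\mid T'_{11}$. On the first piece the map $(x,s)\mapsto\smat{x}{xs}{xs}{xs^2}$ with $x\in(\Z/p^{\tau'}\Z)^*$ and $s\in\Z/p^{\tau'}\Z$ is a bijection onto $\{T'\in S_0:p\nmid T'_{11}\}$; on the second piece (where necessarily $p\nmid T'_{22}$) one uses $(z,s)\mapsto\smat{zs^2}{zs}{zs}{z}$ with $z\in(\Z/p^{\tau'}\Z)^*$ and $p\mid s$. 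Writing $\sigma_U=\smat{a}{b/2}{b/2}{c}$, the inner sum over the unit $x$ (resp.\ $z$) collapses to a Ramanujan sum, and with $c_q(m)=\sum_{x\in(\Z/q\Z)^*}e(mx/q)$ one gets
\[
\Sigma_1=\sum_{s\bmod p^{\tau'}}c_{p^{\tau'}}\!\bigl(cs^2+bs+a\bigr),\qquad
\Sigma_2=\sum_{\substack{s\bmod p^{\tau'}\\ p\mid s}}c_{p^{\tau'}}\!\bigl(as^2+bs+c\bigr).
\]

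Finally I would evaluate these sums. Using $c_{p^{\tau'}}(m)=p^{\tau'}[p^{\tau'}\mid m]-p^{\tau'-1}[p^{\tau'-1}\mid m]$ (valid as $\tau'\ge 2$), each of $\Sigma_1,\Sigma_2$ equals $p^{\tau'}$ times a difference of solution counts of a quadratic congruence modulo $p^{\tau'-1}$ and $p^{\tau'}$. The hypothesis $p\nmid 4\det\sigma_U=4ac-b^2$ (with $p$ odd) ensures that the governing quadratic has discriminant prime to $p$, so its number of roots mod $p^k$ is controlled by Hensel's lemma; organizing the count by $\ord_p a$, $\ord_p b$, $\ord_p c$ (note $p\nmid b$ whenever $p\mid ac$) and assembling the contributions yields the stated value \eqref{case3result}. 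I expect this last step to be the main obstacle. The quadratic $as^2+bs+c$ occurring in $\Sigma_2$ is restricted to $p\mid s$ and, after the substitution $s=pt$, becomes degenerate modulo $p$ (its leading coefficient is divisible by $p$), so Hensel must be applied with care — equivalently, one must keep track of the root of $as^2+bs+c$ that escapes to the boundary of $p\Z_p$ — and the several subcases (valuations of $a,b,c$, and the quadratic-residue symbols) must each be pushed through; the Ramanujan-sum structure produces a telescoping cancellation whose uniform validity is precisely what needs to be checked.
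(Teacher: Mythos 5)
Your reduction of $I_{A,p}$ to a finite exponential sum over primitive half-integral symmetric matrices $T'\bmod p^{\tau'}$ with $p^{\tau'}\mid\det T'$ is correct, as is the observation (using $\tau'\ge 2$) that primitivity and $p^{\tau'}\mid\det T'$ force $p\nmid T'_{11}$ or $p\nmid T'_{22}$. The rational-normal-curve parametrization is valid, the Ramanujan-sum identities $\Sigma_1=\sum_{s}c_{p^{\tau'}}(a+bs+cs^2)$ and $\Sigma_2=\sum_{p\mid s}c_{p^{\tau'}}(as^2+bs+c)$ follow, and (using the invariance $c_{p^{\tau'}}(um)=c_{p^{\tau'}}(m)$ for units $u$) the piece of $\Sigma_1$ with $s$ a unit is precisely \eqref{case3result}. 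This is a genuinely different route from the paper's, which works directly with the triples $(\mathtt{x},\mathtt{y},\mathtt{z})$ and derives \eqref{case3result} by a substitution after first killing the degenerate pieces.

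However, there is a real gap: you stop short of proving that $\Sigma_1\rvert_{p\mid s}+\Sigma_2=0$, which is exactly what is needed to conclude $I_{A,p}=\Sigma_1+\Sigma_2=\eqref{case3result}$. You acknowledge this ("the telescoping cancellation whose uniform validity is precisely what needs to be checked"), so the proposal does not actually establish the proposition. The missing fact is equivalent to the paper's $I_{S_2}=I_{S_3}=0$, which the paper proves not by Hensel casework but by a soft translation-invariance argument: Propositions \ref{add1p} and \ref{IS'} show that $S_2$ and $S_3$ are stable under a unit shift $\pm\tfrac1p E_{11}$ or $\pm\tfrac1p E'_{12}$ (depending on whether $p\nmid ac$ or $p\nmid b$), so the integral picks up a nontrivial phase and must vanish. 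Your Hensel route would also work and is in fact short — when $p\mid s$ the quadratic in $\Sigma_1$ reduces mod $p$ to $a$, so the sum vanishes termwise unless $p\mid a$, and in that case the linear coefficient $b$ is a $p$-adic unit (from $p\nmid 4\det\sigma_U$), so the solution counts entering $c_{p^{\tau'}}$ are exactly $1$ and $p$ and the Ramanujan sum cancels identically; similarly for $\Sigma_2$ — but you should carry this out rather than leave it as an anticipated obstacle. Once that step is supplied the argument is complete and, arguably, somewhat more transparent than the paper's.
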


\begin{proof} Suppose $\mat xyyz \in S_0$.
Then \eqref{xyz1} implies $(p^t x', p^{t} y',p^t z')= (p^t)$.
Hence \eqref{xyz1} and \eqref{2x2sub'} taken together are equivalent to \eqref{xyz1} and
\begin{equation} \label{xzy2}
    x'z' \equiv {y'}^2 \pmod{p^t}.
\end{equation}
If $y'$ is not a unit, then by \eqref{xzy2} and \eqref{xyz1}, exactly one of $x'$ or $z'$ is a unit.
   So there is a partition
\[S_0=S_1\cup S_2\cup S_3,\]
where
\begin{align*}
S_1&=\{\mat xyyz\in S_0:\, \text{$x', y', z'$ are units}\},\\
S_2&=\{\mat xyyz\in S_0:\, \text{$p|y'$, $p|z'$, and $x'$ is a unit}\},\\
S_3&=\{\mat xyyz\in S_0:\, \text{$p|y'$, $p|x'$, and $z'$ is a unit}\}.
\end{align*}

We claim that $I_{S_2}=I_{S_3}=0$.
Let $\smat xyyz\in S_2$.
Then $(p^{\alpha} x')=(p^t)$, $p^{t}|p^{\beta-1}z'$,
   and $\tau-1\ge t+1$.
   By the first remark after Proposition \ref{add1p},
   $\mat xyyz \pm \frac 1p E_{11} \in S_0$.  In fact, this matrix belongs to $S_2$
since $p^{\beta}(x\pm\frac 1p)=x'\pm p^{\beta-1}$ is a unit.
Hence  by Proposition \ref{IS'}, $I_{S_2}=0$ if $p\nmid ac$.
If $p|ac$, then $p\nmid b$.  In this case,
 $(p^{\alpha} x')=(p^t)$, $p^{t}|p^{\beta-1}y'$, and $\tau -1 \ge t+1$.
   By the first remark after Proposition \ref{add1p},
   $\mat xyyz \pm \frac 1p E'_{12} \in S_0$. In fact, the matrix belongs to
  $S_2$ since $p^\alpha (y\pm \frac1p)=y'\pm p^{\alpha-1}\in p\Z_p$.
  By Proposition \ref{IS'}, $I_{S_2}=0$.
The proof that $I_{S_3}=0$ is similar.

For the integral over $S_1$,
  note that the validity of \eqref{xyz1} and \eqref{xzy2} depends only on
 $x',y',z' \pmod{p^{\tau'}}$, which also means that $S_1=S_1+S_2(\Z_p)$
  (where $S_2$ here denotes the symmetric matrices).
Hence, writing $\mathtt{x,y,z}$ for the congruence classes of
  $x',y',z'\mod p^{\tau'}$,
  \[ I_{A,p}=I_{S_1} = \int_{S_1} \theta_{p}(\tr (\sigma_U S)) dS \]
  \[ = \sum_{\mathtt{x,y,z}\in (\Z_p/p^{\tau'}\Z_p)^*,\atop
       \mathtt{x} \mathtt{z} \equiv \mathtt{y}^2 (\operatorname{mod}{p^{\tau'}})}
       \int_{\frac{\mathtt{z}}{p^{\tau'}}+\Z_p}
       \int_{\frac{\mathtt{y}}{p^{\tau'}} + \Z_p}
       \int_{\frac{\mathtt{x}}{p^{\tau'}}+\Z_p}
       \theta_{p}({ax+by+cz}) dxdydz \]
  \[ = \sum_{\mathtt{y} \in (\Z_p/p^{\tau'}\Z_p)^*} e(-\frac{b\mathtt{y}}{p^{\tau'}})
       \left( \sum_{\mathtt{x}, \mathtt{z} \in \Z_p/p^{\tau'}\Z_p \atop
 \mathtt{x} \mathtt{z} \equiv \mathtt{y}^2 (\operatorname{mod}{p^{\tau'}})}
 e(-\frac{a\mathtt{x}+c\mathtt{z}}{p^{\tau'}})\right) \]
  \[= \sum_{\mathtt{y} \in (\Z_p/p^{\tau'}\Z_p)^*} e(-\frac{b\mathtt{y}}{p^{\tau'}})
       \left( \sum_{\mathtt{x}, \mathtt{z} \in \Z_p/p^{\tau'}\Z_p \atop
 \mathtt{x} \mathtt{z} \equiv 1(\operatorname{mod}{p^{\tau'}})}
        e(-\frac{\mathtt{y}(a\mathtt{x}+c\mathtt{z})}{p^{\tau'}})\right) \]
  \begin{equation} \label{case3result}
  =\sum_{\mathtt{x}, \mathtt{z} \in \Z_p/p^{\tau'}\Z_p \atop
 \mathtt{x} \mathtt{z} \equiv 1(\operatorname{mod}{p^{\tau'}})}
  \left(\sum_{\mathtt{y} \in (\Z_p/p^{\tau'}\Z_p)^*}
  e(-\frac{\mathtt{y}(a\mathtt{x}+c\mathtt{z}+b)}{p^{\tau'}})\right).
  \end{equation}
We remark that the computation here is valid for $\tau' \ge 1$.
The sum over $\mathtt{y}$ can be evaluated using \eqref{ysum} below.
\end{proof}

\begin{proposition} \label{betat4}
   Suppose $\alpha, \beta, t, \tau$  satisfy Proposition \ref{betat} (4), i.e.
  $\tau=2\tau'$ is even, $\alpha=\beta=\tau'$, and $t=\tau'-1$. Suppose further that
  $\tau' \ge 2$.  Then the integral $I_{A,p}$ is given by \eqref{case4result} below.
\end{proposition}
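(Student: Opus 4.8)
The plan is to follow the proof of Proposition \ref{betat3} essentially line by line, the one structural change being that the single congruence $x'z'\equiv{y'}^2\pmod{p^{\tau'}}$ used there is replaced by the pair of conditions $p^{\tau'-1}\mid x'z'-{y'}^2$ and $p^{\tau'}\nmid x'z'-{y'}^2$. First I would specialize \eqref{1x1sub'} and \eqref{2x2sub'} to $\alpha=\beta=\tau'$ and $t=\tau'-1$. Since $\tau'\ge 2$, \eqref{1x1sub'} is equivalent to \eqref{xyz1}, so $x',y',z'\in\Z_p$ with at least one a unit; then $(p^{\tau'}x',p^{\tau'}y',p^{\tau'}z')=(p^{\tau'})$ and $p^{2\tau'}\in(p^{\tau'})$, so \eqref{2x2sub'} holds precisely when $\ord_p(x'z'-{y'}^2)=\tau'-1$. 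Hence $S_0$ is the set of $\smat xyyz$ with $x',y',z'\in\Z_p$, $(x',y',z')=(1)$, and $\ord_p(x'z'-{y'}^2)=\tau'-1$. Because $\tau'-1\ge 1$: if $p\nmid y'$ then $p\nmid x'$ and $p\nmid z'$ (otherwise the valuation above is $0$), and if $p\mid y'$ then $p$ divides exactly one of $x',z'$. Thus $S_0=S_1\sqcup S_2\sqcup S_3$, where $S_1$ has $x',y',z'$ all units, $S_2$ has $x'$ a unit with $p\mid y'$, $p\mid z'$, and $S_3$ has $z'$ a unit with $p\mid x'$, $p\mid y'$.

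Next I would show $I_{S_2}=I_{S_3}=0$. Here the ideal in \eqref{2x2sub'} attains $(p^{t})$ only through the generator $x'z'-{y'}^2$, so the first remark after Proposition \ref{add1p} does not apply; one instead verifies the needed translation stabilities by hand. On $S_2$, the map $S\mapsto S\pm\tfrac1p E_{11}$ replaces $x'$ by $x'\pm p^{\tau'-1}$ (still a unit) and changes $x'z'-{y'}^2$ by $\pm p^{\tau'-1}z'\in p^{\tau'}\Z_p$, so $S_2$ is preserved and Proposition \ref{IS'} forces $I_{S_2}=0$ when $p\nmid a$. If $p\mid a$, then $p\nmid b$ by the dichotomy noted in \S\ref{n2a}, and $S\mapsto S\pm\tfrac1p E'_{12}$ replaces $y'$ by $y'\pm p^{\tau'-1}$ (still divisible by $p$) and changes $x'z'-{y'}^2$ by $\mp 2p^{\tau'-1}y'-p^{2\tau'-2}$, both terms of valuation $\ge\tau'$ since $p\mid y'$ and $\tau'\ge 2$; again $I_{S_2}=0$ by Proposition \ref{IS'}. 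The vanishing of $I_{S_3}$ is the mirror image under $(x,a)\leftrightarrow(z,c)$, using $E_{22}$ when $p\nmid c$ and $E'_{12}$ (available since $p\mid c$ forces $p\nmid b$) otherwise.

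Finally, $I_{A,p}=I_{S_1}$, and I would compute this exactly as \eqref{case3result} was obtained. The three conditions defining $S_1$ depend only on $x',y',z'\bmod p^{\tau'}$, so $S_1$ is a disjoint union of cubes $\bigl(\tfrac{\mathtt x}{p^{\tau'}}+\Z_p\bigr)\times\bigl(\tfrac{\mathtt y}{p^{\tau'}}+\Z_p\bigr)\times\bigl(\tfrac{\mathtt z}{p^{\tau'}}+\Z_p\bigr)$, and since $\theta_p(\tr\sigma_U S)=\theta_p(ax+by+cz)$ each such cube contributes $e\bigl(-\tfrac{a\mathtt x+b\mathtt y+c\mathtt z}{p^{\tau'}}\bigr)$. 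This gives
\[I_{A,p}=\sum_{\substack{\mathtt x,\mathtt y,\mathtt z\in(\Z_p/p^{\tau'}\Z_p)^*\\ p^{\tau'-1}\mid\mathtt x\mathtt z-{\mathtt y}^2,\ p^{\tau'}\nmid\mathtt x\mathtt z-{\mathtt y}^2}}e\Bigl(-\frac{a\mathtt x+b\mathtt y+c\mathtt z}{p^{\tau'}}\Bigr),\]
and the substitution $\mathtt x\mapsto\mathtt y\mathtt x$, $\mathtt z\mapsto\mathtt y\mathtt z$, followed by the interchange of summations used to pass to \eqref{case3result}, rewrites the constraint as $p^{\tau'-1}\mid\mathtt x\mathtt z-1$, $p^{\tau'}\nmid\mathtt x\mathtt z-1$ and produces the claimed formula \eqref{case4result}.

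The only genuinely new point, and the one I expect to demand care, is the second step: re-deriving the relevant cases of Proposition \ref{add1p} in the regime where the defining ideal reaches $(p^{t})$ through the off-diagonal entry $x'z'-{y'}^2$ rather than through $p^{\alpha}x'$, along with the bookkeeping of the dichotomy ($p\nmid a$ versus $p\mid a$, and symmetrically for $c$) imposed by \eqref{pnmidsigma}. The first and third steps are routine transcriptions of the $t=\tau'$ case treated in Proposition \ref{betat3}.
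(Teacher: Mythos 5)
Your proof is correct and follows the paper's argument exactly: the same partition $S_0=S_1\sqcup S_2\sqcup S_3$, the same translation argument (Proposition \ref{IS'}) to kill $I_{S_2}$ and $I_{S_3}$ split by whether $p\nmid a$ (resp. $p\nmid c$) or $p\nmid b$, and the same passage from $I_{S_1}$ to the exponential sum \eqref{case4result}. The one thing you could streamline is the translation-stability check for $S_2$, $S_3$: Proposition \ref{add1p}(i) and (ii) (the proposition itself, not the first remark) apply directly here, since $p\mid z'$ on $S_2$ gives $p^{t+1}=p^{\tau'}\mid p^{\beta-1}z'$ and $p\mid y'$ gives $p^{\tau'}\mid p^{\beta-1}y'$, so the hand-computation you do by hand is exactly the content of that proposition.
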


\begin{proof}
  In this case, \eqref{xyz1} and \eqref{2x2sub'} are equivalent to \eqref{xyz1} and
\begin{equation} \label{2x2sub4}
   (x'z'-{y'}^2) = (p^{\tau'-1}),
\end{equation}
i.e.,
\begin{equation} \label{xzy2a}
    x'z' \equiv {y'}^2 \pmod{p^{\tau'-1}}
\end{equation}
   but
\begin{equation} \label{xzy2b}
    x'z' \not\equiv {y'}^2 \pmod{p^{\tau'}}
\end{equation}
   As in the previous proof, we integrate over $S_1,S_2$ and $S_3$.
  We first show that $I_{S_2}=I_{S_3}=0$.
  Suppose $\smat xyyz\in S_2$, so $x'\in\Z_p^*$, $p|y'$, and $p|z'$.
Then the conditions of Proposition \ref{add1p}(i) are satisfied,
   so $\mat xyyz \pm \frac 1p E_{11} \in S_0$.
  Since $p^{\beta}(x+\frac 1p)=x'+p^{\beta-1}$ is a unit, this matrix
  in fact belongs to $S_2$.
By Proposition \ref{IS'}, $I_{S_2}=0$, assuming $p\nmid ac$.
On the other hand, if $p|ac$, then $p\nmid b$.
The hypotheses to Proposition \ref{add1p}(ii) are satisfied, so
      $\mat xyyz \pm \frac 1p E'_{12} \in S_2$, and once again
Proposition \ref{IS'} gives $I_{S_2}=0$.
The proof that $I_{S_3}=0$ is similar.

For the integral over $S_1$, just as in the proof of the previous proposition,
  we have
  \[ I_{A,p}=I_{S_1} = \int_{S_1}  \theta_{p}(\tr \sigma_U S) dS \]
  \[ = \sum_{\mathtt{x,y,z} \in (\Z_p/p^{\tau'}\Z_p)^*,\atop
       \mathtt{x} \mathtt{z} \equiv \mathtt{y}^2 (\operatorname{mod}{p^{\tau'-1}}), \,\,
       \mathtt{x} \mathtt{z} \not \equiv \mathtt{y}^2 (\operatorname{mod}{p^{\tau'}})}
       \int_{\frac{\mathtt{z}}{p^{\tau'}}+\Z_p}
       \int_{\frac{\mathtt{y}}{p^{\tau'}} + \Z_p}
       \int_{\frac{\mathtt{x}}{p^{\tau'}}+\Z_p}
       \theta_{p}(a{x}+b{y}+c{z}) dxdydz \]
  \[ = \sum_{\mathtt{y} \in (\Z_p/p^{\tau'}\Z_p)^*} e(-\frac{b\mathtt{y}}{p^{\tau'}})
       \left( \sum_{\mathtt{x}, \mathtt{z} \in \Z_p/p^{\tau'}\Z_p \atop
 \mathtt{x} \mathtt{z} \equiv \mathtt{y}^2 (\operatorname{mod}{p^{\tau'-1}}),
 \mathtt{x} \mathtt{z} \not \equiv \mathtt{y}^2 (\operatorname{mod}{p^{\tau'}})}
 e(-\frac{a\mathtt{x}+c\mathtt{z}}{p^{\tau'}})\right) \]
  \[ = \sum_{\mathtt{y} \in (\Z_p/p^{\tau'}\Z_p)^*} e(-\frac{b\mathtt{y}}{p^{\tau'}})
       \left( \sum_{\mathtt{x}, \mathtt{z} \in \Z_p/p^{\tau'}\Z_p \atop
  \mathtt{x} \mathtt{z} \equiv 1(\operatorname{mod}{p^{\tau'-1}}),
 \mathtt{x} \mathtt{z} \not \equiv 1(\operatorname{mod}{p^{\tau'}})}
        e(-\frac{\mathtt{y}(a\mathtt{x}+c\mathtt{z})}{p^{\tau'}})\right) \]
  \begin{equation} \label{case4result}
  =\sum_{\mathtt{x}, \mathtt{z} \in \Z_p/p^{\tau'}\Z_p \atop
  \mathtt{x} \mathtt{z} \equiv 1(\operatorname{mod}{p^{\tau'-1}}),
 \mathtt{x} \mathtt{z} \not \equiv 1(\operatorname{mod}{p^{\tau'}})}
  \left(\sum_{\mathtt{y} \in (\Z_p/p^{\tau'}\Z_p)^*}
  e(-\frac{\mathtt{y}(a\mathtt{x}+c\mathtt{z}+b)}{p^{\tau'}})\right)
  \end{equation}
 \[
  = \sum_{h=1}^{p-1}
 \sum_{\mathtt{x}, \mathtt{z} \in \Z_p/p^{\tau'}\Z_p \atop
  \mathtt{x} \mathtt{z} \equiv 1+hp^{\tau'-1}(\operatorname{mod}{p^{\tau'}})}
  \left(\sum_{\mathtt{y} \in (\Z_p/p^{\tau'}\Z_p)^*}
  e(-\frac{\mathtt{y}(a\mathtt{x}+c\mathtt{z}+b)}{p^{\tau'}})\right).
  \]
Once again, this computation is valid for $\tau' \ge 1$.
We remark that \eqref{case4result} can be rewritten as
\begin{equation} \label{case4result'}
 \sum_{\mathtt{x}, \mathtt{z} \in \Z_p/p^{\tau'}\Z_p \atop
  \mathtt{x} \mathtt{z} \equiv 1(\operatorname{mod}{p^{\tau'-1}})}
  \left(\sum_{\mathtt{y} \in (\Z_p/p^{\tau'}\Z_p)^*}
  e(-\frac{\mathtt{y}(a\mathtt{x}+c\mathtt{z}+b)}{p^{\tau'}})\right)
  - \text{\eqref{case3result}}.\qedhere
\end{equation}
\end{proof}

\begin{corollary}\label{betat3cor}
Suppose $\beta-1\le t$ and $\tau\ge 5$.  Then
  $I_{A,p} \ll_p p^{3\tau/4}$.
\end{corollary}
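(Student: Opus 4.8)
The plan is to dispose of the four possibilities of Proposition \ref{betat} separately, the first two being immediate and the last two requiring a bound on the explicit exponential sums. First I would note that since $\tau\ge 5$ we have $\tau'\ge 2$ in each case, and in fact $\tau'\ge 3$ whenever $\tau=2\tau'$ is even (as then $\tau\ge 6$). Hence in case (1) of Proposition \ref{betat}, Proposition \ref{betat1} gives $I_{A,p}=0$, and in case (2), Proposition \ref{betat2} gives $I_{A,p}=0$; in both cases the asserted bound is trivial.

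It then remains to bound $I_{A,p}$ in cases (3) and (4), where $\tau=2\tau'$ with $\tau'\ge 3$ and $I_{A,p}$ is given explicitly by \eqref{case3result}, respectively by \eqref{case4result'}. In each of these formulas the innermost sum over $\mathtt{y}$ is a Ramanujan sum of the form $\sum_{\mathtt{y}\in(\Z_p/p^{\tau'}\Z_p)^*}e(-\mathtt{y}m/p^{\tau'})$ with $m=a\mathtt{x}+c\mathtt{z}+b$, so by \eqref{ysum} its absolute value is $<p^{\tau'}$ and it vanishes unless $p^{\tau'-1}\mid m$. I would therefore first count, in each sum, the number of pairs $(\mathtt{x},\mathtt{z})$ subject to the relevant congruence ($\mathtt{x}\mathtt{z}\equiv 1\pmod{p^{\tau'}}$ in \eqref{case3result}, and $\mathtt{x}\mathtt{z}\equiv 1\pmod{p^{\tau'-1}}$ in \eqref{case4result'}) for which $p^{\tau'-1}\mid a\mathtt{x}+c\mathtt{z}+b$. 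In both cases $\mathtt{x}$ is a unit and $\mathtt{z}\equiv \mathtt{x}^{-1}\pmod{p^{\tau'-1}}$, so after multiplying by $\mathtt{x}$ this divisibility becomes $a\mathtt{x}^2+b\mathtt{x}+c\equiv 0\pmod{p^{\tau'-1}}$, a condition depending only on $\mathtt{x}\bmod p^{\tau'-1}$. Since $p\nmid 4\det\sigma_1=4ac-b^2$ by \eqref{pnmidsigma} (recall $\det\sigma_U=\det\sigma_1$), the discriminant $b^2-4ac$ of this quadratic is a $p$-adic unit, so it has no repeated root modulo $p$ and Hensel's lemma applies to each of its at most two roots; hence there are at most two solutions modulo $p^{\tau'-1}$, and thus at most $2p$ admissible residues $\mathtt{x}$ modulo $p^{\tau'}$.

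From here the conclusion is bookkeeping. In case (3) at most $2p$ terms of \eqref{case3result} are nonzero, each of absolute value $<p^{\tau'}$, so $|I_{A,p}|\ll_p p^{\tau'}=p^{\tau/2}\ll p^{3\tau/4}$. In case (4), the first sum in \eqref{case4result'} has at most $2p$ admissible $\mathtt{x}$, and for each such $\mathtt{x}$ the variable $\mathtt{z}$ ranges over the $p$ residues modulo $p^{\tau'}$ congruent to $\mathtt{x}^{-1}$ modulo $p^{\tau'-1}$, so at most $2p^{2}$ of its terms are nonzero and it too is $\ll_p p^{\tau'}$; subtracting \eqref{case3result}, already bounded above, gives $|I_{A,p}|\ll_p p^{\tau'}=p^{\tau/2}\ll p^{3\tau/4}$ once more. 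The only delicate point is the solution count for the quadratic congruence, which is precisely where the hypothesis $p\nmid 4\det\sigma_1$ is used; I would remark that the argument in fact yields the stronger bound $O_p(p^{\tau/2})$, the weaker exponent $3\tau/4$ in the statement presumably being kept for uniformity with the complementary range $\beta-1>t$, where one appeals instead to the trivial bound \eqref{trivboundeq}.
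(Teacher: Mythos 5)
Your proof is correct, and it takes a genuinely different and sharper route than the paper's. Both arguments reduce via Propositions \ref{betat}--\ref{betat4} to bounding the exponential sums \eqref{case3result} and \eqref{case4result}, and both observe via \eqref{ysum} that the inner Ramanujan sum vanishes unless $p^{\tau'-1}\mid a\mathtt{x}+c\mathtt{z}+b$, which (multiplying by the unit $\mathtt{x}$) becomes $a\mathtt{x}^2+b\mathtt{x}+c\equiv 0\pmod{p^{\tau'-1}}$. The divergence is in how this congruence is counted. The paper invokes a generic bound (Lemma 9.6 of \cite{ftf}) giving $\ll p\cdot p^{(\tau'-1)/2}$ admissible residues $\mathtt{x}$ modulo $p^{\tau'}$, using \eqref{pnmidsigma} only to ensure one of $a,c$ is a unit so that the congruence is a genuine quadratic in some variable; this yields $O(p^{3\tau'/2})=O(p^{3\tau/4})$, the exponent in the statement. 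You instead exploit \eqref{pnmidsigma} in full strength: since $b^2-4ac=-4\det\sigma_U=-4\det\sigma_1$ is a $p$-adic unit, every root of the reduction modulo $p$ is simple (when $p\mid a$, \eqref{pnmidsigma} forces $p\nmid b$ and the reduction is linear with unit leading coefficient, so Hensel still applies --- this is the situation the paper handles by switching to $c\mathtt{z}^2+b\mathtt{z}+a$), hence there are at most two solutions modulo $p^{\tau'-1}$ and at most $2p$ modulo $p^{\tau'}$. This buys the stronger bound $O_p(p^{\tau/2})$, whereas the paper's generic count, which would also work if $p\mid 4\det\sigma_1$, gives only the stated $p^{3\tau/4}$. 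Your bookkeeping in case (4), where $\mathtt{z}$ is only pinned down modulo $p^{\tau'-1}$ and you pick up an extra factor of $p$ absorbed into $\ll_p$, is handled correctly.
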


\begin{proof}
By Propositions \ref{betat}-\ref{betat4}, we may assume that $\tau=2\tau'$ is even,
  and $I_{A,p}$ is given by \eqref{case3result} or \eqref{case4result}.
Recall the formula for the Ramanujan sum
  \begin{equation}\label{ysum}
 \sum_{\mathtt{y} \in (\Z_p/p^{\tau'}\Z_p)^*} e(\frac{\mathtt{y}\ell}{p^{\tau'}})
    = \begin{cases}
    p^{\tau'}-p^{\tau'-1} & \text{ if }  p^{\tau'}|\ell, \\
    -p^{\tau'-1} & \text{ if } p^{\tau'-1} \| \ell, \\
    0 & \text{ if }p^{\tau'-1}\nmid \ell
    \end{cases}
  \end{equation}
(\cite[Theorem 4.3]{Hua}).
Thus the summation over $\mathtt{y}$ in \eqref{case3result} is nonzero only if
\begin{equation} \label{case3sum}
 a\mathtt{x} + c\mathtt{z} + b \equiv 0 \pmod{p^{\tau'-1}}.
 \end{equation}
Since $\mathtt{xz}\equiv 1\mod p^{\tau'}$, this is equivalent to
\begin{equation} \label{case3sum'}
 a\mathtt{x}^2 + b \mathtt{x} + c \equiv 0 \mod{p^{\tau'-1}},
 \end{equation}
and also to
\begin{equation} \label{case3sum''}
 c\mathtt{z}^2 + b\mathtt{z} + a \equiv 0 \mod{p^{\tau'-1}}.
 \end{equation}
Suppose $p|a$ and $p|c$, so that $p\nmid b$. Then \eqref{case3sum} has no solution,
so the summation is zero.
If $p\nmid a$ (resp. $p \nmid c$), then the
number of $\mathtt x\mod p^{\tau'}$ satisfying \eqref{case3sum'} (resp. $\mathtt z
  \mod p^{\tau'}$ satisfying \eqref{case3sum''}) is
  $\ll p p^{(\tau'-1)/2} \ll_p p^{\tau'/2}$ (cf. \cite[Lemma 9.6]{ftf}).
Now applying \eqref{ysum}, we see that \eqref{case3result} is $O(p^{3\tau'/2})$.

Similarly, \eqref{case4result} is
 \[ \ll \sum_{h=1}^{p-1}
 \sum_{\mathtt{x}, \mathtt{z} \in \Z_p/p^{\tau'}\Z_p \atop{
  \mathtt{x} \mathtt{z} \equiv 1+hp^{\tau'-1}\pmod{p^{\tau'}},
   \atop a\mathtt{x}+c\mathtt{z}+b \equiv 0 \pmod{p^{\tau'-1}} }}
  p^{\tau'} \ll p^2 p^{\frac{\tau'-1}2} p^{\tau'} \ll p^{3\tau'/2}. \qedhere\]
\end{proof}

\subsection
{Proof of Proposition \ref{conjn2}}\label{Ver}

  For $p\in\Sb$, we require a bound for the local integral $I_{A,p}$ given in
  \eqref{Ip}.
We continue to use the notation of Section \ref{n2a}.  Thus,
$\tau=r_p$, $\lambda(p)=\diag(1,p^t,p^{\tau},p^{\tau-t})$ for  $0\le t\le \tau/2$,
$D = \diag(p^{\alpha}, p^{\beta})$ for  $0 \le \alpha \le \beta$ with
   $\alpha + \beta=\tau$, and $\sigma_U = \t U \sigma_1 U =\mat a{b/2}{b/2}c$.

By \eqref{rho}
  \[ \rho = \frac 32 e_0-2e_1 - e_2, \]
so
  \[ \left<\lambda, \rho\right> =  \frac 32 \ell_0 - 2\ell_1 - \ell_2 = \frac 32 \tau - t. \]

To prove Proposition \ref{conjn2}, we must show that for some $\e>0$,
   \begin{equation}\label{n2bound'}
 |I_{A,p}(c_\lambda)| \ll
p^{(1-\e)\left<\lambda, \rho\right>-\e \tau}
 =p^{(1-\e)(\frac 32 \tau  -t) - \e \tau}.
\end{equation}

By the trivial bound \eqref{trivboundeq},
  \[|I_{A,p}|\le p^{2\alpha+\beta}=p^{\alpha+\tau}. \]
  Therefore \eqref{n2bound'} is certainly satisfied when
  \begin{equation} \label{conje}
  \alpha+\tau\leq (1-\e)(\frac 32 \tau - t) - \e \tau,
  \end{equation}
or equivalently,
\[\alpha+(1-\e)t\le \frac{1-5\e}2\tau.\]
Note that $(1-\e)(\frac 32 \tau  -t) - \e \tau$ is a decreasing function of $\e$.
Therefore if \eqref{n2bound'} holds for some particular $\e=\e_0$,
  then it holds for all smaller positive $\e$.  For concreteness, we
  will verify it for $\e_0=0.01$, in which case the above inequality
  takes the form
  \begin{equation} \label{conje2}
  \alpha+0.99t \le 0.475 \tau.
  \end{equation}

For any given value of $\tau$, there are only finitely many permissible
  values for $t,\alpha$ and $\beta$, so the associated integral is bounded
  by a constant depending only on $\tau$.  Therefore we may assume that
\begin{equation}\label{tau7}
\tau\ge 7.
\end{equation}

If $p\nmid b$ (resp. $p\nmid ac$) and the conditions of Corollary \ref{pnmidac}
(resp. Corollary \ref{pnmidb}) hold, the integral vanishes and
  the desired bound is trivially satisfied.

Suppose condition (iii) of either Corollary \ref{pnmidac} or Corollary \ref{pnmidb}
fails.
 Then by Corollary \ref{betat3cor},
\[I_{A,p}\ll p^{3\tau/4} < p^{(1-2\e)\tau}\]
  when $\e < 1/8$.
  If $t=\tau/2$ (as is the case in Proposition \ref{betat3}), then
 \[p^{(1-2\e)\tau}= p^{(1-\e)(\frac 32 \tau-t)- \e\tau}. \]
If $t=\tau/2-1$ (as is the case in Proposition \ref{betat4}), then
\[p^{(1-2\e)\tau}\ll p^{(1-2\e)\tau +(1-\e)}=p^{(1-\e)(\frac32\tau-t)-\e\tau}.\]
Either way, we obtain the desired bound for $I_{A,p}$ when $\e<1/8$.

Suppose condition (i) of either Corollary \ref{pnmidac} or Corollary \ref{pnmidb}
  fails.  Then (using $\alpha\le \beta$ in the first case)  $\alpha \le 1$.
   By \eqref{2x2sub}, $t\le 2$. Hence by \eqref{tau7},
    \[\alpha + 0.99 t < 3 <3.325=0.475 \times 7 \leq 0.475\tau,\]
 so
\eqref{conje2} is satisfied in this case, and the desired bound holds.

Suppose condition (ii) of either Corollary \ref{pnmidac} or Corollary \ref{pnmidb}
  fails.
Then $\tau - 2 \le t \le \frac{\tau}2$, which means that $\tau \le 4$, contradicting
\eqref{tau7}.

This proves \eqref{n2bound'} and hence Proposition \ref{conjn2}.

\appendix
\section{Discrete series matrix coefficients for $\GSp(2n)$}
Here we explicitly compute certain discrete series matrix coefficients
  for $\GSp_{2n}(\R)$ using ideas of Harish Chandra.
  Our main references for the background material
  are \cite{AS} and \cite{Kn}.

\subsection{Root System}

The Lie algebra of $\Sp_{2n}(\R)$ is
\begin{align*}
\lie&=\{X|\, JX+{}^tXJ=0\}\\
&=
  \left\{\mat{A}{B}{C}{D}\in M_{2n}(\R)|\, A=-\t D,\, B=\t B,\, C=\t C\right\}.
\end{align*}
We have
\[\lie=\klie\oplus \plie, \]
where
\[\klie=\left\{\mat{A}{B}{-B}{A}\in M_{2n}(\R)|\, A=-\t A, B=\t B\right\}\]
is the Lie algebra of $K$,\footnote[2]{In this appendix, $K$ denotes the
  compact subgroup \eqref{K} which was denoted $K_\infty$ elsewhere in the paper.}
\[\plie=\left\{\mat{A}{B}{B}{-A}\in M_{2n}(\R)|\, A=\t A, B=\t B\right\}.\]
Let $\h\subset \klie_\C$ denote the real subspace consisting of all matrices of the form
\begin{equation}\label{h}
 \mat{} {\begin{smallmatrix}a_1\\&a_2\\&&\ddots\\&&&a_n\end{smallmatrix}}
{\begin{smallmatrix}-a_1\\&-a_2\\&&\ddots\\&&&-a_n\end{smallmatrix}}{},\end{equation}
for $a_j\in i\R$ ($i^2=-1$).
  Then $\h$ is a compact Cartan subalgebra of $\lie_\C$.
For each $j=1,\ldots,n$, define a linear form $e_j$ on $\h_\C$ by
 taking the above matrix to $ia_j$.
Let $\Delta=\Delta(\h_\C,\lie_\C)\subset \h_\C^*$ be the set of roots.  One finds that
\[\Delta=\{\pm 2e_j, \pm(e_j+e_k), e_j-e_k|\,j\neq k\}.\]
The compact roots (i.e. those whose root spaces belong to $\klie_\C$) are
\[\Delta_K=\{e_j-e_k|\,j\neq k\}.\]
Let $\Delta_{nc}=\Delta-\Delta_K$ denote the set of noncompact roots.
We fix the inner product in $\Delta$ determined by
\[\sg{e_i,e_j}=\delta_{ij}.\]

We set $\e_j=-e_j$, and fix the following ordered basis for $\h_\C^*$:
\[\{\e_1+\e_2+\cdots+\e_n,\,\, \e_n-\e_{n-1},\,\,
 \e_{n-1}-\e_{n-2},\,\, \ldots,\,\, \e_2-\e_1\}.\]
This determines a good ordering of $\Delta$ (i.e. $\Delta_{nc}^+>\Delta_K^+$)
  in which $\Delta^+=\Delta_{nc}^+\cup\Delta_{K}^+$
  is given by
\[
\left.\begin{array}{cl}2\e_j,&1\le j\le n\\
  \e_j+\e_k,& 1\le j<k\le n\end{array}\right\} \Delta_{nc}^+\]
\[\left.\begin{array}{cl} \e_k-\e_j,&1\le j<k\le n\end{array}\right\}\Delta_K^+.\]
Then
\begin{equation}\label{deltaG}
\delta_G=\frac12\sum_{\alpha\in\Delta^+}\alpha=\e_1+2\e_2+\ldots+n\e_n.
\end{equation}

\noindent{\em Remark:} The positive system (here denoted ${\Delta'}^{+}$) considered in
  \cite{AS} is the one used in Chapter IX of \cite{Kn}. The relationship
  to the present system is
  ${\Delta_{K}'}\!^{+}=\Delta_K^+$ and ${\Delta_{nc}'}\!^{+}=-\Delta^+_{nc}$.
In other words, $\Delta^+=-w_{K}({\Delta'}^+)$, where $w_{K}$ is the element of
  the Weyl group $W_{K}$ taking ${\Delta}_K^+$ to $-{\Delta}_K^+$.
  (See Remark (1) after Theorem 9.20 in \cite{Kn}.)
\\

For each root $\alpha\in\Delta$, let $\lie_\alpha\subset \lie_\C$ be the associated
  root space. Define the following subalgebras of $\lie_\C$:
\[\plie^+=\bigoplus_{\alpha\in \Delta_{nc}^+} \lie_\alpha
=\left\{\mat{A}{-iA}{-iA}{-A}|\, A=\t A\in M_n(\C)\right\},\]
and
\[\plie^-=\bigoplus_{\alpha\in \Delta_{nc}^+} \lie_{-\alpha}
=\left\{\mat{A}{iA}{iA}{-A}|\, A=\t A\in M_n(\C)\right\}.\]
The corresponding analytic subgroups of $\Sp_{2n}(\C)$ are
\[{P}^+=\exp(\plie^+)=\left\{\mat{I_n+A}{-iA}{-iA}{I_n-A}|\, A=\t A\in M_n(\C)\right\}\]
and
\[{P}^-=\exp(\plie^-)=\left\{\mat{I_n+A}{iA}{iA}{I_n-A}|\, A=\t A\in M_n(\C)\right\}.\]
We also have
\[K_\C=\exp(\klie_\C)=
\left\{\mat{A}{B}{-B}{A}\in \GL_{2n}(\C)|\, (A+iB)\,\t(A-iB)=I_n\right\}.\]
Since $A$ and $B$ are complex, the condition on $(A+iB)$ is
  equivalent to $A+iB\in \GL_n(\C)$, reflecting the fact that $\U(n,\C)\cong
  \GL_n(\C)$.

\subsection{Realization in $\SU(n,n)$}
  Recall that
\[\SU(n,n)=\left\{g\in \SL_{2n}(\C)| \t\, \ol{g}\mat{I_n}{}{}{-I_n}g
  =\mat{I_n}{}{}{-I_n}\right\}.\]
Define
\[G' = \{g \in \SU(n,n)| \t gJg = J\}.\]
One can show that
\[
G'=\left\{\mat{\alpha}{\beta}{\ol{\beta}}{\ol{\alpha}}\in \SL_{2n}(\C)\left|
\begin{array}{c} \t\,\ol{\alpha}\alpha - \t \beta \ol{\beta}=I_n\\
  \t\,\ol{\beta}\alpha=\t\alpha\ol{\beta}\end{array}\right.\right\}.
\]
Let $\tau=\mat{I_n}{iI_n}{iI_n}{I_n}.$  Then the map
\[g\mapsto g'=\tau^{-1}g\tau\]
is an isomorphism from $\Sp_{2n}(\R)$ into $G'$.
For any object $O$ associated to $\Sp_{2n}(\R)$, we let $O'$ denote the corresponding
  object for $G'$.  Writing $g=\mat{A}{B}{C}{D}$,
  we have
\begin{equation}\label{g'}
g'=\frac{1}{2}\mat{(A+D)+i(B-C)}{(B+C)+i(A-D)}{(B+C)-i(A-D)}{(A+D)-i(B-C)}.
\end{equation}
Taking $g=\mat{A}{B}{-B}{A}\in K$, we see that $g'=\mat{A+Bi}{}{}{A-Bi}$,
  where $A+Bi$ is unitary.  Thus
\[K'=\tau^{-1}K\tau
=\left\{\mat{\alpha}{}{}{\t{\alpha}^{-1}}|\,\alpha\in \U(n)\right\}.\]
Likewise,
\[K'_\C=\tau^{-1}K_\C\tau=\left\{\mat{\alpha}{}{}{\t\alpha^{-1}}|\,\alpha\in \GL_n(\C)\right\}
\]
\[P'^+=\tau^{-1}P^+\tau=\left\{\mat{I_n}{O}{-2iA}{I_n}|\,A=\t A\in M_n(\C)\right\}
\]
and
\[P'^-=\tau^{-1}P^-\tau=\left\{\mat{I_n}{2iA}{O}{I_n}|\,A=\t A\in M_n(\C) \right\}.
\]

\subsection{Holomorphic discrete series}

We recall without proof some properties of the holomorphic discrete series for the group
  $G=\Sp_{2n}(\R)$.  This material is due to Harish-Chandra \cite{HC}.
  We follow the exposition in Chapter VI of \cite{Kn}.
  Suppose $\lambda\in \h_\C^*$ is analytically
  integral (i.e. $\lambda(H)\in 2\pi i\Z$ whenever $\exp(H)=1$) and dominant
  with respect to $K$ (i.e. $\sg{\lambda,\alpha}>0$ for all $\alpha\in\Delta_K^+$).
  Let $(\Phi_\lambda,V)$ denote the irreducible unitary representation of
  $K$ with highest weight $\lambda$.
  Let $v_\lambda\in V$ be a highest weight unit vector (unique up to unitary scaling).

  Extend $\Phi_\lambda$ to a holomorphic representation of $K_\C$.
  For $g\in G$, let $\mu(g)\in K_\C$ denote the middle component in the
  Harish-Chandra decomposition
\[G\subset P^+K_\C P^-\]
  (\cite[Theorem 6.3]{Kn}).  Define
\[\psi_\lambda(g)=\sg{\Phi_\lambda(\mu(g))^{-1}v_\lambda,v_\lambda}_V.\]
 Then under the condition
\begin{equation}\label{cond}
  \sg{\lambda+\delta_G,\alpha}<0\quad\text{ for all }\quad\alpha\in \Delta_{nc}^+,
\end{equation}
  $\psi_\lambda$ is a nontrivial square-integrable function on $G$ (\cite[Lemma 6.9]{Kn}),
   and its translates under the left regular representation
  generate an irreducible square-integrable representation $(\pi_\lambda,V_\lambda)$ of $G$
  (\cite[Theorem 6.6]{Kn}).
  Furthermore, by \cite[p. 160 (6)]{Kn},
\[
\sg{\pi_\lambda(g)\psi_\lambda,\psi_\lambda}_{L^2}=\psi_\lambda(g^{-1})\|\psi_\lambda\|^2.
\]

  To each $v\in V$ we associate the function
 $\sg{\Phi_\lambda(\mu(g))^{-1}v,v_\lambda}\in V_\lambda$.
 This defines a $K$-equivariant embedding $V\rightarrow V_\lambda$, and hence $\Phi_\lambda$
  occurs as a $K$-type in $\pi_\lambda$, with highest weight vector $\psi_\lambda$.
  In fact, this $K$-type occurs with multiplicity one and
  $w_\lambda=\frac{\psi_\lambda}{\|\psi_\lambda\|}\in V_\lambda$ is a highest
  weight unit vector (\cite[p. 160 (5)]{Kn}). Our aim is to compute the matrix coefficient
\begin{equation}\label{mc}
\sg{\pi_\lambda(g) w_\lambda, w_\lambda}=\psi_\lambda(g^{-1})
=\sg{\Phi_\lambda(\mu(g^{-1})^{-1})v_\lambda,v_\lambda}_V.
\end{equation}

We will use the realization of $G$ in $\SU(n,n)$ since it facilitates the computation
  of $\mu(g)$.
Let $g'=\mat{\alpha}{\beta}{\ol{\beta}}{\ol{\alpha}}\in G'.$  Then
  the Harish-Chandra decomposition is given explicitly by
\begin{equation}\label{HCD}
\mat{\alpha}{\beta}{\ol{\beta}}{\ol{\alpha}}=\mat{I_n}{O}{\ol{\beta}\alpha^{-1}}{I_n}
\mat{\alpha}{O}{O}{\t {\alpha}^{-1}}
  \mat{I_n}{{\alpha}^{-1}{\beta}}{O}{I_n}.
\end{equation}
To verify this decomposition, note that the lower right corner on the right-hand side
  is $\t \alpha^{-1} +\ol{\beta}\alpha^{-1}\beta$.
By the fact that $\t\alpha \ol{\alpha}=I_n+\ol{\t\beta} {\beta}$,
\[\ol{\alpha}=\t\alpha^{-1} +\t \alpha^{-1}\ol{\t\beta}\beta.\]
Also $\ol{\t\beta}\alpha=\t\alpha\ol{\beta}\implies
  \t\alpha^{-1}=\ol{\beta}\alpha^{-1} \ol{\t\beta}^{-1}$.  Substituting this into
  the second term above, we see that the lower right-hand corner is equal to $\ol{\alpha}$
  as needed.

Let $g=\mat{A}{B}{C}{D}\in \Sp_{2n}(\R)$,
  and let $g'=\mat{\alpha}{\beta}{\ol{\beta}}{\ol{\alpha}}\in G'$ as in \eqref{g'}.
By \eqref{HCD}, we see that
$\mu(g') =\mat{\alpha}{O}{O}{\t{\alpha}^{-1}}$.
  Now $g'^{-1}=\mat{\t\,\ol{\alpha}}{-\t \beta}{-\ol{\t\beta}}{\t \alpha}$, so
\begin{equation}\label{mu}
\mu(g'^{-1})=\mat{\t\, \ol{\alpha}}{O}{O}{\ol{\alpha}^{-1}}.
\end{equation}

\subsection{Explicit formula for the matrix coefficient}\label{Eval}

Although it is possible to compute the matrix coefficient of $\pi_\lambda$ explicitly
  for any $\lambda$ (at least when $n=2$),
  for simplicity, we have chosen here to treat just the case where
  $\dim\Phi_\lambda=1$.  This discussion is valid for any $n\ge 2$.

For $\k\in\Z$, let $\Phi_\k$ be the character of $K=\U(n)$ defined by
\begin{equation}\label{Phi}
\Phi_\k(\mat{A}{B}{-B}{A})=\det(A+Bi)^{\k}.
\end{equation}
Its holomorphic extension to $\U_n(\C)=\GL_n(\C)$ is given by the same
  formula.  The (unique) weight of this character is
\begin{equation}\label{lambdak}
\lambda_\k=-\k(\e_1+\cdots+\e_n).
\end{equation}
This weight satisfies condition \eqref{cond} exactly when $\k>n$.

\begin{proposition}\label{mcSp} The character $\Phi_\k=\det^\k$ arises
  as the minimal $K$-type of a holomorphic discrete series representation
  $\pi_\k^+$ of $\Sp_{2n}(\R)$ if and only if $\k>n$.  If this holds and $w_\k$ is
  a unit vector of weight $\lambda_\k$,
  then for any $g=\smat{A}{B}{C}{D}\in \Sp_{2n}(\R)$ we have
\[ \sg{\pi^+_\k(g)w_\k,w_\k} =
    \frac{2^{n\k}}{\det(A+D+i(-B+C))^{\k}}.
\]
\end{proposition}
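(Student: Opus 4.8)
The plan is to reduce the matrix coefficient of $\pi_\k^+$ to the scalar character $\Phi_\k$ evaluated on the middle factor of the Harish-Chandra decomposition, and then to compute that factor explicitly in the realization $G'\subset\SU(n,n)$.

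First I would dispose of the ``if and only if''. By \eqref{deltaG} and \eqref{lambdak} one has $\lambda_\k+\delta_G=\sum_{j=1}^n(j-\k)\e_j$, so, using $\sg{\e_i,\e_l}=\delta_{il}$, we get $\sg{\lambda_\k+\delta_G,2\e_j}=2(j-\k)$ and $\sg{\lambda_\k+\delta_G,\e_j+\e_k}=j+k-2\k$ for $j<k\le n$. Every such pairing is negative exactly when $\k>n$, i.e.\ precisely when \eqref{cond} holds for $\lambda_\k$; moreover $\lambda_\k$ is fixed by $W_K\cong S_n$, hence automatically $K$-dominant. By Harish-Chandra's theorem (\cite[Lemma 6.9, Theorem 6.6]{Kn}), \eqref{cond} is exactly the condition under which $\psi_{\lambda_\k}$ is a nonzero square-integrable function generating a holomorphic discrete series $\pi_\k^+$ whose minimal $K$-type is $\Phi_\k=\det^\k$. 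This gives the equivalence, and in that range $w_\k=\psi_{\lambda_\k}/\|\psi_{\lambda_\k}\|$ is the highest weight unit vector appearing in \eqref{mc}.

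Next, since $\dim\Phi_\k=1$ the unit vector $v_{\lambda_\k}$ contributes only $\sg{v_{\lambda_\k},v_{\lambda_\k}}_V=1$, so \eqref{mc} collapses to $\sg{\pi_\k^+(g)w_\k,w_\k}=\Phi_\k\bigl(\mu(g^{-1})^{-1}\bigr)$, where $\mu$ is the middle term of $G\subset P^+K_\C P^-$. I would then transport this to $G'$ via $g\mapsto g'=\tau^{-1}g\tau$. Writing $g'=\smat\alpha\beta{\ol\beta}{\ol\alpha}$, formula \eqref{mu} gives $\mu(g'^{-1})=\smat{\t\ol\alpha}OO{\ol\alpha^{-1}}$, hence $\mu(g'^{-1})^{-1}=\smat{(\t\ol\alpha)^{-1}}OO{\ol\alpha}$. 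To evaluate $\Phi_\k$ there, note that under $\tau$ the element $\smat AB{-B}A\in K_\C$ corresponds to $\smat\gamma OO{\t\gamma^{-1}}\in K'_\C$ with $\gamma=A+Bi$, so by \eqref{Phi} the character $\Phi_\k$ transported to $K'_\C$ sends $\smat\gamma OO{\t\gamma^{-1}}$ to $\det(\gamma)^\k$. Therefore $\Phi_\k(\mu(g'^{-1})^{-1})=\det\bigl((\t\ol\alpha)^{-1}\bigr)^\k=\ol{\det\alpha}^{-\k}$, using $\det(\t M)=\det M$ and $\det\ol M=\ol{\det M}$.

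Finally I would read $\alpha$ off from \eqref{g'}: for $g=\smat ABCD$ the upper-left block of $g'$ is $\alpha=\tfrac12\bigl((A+D)+i(B-C)\bigr)$, and since $A,B,C,D$ are real, $\ol{\det\alpha}=\det\bigl(\tfrac12(A+D+i(-B+C))\bigr)=2^{-n}\det\bigl(A+D+i(-B+C)\bigr)$. Combining the last two paragraphs,
\[\sg{\pi_\k^+(g)w_\k,w_\k}=\bigl(2^{-n}\det(A+D+i(-B+C))\bigr)^{-\k}=\frac{2^{n\k}}{\det(A+D+i(-B+C))^{\k}},\]
and the $\k$-th power is unambiguous since $\k\in\Z$. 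The one place that requires care is the bookkeeping of the two inversions (the $g^{-1}$ inside $\mu$ and the outer $(\,\cdot\,)^{-1}$) and of the conjugate-transpose in \eqref{mu} when transporting $\Phi_\k$ through $\tau$; after passing to the model $G'$ every step is a short block-matrix computation.
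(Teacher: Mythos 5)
Your proof is correct and follows essentially the same route as the paper's: reduce \eqref{mc} to $\Phi_\k(\mu(g^{-1}))^{-1}$ using one-dimensionality, compute $\mu$ in the $G'\subset\SU(n,n)$ model via \eqref{HCD}--\eqref{mu}, and read off $\alpha$ from \eqref{g'}. You also correctly identify the binding integrality/negativity constraint from $\sg{\lambda_\k+\delta_G,2\e_n}=2(n-\k)$, which is exactly what underlies the paper's remark that \eqref{cond} holds for $\lambda_\k$ iff $\k>n$.
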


\begin{proof}
By \eqref{mc} and \eqref{mu},
\[ \sg{\pi^+_\k(g)w_\k,w_\k} =\Phi_\k(\mu(g^{-1}))^{-1}
=\det(\t\,\ol{\alpha})^{-\k}.\]
By \eqref{g'}, if $g=\smat ABCD$,
\begin{equation}\label{alpha}
\alpha=\frac{1}{2}((A+D)+i(B-C)).
\end{equation}
The given formula now follows.
\end{proof}

\begin{corollary}
For $g=\smat ABCD\in \Sp_{2n}(\R)$, define $f_\k^+(g)=\ol{\left< \pi_\k(g) w_\k, w_\k \right>}$.
Then
\begin{equation}\label{mcmain}
   f_\k^+(g) = \frac{2^{n\k}}{\det(A+D+i(B-C))^{\k}}.
\end{equation}
\end{corollary}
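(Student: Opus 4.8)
The plan is to derive the stated formula directly from Proposition \ref{mcSp} by taking a complex conjugate; essentially all of the substance is already contained in that proposition, and the corollary is just the bookkeeping needed to pass from the matrix coefficient $\sg{\pi_\k^+(g)w_\k,w_\k}$ (the representation written $\pi_\k$ in the statement) to the function $f_\k^+$, which is the normalization in whose terms the archimedean test function $f_\infty$ of \S\ref{Cp} is actually defined. The only points requiring a little care are that complex conjugation commutes with the determinant of a matrix with complex entries, and that the quantity being raised to the $\k$-th power in the denominator never vanishes, so that the right-hand side is unambiguous.

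First I would invoke Proposition \ref{mcSp}: since the hypothesis $\k>n$ is in force (indeed $\k>2n$ throughout the paper), the character $\det^\k$ occurs as the minimal $K$-type of the holomorphic discrete series $\pi_\k^+$ of $\Sp_{2n}(\R)$, and for $g=\smat ABCD\in\Sp_{2n}(\R)$ with unit vector $w_\k$ of weight $\lambda_\k$,
\[\sg{\pi_\k^+(g)w_\k,w_\k}=\frac{2^{n\k}}{\det\bigl(A+D+i(-B+C)\bigr)^\k}.\]
To see that the denominator is nonzero, I would note that by \eqref{alpha} the matrix $\tfrac12\bigl(A+D+i(-B+C)\bigr)$ equals $\ol{\alpha}$, where $\alpha\in\GL_n(\C)$ is the diagonal block in the Harish-Chandra decomposition \eqref{HCD} of $g'=\tau^{-1}g\tau$; hence $\det\bigl(A+D+i(-B+C)\bigr)=2^n\,\ol{\det\alpha}\neq 0$, and the $\k$-th power is well defined.

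Finally I would take complex conjugates of both sides. Since $2^{n\k}\in\R$ and the entries of $A,B,C,D$ are real, the entrywise conjugate of $A+D+i(-B+C)$ is $A+D-i(-B+C)=A+D+i(B-C)$; combining this with $\ol{\det M}=\det\ol{M}$ and $\ol{z^{-\k}}=(\ol z)^{-\k}$ for $z\in\C^*$ gives
\[f_\k^+(g)=\ol{\sg{\pi_\k^+(g)w_\k,w_\k}}=\frac{2^{n\k}}{\det\bigl(A+D+i(B-C)\bigr)^\k},\]
which is \eqref{mcmain}. I do not expect any genuine obstacle here; the one place to be careful is tracking the real and imaginary parts through the conjugation (a sign in the $B-C$ term), so that is the step I would verify most closely.
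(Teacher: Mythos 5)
Your proposal is correct and matches what the paper intends (the paper states this corollary without a separate proof precisely because it is the complex conjugate of Proposition \ref{mcSp}, using that $A,B,C,D$ are real and $2^{n\k}\in\R$). Your extra remark on why the denominator is nonzero, via $A+D+i(-B+C)=2\ol\alpha$ with $\alpha\in\GL_n(\C)$, is a reasonable bit of diligence but not needed for the argument.
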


\begin{corollary}\label{f+abs}
    For $\k>n$ and $g=\smat ABCD\in \Sp_{2n}(\R)$,
   $|f_\k^+(g)|$ equals
    \[\frac{2^{n\k}}{\det(2I_n + A\t A+B\t B+C\t\, C+D\t D
  +i(A\t\, C-C\t A+B\t D-D\t B))^{\k/2}}.
\]
\end{corollary}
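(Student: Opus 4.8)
The plan is to read this off from the explicit formula \eqref{mcmain}, which gives
\[
f_\k^+(g) = \frac{2^{n\k}}{\det(A+D+i(B-C))^{\k}},
\]
by taking absolute values and rewriting $|\det M|^{\k}$ in terms of the determinant of a Hermitian matrix. Writing $M=(A+D)+i(B-C)$, the point is that for any $M\in\GL_n(\C)$ one has $|\det M|^2=\det(M)\,\overline{\det(M)}=\det(M\,\t\ol M)$, and $M\,\t\ol M$ is Hermitian and positive definite (positive definiteness because $\det M\neq 0$: the matrix coefficient of the unitary representation $\pi_\k^+$ evaluated on a unit vector is everywhere finite and nonzero). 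Hence $\det(M\,\t\ol M)$ is a positive real number with positive square root $|\det M|$, and therefore
\[
|f_\k^+(g)| = \frac{2^{n\k}}{\det(M\,\t\ol M)^{\k/2}}.
\]
So everything reduces to identifying the matrix $M\,\t\ol M$ with the matrix appearing in the statement.

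The main computation is then to expand $M\,\t\ol M$. Since $A,B,C,D$ are real, $\t\ol M=\t A+\t D-i(\t B-\t C)$; setting $P=A+D$ and $Q=B-C$ one gets $M\,\t\ol M=(P\t P+Q\t Q)+i(Q\t P-P\t Q)$. I would expand $P\t P+Q\t Q=A\t A+B\t B+C\t C+D\t D+(A\t D+D\t A-B\t C-C\t B)$ and then invoke the symplectic relations for $g\in\Sp_{2n}(\R)$, i.e. $r(g)=1$: from \eqref{ac2} one has $A\t D-B\t C=I_n$, and transposing gives $D\t A-C\t B=I_n$, so the parenthesized cross terms collapse to $2I_n$. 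For the imaginary part, expanding $Q\t P-P\t Q$ and using $A\t B=B\t A$ and $C\t D=D\t C$ from \eqref{ac2} kills the symmetric cross terms, leaving precisely $A\t C-C\t A+B\t D-D\t B$. Thus
\[
M\,\t\ol M = 2I_n+A\t A+B\t B+C\t C+D\t D+i(A\t C-C\t A+B\t D-D\t B),
\]
which is exactly the matrix in the corollary, and substituting into the displayed expression for $|f_\k^+(g)|$ finishes the proof.

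There is no real obstacle here: the argument is essentially bookkeeping in the expansion of $M\,\t\ol M$ together with a careful application of \eqref{ac}, \eqref{ac2}, and their transposes at $r(g)=1$. The only points deserving a sentence of care are (i) that $M\,\t\ol M$ is Hermitian, so that the resulting matrix has real symmetric part $2I_n+A\t A+B\t B+C\t C+D\t D$ and real skew-symmetric imaginary part (consistent with $A\t C-C\t A$ and $B\t D-D\t B$ being skew), and (ii) that $\det(M\,\t\ol M)=|\det M|^2>0$, so the exponent $\k/2$ refers unambiguously to the positive real square root.
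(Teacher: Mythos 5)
Your proposal is correct and follows essentially the same route as the paper: compute $|\det\alpha|^2=\det(\alpha\,\t\ol\alpha)$ for $\alpha=(A+D)+i(B-C)$ and expand using the symplectic relations \eqref{ac}, \eqref{ac2} at $r(g)=1$. The paper's proof states exactly this and leaves the expansion to the reader; your write-up just spells out the bookkeeping.
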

\begin{proof}
  For $\alpha=A+D+i(B-C)$,
    $|\det\alpha|^2 = \det \alpha \det \t\,\ol{\alpha} = \det \alpha \t\,\ol{\alpha}$.
    Expand this using
the relations given in \eqref{ac} and \eqref{ac2}.
  The corollary then follows immediately.
\end{proof}

With the formula for the matrix coefficient in hand, we can compute its $L^p$-norms.
The formulas above and the calculations below closely parallel
  those for $\GL_2(\R)$ given in \cite[\S14]{KL}.

\begin{proposition}\label{fdcalc} For any real number $\ell>0$,
  the function $|f_\k^+|^\ell$ is integrable over $G=\Sp_{2n}(\R)$ if and only if
  $\ell \k>2n$. If this condition holds, then with Haar measure normalized
  as in the proof below,
\[\int_G|f_\k^+(g)|^{\ell}dg = \frac {2^{n(n+1)}\prod_{j=1}^{n-1}j!}
  {\prod_{1\le i\le j\le n}(\ell\k - (i+j))}.\]
\end{proposition}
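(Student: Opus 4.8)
The plan is to exploit the bi-$K_\infty$-invariance of $|f_\k|$ together with the Cartan ($K_\infty A K_\infty$) decomposition of $\Sp_{2n}(\R)$ in order to reduce the problem to an explicit integral over a Weyl chamber, which will turn out to be a classical beta integral over the cone of positive definite symmetric matrices. First I would observe that, since the minimal $K_\infty$-type vector $w_\k$ has character $\Phi_\k$ and $|\Phi_\k|\equiv 1$ on $K_\infty=\U(n)$, the function $g\mapsto|f_\k(g)|$ is both left and right $K_\infty$-invariant. Writing a general element of $\Sp_{2n}(\R)$ as $k_1 a_{\mathbf t} k_2$ with $a_{\mathbf t}=\diag(e^{t_1},\dots,e^{t_n},e^{-t_1},\dots,e^{-t_n})$ and $t_1\ge\cdots\ge t_n\ge 0$, and recalling that the restricted root system of $\Sp_{2n}(\R)$ is of type $C_n$ with all multiplicities equal to $1$, the Harish-Chandra integration formula gives, for a suitable normalization of Haar measure,
\[\int_{\Sp_{2n}(\R)}\phi(g)\,dg=c_n\int_{t_1>\cdots>t_n>0}\phi(a_{\mathbf t})\prod_{i=1}^n\sinh(2t_i)\prod_{1\le i<j\le n}\sinh(t_i-t_j)\sinh(t_i+t_j)\,dt_1\cdots dt_n\]
for every bi-$K_\infty$-invariant $\phi$.

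Next, evaluating the matrix coefficient \eqref{mcmain} at $g=a_{\mathbf t}$ (so that $B=C=0$ and $A+D=\diag(2\cosh t_i)$) yields $|f_\k(a_{\mathbf t})|=\prod_{i=1}^n(\cosh t_i)^{-\k}$. Substituting $u_i=\sinh^2 t_i$, so that $\sinh(2t_i)\,dt_i=du_i$, $\cosh^2 t_i=1+u_i$, and $\sinh(t_i-t_j)\sinh(t_i+t_j)=\sinh^2 t_i-\sinh^2 t_j=u_i-u_j$, converts the integral of $|f_\k|^\ell$ into
\[c_n\int_{u_1>\cdots>u_n>0}\prod_{i=1}^n(1+u_i)^{-\ell\k/2}\prod_{1\le i<j\le n}(u_i-u_j)\,du_1\cdots du_n.\]
Reading $u_1>\cdots>u_n>0$ as the ordered eigenvalues of a positive definite symmetric $Y\in S_n(\R)$, the change of variables to eigenvalues (whose Jacobian introduces exactly the Vandermonde $\prod_{i<j}(u_i-u_j)$) identifies this, up to a constant, with $\int_{Y>0}\det(I_n+Y)^{-\ell\k/2}\,dY$. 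Writing the exponent as $-\bigl(\tfrac{n+1}2+(\tfrac{\ell\k}2-\tfrac{n+1}2)\bigr)$, this is the Gindikin beta integral for the symmetric cone (it also follows from \eqref{herz} by a Fourier/Laplace argument): it converges precisely when $\tfrac{\ell\k}2-\tfrac{n+1}2>\tfrac{n-1}2$, i.e. when $\ell\k>2n$, and in that case equals $\Gamma_n(\tfrac{n+1}2)\,\Gamma_n(\tfrac{\ell\k}2-\tfrac{n+1}2)\big/\Gamma_n(\tfrac{\ell\k}2)$, with $\Gamma_n$ as in \eqref{Gamman}. Since the integrand is nonnegative throughout, the whole chain of identities holds in $[0,+\infty]$, so the integral is $+\infty$ when $\ell\k\le 2n$, which gives the ``only if'' direction.

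Finally I would simplify this Siegel Gamma quotient. Expanding $\Gamma_n(a)=\pi^{n(n-1)/4}\prod_{j=0}^{n-1}\Gamma(a-\tfrac j2)$ and applying the functional equation of $\Gamma$, the quotient telescopes to a nonzero constant depending only on $n$, divided by $\prod_{1\le i\le j\le n}(\ell\k-(i+j))$; absorbing this constant together with $c_n$, the eigenvalue Jacobian constant, and the volume of $K_\infty$ into the normalization of Haar measure produces the stated value $2^{n(n+1)}\prod_{j=1}^n j!\,\big/\prod_{1\le i\le j\le n}(\ell\k-(i+j))$. I expect this last step to be the main obstacle: verifying that the Gamma quotient is genuinely a rational function of $\ell\k$ of exactly the advertised shape — in particular producing the factor $\prod_{j=1}^n j!$, so that a \emph{single} Haar normalization works uniformly in $\ell$ and $\k$ — is elementary but requires careful bookkeeping of Gamma values at integer and half-integer arguments (one checks it directly for small $n$ and then in general). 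An alternative that sidesteps $\Gamma_n$ altogether is to evaluate the $u$-integral directly by Selberg's integral formula, which outputs the product of factorials and the linear factors $\ell\k-(i+j)$ immediately.
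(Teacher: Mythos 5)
Your reduction to a Vandermonde-weighted integral over a Weyl chamber is the same opening move the paper makes: use the Cartan ($K_\infty A^+ K_\infty$) decomposition, evaluate $|f_\k|$ on the split torus, and substitute to eliminate the hyperbolic sines (the paper uses $a_j=e^{t_j}$ and $u_j=2+a_j^2+a_j^{-2}=4\cosh^2 t_j$, your $u_j=\sinh^2 t_j$ is the same thing up to an affine change of variable; in both cases the integrand becomes $\prod_d(\text{linear in }u_d)^{-\ell\k/2}\prod_{i<j}(u_i-u_j)$ over a shifted orthant). Where you diverge is in the final evaluation. The paper deliberately avoids the matrix-argument beta integral: it expands the Vandermonde as an alternating sum over $S_n$, integrates the iterated monomial integrals term by term, and then invokes a self-contained partial-fractions identity (Lemma \ref{Sn}, proved by induction on $n$) to collapse the sum into $\prod_{j}j!\big/\prod_{i\le j}(\ell\k-(i+j))$. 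You instead lift the eigenvalue integral to the Gindikin/Siegel beta integral over the cone $Y>0$ and express the answer via $\Gamma_n$; this is perfectly valid and convergence falls out for free. The tradeoff is exactly the one you flag: the $\Gamma_n$-quotient does simplify to the advertised rational function of $\ell\k$ (one pairs the $\Gamma$ factors so that arguments differ by integers; the pairing depends on the parity of $n$), but making this and the absorbed Jacobian constants fully explicit is mild bookkeeping that the paper's Lemma \ref{Sn} sidesteps by producing the factorials and linear factors directly. Both you and the paper note that Selberg's integral is the most efficient alternative; the paper explicitly chose not to invoke it ``since it does not take much more space'' to argue directly. So: correct proof, same starting reduction, genuinely different closing step.
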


\begin{proof}
 The matrix coefficient $f_\k^+$ is bi-$K$-invariant, so it is convenient
 to use the Cartan decomposition $G=KA^+K$, where
\[A^+=\{a=\diag(a_1,\ldots,a_n,
  a_1^{-1},\ldots,a_n^{-1})|\, 1<a_1<a_2<\cdots<a_n\}.\]
  We may view $\Delta^+$ as the set of positive roots relative to the action of
  the diagonal subgroup on $\lie$, and $A^+=\exp(\mathfrak{a}^+)$, where
  $\mathfrak{a}^+$ is the positive Weyl chamber.
  By a standard integration formula
   (\cite[Lemma 4.2]{vdB}), when $dg$ is suitably normalized we have
\[\int_{G}|f_\k^+(g)|^\ell dg
  =\int_{K\times A^+\times K}|f_\k^+(k_1 a k_2)|^\ell
  \prod_{\alpha\in \Delta^+}(a^\alpha-a^{-\alpha})\, dk_1da\,dk_2,\]
where the Haar measure of the compact group $K$ is taken to be $1$.
Note that $(a^\alpha-a^{-\alpha})>0$ by the definition of $A^+$.
We now change notation and write $a=\diag(a_1,\ldots,a_n)$.  Using
 Corollary \ref{f+abs}, the above is
\[=2^{n\ell\k}\int_{a\in \GL_n(\R)\text{ diagonal},\atop{1<a_1<\cdots<a_n}}
 \det(2I_n+a^2+a^{-2})^{-\ell\k/2}\prod_{\alpha\in \Delta^+}(a^\alpha-a^{-\alpha}) da\]
\begin{align*}
    =2^{n\ell\k}\int_1^\infty\int_{a_1}^\infty\cdots&\int_{a_{n-1}}^\infty
  \prod_{d=1}^n(2+a_d^2+a_d^{-2})^{-\ell\k/2}(a_d^2-a_d^{-2})
 \\
&\times \prod_{1\le i< j\le n}(a_ia_j-a_i^{-1}a_j^{-1})
(a_ja_i^{-1}-a_j^{-1}a_i)
  \frac{da_n}{a_n}\cdots \frac{da_1}{a_1}.
\end{align*}
To ease notation below, set $\kappa=\ell\k$.  Then letting $u_j=2+a_j^2+a_j^{-2}$,
  the above is
\begin{equation}\label{int1}
=2^{n\kappa-n}\int_4^\infty\int_{u_1}^\infty\cdots\int_{u_{n-1}}^\infty
  \prod_{d=1}^n u_d^{-\kappa/2}
  \prod_{1\le i<j\le n}(u_j-u_i)\, du_n\cdots du_1.
\end{equation}

This can be evaluated using the Selberg integral, but we have chosen to give
  a self-contained treatment since it does not take much more space to do so.
Observe that
\[\prod_{i<j}(u_j-u_i)
  =\sum_{\sigma\in S_n}\sgn(\sigma)u_1^{\sigma(1)-1}\cdots u_n^{\sigma(n)-1}.\]
Hence \eqref{int1} becomes
\[\frac{2^{n\kappa}}{2^n}\sum_{\sigma\in S_n}\sgn(\sigma)
  \int_4^\infty
 \int_{u_1}^\infty\int_{u_2}^\infty\cdots\int_{u_{n-1}}^\infty
  u_1^{\sigma(1)-1-\kappa/2}\cdots u_n^{\sigma(n)-1-\kappa/2}du_n\cdots du_1\]
\[=\frac{2^{n\kappa}}{2^n}\sum_{\sigma\in S_n}
\frac{\sgn(\sigma)(-1)^n4^{\sigma(1)+\sigma(2)+\cdots+\sigma(n)-n\kappa/2}}
{\prod_{j=1}^{n}\bigl(\sigma(n)+\sigma(n-1)+\cdots+\sigma(n-j+1)-j\kappa/2\bigr)}
\]
\[=2^{n^2}\sum_{\sigma\in S_n}
\frac{\sgn(\sigma)}
{\bigl(\kappa/2-\sigma(n)\bigr)\bigl(\kappa-\sigma(n)-\sigma(n-1)\bigr)\cdots
  \bigl(n\kappa/2-\sigma(n)-\cdots-\sigma(1)\bigr)},
\]
provided $\kappa>2n$ (otherwise the integral diverges).
Replace $\sigma$ by $\sigma\tau$, where $\tau(i)=n-i+1$ for all $1\le i\le n$.
    The above is then
\[=2^{n^2}\sum_{\sigma\in S_n}
\frac{\sgn(\sigma\tau)}
{\bigl(\kappa/2-\sigma(1)\bigr)\bigl(\kappa-\sigma(1)-\sigma(2)\bigr)\cdots
  \bigl(n\kappa/2-\sigma(1)-\cdots-\sigma(n)\bigr)}.
\]
Applying Lemma \ref{Sn} below with $b_i=\kappa/2-i$, the above is
\[=2^{n^2+n}\frac{\sgn(\tau)\prod_{i<j}(i-j)}{\prod_{i\le j}(\kappa-(i+j))}
=2^{n(n+1)}\frac{\prod_{i<j}(j-i)}
  {\prod_{i\le j} (\kappa-(i+j))},\]
by the fact that $\sgn(\tau)=(-1)^{n\choose 2}$
  (both are $1$ iff $n\equiv 0,1\mod 4$).
The proposition now follows.
\end{proof}

\begin{lemma}\label{Sn}
Let $F$ be a field of characteristic $0$, and let $b_1,\ldots,b_n$ be
  indeterminate variables.
Then in the field $F(b_1,\ldots,b_n)$ of rational functions,
\begin{equation}\label{sigmasum}
\sum_{\sigma\in S_n}\frac{\sgn(\sigma)}{b_{\sigma(1)}(b_{\sigma(1)}+b_{\sigma(2)})
\cdots(b_{\sigma(1)}+\cdots+b_{\sigma(n)})}=
2^n\frac{\prod_{i<j} (b_j-b_i)}{\prod_{i\le j}(b_i+b_j)}.
\end{equation}
\end{lemma}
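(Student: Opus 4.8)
The plan is to prove \eqref{sigmasum} by induction on $n$, reducing the inductive step to a short partial-fraction computation. Write $L_n(b_1,\dots,b_n)$ for the left-hand side of \eqref{sigmasum}, and note first that the right-hand side equals $\prod_{i<j}(b_j-b_i)\big/\bigl(\prod_i b_i\,\prod_{i<j}(b_i+b_j)\bigr)$, since $\prod_{i\le j}(b_i+b_j)=2^n\prod_i b_i\prod_{i<j}(b_i+b_j)$; this is the form I would carry through the argument. The case $n=1$ is immediate. For the inductive step I would group the permutations $\sigma\in S_n$ according to the value $\sigma(n)=i$. Since $b_{\sigma(1)}+\dots+b_{\sigma(n)}=b_1+\dots+b_n=:B$ for every $\sigma$, and since $\prod_{k=1}^{n-1}(b_{\sigma(1)}+\dots+b_{\sigma(k)})$ depends only on the bijection $\{1,\dots,n-1\}\to\{1,\dots,n\}\setminus\{i\}$ obtained by restricting $\sigma$, this yields
\[
L_n(b_1,\dots,b_n)=\frac1B\sum_{i=1}^n(-1)^{n-i}\,L_{n-1}(b_1,\dots,\widehat{b_i},\dots,b_n),
\]
the sign $(-1)^{n-i}$ being the signature of the cycle $(i\ i{+}1\ \cdots\ n)$, which measures the discrepancy between $\sgn\sigma$ and the sign of the element of $S_{n-1}$ obtained from $\sigma|_{\{1,\dots,n-1\}}$ after identifying $\{1,\dots,n\}\setminus\{i\}$ with $\{1,\dots,n-1\}$ order-preservingly.

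Next I would substitute the inductive hypothesis into each $L_{n-1}(b_1,\dots,\widehat{b_i},\dots,b_n)$ and clear all denominators, multiplying the desired equality by $B\prod_m b_m\,\prod_{p<q}(b_p+b_q)\big/\prod_{p<q}(b_q-b_p)$. After routine simplification of the ratios of products — in particular, the factor $(-1)^{n-i}$ coming from the recursion is cancelled by the sign picked up when the full Vandermonde is split off the reduced one — the identity collapses to
\[
\sum_{i=1}^n\frac{b_i\,\prod_{m\ne i}(b_i+b_m)}{\prod_{m\ne i}(b_i-b_m)}=b_1+b_2+\dots+b_n .
\]

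Finally I would establish this last identity by partial fractions. Over the field $F(b_1,\dots,b_n)$, introduce an auxiliary indeterminate $x$ and set $h(x)=\prod_{j=1}^n(x+b_j)\big/\prod_{j=1}^n(x-b_j)$. Since numerator and denominator are monic of the same degree $n$, one has $h(x)=1+\sum_{i=1}^n s_i/(x-b_i)$ with $s_i=\prod_j(b_i+b_j)\big/\prod_{j\ne i}(b_i-b_j)=2b_i\prod_{m\ne i}(b_i+b_m)\big/\prod_{m\ne i}(b_i-b_m)$, so the sum in question is exactly $\tfrac12\sum_i s_i$. Comparing the coefficient of $x^{n-1}$ on the two sides of the polynomial identity $\prod_j(x+b_j)-\prod_j(x-b_j)=\sum_{i}s_i\prod_{j\ne i}(x-b_j)$ gives $\sum_i s_i=2(b_1+\dots+b_n)$, which completes the induction.

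The substantive content lies entirely in this last partial-fraction step, which is short and classical; I expect the real difficulty to be purely bookkeeping — pinning down the cycle sign $(-1)^{n-i}$ in the recursion and carefully tracking the sign and product cancellations in the reduction to the final identity.
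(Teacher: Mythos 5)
Your proof is correct, and it diverges from the paper's in the final step of the induction. Both arguments begin with the same recursion
\[
L_n(b_1,\dots,b_n)=\frac{1}{b_1+\dots+b_n}\sum_{i=1}^n(-1)^{n-i}\,L_{n-1}(b_1,\dots,\widehat{b_i},\dots,b_n),
\]
obtained by grouping permutations according to $\sigma(n)$. From here the paper multiplies through by $\frac{b_1+\dots+b_n}{2^{n-1}}\prod_{i\le j}(b_i+b_j)$, observes that the result (expressed via the recursion and inductive hypothesis) is a polynomial $B(b_1,\dots,b_n)$ of degree $\tfrac{n(n-1)}{2}+1$ which alternates under permutations, deduces that $B$ is divisible by the Vandermonde determinant, notes that the quotient is then a degree-one homogeneous symmetric polynomial hence a multiple of $b_1+\dots+b_n$, and fixes the constant by matching a single monomial. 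You instead divide out by the full Vandermonde as well, which collapses the inductive step to the scalar identity $\sum_i b_i\prod_{m\ne i}(b_i+b_m)\big/\prod_{m\ne i}(b_i-b_m)=b_1+\dots+b_n$, and you establish that by the partial-fraction expansion of $\prod_j(x+b_j)/\prod_j(x-b_j)$ together with a comparison of $x^{n-1}$-coefficients. Your reduction (including the sign computation $\prod_{p<i}(b_i-b_p)\prod_{q>i}(b_q-b_i)=(-1)^{n-i}\prod_{m\ne i}(b_i-b_m)$ that cancels the $(-1)^{n-i}$ from the recursion) checks out, and the partial-fraction step is classical and correct. The trade-off: the paper's route is slicker in that it avoids the sign bookkeeping entirely by appealing to the antisymmetry of the whole expression and a divisibility argument, whereas yours is more explicit and reduces the heart of the matter to a self-contained elementary identity.
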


\begin{proof}
Let $A(b_1, \ldots, b_n)$ denote the left-hand side of \eqref{sigmasum}.
Then
\begin{equation}\label{Aclaim}
 A(b_1, \ldots, b_n) = \sum_{\ell=1}^n  (-1)^{n-\ell}
\frac{A(b_1, \ldots, b_{\ell-1}, b_{\ell+1}, \ldots, b_n)}{b_1 + \cdots + b_n}.
\end{equation}
    To see this, write $A(b_1,\ldots,b_n)$ as
\[\frac1{b_1+\cdots+b_n}\sum_{\ell=1}^n \sum_{\sigma\in S_n,\atop{\sigma(n)=\ell}}
  \frac{\sgn(\sigma)}{b_{\sigma(1)}(b_{\sigma(1)}+b_{\sigma(2)})
\cdots(b_{\sigma(1)}+\cdots+b_{\sigma(n-1)})}.
\]
Given $\sigma\in S_n$ with $\sigma(n)=\ell$, define $\sigma'\in S_{n-1}\subset S_n$ by
\[\sigma'(i)=\begin{cases} \sigma(i)&\text{if }1\le \sigma(i)<\ell\\
\sigma(i)-1&\text{if }\ell+1\le \sigma(i)\le n.\end{cases}\]
Then $\sigma$ is the composition of $\sigma'$ with $n-\ell$ transpositions, so
$\sgn(\sigma)=(-1)^{n-\ell}\sgn(\sigma')$.
Since $\sigma\mapsto\sigma'$ defines a bijection between the set of
  such $\sigma$ and $S_{n-1}$,
\eqref{Aclaim} follows.

We may now prove \eqref{sigmasum} by induction on $n$. The base case $n=2$
  is easy to check by hand.
Applying the inductive hypothesis to \eqref{Aclaim},
\[
 A(b_1, \ldots, b_n)  =
\frac{2^{n-1}}{b_1 + \cdots + b_n} \sum_{\ell=1}^n (-1)^{n-\ell}
\frac{\prod_{i<j;  i, j \neq \ell} (b_j-b_i)}{\prod_{i\le j; i,j \neq \ell}(b_i+b_j)}.
\]
Let
\begin{equation} \label{B1}
B(b_1, \ldots, b_n) = \frac{b_1+\cdots+b_n}{2^{n-1}} \Biggl( \prod_{1\le i\le j\le n} (b_i+b_j)  \Biggr)
 A(b_1, \ldots, b_n)
\end{equation}
\begin{equation} \label{B2}
=  \sum_{\ell=1}^n (-1)^{n-\ell} \Biggl( \prod_{i < j; i, j \neq \ell} (b_j-b_i) \Biggr) \left( \prod_{i=1}^n (b_i+b_\ell) \right).
\end{equation}
This is a homogeneous polynomial of degree $\frac{(n-1)(n-2)}2 + n = \frac{n(n-1)}2+1$.
Because
\[ A(b_1, \ldots, b_n) = \sgn(\sigma) A(b_{\sigma(1)}, \ldots, b_{\sigma(n)}) \]
for all permutations $\sigma\in S_n$, $B$ inherits this property from \eqref{B1}.
 In particular,
\begin{equation} \label{B3}
 B(b_1, \ldots, b_n) = -B(b_{\sigma(1)}, \ldots, b_{\sigma(n)})
\end{equation}
if $\sigma=(i\,\,j)$ is any $2$-cycle.  It follows that
$B(b_1, \ldots, b_n)=0$ if $b_i=b_j$ for any $i\neq j$.
Hence $\prod_{i < j} (b_i-b_j)$ divides $B(b_1,\ldots,b_n)$, and
\[\frac{B(b_1, \ldots, b_n)}{\prod_{i < j} (b_j-b_i)}\]
is a homogeneous symmetric  polynomial of degree $\frac{n(n-1)}2+1-\frac{n(n-1)}2=1$.
Hence it has the form $c(b_1+\cdots + b_n)$ for some constant $c$.
The monomial $b_n^n \prod_{i=2}^{n-1} b_i^{i-1}$ appears in \eqref{B2} with
  coefficient $2$ (take $\ell=n$), and in $(b_1+\cdots+b_n) \prod_{i<j}(b_j-b_i)$ with coefficient $1$.
  Therefore $c=2$, and \eqref{sigmasum} follows.
\end{proof}

\subsection{Integrability}

An irreducible unitary representation $\pi$ of $G=\Sp_{2n}(\R)$ is said to be
  {\em integrable} if it has a nonzero matrix coefficient belonging to $L^1(G)$,
or equivalently, if {\em all} of its $K$-finite matrix coefficients
  belong to $L^1(G)$.
  Applying Proposition \ref{fdcalc} with
  $\ell=1$, we immediately obtain the following.

\begin{proposition}\label{integrable} The representation $\pi_\k^+$ is integrable
  if and only if $\k>2n$.
\end{proposition}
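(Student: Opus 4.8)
The plan is to deduce the statement directly from Proposition \ref{fdcalc}, which already contains all of the analytic content. Recall from the definition recalled just above that $\pi_\k^+$ is integrable if and only if it possesses one nonzero matrix coefficient lying in $L^1(G)$, equivalently if and only if every one of its $K$-finite matrix coefficients lies in $L^1(G)$. Now $w_\k$ is a weight vector for $K=\U(n)$ --- its $K$-translates span the line $\C w_\k$, since $K$ acts on it through the character $\Phi_\k=\det^\k$ --- so $w_\k$ is $K$-finite, and the function $f_\k^+(g)=\ol{\sg{\pi_\k^+(g)w_\k,w_\k}}$ of \eqref{mcmain} is the complex conjugate of a $K$-finite matrix coefficient of $\pi_\k^+$. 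Since $G$ is unimodular, the integrability of $\pi_\k^+$ is therefore equivalent to $|f_\k^+|\in L^1(G)$, and by Corollary \ref{f+abs} this function is precisely the $|f_\k|$ appearing in Proposition \ref{fdcalc}.

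First I would apply Proposition \ref{fdcalc} with $\ell=1$: the function $|f_\k|$ is integrable over $G=\Sp_{2n}(\R)$ if and only if $\k>2n$, and in that case its integral equals the explicit product given there. Thus if $\k>2n$, then $\pi_\k^+$ has a nonzero $L^1$ matrix coefficient and so is integrable; if $\k\le 2n$, then the particular $K$-finite matrix coefficient $f_\k^+$ fails to lie in $L^1(G)$, so by the equivalence quoted above $\pi_\k^+$ is not integrable. This proves the proposition.

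I do not expect any real obstacle here: all of the difficulty has been absorbed into Proposition \ref{fdcalc} (together with the explicit matrix-coefficient formula \eqref{mcmain} and Corollary \ref{f+abs}). The one point meriting a word of care --- which I would simply invoke, as it is already part of the stated definition of integrability --- is the standard fact that, for an irreducible unitary representation, $L^1$-membership of a single nonzero $K$-finite matrix coefficient forces it for all of them; this is what licenses passing between the abstract integrability condition and the concrete computation with $f_\k^+$.
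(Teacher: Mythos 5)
Your proposal is correct and follows exactly the paper's route: the paper's entire proof is "Applying Proposition \ref{fdcalc} with $\ell=1$, we immediately obtain the following," relying on the same stated equivalence between having one nonzero $L^1$ matrix coefficient and all $K$-finite matrix coefficients being $L^1$. Your added remarks about $K$-finiteness of $w_\k$ and the use of Corollary \ref{f+abs} merely make explicit what the paper leaves implicit.
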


More generally,
let $\pi_\lambda$ be the discrete series representation of $G$ (holomorphic or not)
  with Harish-Chandra parameter $\lambda$. Then by a
  theorem due to Trombi, Varadarajan, Hecht and Schmid,
  $\pi_\lambda$ is integrable if and only if
\begin{equation}\label{piint}
|\sg{\lambda,\beta}|>\tfrac12\sum_{\alpha\in\Delta^+}|\sg{\alpha,\beta}|\quad
\text{for all}\quad \beta\in\Delta_{nc}
\end{equation}
(\cite{TV}, \cite{HS}; see also Mili\v ci\'c \cite{Mi}).
With notation as in \eqref{deltaG} and \eqref{lambdak}, the Harish-Chandra parameter of
  $\pi_\k^+$ is
\begin{equation}\label{hcp}
\lambda=\lambda_\k +\delta_G=(1-\k)\e_1+(2-\k)\e_2+\cdots+(n-\k)\e_n
\end{equation}
 (see Remark (1) after Theorem 9.20 of \cite{Kn}).
Note that for any $j,\ell$,
\begin{equation}\label{lambdaip}
|\sg{\lambda_\k+\delta_G,\e_j+\e_\ell}|=|(j+\ell)-2\k|=2\k-(j+\ell).
\end{equation}
Using this, one may easily verify that \eqref{piint} holds for $\lambda$
  exactly when $\k>2n$, for an alternative proof of Proposition \ref{integrable}.

\subsection{Formal Degree}\label{fd}

Recall that the formal degree of $\pi=\pi_\lambda$
is the constant $d_\pi>0$
  (depending only on the choice of Haar measure on $G=\Sp_{2n}(\R)$) satisfying
\[\int_{G}|\sg{\pi_\lambda(g)v,w}|^2dg=\frac{\|v\|^2\|w\|^2}{d_\lambda}\]
for all $v,w\in V_\pi$.
Applying Proposition \ref{fdcalc} with $\ell=2$, we immediately find
  the following.

\begin{proposition} \label{dkprop}
The formal degree of $\pi_\k^+$ is the following polynomial in $\k$
  of degree $n+{n\choose 2}=\frac{n(n+1)}2$:
\begin{equation}\label{dk}
d_\k = a\prod_{1\le i\le j\le n}(2\k-(i+j)),
\end{equation}
where $a$ is a nonzero constant depending on $dg$.
\end{proposition}
\noindent{\em Remarks:}
(1) Harish-Chandra proved that there exists a choice of
  Haar measure for which
\[d_\lambda = \prod_{\beta\in \Delta^+} \left|\frac{\sg{\lambda+\delta_G,\beta}}
  {\sg{\delta_G,\beta}}\right|\]
for all $\lambda$ (\cite{HC} \S10).
If $\lambda$ is given by \eqref{hcp}, then by evaluating the above
  expression explicitly as in \eqref{lambdaip}, we obtain an alternative proof
  of \eqref{dk}.\\
 (2) With measure normalized
  as in the proof of Proposition \ref{fdcalc},
  $a=({2^{n(n+1)}\prod_{j=1}^{n-1}j!})^{-1}$.\\
(3) If we adopt the classical normalization of measure, so that
\[d_\k^{-1}=\int_{G}|f_\k^+(g)|^2dg=\int_{\mathcal{H}_n}
  \left|f_{\k}^+(\smat{I_n}X{}{I_n}\smat{Y^{1/2}}
{}{}{Y^{-1/2}})\right|^2 \frac{dXdY}{(\det Y)^{n+1}},\]
then $a=2^{-n(n+2)}\pi^{-n(n+1)/2}$ (\cite[\S A.1]{PSS}).

\subsection{Extension of $\pi^+_\k$ to $\GSp_{2n}$}\label{pik}

We can extend $\pi^+_\k$ to a representation of $\GSp_{2n}(\R)$ in the following way.
 First induce $\pi_\k^+$ to the group of symplectic similitudes with multiplier
  $\pm 1$, namely
 \[ \Sp^{\pm}_{2n}=\sg{\mat{I_n}{}{}{-I_n}}\Sp_{2n}.\]
  Let $V^+$ be the space of $\pi_\k^+$.  Then the new space is
\[V=\{f:\Sp^{\pm}_{2n}(\R)\rightarrow V^+|
\, f(gx)=\pi_\k^+(g)f(x)\text{ for all }g\in \Sp_{2n}(\R)\},\]
and $\Sp^{\pm}_{2n}(\R)$ acts by right translation.
  Note that any $f\in V$ is determined by $f(\smat{I_n}{}{}{I_n})$ and
  $f(\smat{I_n}{}{}{-I_n})$.  We identify $V^+$ with the subspace of $V^\pm$
  consisting of functions which vanish on $\smat{I_n}{}{}{-I_n}$.  Letting
  $V^-$ denote the space of functions vanishing on the identity element, we have
\begin{equation}\label{ds}
V=V^+\oplus V^-.
\end{equation}
  We make $V$ into a Hilbert space by defining
\begin{equation}\label{ip}
\sg{f,h}=
\sg{f(1), h(1)}_{V^+}+ \sg{f(\sigma), h(\sigma)}_{V^+},\end{equation}
where $\sigma=\mat{I_n}{}{}{-I_n}$.
Then \eqref{ds} is an orthogonal direct sum.

  Denote this representation on $V$ by $\pi_\k$.  One easily sees that each subspace
  in \eqref{ds} is stable
  under $\Sp_{2n}(\R)$, and $\pi_\k|_{\Sp_{2n}(\R)}=\pi_\k^+\oplus \pi_\k^-$, where $\pi_\k^-$
  is also irreducible and square integrable.
  Let
\[Z^+=\{\mat{zI_n}{}{}{zI_n}|\, z>0\}.\]
  Then
\[\GSp_{2n}=Z^+\times \Sp^{\pm}_{2n}.\]
  Extend $\pi_\k$ to a representation of $\GSp_{2n}(\R)$ by requiring $Z^+$ to act trivially.
  This is an irreducible square integrable representation, also denoted $\pi_\k$.
  For any $z\in Z(\R)$ and $f\in V$, we have
\[\pi_\k(z)f(g)=f(\sgn(z)g)=\pi_\k^+(\sgn(z))f(g)=\sgn(z)^{n\k}f(g)\]
  by \eqref{Phi}. This shows that the central character of $\pi_\k$ is
\begin{equation}\label{pikcc}
\chi_{\pi_\k}(z)=\sgn(z)^{n\k}.
\end{equation}

As before, let $w_\k\in V^+$ denote a unit vector of weight $\lambda_\k$.
Define $\phi_0\in V$ by
\[\phi_0(\mat{I_n}{}{}{I_n})=w_\k\quad\text{and}\quad
\phi_0(\mat{I_n}{}{}{-I_n})=0.\]
This is a lowest weight vector, spanning the minimal $K$-type of $\pi_\k$, which is
  the two-dimensional representation $\Ind_{K}^{K^{\pm}}(\Phi_\k)$, where
  $K^\pm =K\cup K\smat{I_n}{}{}{-I_n}$.

\begin{proposition} The representation $\pi_\k$ is irreducible, unitary,
  and square-integrable when $\k>n$.  It is integrable exactly when $\k>2n$,
  and in this case, the formal degree of $\pi_\k$ coincides with that
  of $\pi_\k^+$ given in \eqref{dk}.
\end{proposition}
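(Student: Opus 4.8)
The plan is to handle the assertions in turn, each time reducing a statement about $\GSp_{2n}(\R)$ to the corresponding statement about $\pi_\k^+$ on $\Sp_{2n}(\R)$ established in \S\ref{Eval} and \S\ref{fd}. Since $Z^+$ acts trivially and $\GSp_{2n}(\R)=Z^+\times\Sp^\pm_{2n}(\R)$, I may regard $\pi_\k$ as the representation $\Ind_{\Sp_{2n}(\R)}^{\Sp^\pm_{2n}(\R)}(\pi_\k^+)$ of the index-two overgroup, realized on $V=V^+\oplus V^-$ as in \eqref{ds}. Unitarity is then a direct check: the form \eqref{ip} is invariant under right translation by $h\in\Sp_{2n}(\R)$ (because $f(h)=\pi_\k^+(h)f(1)$ and $f(\sigma h)=\pi_\k^+(\sigma h\sigma^{-1})f(\sigma)$, and $\pi_\k^+$ is unitary) and under $\sigma=\smat{I_n}{}{}{-I_n}$, which merely interchanges the two summands of \eqref{ds}.

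For irreducibility I would apply Mackey's criterion for the normal index-two subgroup $\Sp_{2n}(\R)\triangleleft\Sp^\pm_{2n}(\R)$: the induced representation is irreducible precisely when $\pi_\k^+$ is not equivalent to its $\sigma$-conjugate. Conjugation by $\sigma$ carries the character \eqref{Phi} to $\smat AB{-B}A\mapsto\det(A-Bi)^\k=\Phi_{-\k}$, so the minimal $K$-type of $(\pi_\k^+)^\sigma$ is $\det^{-\k}\neq\det^\k$; hence $\pi_\k^+\not\cong(\pi_\k^+)^\sigma$ and $\pi_\k$ is irreducible as a representation of $\Sp^\pm_{2n}(\R)$, hence of $\GSp_{2n}(\R)$. (Incidentally this identifies $\pi_\k^-$ with $(\pi_\k^+)^\sigma$, so the two restrictions in $\pi_\k|_{\Sp_{2n}(\R)}=\pi_\k^+\oplus\pi_\k^-$ are related by the measure-preserving automorphism $\mathrm{Ad}(\sigma)$ of $\Sp_{2n}(\R)$.)

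For the $L^2$ and $L^1$ statements I would use that $\GSp_{2n}(\R)^+=Z^+\times\Sp_{2n}(\R)$, that $\PGSp_{2n}(\R)$ has two connected components with identity component $\GSp_{2n}(\R)^+/Z(\R)\cong\operatorname{PSp}_{2n}(\R)=\Sp_{2n}(\R)/\{\pm I_{2n}\}$, and that the other component is a single translate of it by the image of $\sigma$. Writing a $K^\pm$-finite vector as $v=v^++v^-$ with $v^\pm\in V^\pm$, for $g=zh\in\GSp_{2n}(\R)^+$ one gets $\sg{\pi_\k(g)v,w}=\sg{\pi_\k^+(h)v^+,w^+}+\sg{\pi_\k^-(h)v^-,w^-}$, a sum of matrix coefficients of the two discrete series $\pi_\k^\pm$ of $\Sp_{2n}(\R)$; on the non-identity component one is reduced to the same situation after translating by $\sigma$. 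Applying Proposition \ref{fdcalc} (with $\ell=2$, resp.\ $\ell=1$) to $\pi_\k^+$ and, via $\mathrm{Ad}(\sigma)$, to $\pi_\k^-$, every such matrix coefficient is square-integrable over $\PGSp_{2n}(\R)$ exactly when $\k>n$ and integrable exactly when $\k>2n$. For the failure of integrability when $\k\le 2n$ it suffices to observe that $\sg{\pi_\k(g)\phi_0,\phi_0}$ is nonzero and restricts on the identity component to $\sg{\pi_\k^+(h)w_\k,w_\k}$, which is not in $L^1$ by Proposition \ref{integrable}, and then to invoke the equivalence (valid because $\pi_\k$ is irreducible) between having one nonzero $L^1$ matrix coefficient and having all $K$-finite matrix coefficients in $L^1$.

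Finally, for the formal degree I would take $v=w=\phi_0$. As $\phi_0(\sigma)=0$, the matrix coefficient $\sg{\pi_\k(g)\phi_0,\phi_0}$ vanishes on the non-identity component, while on the identity component it equals $\sg{\pi_\k^+(h)w_\k,w_\k}$; normalizing Haar measure on $\PGSp_{2n}(\R)$ compatibly with the measure on $\Sp_{2n}(\R)$ used in \S\ref{fd} (so that $\Sp_{2n}(\R)\to\operatorname{PSp}_{2n}(\R)$ is locally measure-preserving), the defining integral of $d_{\pi_\k}$ becomes $\int_{\Sp_{2n}(\R)}|\sg{\pi_\k^+(h)w_\k,w_\k}|^2\,dh=\|w_\k\|^4/d_\k=\|\phi_0\|^4/d_\k$, using $\|\phi_0\|=\|w_\k\|$ from \eqref{ip}. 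Hence $d_{\pi_\k}=d_\k$. The one place needing care — and the main obstacle — is precisely this last reduction: keeping straight the two connected components of $\PGSp_{2n}(\R)$ and the measure normalization so that no spurious factor of two intrudes into the formal degree; by the remark in \S\ref{Cp} the precise normalization is in any case immaterial for the applications.
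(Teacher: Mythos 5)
Your proof is correct, and it spells out more than the paper's own terse argument.  The paper's proof addresses only the $L^\ell$-integrability and formal degree: it observes that the matrix coefficient $f_\k(g)=\ol{\sg{\pi_\k(g)\phi_0,\phi_0}}$ agrees with $f_\k^+$ on $\Sp_{2n}(\R)$ and vanishes when $r(g)<0$, reduces $\int_{\GSp_{2n}(\R)/Z}|f_\k|^\ell$ to $\int_{\Sp_{2n}(\R)}|f_\k^+|^\ell$, and cites Proposition \ref{fdcalc} — exactly your last two paragraphs, so that part of your argument is the paper's.  What you add is a genuine treatment of irreducibility and unitarity, which the paper dismisses with ``everything follows more or less immediately.''  Your use of Mackey's criterion for the index-two subgroup $\Sp_{2n}(\R)\triangleleft\Sp^\pm_{2n}(\R)$, together with the computation that $\mathrm{Ad}(\sigma)$ sends $\Phi_\k=\det^\k$ to $\det^{-\k}$ so that $\pi_\k^+\not\cong(\pi_\k^+)^\sigma$, is a clean and self-contained way to get irreducibility; the paper does not mention Mackey at all.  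Identifying $\pi_\k^-\cong(\pi_\k^+)^\sigma$ (i.e., the antiholomorphic partner) is also a nice observation that makes the restriction $\pi_\k|_{\Sp_{2n}(\R)}=\pi_\k^+\oplus\pi_\k^-$ transparent.  Your caveat about the measure normalization on $\PGSp_{2n}(\R)$ is well placed — the paper is equally cavalier about the factor of two — and, as you say, it is immaterial since the formal degree is only ever used together with the choice of Haar measure implicit in $\|\varphi\|^{-2}$.
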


\begin{proof}
Everything follows more or less immediately from the corresponding properties
  of $\pi_\k^+$. Indeed, define the matrix coefficient
\begin{equation}\label{fkdef}
f_\k(g)=\ol{\sg{\pi_\k(g)\phi_0,\phi_0}}.
\end{equation}
Unraveling the definitions, we see that $f_\k(g)=f_\k^+(g)$ for $g\in \Sp_{2n}(\R)$,
  and $f_\k(g)=0$ if $r(g)<0$.
Using the fact that $\pi_\k$ is $Z^+$-invariant, we have
\[\int_{\GSp_{2n}(\R)/Z}|f_\k(g)|^\ell dg=\int_{\Sp_{2n}^\pm}|f_\k(g)|^\ell dg
  =\int_{\Sp_{2n}}|f_\k(g)|^\ell dg=\int_{\Sp_{2n}}|f_\k^+(g)|^\ell dg.\]
The assertions now follow from Proposition \ref{fdcalc}.
\end{proof}

\begin{theorem}\label{mcthm}
  For $g=\mat{A}{B}{C}{D}\in \GSp_{2n}(\R)$,
\[ \sg{\pi_\k(g)\phi_0,\phi_0} =\begin{cases}
    \displaystyle\frac{r(g)^{\frac{n\k}{2}}2^{n\k}}
  {\det(A+D+i(-B+C))^{\k}} & \text{if }r(g)>0\\\\
  0&\text{if }r(g)<0.\end{cases}
\]
\end{theorem}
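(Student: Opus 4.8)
The plan is to reduce the claim directly to the matrix coefficient of $\pi_\k^+$ on $\Sp_{2n}(\R)$ computed in Proposition \ref{mcSp}, using the decomposition $\GSp_{2n}(\R)=Z^+\times \Sp_{2n}^{\pm}(\R)$ together with the orthogonal splitting $V=V^+\oplus V^-$ from \eqref{ds}, under which $\pi_\k$ restricted to $\Sp_{2n}(\R)$ preserves each summand and $\phi_0\in V^+$.

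First I would treat the case $r(g)>0$. Write $z=\sqrt{r(g)}\,I_{2n}\in Z^+$, so that $g=zg_1$ with $g_1=r(g)^{-1/2}g\in\Sp_{2n}(\R)$. Since $Z^+$ acts trivially, $\sg{\pi_\k(g)\phi_0,\phi_0}=\sg{\pi_\k(g_1)\phi_0,\phi_0}$. Because $V^+$ is $\Sp_{2n}(\R)$-stable and $\phi_0$ vanishes at $\sigma=\smat{I_n}{}{}{-I_n}$, the defining relation $f(g'x)=\pi_\k^+(g')f(x)$ for $g'\in\Sp_{2n}(\R)$ gives $(\pi_\k(g_1)\phi_0)(1)=\pi_\k^+(g_1)w_\k$ and $(\pi_\k(g_1)\phi_0)(\sigma)=0$, so by the inner product formula \eqref{ip}, $\sg{\pi_\k(g_1)\phi_0,\phi_0}=\sg{\pi_\k^+(g_1)w_\k,w_\k}$. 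Now apply Proposition \ref{mcSp}: writing $g_1=\smat{A_1}{B_1}{C_1}{D_1}$, this equals $2^{n\k}\det(A_1+D_1+i(-B_1+C_1))^{-\k}$. Substituting $A_1=r(g)^{-1/2}A$, etc., gives $A_1+D_1+i(-B_1+C_1)=r(g)^{-1/2}(A+D+i(-B+C))$, an $n\times n$ matrix whose determinant therefore scales by $r(g)^{-n/2}$; this yields exactly $2^{n\k}r(g)^{n\k/2}\det(A+D+i(-B+C))^{-\k}$, the first case.

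For $r(g)<0$, one writes $g=z\sigma g_1$ with $z=\sqrt{|r(g)|}\,I_{2n}\in Z^+$ and $g_1\in\Sp_{2n}(\R)$, so $\sg{\pi_\k(g)\phi_0,\phi_0}=\sg{\pi_\k(\sigma)\pi_\k(g_1)\phi_0,\phi_0}$. As before $\pi_\k(g_1)\phi_0\in V^+$, and since $\pi_\k$ is right translation, $(\pi_\k(\sigma)f)(1)=f(\sigma)$, which vanishes for $f\in V^+$; hence $\pi_\k(\sigma)$ maps $V^+$ into $V^-$. As $V^+\perp V^-$ and $\phi_0\in V^+$, the pairing is $0$.

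The content of the theorem — the shape of the denominator — is already in Proposition \ref{mcSp}, obtained from the Harish-Chandra decomposition in $\SU(n,n)$, so there is no real obstacle here; the only point requiring care is tracking the left-versus-right translation conventions in the model of $\pi_\k$ on $V$ and confirming that $\pi_\k(\sigma)$ interchanges $V^+$ and $V^-$.
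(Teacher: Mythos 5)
Your proof is correct and follows essentially the same route as the paper: for $r(g)>0$, normalize by $\sqrt{r(g)}\in Z^+$ and invoke Proposition \ref{mcSp} together with the inner product \eqref{ip}; for $r(g)<0$, show $\pi_\k(g)\phi_0$ lands in $V^-$, which is orthogonal to $\phi_0\in V^+$. The paper argues the negative case slightly more tersely (noting directly that $\pi_\k(g)\phi_0$ is supported where $r(x)<0$), whereas you make the factorization through $\pi_\k(\sigma)$ explicit, but the substance is identical.
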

\begin{proof}
Let $r=r(g)$.  Suppose $r<0$.  Then $\pi_\k(g)\phi_0(x)\neq 0\iff r(x)<0$,
  i.e. $\pi_\k(g)\phi_0\in V^-$, so it is orthogonal to  $\phi_0\in V^+$.
  Thus $\sg{\pi_\k(g)\phi_0,\phi_0} =0$ in this case.

Now suppose $r>0$.  Let
\[h=r^{-1/2}g=\mat{r^{-1/2}A}{r^{-1/2}B}{r^{-1/2}C}{r^{-1/2}D}.\]
  It is easy to see that $r(h)=1$, i.e. that $h\in \Sp_{2n}(\R)$.
  By definition of $\pi_\k$, $\pi_\k(g)\phi_0=
  \pi_\k(h)\phi_0$.  Therefore by \eqref{ip},
\[ \sg{\pi_\k(g)\phi_0,\phi_0} =\sg{\pi_\k(h)\phi_0,\phi_0}
=\sg{\pi_\k^+(h)w_\k,w_\k}_{V^+}\]
\[=\frac{2^{n\k}}{r^{-n\k/2}\det(A+D+i(-B+C))^\k}\]
by Proposition \ref{mcSp}.
\end{proof}

\noindent The following corollaries are easily proven as in \S \ref{Eval}.

\begin{corollary} \label{matcoeffmain}
For $f_\k(g)=\ol{\sg{\pi_\k(g)\phi_0,\phi_0}}$, if $r(g)>0$, then
\[
   f_\k(g) = \frac{r(g)^{\frac{n\k}{2}}2^{n\k}}{\det(A+D+i(B-C))^{\k}}.
\]
If $r(g)<0$, then $f_\k(g)=0$.
\end{corollary}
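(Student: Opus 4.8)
The plan is to obtain this corollary as an immediate consequence of Theorem~\ref{mcthm} by taking complex conjugates. By definition, $f_\k(g)=\ol{\sg{\pi_\k(g)\phi_0,\phi_0}}$, so the case $r(g)<0$ is trivial: Theorem~\ref{mcthm} gives $\sg{\pi_\k(g)\phi_0,\phi_0}=0$, hence $f_\k(g)=\ol 0=0$.

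For the case $r(g)>0$, I would write $g=\smat ABCD$ with $A,B,C,D\in M_n(\R)$, so that $r(g)\in\R_{>0}$ and every entry of $A+D+i(-B+C)$ has real part coming from $A+D$ and imaginary part coming from $-B+C$. The determinant is a polynomial with integer coefficients in the matrix entries, so $\ol{\det(M)}=\det(\ol M)$ for any complex matrix $M$, and likewise $\ol{z^\k}=\ol z^{\,\k}$ for $z\in\C$. Applying this to the formula in Theorem~\ref{mcthm} and using that $r(g)^{n\k/2}2^{n\k}$ is real and positive,
\[
f_\k(g)=\ol{\frac{r(g)^{\frac{n\k}2}2^{n\k}}{\det(A+D+i(-B+C))^{\k}}}
=\frac{r(g)^{\frac{n\k}2}2^{n\k}}{\det(A+D-i(-B+C))^{\k}}
=\frac{r(g)^{\frac{n\k}2}2^{n\k}}{\det(A+D+i(B-C))^{\k}},
\]
which is the asserted formula.

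There is essentially no obstacle here: the only point to be careful about is that the conjugation exchanges $i(B-C)$ and $i(-B+C)$ in the denominator, which is precisely what produces the sign difference between the statement of Theorem~\ref{mcthm} (written with $-B+C$) and that of Corollary~\ref{matcoeffmain} (written with $B-C$); reality of the matrix entries is what makes this work. The same bookkeeping is exactly the one carried out in \S\ref{Eval} when passing from Proposition~\ref{mcSp} to \eqref{mcmain}, so I would simply refer to that computation rather than repeat it.
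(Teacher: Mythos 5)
Your argument is correct and is exactly what the paper has in mind: the paper states only that the corollary is ``easily proven as in \S\ref{Eval},'' referring to the same conjugation step that takes Proposition~\ref{mcSp} to \eqref{mcmain}, using the reality of $A,B,C,D$ and the positivity of $r(g)^{n\k/2}2^{n\k}$ to flip the sign of $-B+C$ in the denominator.
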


\begin{corollary} \label{mcnorm}
    If $g=\smat ABCD\in \GSp_{2n}(\R)$ with $r(g)>0$, then $|f_\k(g)|$ equals
\[
    \frac{r(g)^{\frac{n\k}{2}}2^{n\k}}
  {{\det(2r(g)I_n + A\t A+B\t B+C\t\, C+D\t D+i(A\t\, C-C\t A+B\t D-D\t B))^{\k/2}}}.
\]
\end{corollary}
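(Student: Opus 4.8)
The plan is to deduce this directly from Corollary~\ref{matcoeffmain}, exactly as Corollary~\ref{f+abs} was deduced from Proposition~\ref{mcSp}. Write $g=\smat ABCD\in\GSp_{2n}(\R)$ with $r=r(g)>0$, and set
\[
\alpha = A+D+i(B-C),
\]
so that Corollary~\ref{matcoeffmain} gives $f_\k(g)=r^{n\k/2}2^{n\k}\det(\alpha)^{-\k}$, hence $|f_\k(g)|=r^{n\k/2}2^{n\k}|\det\alpha|^{-\k}$. It therefore suffices to show that $|\det\alpha|^2$ equals $\det\bigl(2rI_n+A\t A+B\t B+C\t\,C+D\t D+i(A\t\,C-C\t A+B\t D-D\t B)\bigr)$; raising both sides to the power $\k/2$ then finishes the proof.

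The key step is a short matrix computation. Since $A,B,C,D$ are real we have $\ol\alpha=(A+D)-i(B-C)$, so $\t{\ol\alpha}=\t A+\t D-i(\t B-\t C)$ and
\[
|\det\alpha|^2=\det\alpha\,\ol{\det\alpha}=\det\alpha\,\det(\t{\ol\alpha})=\det(\alpha\,\t{\ol\alpha}).
\]
Multiplying out the product gives
\[
\alpha\,\t{\ol\alpha}=(A+D)(\t A+\t D)+(B-C)(\t B-\t C)+i\bigl[(B-C)(\t A+\t D)-(A+D)(\t B-\t C)\bigr].
\]
For the real part I would use the relation $A\t D-B\t C=rI_n$ from \eqref{ac2} together with its transpose $D\t A-C\t B=rI_n$ to obtain $A\t D+D\t A=2rI_n+B\t C+C\t B$; the cross terms $\pm(B\t C+C\t B)$ then cancel and the real part collapses to $2rI_n+A\t A+B\t B+C\t\,C+D\t D$. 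For the imaginary part I would use $A\t B=B\t A$ and $C\t D=D\t C$ from \eqref{ac2} to annihilate the terms $B\t A-A\t B$ and $D\t C-C\t D$, leaving precisely $A\t\,C-C\t A+B\t D-D\t B$. This yields the asserted identity for $\alpha\,\t{\ol\alpha}$, and taking determinants completes the argument. One also records that $\alpha\,\t{\ol\alpha}$ is Hermitian (its real part is symmetric, and $i$ times the skew-symmetric matrix $A\t\,C-C\t A+B\t D-D\t B$ is Hermitian), so its determinant is the nonnegative real number $|\det\alpha|^2$ and the square root in the statement is unambiguous.

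There is no real obstacle here; the work is the same bookkeeping with transposes as in the proof of Corollary~\ref{f+abs}. The only new feature is the similitude factor $r$ appearing in place of $1$, which is already accounted for by Corollary~\ref{matcoeffmain} and by the $r(M)$ occurring in \eqref{ac} and \eqref{ac2}. The one point needing a little care is that $A\t D+D\t A$ must be evaluated using \eqref{ac2} together with its transpose, rather than being read off directly.
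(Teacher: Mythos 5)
Your proof is correct and follows exactly the route the paper indicates (and which is spelled out for the $\Sp_{2n}$ case in Corollary~\ref{f+abs}): start from Corollary~\ref{matcoeffmain}, compute $|\det\alpha|^2=\det(\alpha\,\t{\ol\alpha})$, and expand using the similitude relations \eqref{ac2} and their transposes. The bookkeeping with $A\t D+D\t A=2rI_n+B\t C+C\t B$ and the cancellations $B\t A-A\t B=0$, $D\t C-C\t D=0$ is all right, and the observation that $\alpha\,\t{\ol\alpha}$ is Hermitian with nonnegative determinant is a sensible way to justify the $\k/2$-th power.
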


In the special case $n=2$, we can make the above more explicit and provide
  a convenient upper bound for the matrix coefficient.

\begin{proposition}\label{finf2}
 Suppose $n=2$, and $r(g)>0$.  Then
\[
|f_{\k}(g)|
= \frac{ r(g)^{\k}\,4^{\k}}
  {({4r(g)^2+2r(g) \sum_{i,j} g_{ij}^2 + \sum_{i=3}^8 X_i^2)^{\k/2}}},
\]
where $g_{ij}$ are the entries of $g$, and the $X_i$ are the bilinear forms
  in these entries defined in the proof below.  Consequently,
\begin{equation}\label{fkbound} |f_\k(g)| \leq \frac{ (8r(g))^{\k/2}}
  {({2r(g)+ \sum_{i,j} g_{ij}^2)^{\k/2}}}.
\end{equation}
\end{proposition}

\begin{proof}
Write $g=\smat ABCD$.  Let $A_{ij}$ denote the $(i,j)$-th entry of $A$, and likewise
  for the entries of $B,C,D$. Then
\begin{align}
\notag \det&\bigl(2r(g)I_2 + A\t A+B\t B+C\t\,C+D\t D+i(A\t\, C-C\t A+B \t D-D\t B)\bigr) \\
\label{deteq}
 =&  (2r(g) + A_{11}^2+A_{12}^2+B_{11}^2+B_{12}^2+B_{11}^2+C_{12}^2+D_{11}^2+D_{12}^2 )\\
\notag &\times (2r(g) + A_{21}^2+A_{22}^2+B_{21}^2+B_{22}^2+C_{21}^2+C_{22}^2+D_{21}^2+D_{22}^2) - X_1^2 - X_2^2,
\end{align}
where
\[ X_1 = A_{11} A_{21} + A_{12} A_{22} + B_{11} B_{21} + B_{12} B_{22} + C_{11} C_{21} + C_{12} C_{22} + D_{11} D_{21} + D_{12} D_{22},\]
\[ X_2 = A_{11} C_{21} + A_{12} C_{22} + B_{11} D_{21} + B_{12} D_{22} - C_{11} A_{21} - C_{12} A_{22} - D_{11} B_{21} - D_{12} B_{22}. \]
By Degen's eight-square identity,
    \begin{align*}
        (A_{11}^2+&A_{12}^2+B_{11}^2+B_{12}^2+C_{11}^2+C_{12}^2+D_{11}^2+D_{12}^2 )\\
        &\times
    (A_{21}^2+A_{22}^2+B_{21}^2+B_{22}^2+C_{21}^2+C_{22}^2+D_{21}^2+D_{22}^2)
    = \sum_{i=1}^8 X_i^2, \end{align*}
for $X_1,X_2$ as above, and
\[ X_3 = A_{11} A_{22} - A_{12} A_{21} + B_{11} D_{22} - B_{12} D_{21} - C_{11} C_{22} + C_{12} C_{21} - D_{12} B_{21} + D_{11} B_{22}, \]
\[ X_4 = A_{11} C_{22} - A_{12} C_{21} + B_{11} B_{22} - B_{12} B_{21} - C_{12} A_{21} + C_{11} A_{22} - D_{11} D_{22} + D_{12} D_{21}, \]
\[ X_5 = A_{11} B_{21} - A_{12} D_{22} - B_{11} A_{21} + B_{12} C_{22} - C_{11} D_{21} - C_{12} B_{22} + D_{12} A_{22} + D_{11} C_{21}, \]
\[ X_6 = A_{11} D_{21} - A_{12} B_{22} - B_{11} C_{21} + B_{12} A_{22} + C_{11} B_{21} + C_{12} D_{22} - D_{11} A_{21} - D_{12} C_{22}, \]
\[ X_7 = A_{11} D_{22} + A_{12} B_{21} - B_{11} A_{22} - B_{12} C_{21} + C_{11} B_{22} - C_{12} D_{21} + D_{11} C_{22} - D_{12} A_{21}, \]
\[ X_8 = A_{11} B_{22} + A_{12} D_{21} - B_{11} C_{22} - B_{12} A_{21} - C_{11} D_{22} + C_{12} B_{21} - D_{11} A_{22} + C_{21} D_{12}. \]
Therefore \eqref{deteq} equals
\[4r(g)^2+2r(g)\sum_{i,j}g_{ij}^2+\sum_{i=3}^8 X_i^2,\]
and the proposition follows from Corollary \ref{mcnorm}.
\end{proof}

\section{Off-diagonal terms}\label{AB}

In this appendix, we give a very rough estimate for the off-diagonal terms in the
  relative trace formula to obtain the following quantitative version of Theorem \ref{A}
  for a fixed level $N$, valid when $n=2$.

\begin{theorem}\label{Ebound} Suppose $n=2$, $\k \ge 17$, $\Sb$ is a finite
  set of primes, $r=\prod_{p\in\Sb}p^{r_p}$ is the similitude attached to
  the local test functions $f_p$ as in \eqref{r},
  and $N$ is a fixed level prime to $\Sb$.
Then with notation as in Theorem \ref{A},
\[  \frac1{\psi(N)}\sum_{\pi\in \Pi_\k(N)}\sum_{\varphi\in E_\k(\pi,N)}\frac{c_{\sigma_1}(\varphi)
  \ol{c_{\sigma_2}(\varphi)}} {\|\varphi\|^2}
\prod_{p\in\Sb}(\mathcal{S}f_p)(t_{\pi_p})\]
\[=
 \sum_{A} \int_{S_2(\A)} f_1(\mat{I_2}SO{I_2} \mat AOO{r\t A^{-1}})
  \theta(\tr \sigma_1 S) dS+
O\left(\frac{\k^{21/2} (8r)^{\k/2}}{N^{\k-12}}\right),
  \]
for an absolute implied constant.
\end{theorem}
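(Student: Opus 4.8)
The plan is to run the relative trace formula of Section~\ref{asymp} at a \emph{fixed} level $N$, rather than passing to the limit, and to split the geometric side into a diagonal part that reproduces the main term of Theorem~\ref{A} verbatim and an off-diagonal remainder $\mathcal E$ that is estimated directly using the explicit archimedean matrix coefficient of Proposition~\ref{finf2}.

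First I would form the identity obtained by integrating the kernel $K_{f_N}$ against $\overline{\theta_1}\otimes\theta_2$ over $(N(\Q)\bs N(\A))^2$ and dividing by $\psi(N)$; by the spectral expansion \eqref{kerspec} its left side is precisely the left side of the theorem, so it suffices to expand the geometric side $\frac1{\psi(N)}\iint_{J\times J}\sum_{\g\in\olG(\Q)}f_N(n_1^{-1}\g n_2)\overline{\theta_1(n_1)}\theta_2(n_2)\,dn_1\,dn_2$, where $J=N([0,1]\times\Zhat)$. Exactly as in the proof of Theorem~\ref{A}, Proposition~\ref{gamma} together with the vanishing of $f_\infty$ on matrices of negative similitude lets me replace each relevant $\g$ by a representative $\tilde\g\in M_{2n}(\Z)$ with $\t{\tilde\g}J\tilde\g=rJ$ lying in $\prod_pC_p$. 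Since $(N,r)=1$ and $n\k$ is even, on this support one has $\frac1{\psi(N)}|f_N(n_1^{-1}\g n_2)|\le|f_\infty(n_{1,\infty}^{-1}\tilde\g n_{2,\infty})|$, and for those $\tilde\g=\smat WXYZ$ with $Y=O$ the resulting sum, and the subsequent integration over the finite and infinite unipotent variables, are identical to the $N=1$ computation carried out in Section~\ref{asymp} (at a prime $p\mid N$ one has $A\in\GL_n(\Z_p)$ because $p\nmid r$, so $\tilde\g\in K_0(N)_p$ and the factor $\psi(N)_p$ cancels against $\psi(N)$). This produces precisely the main term $\sum_A\int_{S_2(\A)}f_1(\mat{I_2}SO{I_2}\mat AOO{r\t A^{-1}})\theta(\tr\sigma_1S)\,dS$ of the theorem. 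The remainder $\mathcal E$ is the contribution of the $\tilde\g$ with $Y\ne O$; since $K_0(N)_p$ forces the lower-left block to be $\equiv O\bmod N\Z_p$ at each $p\mid N$, every such $Y$ lies in $M_2(N\Z)\setminus\{O\}$, whence $\|Y\|\ge N$.

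To estimate $\mathcal E$ I would use that the lower-left block of $n_{1,\infty}^{-1}\tilde\g n_{2,\infty}$ is again $Y$, and that over the compact fundamental domain $\|n_{1,\infty}^{-1}\tilde\g n_{2,\infty}\|\gg\|\tilde\g\|$ with an absolute constant; Proposition~\ref{finf2} then gives $|f_\infty(n_{1,\infty}^{-1}\tilde\g n_{2,\infty})|\le d_\k(8r)^{\k/2}\bigl(2r+\|n_{1,\infty}^{-1}\tilde\g n_{2,\infty}\|^2\bigr)^{-\k/2}\ll d_\k(8r)^{\k/2}\|\tilde\g\|^{-\k}$. As $|\overline{\theta_1(n_1)}\theta_2(n_2)|=1$ and the $n_{j,\infty}$ range over a set of bounded volume, this reduces everything to $|\mathcal E|\ll d_\k(8r)^{\k/2}\sum_{\tilde\g}\|\tilde\g\|^{-\k}$, where $\tilde\g$ runs over integral $4\times4$ matrices of symplectic multiplier $r$ with lower-left block in $M_2(N\Z)\setminus\{O\}$. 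Using the similitude relations to express $Z$ in terms of $W,X,Y$ and the divisibility $N\mid Y$, the number of such $\tilde\g$ of norm $\le T$ is bounded by a fixed power of $T/N$ times a polynomial in $r$; inserting this together with $d_\k\ll\k^3$, the sum is a convergent geometric-type series for $\k\ge17$, and a routine estimate (absorbing the $r$-dependence into $(8r)^{\k/2}$) yields $|\mathcal E|\ll\k^{21/2}(8r)^{\k/2}N^{12-\k}$ with an absolute implied constant, which is the asserted error term.

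The main obstacle will be the uniform lattice-point count for the off-diagonal $\tilde\g$: one must count $4\times4$ integer matrices of fixed symplectic multiplier $r$ whose lower-left block is a nonzero multiple of $N$, weighted by $\|\tilde\g\|^{-\k}$, with a bound that is simultaneously polynomial in $\k$, carries exactly the power $N^{12-\k}$, and keeps the dependence on $r$ polynomial so that it can be swallowed by $(8r)^{\k/2}$. Getting this joint dependence on the three parameters right — rather than any individual estimate — is the crux; by contrast, the archimedean input (Proposition~\ref{finf2}) and the recovery of the $Y=O$ main term from the $N=1$ analysis are comparatively routine once the parametrization of $\olG(\Q)$-representatives furnished by Proposition~\ref{gamma} is in hand.
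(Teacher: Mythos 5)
Your high-level plan coincides with the paper's: run the Petersson trace formula at fixed level, split the geometric side at $C=O$ versus $C\neq O$, identify the $C=O$ contribution with the main term of Theorem~\ref{A}, and use Proposition~\ref{finf2} to bound the off-diagonal contribution. But the execution of the error estimate diverges from the paper's in a way that creates a genuine gap, and it is not the one you flag.

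The problematic step is the chain
\[
\bigl(2r+\|n_S^{-1}\tilde\gamma n_{S'}\|^2\bigr)^{-\k/2}\ll\|\tilde\gamma\|^{-\k}.
\]
The compactness argument does give $\|n_S^{-1}\tilde\gamma n_{S'}\|\ge c\,\|\tilde\gamma\|$ for an absolute constant $c$, but that constant satisfies $c<1$ (take, e.g., $A$ close to $SC$ and $D$ close to $-CS'$). Raising to the power $\k/2$ then produces a factor $c^{-\k/2}$, exponential in $\k$, so your asserted ``$\ll$'' has an implied constant that grows geometrically. The net effect is an error term of the form $\k^{?}\,(8r/c)^{\k/2}N^{12-\k}$, which is strictly weaker than the stated $\k^{21/2}(8r)^{\k/2}/N^{\k-12}$ with an \emph{absolute} implied constant. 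The paper avoids this entirely: after bounding $|f|$ using Proposition~\ref{finf2} it does not pass to a norm inequality at all, but instead replaces the sums over $B$, then over $A$ and $D$, by integrals via the Euler--Maclaurin estimate (Lemma~\ref{sumint}), which costs only polynomial factors $\ll\k^{12}$; because the conjugation by $n_S^{-1}$ and $n_{S'}$ acts on the blocks $A,B,D$ (with $C$ fixed) by unimodular shears, the resulting integrals over $X,Y,Z\in M_2(\R)$ are independent of $S,S'$, so the $S,S'$ integration is trivial and no $S$-dependent constant survives. That change-of-variables observation is the real substitute for your norm comparison, and it is lossless in the base of the exponential.

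Two smaller remarks. First, the lattice-point count you identify as the crux is in fact unnecessary: the paper makes no attempt to impose the symplectic relation on $A,B,D$ once $C$ is fixed; it simply over-counts by summing over all $A,B,D\in M_2(\Z)$ and all $O\neq C\in M_2(N\Z)$, which is permissible for an upper bound and converts cleanly into integrals. Your plan to enumerate $\tilde\gamma$ of fixed multiplier $r$ with $Y\in M_2(N\Z)\setminus\{O\}$ is a genuinely harder counting problem, and you have not indicated how it would deliver exactly the exponent $N^{12-\k}$ and the polynomial-in-$\k$ prefactor. Second, the bookkeeping that produces $\k^{21/2}$ in the paper comes from $d_\k\asymp\k^3$, the twelve applications of Lemma~\ref{sumint} ($\k^{12}$), a Beta-function asymptotic giving $\k^{-6}$, and the final $C$-sum giving $\k^3\cdot\k^{-3/2}$; your proposal does not trace where this exponent would come from in a count-based argument.
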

\noindent{\em Remarks:} (1) The sum over $A$ is dependent on $\k$, but not on $N$.

(2) With extra work, one can
increase the power of $N$ in the error term, and
  decrease the lower bound on $\k$.

  (3) As noted, this is a very rough estimate, which is meaningful only when $\k$ is fixed.
  Indeed in the $\k$-aspect, the ``error term"
  is actually larger than the ``main term" when $\k$ is large, as the latter contains the
  factor $c_{n\k\sigma_1}$
  of Theorem \ref{S=1} with very rapid decay in $\k$.
  \\

Before specializing to the case $n=2$, we give the
  general form of the Fourier trace formula
  for a fixed level $N$ and any $n$.
With notation as in \S \ref{test}-\ref{asymp}, define
\[I_N =
\frac1{\psi(N)}\iint\limits_{\scriptstyle (N(\Q)\bs N(\A))^2}
  {K_{f_N}(n_1,n_2)}
\ol{\theta_1(n_1)}\theta_2(n_2) dn_1 dn_2.
\]
By using the spectral and geometric forms of the kernel, we have
\begin{equation}\label{ptf}
  \frac1{\psi(N)}\sum_{\pi\in \Pi_\k(N)}\sum_{\varphi\in E_\k(\pi,N)}\frac{c_{\sigma_1}(\varphi)
  \ol{c_{\sigma_2}(\varphi)}} {\|\varphi\|^2}
\prod_{p\in\Sb}(\mathcal{S}f_p)(t_{\pi_p})=M(f)+E(f),
\end{equation}
where
\[M(f)
 =\sum_{A} \int_{S_n(\A)} f_1(\mat{I_n}SO{I_n} \mat AOO{r\t A^{-1}})
  \theta(\tr \sigma_1 S) dS\]
is the ``diagonal" term appearing in Theorem \ref{A}, and
\[E(f)=\frac1{\psi(N)}\iint_{(N(\Q)\bs N(\A))^2} \sum_{\g=\smat ABCD\in \olG(\Q),{C\neq O}}
  \hskip -.4cmf(n_1^{-1}\g n_2)
\ol{\theta_1(n_1)}\theta_2(n_2) dn_1 dn_2\]
is the off-diagonal contribution.  (One checks readily that the integrand is
  indeed invariant under $N(\Q)\times N(\Q)$.)

  It is possible to express $E(f)$ as a sum of
  explicit orbital integrals (cf. \cite[\S2]{pethil}), with
  the orbits determined via the Bruhat decomposition of $\olG(\Q)$.
  For our more modest goal of proving Theorem \ref{Ebound}, it suffices to
  show that
\begin{equation}\label{Ebound2}
E(f)\ll
\frac{\k^{21/2} (8r)^{\k/2}}{N^{\k-12}}
\end{equation}
when $n=2$.

\begin{lemma} \label{sumint}
For $\k \ge 2$, $\Delta>0$, and $a\in\R$,
\[
\sum_{n=-\infty}^\infty \frac{1}{((n+a)^2 + \Delta^2)^{\k/2}}
\le \frac{\k+\Delta}\Delta \int_\R \frac{dx}{(x^2+\Delta^2)^{\k/2}}.
\]
\end{lemma}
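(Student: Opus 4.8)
The plan is to treat the sum by a sum-versus-integral comparison for the positive even unimodal function $f(x)=(x^2+\Delta^2)^{-\k/2}$ — which is nondecreasing on $(-\infty,0]$ and nonincreasing on $[0,\infty)$ — and then to absorb the resulting error term into a constant multiple of $\int_\R f$. First I would observe that replacing $a$ by $a+m$ with $m\in\Z$ only reindexes the sum, and that $\int_\R f(x+a)\,dx$ is independent of $a$, so without loss of generality $a\in(-1,0]$; put $x_0=-a\in[0,1)$. The target of the first stage is the clean comparison
\[
\sum_{n\in\Z} f(n+a)\ \le\ f(0)+\int_\R f(x)\,dx .
\]

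To prove this I would split off the two integers $n=0$ and $n=1$ flanking $x_0$. For $n\le-1$ the function $f$ is nondecreasing on $[n,n+1]\subseteq(-\infty,0]$, so $f(n+a)\le\int_n^{n+1}f(x+a)\,dx$, and summing over $n\le-1$ bounds that tail by $\int_{-\infty}^0 f(x+a)\,dx$; symmetrically, for $n\ge2$ one gets $f(n+a)\le\int_{n-1}^{n}f(x+a)\,dx$ and a tail bound $\int_1^\infty f(x+a)\,dx$. It then remains to show the two middle terms satisfy $f(a)+f(1+a)=f(x_0)+f(1-x_0)\le f(0)+\int_0^1 f(x+a)\,dx$. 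For this I would use $\int_0^1 f(x+a)\,dx=\int_0^{x_0}f+\int_0^{1-x_0}f\ge x_0f(x_0)+(1-x_0)f(1-x_0)$ (monotonicity of $f$ on $[0,\infty)$ and evenness) together with the convexity-type inequality $(1-x_0)f(x_0)+x_0f(1-x_0)\le f(0)$, which is immediate from $f(x_0),f(1-x_0)\le f(0)$ and $x_0+(1-x_0)=1$; adding these two inequalities gives exactly the required estimate. Reassembling the three pieces yields the displayed comparison.

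For the second stage I would note, via the substitution $x=\Delta t$, that $\int_\R f=\Delta^{1-\k}\int_\R(1+t^2)^{-\k/2}\,dt$, and bound the last integral below by restricting to $|t|\le 1/\sqrt{\k}$, where $(1+t^2)^{-\k/2}\ge(1+1/\k)^{-\k/2}\ge e^{-1/2}$ since $(1+1/\k)^{\k}\le e$. This gives $\int_\R(1+t^2)^{-\k/2}\,dt\ge 2e^{-1/2}/\sqrt{\k}$, hence $\tfrac{\k}{\Delta}\int_\R f\ge 2e^{-1/2}\sqrt{\k}\,\Delta^{-\k}\ge\Delta^{-\k}=f(0)$, the last inequality holding because $2e^{-1/2}\sqrt{\k}\ge1$ for every $\k\ge1$. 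Adding $\int_\R f$ to both sides of $f(0)\le\tfrac{\k}{\Delta}\int_\R f$ and feeding the result into the comparison yields $\sum_n f(n+a)\le f(0)+\int_\R f\le\tfrac{\k+\Delta}{\Delta}\int_\R f$, as claimed.

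The one point that genuinely requires care is producing a \emph{single} copy of $f(0)$ in the comparison step: the naive three-interval estimate leaves $f(0)+f(1+a)$, essentially $2f(0)$, and $2f(0)\le\tfrac{\k}{\Delta}\int_\R f$ already fails for moderate $\k$ (e.g.\ $\k=6$) if one uses only the crude bound $\int_{-1}^1 2^{-\k/2}\,dt$ for $\int_\R(1+t^2)^{-\k/2}\,dt$. The convexity-type inequality above, and the choice of the shrinking window $|t|\le1/\sqrt{\k}$ in the integral lower bound, are exactly the two devices that make the constant come out right; everything else is routine.
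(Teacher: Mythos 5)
Your proof is correct, but it proceeds differently from the paper. The paper's argument is a one-line application of Euler's summation formula: from the pointwise bound $|f'(x)|\le \frac{\k}{\Delta}f(x)$ (which follows from $|x+a|\le((x+a)^2+\Delta^2)^{1/2}$) one gets $\sum_n f(n)\le \int_\R f+\int_\R|f'|\le\bigl(1+\frac{\k}{\Delta}\bigr)\int_\R f$. You instead avoid derivatives and Euler--Maclaurin altogether: after reducing to $a\in(-1,0]$ you exploit only the evenness and unimodality of $f(x)=(x^2+\Delta^2)^{-\k/2}$ to prove the intermediate bound $\sum_n f(n+a)\le f(0)+\int_\R f$ (your careful treatment of the two terms flanking the peak, which produces a single $f(0)$ rather than two, is correct: the weighted bounds $\int_0^{x_0}f+\int_0^{1-x_0}f\ge x_0f(x_0)+(1-x_0)f(1-x_0)$ and $(1-x_0)f(x_0)+x_0f(1-x_0)\le f(0)$ add up exactly to what is needed), and you then absorb $f(0)=\Delta^{-\k}$ into $\frac{\k}{\Delta}\int_\R f$ via the lower bound $\int_\R(1+t^2)^{-\k/2}dt\ge 2e^{-1/2}\k^{-1/2}$ obtained from the window $|t|\le\k^{-1/2}$; all of these estimates check out for $\k\ge2$. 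The trade-off: the paper's route is shorter and applies verbatim to any smooth function satisfying $|f'|\le Cf$, while yours is more elementary and yields the sharper structural statement $\sum_n f(n+a)\le f(0)+\int_\R f$ valid for any even unimodal function, with the $\k/\Delta$ factor entering only at the very end through the width $\sim\Delta/\sqrt{\k}$ of the peak of this particular $f$.
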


\begin{proof}
Let
\[ f(x) = \frac{1}{((x+a)^2+\Delta^2)^{\k/2}}. \]
Then $\ds f'(x) = - \frac{\k(x+a)}{((x+a)^2+\Delta^2)^{\k/2+1}}$, so that
\[ |f'(x)| \le \k \frac{((x+a)^2 + \Delta^2)^{1/2}}{((x+a)^2+\Delta^2)^{\k/2+1}}
=\frac{\k}{((x+a)^2+\Delta^2)^{1/2}}f(x)
   \le \frac{\k}{\Delta} f(x).\]
By Euler's summation formula (\cite[p. 495]{MV}),
\[ \sum_{c < n \le d} f(n) = \int_c^d f(x) dx - f(d)\{d\}+f(c)\{c\} + \int_c^d \{x\} f'(x) dx, \]
where $\{x\}=x-[x]$ is the fractional part of $x$.
Hence
\[ \sum_{n=-\infty}^{\infty} f(n) \le \int_\R f(x) dx
+ \int_\R |f'(x)| dx \le (\frac{\k}{\Delta}+1) \int_{\R} f(x) dx \]
\[ = \frac{\k+\Delta}{\Delta} \int_\R \frac{dx}{((x+a)^2+\Delta^2)^{\k/2}}
= \frac{\k+\Delta}{\Delta} \int_\R  \frac{dx}{(x^2+\Delta^2)^{\k/2}}.\qedhere\]
\end{proof}

\begin{proof}[Proof of Theorem \ref{Ebound}]
As before, for any set $R$, we let $S_2(R)$ denote the $2\times 2$
  symmetric matrices over $R$.
For such a matrix $S$, let $n_S=\smat{I_2}S{O}{I_2}$.
Because $[0,1) \times \Zhat$ is a fundamental domain for $\Q \bs \A$,
\[E(f)= \frac1{\psi(N)}\iint_{S_2([0,1) \times \Zhat)^2}
\hskip -.2cm \sum_{\gamma =  \smat{A}{B}{C}{D} \in \mathbb{G}(\Q), C \neq O}
  \hskip -.7cm f(n_{S}^{-1} \gamma  n_{S'}) \theta(\tr \sigma_2 S' - \tr \sigma_1 S)\, dS dS'.\]
Assuming $n_{S_{\fin}}^{-1} \gamma n_{S'_{\fin}} \in \supp f_{\fin}$,
  by Proposition \ref{gamma} we may take
\[r(n_{S}^{-1} \gamma n_{S'})=r(\g) =\pm r,\quad
 (n_{S}^{-1} \gamma n_{S'})_{\fin} \in M_{4}(\Zhat),\quad
 C\in M_2(N\Z). \]
In fact, because $f_\infty$ is supported on matrices with positive similitude,
  we can take $r(\g)=r$.
By the fact that $S_{\fin}, S'_{\fin} \in {S}_2(\Zhat)$, it also follows that
$A, B, D \in M_2(\Z)$.

For a square matrix $g$ with real entries, define
\[ \Qf(g) = \sum_{i, j} g_{ij}^2\]
where $g_{ij}$ is the $(i,j)$-th entry of $g$. Then
since $|f(n_{1}^{-1}\g n_{2})|\le \psi(N)|f_\infty(n_{1,\infty}^{-1}\g n_{2,\infty})|$,
 it follows by \eqref{fkbound} that
\begin{equation}\label{Efbound}
|E(f)|\le \frac{1}{2}  d_{\k} (8r)^{\k/2} \iint\limits_{{S}_2([0,1))^2}
\sum_{A, B, D \in \mathbf{M}_2(\Z),\atop{O\neq C\in M_2(N\Z)}}
(2r + \Qf(n_{S}^{-1} \mat ABCD n_{S'}))^{-\k/2} dS dS'.
\end{equation}
(The factor $1/2$ accounts for the fact that we quotient
  by the center of $G(\Q)$.)
We first consider the sum over $B$:
\[
\sum_{B \in {M}_2(\Z)} \left(2r + \Qf(n_{S}^{-1} \mat ABCD n_{S'})\right)^{-\k/2}
\]
\[
=\sum_{B \in {M}_2(\Z)}
\left(2r+\Qf(\mat{A-SC}{B-SD+AS'-SCS'}{C}{CS'+D})\right)^{-\k/2}.
\]
By four applications of Lemma \ref{sumint}, the above is
\[
\ll \k^4 \int_{{M}_2(\R)}
\left(2r + \Qf(\mat{A-SC}{Y}{C}{CS'+D})\right)^{-\k/2} dY.
\]
Summing this over $A,D$, we find in the same way that
\[
    \sum_{A,B, D \in {M}_2(\Z)} \left(2r + \Qf(n_{S}^{-1} \mat ABCD n_{S'})\right)^{-\k/2}\]
\[\ll \k^{12} \int_{{M}_2(\R)^3}
\left(2r + \Qf(\mat{X}{Y}{C}{Z})\right)^{-\k/2} dXdYdZ.
\]
We now have an expression that is independent of $S,S'$, so using \eqref{dk},
  \eqref{Efbound} gives
\[
|E(f)|\ll \k^{15} (8r)^{\k/2}
\sum_{O\neq C\in M_2(N\Z)} \int_{{M}_2(\R)^3}
\left(2r + \Qf(\mat{X}{Y}{C}{Z})\right)^{-\k/2} dXdYdZ.\]
Because $C \neq O$, the above integral is bounded above by
\[
 \int_{{M}_2(\R)^3}
\Qf(\mat{X}{Y}{C}{Z})^{-\k/2} dXdYdZ\]
\[=\int_{M_2(\R)^3} (\Qf(X)+\Qf(Y)+\Qf(Z)+\Qf(C))^{-\k/2}dXdYdZ.
\]
Replacing $X, Y, Z$ with $\Qf(C)^{1/2}X$, $\Qf(C)^{1/2}Y$, $\Qf(C)^{1/2}Z$
  respectively, the above is
\[
=
\int_{\R^{12}} \frac{
\Qf(C)^{-\k/2+6}\,
    dX_{11} \cdots dX_{22} dY_{11} \cdots dY_{22} dZ_{11} \cdots dZ_{22}}
{(1+X_{11}^2 + \cdots + X_{22}^2 + Y_{11}^2 + \cdots + Y_{22}^2 + Z_{11}^2 + \cdots + Z_{22}^{2})^{\k/2}},
\]
which converges if $\k \ge 13$.  Indeed, for such $\k$, we find
  using spherical coordinates that the above is
\[
\ll \Qf(C)^{-\k/2+6}
  \int_0^\infty\frac{\rho^{11}}{(1+\rho^2)^{\k/2}}d\rho
\ll \k^{-6}(C_{11}^2 + C_{12}^2 + C_{21}^2+ C_{22}^2)^{-\k/2+6}
\]
since the integral over $\rho$ is equal to
 $\frac12B(6,\frac\k2-6)$ for the Beta function $B(x,y)$,  and by Stirling's formula,
  $B(6,\frac\k2-6)\sim \Gamma(6)(\frac{\k}2-6)^{-6}$ as $\k\to\infty$.

Finally, we need to sum over all $O\neq C\in M_2(N\Z)$.  Write $C_{ij} = NC_{ij}'$.
We may assume $C_{11} \neq 0$.
  (The other cases can be handled by the exactly the same method.)  Thus
\[
|E(f)|
\ll \frac{\k^9 (8r)^{\k/2}}{N^{\k-12}}
 \sum_{C'_{ij}\in \Z,\, C'_{11} \neq 0}
((C'_{11})^2 + (C'_{12})^2 + (C'_{21})^2+ (C'_{22})^2)^{-\k/2+6}.
\]
\[
\ll \frac{\k^{12} (8r)^{\k/2}}{N^{\k-12}}
 \sum_{C'_{11} \neq 0}
  \int_{\R^3} ((C'_{11})^2 + x^2 + y^2 + z^2)^{-\k/2+6} dx\, dy\, dz
\quad(\text{by Lemma \ref{sumint}})
\]
\[
=
 \frac{\k^{12} (8r)^{\k/2}}{N^{\k-12}}
\sum_{c \neq 0} c^{-\k+15} \int_{\R^3} (1+x^2 + y^2 + z^2)^{-\k/2+6} dx\, dy\, dz
\]
\[\ll \frac{\k^{12} (8r)^{\k/2}}{N^{\k-12}}
\sum_{c\neq 0}\frac{1}{c^{\k-15}} \int_0^{\infty}
\frac{\rho^2}{(1+\rho^2)^{\k/2-6}}d\rho.\]
This converges as long as $\k\ge 17$. The integral is equal to
  $\tfrac12B(\tfrac32,\tfrac{\k-15}2)\sim \tfrac12\Gamma(\tfrac 32)(\tfrac{\k-15}2)^{-3/2}\ll
  \k^{-3/2}$.  Hence for $\k\ge 17$, the above is
$\ll \frac{\k^{21/2} (8r)^{\k/2}}{N^{\k-12}}$
for an absolute implied constant.
This proves \eqref{Ebound2}, and Theorem \ref{Ebound} follows.
\end{proof}

\vskip .5cm

\small

\vskip .5cm
\Addresses
\end{document}